\newcommand{\e}{\mathrm{e}}
\newcommand{\msD}{\mathscr{D}}
\newcommand{\msP}{\mathscr{P}}
\newcommand{\cD}{\mathcal{D}}
\newcommand{\BD}{\mathbb{D}}
\newcommand{\cI}{\mathcal{I}}
\newcommand{\cH}{\mathcal{H}}
\newcommand{\cL}{\mathcal{L}}
\newcommand{\cK}{\mathcal{K}}
\newcommand{\cP}{\mathcal{P}}
\newcommand{\cR}{\mathcal{R}}
\renewcommand{\d}{\,\text{d}}
\newcommand{\BI}{\mathbb{I}}
\newcommand{\BN}{\mathbb{N}}
\newcommand{\BR}{\mathbb{R}}
\newcommand{\BL}{\mathbb{L}}
\newcommand{\BP}{\mathbb{P}}
\newcommand{\BT}{\mathbb{T}}
\newcommand{\BC}{\mathbb{C}}
\newcommand{\im}{{\rm im}}
\newcommand\norm[2]{{\Vert{#1}\Vert_{#2}}}
\newcommand\normto[3]{{\Vert{#1}\Vert_{#2}^{#3}}}
\newcommand{\set}[1]{\{{#1}\}}
\newcommand{\Lpr}[2]{L^{#1}(\rdp,\mua;\Uplambda^{#2})}
\newcommand{\Lprs}[2]{L^{#1}(\mua;\Uplambda^{#2})}
\newcommand{\Lprss}[2]{L^{#1}(\Uplambda^{#2})}
\newcommand{\card}{\#}
\newcommand{\supp}{\mathrm{supp}}
\newcommand{\rmi}{\textrm{(i)}}
\newcommand{\rmii}{\textrm{(ii)}}
\newcommand{\rmiii}{\textrm{(iii)}}
\newcommand{\Bb}{\mathbb}
\newcommand{\mc}{\mathcal}
\newcommand{\rd}{\mathbb{R}^d}
\newcommand{\rdp}{\mathbb{R}^{d}_+}
\newcommand{\rp}{\mathbb{R}_+}
\newcommand{\la}{\mathcal{L}_{\alpha}}
\newcommand{\mua}{\mu_{\alpha}}
\numberwithin{equation}{section}
\newtheorem{thm}{Theorem}[section]
\newtheorem{prop}[thm]{Proposition}
\newtheorem*{thm*}{Theorem}
\newtheorem{cor}[thm]{Corollary}
\newtheorem{lem}[thm]{Lemma}
\theoremstyle{definition}
\newtheorem{definition}[thm]{Definition} 
\newtheorem*{remark*}{Remark}
\newtheorem{remark}[thm]{Remark}
\newtheorem{notation}[thm]{Notation}
\begin{document}
\setcounter{section}{0}
\title[Hodge-Laguerre operator]
{ Riesz Transforms and Spectral Multipliers \\ of the Hodge-Laguerre Operator }

\subjclass[2010]{33C50, 42C10, 42B15, 42A50, 58A14} 

\keywords{Hodge decomposition, Laguerre polynomials, Riesz transforms, spectral multipliers}

\thanks{
Work partially supported by PRIN 2010 ``Real and complex manifolds: 
geometry, topology and harmonic analysis". The first author is a member of the Gruppo Nazionale per l'Analisi Matematica, la Probabilit\`a e le loro Applicazioni (GNAMPA) of the Istituto Nazionale di Alta Matematica (INdAM)
}

\author[G. Mauceri, M. Spinelli]
{Giancarlo Mauceri, Micol Spinelli}

\address{Giancarlo Mauceri: Dipartimento di Matematica\\ 
Universit\`a di Genova\\
via Dodecaneso 35\\ 16146 Genova\\ Italy 
\hfill\break
mauceri@dima.unige.it}

\address{Micol Spinelli: 
Dipartimento di Matematica\\ 
Universit\`a di Genova\\
via Dodecaneso 35\\ 16146 Genova\\ Italy 
\hfill\break
micol.spinelli@gmail.com}

\maketitle

\begin{abstract}
On$\BR^d_+$, endowed with the Laguerre probability measure $
\mu_\alpha$, we define a Hodge-Laguerre operator 
$\BL_\alpha=\delta\delta^*+\delta^*
\delta$ acting on 
differential forms. Here  
$\delta$ is the Laguerre exterior differentiation operator, 
defined as the classical exterior differential, except that the 
partial derivatives $\partial_{x_i}$ are replaced by the \lq\lq Laguerre 
derivatives\rq\rq $\sqrt{x_i}\partial_{x_i}$, and $\delta^*$ is the 
adjoint 
of $\delta$ with respect to inner product on forms defined by the 
Euclidean structure and the Laguerre measure $\mu_\alpha$. We 
prove dimension-free bounds on $L^p$, $1<p<\infty$,  for  
the  Riesz transforms $\delta \BL_\alpha^{-1/2}$ and $\delta^* \BL_
\alpha^{-1/2}$.
As applications we prove the strong Hodge-de Rahm-Kodaira 
decomposition for forms in $L^p$ and deduce existence and 
regularity results for the solutions of the Hodge and de Rham 
equations in $L^p$. We also prove that for suitable functions $m$ 
the operator $m(\BL^\alpha)$ is bounded on $L^p$, $1<p<\infty$.

\end{abstract}
\tableofcontents
\section{Introduction}
For every multiindex $\alpha=(\alpha_1,\ldots,\alpha_d)\in
 (-1,\infty)^d
$ denote by $\mu_
\alpha$  the \emph{Laguerre measure} on $\rdp$, i.e. the probability 
measure with density 
$$
\rho_\alpha(x)= \prod_{i=1}^d{\frac{x_i^{\alpha_i} e^{-x_i}}
{\Gamma(\alpha_i+1)}d x_i}
$$
with respect to the Lebesgue measure on $\rdp$.

The \emph{Laguerre operator} $\cL_\alpha$ is the differential 
operator on $\rdp$ defined by
$$
\cL_\alpha = - \sum_{i=1}^d\left[{x_i \frac{\partial^2}{\partial x_i ^2} + 
(\alpha_i +1 - x_i) \frac{\partial}{\partial x_i}}\right].
$$
The operator $\cL_\alpha$, considered as an operator on the Hilbert 
space $L^2(\rdp,\mua)$ with domain the space of smooth functions 
with compact support  $C^\infty_c(\rdp)$,  is symmetric and 
nonnegative   for all $\alpha\in(-1,\infty)^d$. If $\alpha_i\in (-1,1)$ 
for some $i\in\set{1,2,\ldots,d}$ then it has several self-adjoint 
extensions, depending on the boundary 
condition at $\set{x\in\rdp: x_i=0}$. For $\alpha\in[1,\infty)^d$ it is 
essentially self-adjoint and so it has a unique self-adjoint extension 
whose spectral resolution is given by the orthogonal system  of 
generalized Laguerre polynomials $\set{L^\alpha_k: k\in\BN}$ in $d$ 
variables. The Laguerre orthogonal system  
provides also the spectral 
resolution of an extension of  
$\cL_\alpha$ for $\alpha\notin[1,\infty)^d$ that 
satisfies an appropriate boundary condition at $x_i=0$ whenever $
\alpha_i<1$ \cite{Zettl:MSM}.  
\par
Henceforth, for all $\alpha\in(-1,\infty)^d$ we shall denote by $\cL_
\alpha$ the self-adjoint realization of the Laguerre operator that has 
the Laguerre polynomials as eigenfunctions. 
It is well known that for {$\alpha\in [-1,\infty)^d$}  the 
operator $\cL_\alpha$ generates a symmetric 
diffusion semigroup on 
the measure space $(\rdp,\mua)$, called the \emph{Laguerre  
semigroup}. In his pioneering work \cite{Stein:PUP}, 
E. M. Stein realized that 
many classical results concerning the heat  and the Poisson 
semigroups associated to the Laplacian on the Euclidean setting 
generalize to the more abstract setting of symmetric diffusion 
semigroups. This was the starting point of the so called harmonic 
analysis of semigroups, which centers around the study of the 
boundedness properties on the Lebesgue spaces $L^p$ of various 
operators naturally associated to the semigroup and its generator, 
such as maximal functions, Littlewood-Paley functions, spectral 
multipliers and Riesz transforms.  Some milestones in this abstract 
theory are the works of M. Cowling \cite{Cow:AM}, 
of R. R. Coifman, R. 
Rochberg and G. Weiss \cite{CRW} and, quite recently, 
the paper of A. 
Carbonaro and O. Dragi\v{c}evi\v{c} \cite{CDmult} 
on spectral multipliers for 
symmetric contraction semigroups.
\par
 Even though some of these 
general results are optimal, sharper results can be obtained for 
particular subclasses of symmetric 
contraction semigroups, and there 
is a vast body of  literature concerning specific 
semigroups such as, for 
instance, the semigroups generated by invariant Laplacians or 
sublaplacians on Lie groups, by the Laplace-Beltrami operator on 
Riemannian manifolds, the Ornstein-Uhlenbeck semigroup, both in 
the finite and in the infinite dimensional setting, and the semigroups 
associated to various orthogonal systems of polynomials. 
In the last class falls the Laguerre semigroup, whose 
harmonic analysis has been investigated recently by various people. 
Specifically we mention the work of B. Muckenhoupt in the one-dimensional setting \cite{M1:Tams,M2:TAMS} and, in higher dimension, those of  U. Dinger on the weak-type 
estimate for the maximal function \cite{Dinger:RMI}, of E. Sasso on 
spectral multipliers \cite{Sasso:BAMS, Sasso:MZ} and on the maximal 
operator for the holomorphic semigroup \cite{Sasso:MS} of E. Sasso 
\cite{Sasso:BAMSw} and of L. Forzani, E. Sasso and R. Scotto on the weak 
type inequality for the Riesz-Laguerre transforms \cite{FSS:JFA,FSS:JFAA}, of P. Graczyk \emph{et al.} on higher order Riesz transforms \cite{GLLNU:}.
Particularly relevant for the results of this paper are the papers of A. 
Nowak \cite{Nowak:JFA} and of A. Nowak and K. Stempak \cite{NowakStempak:ILLJM} on Riesz 
transforms and on $L^p$-contractivity of the Laguerre semigroup.
\par
In this paper we investigate the $L^p$-boundedness of the Riesz 
transforms and spectral multipliers for the Hodge-Laguerre operator 
$
\BL_\alpha$, a generalisation to differential forms of the Laguerre 
operator $\cL_\alpha$ on functions. We recall that, if $M$ is a 
Riemannian manifold, the Hodge-de Rham operator on differential 
forms is the operator $\square=dd^*+d^*d$, where $d$ denotes the 
exterior differentiation operator mapping $r$-forms to $(r+1)$-forms 
and $d^*$ is its adjoint with respect to the inner product on forms 
defined by the the Riemannian structure and the Riemannian measure. 
We define the Hodge-Laguerre operator acting on 
differential forms on $\rdp$ as $\BL_\alpha=\delta\delta^*+\delta^*
\delta$, where $\delta$ is the Laguerre exterior differentiation operator, 
defined much as the classical exterior differential, except that the 
partial derivatives $\partial_{x_i}$ are replaced by the \lq\lq Laguerre 
derivatives\rq\rq $\sqrt{x_i}\partial_{x_i}$, and $\delta^*$ is the adjoint 
of $\delta$ with respect to inner product on forms defined by the 
Euclidean structure and the Laguerre measure $\mu_\alpha$ (see 
Section \ref{s: 2.1} for more details).

On manifolds, the Riesz transforms on forms are the 
operators $\cR=d\square^{-1/2}$, mapping $r$-forms to $(r+1)$-forms, 
and its formal adjoint $\cR^*=\square^{-1/2}d^*$, mapping $r$-forms 
to $
(r-1)$-forms. 

In \cite{S:JFA} R. Strichartz proved the on a 
\emph{complete} Riemannian manifold the Hodge operator is 
essentially self-adjoint. He also proved that the Riesz transforms $d
\square^{-1/2}$ and $\square^{-1/2}d^*$ are bounded on $L^2$. It is 
well known that there is a connection between the boundedness of 
Riesz transforms on $L^2$-forms 
and the $L^2$-Hodge-de Rham decomposition of the space $L^2(M,
\Uplambda^r)$ of square-integrable $r$-forms as the direct 
orthogonal sum
$$
L^2(M,\Uplambda^r)=\ker_r(\square)\oplus \overline{d(C^\infty_c(M;
\Uplambda^{r-1}))}\oplus \overline{d^*(C^\infty_c(M;\Uplambda^{r
+1}))}
$$
where 
\begin{itemize}
\item[\rmi] $\ker_r(\square)$ is the kernel of $\square$ in  $L^2(M,
\Uplambda^r)$,
\item[\rmii] $\overline{d(C^\infty_c(M;
\Uplambda^{r-1}))}$ is  the closure in $L^2(M,\Uplambda^r)$ of the image  of $d$ on
the space $C^\infty_c(M;\Uplambda^{r-1})$ of smooth forms with compact support,
\item[\rmiii] $\overline{d^*(C^\infty_c(M;
\Uplambda^{r
+1}))}$ is the closure in $L^2(M,\Uplambda^r)$ of the image of 
$d^*$ on the space $C^\infty_c(M,\Uplambda^{r+1})$,
\end{itemize}
(see \cite{deRham, Gromov:JDG}). Indeed the operators $\cR
\cR^*$ and $\cR^*\cR$ are precisely the orthogonal projections onto 
the spaces $\overline{d(C^\infty_c(M;
\Uplambda^{r-1}))}$ and $\overline{d^*(C^\infty_c(M;\Uplambda^{r
+1}))}$, respectively. To prove the existence of an analogue of the 
Hodge decomposition for $L^p$ forms when $p\not=2$, various 
authors have investigated the boundedness of the Riesz transforms 
on $L^p$, under suitable geometric conditions on the manifold 
\cite{L:MN,LM:Asian, Li:PTRF, Li:RMI, Li:JFA}. 
\par
On $0$-forms the operator $\square$ reduces to $d^*d=\Delta$, the 
Laplace-Beltrami operator on functions, and $d^*=0$. Thus $
\cR^*=0$, and $\cR=d\Delta^{-1/2}$ is the Riesz transform mapping 
functions to $1$-forms, a singular integral whose boundedness on 
$L^p(M)$ has been extensively investigated by many authors \cite{Bakry:LN, ACDH:ASENS,CCH:Duke,CD:JFA}.
\par
In the Laguerre context, the boundedness on $L^p(\rdp,\mua)$, 
$1<p<\infty$, of the scalar Riesz transforms $\delta\cL_\alpha^{-1/2}$, $i=1,\ldots,d$ on 
functions has been proved by A. Nowak when $\alpha\in[-1/2,\infty)^d
$, 
using a Littlewood-Paley-Stein square function \cite{Nowak:JFA}. The 
estimates obtained by Nowak are independent of the dimension.  Quite recently B. Wr\'obel has described a general scheme for deducing dimension free $L^p$ estimates of $d$-dimensional Riesz transforms from the boundedness of one-dimensional Riesz transforms \cite{Wrobel}. By combining his result with the one-dimensional estimate of B.~Muckenhoupt \cite{M2:TAMS}, Wr\'obel obtains dimension independent estimates of the scalar Riesz transforms $\delta_i\cL_\alpha^{-1/2}$, $i=1,\ldots,d$, for all $\alpha\in(-1,\infty)^d$. To the best of our knowledge, no dimension-free estimates are known for the vector valued Riesz transform $\delta\cL_\alpha^{-1/2}$.
\par\medskip
Our first result is that for  {$\alpha\in [-1/2,\infty)^d$} the Riesz 
transforms associated to the Hodge-Laguerre operator, $\delta\BL_
\alpha^{-1/2}$ and $\BL_\alpha^{-1/2}\delta^*$, are bounded from $
\Lpr{p}{r}$ to $\Lpr{p}{r+1}$ and to $\Lpr{p}{r-1}$ respectively, for all 
$p\in(1,\infty)$ and all $r\in\{1,2,\ldots,d\}$. We emphasise the fact 
that our bounds are independent of the dimension $d$ and of the 
multi index $\alpha$.
 When $r=0$ the analogous 
result holds for the Riesz transform $\delta 
\BL_\alpha^{-1/2}=\delta\cL_\alpha^{-1/2}$  provided that the domain 
is restricted to  the 
forms in $\Lpr{p}{0}$ with integral zero. Actually, we shall prove 
the stronger 
result that for every $\rho\le r/2$ the shifted Riesz transforms $
\delta(\BL_\alpha-\rho I)^{-1/2}$ and $(\BL_\alpha-
\rho)^{-1/2}\delta^*
$ are bounded from $\Lpr{p}{r}$ for all $p\in(1,\infty)$, with 
dimension-free bounds. We shall apply this result to obtain the 
following strong form of the 
Hodge-De Rham decomposition in $L^p$ for all $1<p<\infty$ and 
$r=1,\ldots,d$
$$
\Lpr{p}{r}=\d W^{1,p}(\rdp,\mua;\Uplambda^{r-1})\oplus\d W^{1,p}
(\rdp,\mua;\Uplambda^{r+1})
$$
where $W^{1,p}(\rdp,\mua;\Uplambda^{r})$ denotes a $(1,p)$-
Sobolev space of $r$-forms, defined as the domain of the closure in 
$\Lpr{p}{r}$ of the operator that maps a \lq\lq polynomial\rq\rq\  form 
$\omega$ into the pair of forms $(\delta\omega,\delta^*\omega)$ 
(see Section \ref{s: Hodge Lp} for more details).
\par
A second application is to show existence theorems and $L^p$-
estimates for the Hodge-Laguer\-re system and de Rham-Laguerre 
operator. The Hodge-\-Laguer\-re system concerns the solvability in 
$\Lpr{p}{r}$ of the system of equations $\delta \omega=\alpha$ and 
$\delta^* \omega=\beta$ for all $\alpha\in W^{1,p}(\rdp,\mua;
\Uplambda^{r+1})$ and all $\beta\in W^{1,p}(\rdp,\mua;
\Uplambda^{r-1})$ such that $\delta\alpha=0$ and $\delta^*
\beta=0$. The de Rham-Laguerre equation concerns the solvability 
in $\Lpr{p}{r-1}$ of the equation $\delta \omega=\alpha$ for all $
\alpha \in W^{1,p}(\rdp,\mua;\Uplambda^{r})$ such that $\delta 
\alpha=0$. Our results here are the analogues in the Laguerre 
setting of results obtained by X.-D. Li over weighted complete 
Riemannian manifolds under suitable curvature and completeness 
assumptions \cite{Li:JFA}. Note that $\rdp$ is not a complete 
manifold.
\par
We now describe briefly the method used to prove the $L^p$-
boundedness of Riesz transforms. We adapt to our setting 
Carbonaro and Dragi\v{c}evi\v{c}'s proof of Bakry's result on the 
$L^p$-boundedness of the Riesz transform on functions on 
complete Riemannian manifolds whose Ricci curvature is bounded 
from below  \cite{CD:JFA}. They reduce the problem to a bilinear 
estimate involving the covariant derivatives of the Poisson 
semigroups acting on functions and on $1$-forms. To prove the 
bilinear estimate they adapt the technique of Bellman functions, 
introduced in harmonic analysis by F. Nazarov, S. Treil and A. 
Volberg in \cite{NTV:JAMS}. A crucial role in our analysis is played 
by the fact that the Hodge-Laguerre operator $\BL_\alpha$ on forms 
acts diagonally on the coefficients of the form. Namely, if $\omega=
\sum_I \omega_I\d x_I$ is a form in $C^\infty(\rdp,\Uplambda^r)$ 
then
\begin{equation}\label{diagact}
\BL_\alpha \omega=\sum_I \cL_{\alpha,I} \omega_I \d x_I,
\end{equation}
where the $\cL_{\alpha,I}$ are some generalisations of the Laguerre 
operator $\cL_\alpha$. On $1$-forms the operators $\cL_{\alpha,
\set{i}}$ coincide with the operators $M^\alpha_i$ introduced by 
Nowak in \cite{Nowak:JFA} and studied by Nowak and Stempak in 
\cite{NowakStempak:ILLJM} in connection with conjugate Poisson 
integrals.
\par
In the last section we prove a spectral multiplier theorem for the 
Hodge-Laguerre operator.
The Hodge-Laguerre operator on $r$-forms has a self-adjoint 
extension $\BL_\alpha$ on $\Lpr{2}{r}$ with spectral resolution
$$
\BL_\alpha=\sum_{n\ge r} n\,\cP^\alpha_n,
$$
where, for each integer $n\ge r$, $\cP_n$ is the orthogonal 
projection on a finite-dimensional space of \lq\lq polynomial\rq\rq\ 
forms. Thus, by the spectral theorem, if $m=(m_n)_{n\ge r}$ is a 
bounded sequence in $\BC$ the operator
$$
m(\BL_\alpha)=\sum_{n\ge r} m_n\,\cP^\alpha_n
$$
is bounded on $\Lpr{2}{r}$ and $\norm{m(\BL_\alpha)}{2-2}= \sup_{n
\ge r}|m_n|$. \par
We give a sufficient condition for the 
boundedness of $m(\BL_\alpha)$ on \break $L^p(\Uplambda^r(\rdp),
\mua)$ 
also for $p\not=2$. This is a particular instance of the 
\emph{spectral 
multiplier problem} for self-adjoint operators on $L^2(X,\mu)$ where 
$(X,\mu)$ is a $\sigma$-finite measure space, which has been 
investigated 
in a great variety of contexts in the last thirty years. Since the 
literature on the subject is huge, here we only cite a few landmarks 
which are more pertinent to our result. The pioneering work is the 
already cited monograph  \cite{Stein:PUP}, where E. M. Stein 
proved a spectral 
multiplier theorem for generators of symmetric diffusion semigroups. 
Stein's result was subsequently sharpened by M. Cowling in \cite{Cow:AM}, 
who proved that if the operator $-A$ generates a symmetric
 contraction 
semigroup on $(X,\mu)$ and the function $m$ is bounded and 
holomorphic in the sector 
$$
S_{\theta_p}=\set{z\in\BC: |\arg z|<\theta_p}
$$
where $\theta_p = \frac{\pi}{2} \left| \frac{2}{p}-1\right|$, for some $p
\in(1,2)$, then the operator $m(A)$ defined spectrally on $L^2(X,
\mu)
$ extends to a bounded operator on $L^q(X,\mu)$ for all $q$ with 
$p<q<p'$. 

It was known for some time that for some specific generators the 
angle $\theta_p$ is 
not optimal. In particular, if $\cL_{OU}=-\frac{1}{2}\Delta+x\nabla$ is 
the generator of the Ornstein-Uhlenbeck semigroup on  $\BR^n$ 
endowed with the Gauss measure $\gamma$, then
J. Garc\'\i a-Cuerva et al. \cite{GMMST:JFA} (see also 
\cite{ MMS:ASNS}), proved that it suffices to 
assume 
that the spectral multiplier $m$ is bounded and holomorphic in the 
smaller sector $S_{\phi^*_p}$ with $\phi^*_p=  \arcsin \left| 
\frac{2}{p}-1\right|$ (plus some additional differential condition on the 
boundary of the sector) to obtain $L^q$ boundedness of $m(\cL)$ 
on 
all $L^q$, with $p\le q\le p'$.  It is noteworthy to remark that  W. 
Hebisch, G. Mauceri and S. Meda proved that  holomorphy of $m$ 
in 
the sector $S_{\phi^*_p}$ becomes also necessary if we assume 
that 
the multiplier $m$ is \emph{uniform}, i.e. that the all the operators 
$m(t\cL)$, $t>0$, are uniformly bounded on $L^p$ \cite{HMM:JFA}. 
These results are particularly relevant here, 
because the Ornstein-Uhlenbeck operator is strictly related to the 
Laguerre operator on functions via a change of variables, as 
remarked in \cite{GIT}. Indeed, exploiting the relationship 
between $\cL_{OU}$ and $\cL_\alpha$ E. Sasso in \cite{Sasso:MZ} 
proved that 
the results in \cite{GMMST:JFA, HMM:JFA} hold also for the 
Laguerre operator $\cL_\alpha$ on functions. Finally, quite recently,  
A. Carbonaro and O. Dragi\v{c}evi\v{c} \cite{CDmult} proved that  $
\phi^*_p$ is indeed  the optimal angle in a universal multiplier
theorem for generators of symmetric contraction semigroups. 
\par
Since, by \eqref{diagact}
$$
m(\BL_\alpha)\omega=\sum_I m(\cL_{\alpha,I}) \omega_I \d x_I,
$$
via a randomisation argument based on Rademacher's function we 
may reduce the problem to studying spectral multipliers of the 
operators $\cL_{\alpha,I}$. When $\alpha\in[-1/2,\infty)^d$ the 
operators $\cL_{\alpha,I}$,  generate contractions semigroups on 
$L^p(\rdp,\mua)$, $1\le p<\infty$. Therefore Carbonaro and 
Dragi\v{c}evi\v{c}'s result applies to them. 
Exploiting these facts, we prove that if $m$ is a bounded 
holomorphic function in the translated sector $S_{\phi_p^*}+r/2$, 
satisfying suitable Hörmander type conditions on the boundary, then 
the operator $m(\BL_\alpha)$ is bounded on $\Lpr{p}{r}$. We 
emphasise the fact that our estimates depend on the 
dimension $\binom{d}{r}$ of the space of alternating tensors of rank 
$r$ on $\BR^d$.
 \par
Now we describe in some detail how the paper is organised. 
Section \ref{c: Lonf} contains the results on Laguerre 
operators on functions. In Section \ref{s: 2.1} we describe the setup 
and we introduce the operator $\cL_\alpha$, the Laguerre 
derivatives 
$\delta_i$, $i=1,\ldots,d$ and their adjoints $\delta_i^*$. We recall 
the 
spectral resolution of $\cL_\alpha$ and the definition of the Laguerre 
semigroup. In Section \ref{ss: LaI} we define the generalised 
Laguerre operators $\cL_{\alpha,I}$, we give their spectral 
resolutions and we describe the properties of the heat and Poisson 
semigroups generated by them.\par
In Section \ref{c: HLonf} we define the Hodge-Laguerre  operator $
\BL_\alpha$ on forms and prove its basic properties. After a few 
preliminaries on differential forms, in Section \ref{s: HLonf} we define 
first $\delta$, $\delta^*$ and $\BL_\alpha=\delta\delta^*+\delta^*
\delta$ on smooth forms. Then, in Section \ref{s: diag} we prove that 
$\BL_\alpha$ acts diagonally on the coefficients of the form, i.e. 
formula \eqref{diagact}. In Section \ref{s: saeHL} we tackle the 
problem of 
defining $\delta$, $\delta^*$ and $\BL_\alpha$ as closed densely 
defined operators on $\Lpr{2}{r}$. Since for $\alpha\notin [1,\infty)^d
$ the 
operator $\BL_\alpha$ has several self-adjoint extensions, we must 
specify the specific extension we work with. This is done by 
choosing 
an appropriate orthonormal basis $B_r$ of $\Lpr{2}{r}$ consisting of 
eigenfunctions of $\BL_\alpha$. Then the domains of the operators  
$
\delta$, $\delta^*$ and $\BL_\alpha$ are 
described in terms of size conditions on the coefficients of a form 
with respect to the orthonormal basis $B_r$. The map that sends a 
form $\omega\in\Lpr{2}{r}$ to the multisequence of its coefficients 
with respect to the basis $B_r$ can be seen as a map from $\Lpr{2}
{r}$ to a space of square summable multi-sequences of alternating 
covariant tensors of rank $r$ on $\BR^d$, that we call 
\emph{Fourier-Laguerre} transform.  Then we derive simple and 
elegant formulas 
relating the Fourier-Laguerre transform of a form $\omega$ and 
those of the forms $\delta \omega$, $\delta^*\omega$ and $\BL_
\alpha 
\omega$. These formulas are useful in proving the fundamental 
identities $\delta^2=0$, $(\delta^*)^2=0$ and $\BL_\alpha=\delta
\delta^*+\delta\delta^*$ on the appropriate domains of these 
operators. We end this rather long section by proving that for all $
\rho\le r/2$ the \lq\lq heat\rq \rq\  and Poisson semigroups generated 
by the operators $\BL_\alpha-\rho I$ and $(\BL_\alpha-\rho I)^{1/2}$ 
are bounded on $\Lpr{p}{r}$ for all $p\in[1,\infty]$. In Section \ref{s: 
HdRL2} we state the Hodge-de Rham decomposition for the space 
$\Lpr{2}{r}$, whose proof can be obtained using the results of the 
previous section along classical lines.  Then we show that the Riesz-
Laguerre transforms $\cR=\delta \BL_\alpha^{-1/2}$ and $\cR^*= 
\BL_\alpha^{-1/2}\delta^*$ are bounded on $\Lpr{2}{r}$ and that  the 
operators $\cR\cR^*$ and $\cR^*\cR$ are the Hodge-de Rham 
projections onto the spaces of exact and coexact forms respectively.
\par
In Section \ref{s: BETapp} we state the bilinear embedding theorem. 
Deferring its proof, we deduce from it the boundedness on $\Lpr{p}
{r}$ of the Riesz-Laguerre transforms. As applications, we deduce 
the strong Hodge-de Rham decomposition in $\Lpr{p}{r}$ and the 
existence and regularity result for the Hodge system and the de 
Rham equation in $\Lpr{p}{r}$. 
\par
In Section \ref{chbell}, to prepare the proof of the bilinear embedding 
theorem, we recall the definition and the basic properties of the 
particular Bellman function used by A. Carbonaro and O. Dragi\v{c}
evi\v{c} in \cite{CD:JFA} to prove the boundedness of Riesz 
transforms on Riemannian manifolds. Even though the results 
coincide with those in \cite{CD:JFA} we have included full proofs for 
completeness.
\par
In Section \ref{c: proofBET} we prove the bilinear embedding 
theorem, adapting to our situation the arguments in \cite{CD:JFA}.
\par
Finally in Section \ref{c: specmult} we state and prove the spectral 
multiplier theorem for $\BL_\alpha$ when $\alpha\in[-1/2,\infty)^d$.
\par\medskip\noindent
\section{Laguerre operators on functions}\label{c: Lonf}
\subsection{The operator $\cL_\alpha$}\label{s: 2.1}
Let $\rdp$ be the cone $ \left\{ x \in \rd : x_i > 0, \  \forall i=1, \dots, d 
\right\}$.
Given a multi-index $\alpha = (\alpha_1, \dots, \alpha_d), \ \alpha \in 
(-1, \infty)^d$, we define the \emph{Laguerre polynomial of type $
\alpha$ and multidegree $k$ on $\rdp$} as
\[L^{\alpha}_k (x)= L^{\alpha_1}_{k_1}(x_1) \cdots L^{\alpha_d}
_{k_d}(x_d),\]
where $k= (k_1, \dots, k_d)$ has integer components $k_i \geq 0$ 
for each $ i=1, \dots, d$, and 
\[ L^{\alpha_i}_{k_i}(x_i) = \frac{1}{k !} e^{x_i} x_i^{-\alpha_i} 
\frac{\d^k}{\d x_i^k}(e^{-x_i} x_i^{k_i + \alpha_i})\]
is the one-dimensional Laguerre polynomial of type $\alpha_i$ and 
degree $k_i$.

These polynomials are orthogonal with respect to the probability 
measure
\begin{equation}\label{mua}
\d \mua (x) = \rho_\alpha(x)\d x=\prod_{i=1}^d{\frac{x_i^{\alpha_i} 
e^{-x_i}}{\Gamma(\alpha_i+1)}\d x_i},
\end{equation}
which is called the \emph{Laguerre measure on $\rdp$}. 
We denote by 
$$
\ell^{\alpha}_k=L^{\alpha}_k/\norm{L^{\alpha}_k}{2}
$$
their normalizations in $L^2(\rdp,\mua)$. 
The system $\{\ell^{\alpha}_k~:k\in \BN^{d}\}$  is an orthonormal 
basis of $L^2(\rdp,\mu_\alpha)$ of eigenfunctions of the Laguerre 
operator
of type $\alpha$
\[
\la = - \sum_{i=1}^d\left[{x_i \frac{\partial^2}{\partial x_i ^2} + 
(\alpha_i +1 - x_i) \frac{\partial}{\partial x_i}}\right].
\]
Namely 
$$
\cL_\alpha \ell^\alpha_k= |k|\ \ell^\alpha_k, \qquad\forall k\in\BN^d,
$$
where $|k|=k_1+\cdots+k_d$ denotes as usual the length of the 
multiindex $k$. 
\par
The operator $\cL^\alpha$ is nonnegative and symmetric on the 
domain $\mathcal{C}^{\infty}_c(\rdp)$ with respect to the inner 
product in $L^2(\rdp, \mua)$. 
If $\alpha_i\ge 1$ for all $i=1,\ldots,d$ then it is also essentially self-
adjoint on $L^2(\rdp, \mua)$ and the spectral resolution of its 
closure, denoted by $\mathcal{L}_{\alpha}$ as well, is
\begin{equation}\label{f: sar}
\la = \sum_{n=0}^{\infty}{n \ \mathcal{P}_n^{\alpha}},
\end{equation}
where $\cP^{\alpha}_n$ denotes the orthogonal projection on the 
space spanned by Laguerre polynomials of degree $n$. If at least 
one of the indices $\alpha_i$ is such that $\alpha_i <1$ the operator 
$\cL^\alpha$, with domain $C^\infty_c(\rdp)$, has several selfadjoint 
realizations, depending on the boundary conditions at $x_i=0$ 
\cite{Zettl:MSM}.
 \par
  In the following we shall always work with the realization  provided by the spectral resolution \eqref{f: sar} for all $\alpha\in (-1,\infty)^d$. This selfadjoint realisation can also be characterised as the closure of the operator $\cL^\alpha$ on the domain $\cP(\rdp)$ of all finite linear combinations of Laguerre polynomials,  i.e. of  all polynomial functions on $\rdp$. 
\par
The Laguerre operator can be expressed in the more compact way
$$
\cL^\alpha=\sum_{i=1}^d \delta_i^*\delta_i,
$$ 
which makes the symmetry evident, by introducing the \emph{Laguerre partial derivatives}
$$
\delta_i=\sqrt{x_i}\partial_{x_i}
$$
and their formal adjoints in $L^2(\rdp,\mua)$
$$
\delta_i^*=-\sqrt{x_i}\left(\partial_{x_i}+\frac{\alpha_i+\frac{1}{2}-x_i}{x_i}\right)=-\big(\delta_i+\psi_i(x_i)\big),
$$
where
$$
\psi_i(x_i)=\delta_i \log\left(x_i^{\alpha_i+\frac{1}{2}}\,\e^{-x_i}\right)=\frac{\alpha_i+\frac{1}{2}}{\sqrt{x_i}}-\sqrt{x_i}.
$$
The operator $\mathcal{L}_{\alpha}$ is the infinitesimal generator of the Laguerre  semigroup $T^\alpha_t=e^{-t \mathcal{L}_{\alpha}}$, $t\geq 0$ on $L^2(\rdp, \mua)$. 
Since the set $\{\ell^{\alpha}_k\}_{k \in \mathbb{N}^d}$ is an orthonormal basis for $L^2(\rdp, \mua)$, each function $f$ in this space can be expressed as
\[f = \sum_{k \in \mathbb{N}^d}{\hat{f}(k) \ell_k^{\alpha}},\]
where
$$
\hat{f}(k)=\langle f,\ell^\alpha_k\rangle_\alpha
$$
and $\langle\phantom{f},\phantom{f}\rangle_\alpha$ denotes the inner product in $L^2(\rdp,\mua)$.
Hence
\begin{equation}
\label{dec}
T^{\alpha} f = \sum_{k \in \mathbb{N}^d} e^{-t |k|}\ \hat{f}(k)\  \ell^{\alpha}_k.
\end{equation}
The Laguerre semigroup acts as  a semigroup of integral operators
$$
T^{\alpha} f(x)=\int_{\rdp} G^\alpha_t(x,y)\, f(y) \d\mua(y).
$$
where
$$
G^\alpha_t(x,y)=\sum_{k\in\BN^d} \e^{-t|k|}\,\ell^\alpha_{k}(x)\, \ell^\alpha_{k}(y)
$$
is the \emph{Laguerre heat kernel}, which has the following explicit expression in terms of modified Bessel functions of the first kind $I_{\alpha_i}$
\begin{align*}
G^\alpha_t(x,y) = & 
 \prod_{i=1}^d \Gamma(\alpha_i+1) \ (1-u)^{-1}{\exp{ \left( -\frac{u(x_i + y_i) }{1-u} \right)} } \\
 & \times \left( {\sqrt{x_i y_i u}} \right)^{- \alpha_i} I_{\alpha_i} 
\left( 2 \frac{\sqrt{x_i y_i u}}{1-u}\right)
 \end{align*}
 with $u=e^{-t}$ \cite{Nowak:JFA}.
From this expression it is easily seen that $\{T^{\alpha}_t\}$ is a 
symmetric diffusion semigroup, i.e. for all $t\ge 0$
\begin{itemize}
\item[\rmi] $T^{\alpha}_t$ is positivity preserving for all $t\ge0$;
\item[\rmii] $T^{\alpha}_t1=1$;
\item[\rmiii] $\norm{T^{\alpha}_tf}{p}\le \norm{f}{p}$, $1\le p\le 
\infty$.
\end{itemize}
Note that the heat kernel on $\rdp$ is the product of the $d$ one-dimensional heat kernels relative to the operators $\cL^{\alpha_i}=\delta_i^*\delta_i$ acting on $L^2(\BR_+,\mu_{\alpha_i})$\begin{equation}\label{f: prod struc}
G^\alpha_t(x,y)=\prod_{i=1}^d G^{\alpha_i}_t(x_i,y_i).
\end{equation}

\subsection{The operators $\cL_{\alpha,I}$}\label{ss: LaI} 
In this subsection we define some generalizations  of the Laguerre 
operator $\cL_{\alpha}$ that will play an important role in the 
analysis of the Hodge-Laguerre operator on forms.
\begin{definition}\label{d: Lai}
Given a subset $I\subseteq\{1,2,\ldots,d\}$ we define the differential 
operator
$$
\cL_{\alpha,I}=\sum_{i\notin I}\delta_i^*\delta_i+\sum_{i\in I}\delta_i
\delta_i^*.
$$
\begin{remark*}\label{rem605}
When $I=\{i\}$ is a singleton, the operators $\cL_{\alpha,i}$, 
$i=1,\ldots,d$, coincide with the operators $M^\alpha_i$ introduced 
by Nowak in \cite{Nowak:JFA} and studied by Nowak and Stempak 
in \cite{NowakStempak:ILLJM} in connection with conjugate Poisson 
integrals.
\end{remark*}
We observe that $\cL_\alpha=\cL_{\alpha,\emptyset}$. Moreover, 
since
$$
[\delta_j^*,\delta_j] f(x)=\delta_j\psi_j(x_j)\, f(x),
$$
we have that 
\begin{equation}\label{rel LL}
\cL_\alpha=\cL_{\alpha,I} - M_{\alpha,I}
\end{equation}
where 
\begin{equation}\label{MI}
M_{\alpha,I} f(x)=-\left(\sum_{j\in I} \delta_j\psi_j(x_j) \right)\, f(x).
\end{equation}
Notice that {if $\alpha\in[-1/2,\infty)^d$ then}
\begin{equation}\label{e: MI>r/2}
M_{\alpha,I}\,f(x)\ge \frac{\card{I}}{2}\  f(x) \qquad\forall x\in\rdp,
\end{equation}
where $\card{I}$ denotes the cardinality of the set $I$, since 
$$
-\delta_j\psi_j(x)=\frac{1}{2}\left(\frac{\alpha_j+\frac{1}{2}}{x_j}
+1\right)\ge 1/2 \qquad\forall j=1,2,\ldots,d.
$$
\par 
For  each $I\subseteq\{1,2,\ldots,d\}$ we denote by $\cK(I)$ the set 
of all multi-
indexes $k=(k_1,\ldots, k_d)$ in $\BN^d$ such that $k_i\ge 1$ for $i
\in I$. Note that if $k\in \cK(I)$ then $|k|\ge \card{I}$.
For each $I\subseteq\{1,2,\ldots,d\}$  and $k\in \cK(I)$ define
$$
\ell^{\alpha,I}_k(x_{1},\ldots,x_{d})=
\prod_{i\notin I} \ell^{\alpha_{i}}_{k_{i}}(x_{i})\ \prod_{i\in I}
\sqrt{\frac{\alpha_i+1}{k}}\,\delta_{i} \ell^{\alpha_{i}}_{k_{i}}(x_{i}).
$$
\end{definition}
\begin{prop}\label{ONBr1}
The family of functions
$$
B_I=\left\{\ell^{\alpha,I}_k\  : k\in\cK(I)\right\}
$$
is an orthonormal basis of $L^2(\rdp,\mu_\alpha)$ of eigenfunctions 
of the operator $\cL_{\alpha,I}$. Namely
$$
\cL_{\alpha,I} \, \ell^{\alpha,I}_k=|k|\ \ell^{\alpha,I}_k.
$$
\end{prop}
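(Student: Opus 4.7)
The plan is to reduce the statement to a one-dimensional computation and then invoke the product structure of both $L^2(\rdp,\mua)$ and $\cL_{\alpha,I}$. First I would recall that in one variable $\{\ell^{\alpha_i}_{k_i}\}_{k_i\ge 0}$ is an orthonormal basis of $L^2(\BR_+,\mu_{\alpha_i})$ diagonalising $\delta_i^*\delta_i$ with eigenvalue $k_i$, and observe the intertwining identity
$$
\delta_i\delta_i^*\,(\delta_i \ell^{\alpha_i}_{k_i}) \;=\; \delta_i\,(\delta_i^*\delta_i \ell^{\alpha_i}_{k_i}) \;=\; k_i\,\delta_i \ell^{\alpha_i}_{k_i},\qquad k_i\ge 1.
$$
Since $\ell^{\alpha_i}_0$ is constant, $\delta_i \ell^{\alpha_i}_0 = 0$, which is precisely why the restriction $k\in\cK(I)$ (i.e.\ $k_i\ge 1$ for $i\in I$) appears in the proposition.

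Next I would carry out the one-dimensional orthonormality and completeness. The norm is immediate: $\|\delta_i \ell^{\alpha_i}_{k_i}\|_2^2 = \langle \delta_i^*\delta_i \ell^{\alpha_i}_{k_i},\ell^{\alpha_i}_{k_i}\rangle_\alpha = k_i$, and the same polarisation gives $\langle \delta_i \ell^{\alpha_i}_{k_i}, \delta_i \ell^{\alpha_i}_{k_i'}\rangle_\alpha = k_i\,\delta_{k_i,k_i'}$ for $k_i,k_i'\ge 1$. Dividing by $\sqrt{k_i}$ — the normalising factor intended in the statement — yields an orthonormal family of eigenfunctions of $\delta_i\delta_i^*$. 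For completeness I would verify that $\delta_i\delta_i^*$ has trivial kernel on the chosen self-adjoint realisation: any classical solution of $\delta_i^* f = 0$ is proportional to $x_i^{-\alpha_i-1/2}\e^{x_i}$, which fails to lie in $L^2(\BR_+,\mu_{\alpha_i})$ (it blows up at infinity). Hence $\delta_i\delta_i^*$ and $\delta_i^*\delta_i$ share the same nonzero spectrum $\{1,2,\ldots\}$ with the same multiplicities, and the map $\delta_i$ provides an explicit unitary identification of their eigenspaces, so $\{k_i^{-1/2}\delta_i \ell^{\alpha_i}_{k_i}\}_{k_i\ge 1}$ exhausts $L^2(\BR_+,\mu_{\alpha_i})$.

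Finally I would assemble the $d$-dimensional statement via the product structure. Because $\rho_\alpha$ factorises, $L^2(\rdp,\mua)$ is isometric to the Hilbert tensor product $\bigotimes_{i=1}^d L^2(\BR_+,\mu_{\alpha_i})$, and the summands of $\cL_{\alpha,I}$ commute because they act on distinct variables. Taking the tensor product of the one-dimensional orthonormal bases — $\ell^{\alpha_i}_{k_i}$ for $i\notin I$ (with $k_i\ge 0$) and $k_i^{-1/2}\delta_i \ell^{\alpha_i}_{k_i}$ for $i\in I$ (with $k_i\ge 1$) — reproduces exactly the family $B_I$, which is therefore an orthonormal basis of $L^2(\rdp,\mua)$. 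Applying $\cL_{\alpha,I}$ factor by factor gives the eigenvalue
$$
\sum_{i\notin I} k_i + \sum_{i\in I} k_i \;=\; |k|.
$$

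The only genuinely delicate point is the completeness step, since for $\alpha_i\in(-1,1)$ the one-variable Laguerre operator admits several self-adjoint extensions; one must work with the specific realisation fixed by the spectral resolution \eqref{f: sar}, for which the kernel computation above guarantees $\ker\delta_i\delta_i^* = \{0\}$. Everything else is a routine combination of the intertwining identity and a tensor product argument.
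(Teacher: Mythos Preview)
Your overall architecture is correct and matches the paper exactly: reduce to one variable via the tensor structure of $L^2(\rdp,\mua)$, handle the factors $i\notin I$ with the standard Laguerre basis, handle the factors $i\in I$ with the system $\{k_i^{-1/2}\delta_i\ell^{\alpha_i}_{k_i}\}_{k_i\ge 1}$, and read off the eigenvalue $|k|$ from the intertwining $\delta_i\delta_i^*(\delta_i\ell^{\alpha_i}_{k_i})=k_i\,\delta_i\ell^{\alpha_i}_{k_i}$. Your orthonormality computation and the eigenvalue identification are identical to the paper's.

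The genuine difference is in the one-dimensional \emph{completeness} step. You argue abstractly: the ODE $\delta_i^*f=0$ has no $L^2(\mu_{\alpha_i})$ solution, hence $\ker(\delta_i\delta_i^*)=\{0\}$, and then $\delta_i$ furnishes a unitary from the nonzero eigenspaces of $\delta_i^*\delta_i$ onto those of $\delta_i\delta_i^*$. The paper instead uses the classical identity $\partial_t L^{\alpha}_k=-L^{\alpha+1}_{k-1}$ to write, up to normalisation,
$$
\delta\ell^{\alpha}_k(t)\ \propto\ \sqrt{t}\,\ell^{\alpha+1}_{k-1}(t),
$$
and then observes that $f\mapsto\sqrt{t}\,f$ is (a scalar multiple of) a unitary from $L^2(\BR_+,\mu_{\alpha+1})$ onto $L^2(\BR_+,\mu_{\alpha})$, so completeness of $\{\ell^{\alpha+1}_j\}_{j\ge 0}$ immediately yields completeness of $\{\delta\ell^{\alpha}_k\}_{k\ge 1}$.

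The paper's route is shorter and, crucially, bypasses entirely the self-adjoint-extension delicacy you flag: no realisation of $\delta_i\delta_i^*$ as an unbounded operator is ever invoked, only an explicit change of measure. Your route is more conceptual and would transfer to settings lacking such explicit polynomial identities, but as written it leaves one step slightly soft: to pass from ``$g\perp\delta_i\ell^{\alpha_i}_{k_i}$ for all $k_i\ge 1$'' to ``$g$ is a classical solution of $\delta_i^*g=0$'' you need an integration by parts against polynomials, and the boundary behaviour at $x_i=0$ must be checked when $\alpha_i<1$. This can be done (e.g.\ by noting that $\delta_i(\text{polynomials})=\sqrt{x_i}\cdot(\text{polynomials})$, which is dense in $L^2(\mu_{\alpha_i})$ because polynomials are dense in $L^2(\mu_{\alpha_i+1})$), but once written out it essentially reproduces the paper's argument.
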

\begin{proof}
In view of the tensor product structure of the 
functions $\ell^{\alpha,I}_k$, it is sufficient to prove that for $
\alpha>-1$  the families  $\{\ell^\alpha_k: k\in\BN\}$ and 
$\{\sqrt{\frac{\alpha+1}{k}}\,\delta \ell^\alpha_k: k\in\BN_+\}$ are 
orthonormal bases of 
$L^2(\BR_+,\mu_\alpha)$. We have already observed in the previous section 
that 
the former family is an orthonormal basis. Hence we only need to 
show that   $\left\{\sqrt{\frac{\alpha+1}{k}}\,\delta \ell^\alpha_k: k\in
\BN_+\right\}$ is an orthonormal basis. Since, by a well known 
property of 
Laguerre polynomials,  $\partial_t L^\alpha_k(t)=- L^{\alpha+1}_{k-1}
(t)$ and $\norm{L^\alpha_k}{2}=\big(\Gamma(\alpha+k+1)/
\Gamma(\alpha+1)\Gamma(k+1)\big)^{1/2}$, it is easily seen that
$$
\sqrt{\frac{\alpha+1}{k}}\delta \ell^\alpha_k(t)=\sqrt{\frac{\alpha+1}{k}}
\sqrt{t}\,\partial_t \,\ell^
\alpha_k(t)=-\sqrt{t}\,\ell^{\alpha+1}_{k-1}(t)
$$ 
Since $\{\ell^{\alpha+1}_j:j\in\BN\}$ 
is an orthonormal basis of $L^2(\BR_+,\mu_{\alpha+1})$, the 
desired 
conclusion follows immediately.\par
The fact that $\ell^{\alpha,I}_k$ is an eigenfunction of $\cL_{\alpha,I}
$ with eigenvalue $|k|$ follows from the identities
$
\delta_j^*\delta_j \ell^{\alpha_j}_{k_j}= k_j \ell^{\alpha_j}_{k_j}$ and $
\delta_j\delta_j^*\delta_j \ell^{\alpha_j}_{k_j}= k_j \delta_j
\ell^{\alpha_j}_{k_j}.$
\end{proof}
With a sligth abuse of notation we shall denote by $\cL_{\alpha,I}$ 
also the self-adjoint extension of $\cL_{\alpha,I}$ with spectral 
resolution
$$
\cL_{\alpha,I}=\sum_{n\ge\card{I}} n\  \cP_{I,n}\,,
$$
where $\cP_{I,n}$ is the orthogonal projection onto the subspace 
spanned by the functions $\ell^{\alpha,I}_k$, $k\in \cK(I)$, $|k|=n$.
\par
We denote by $\{T^{\alpha,I}_t: t\ge 0\}$ the semigroup  generated 
by $- \cL_{\alpha,I}$ and by $\{P^{\alpha,I}_t: t\ge 0\}$ the 
corresponding Poisson semigroup, generated by $-
(\cL_{\alpha,I})^{1/2}$. 
When $I=\emptyset$ we shall simply write $T^\alpha_t$ and 
$P^{\alpha}_t$ instead of $T^{\alpha,\emptyset}_t$ and $P^{\alpha,
\emptyset}_t$. These semigroups have the spectral representations
$$
T^{\alpha,I}_t \,f=\sum_{k\in \cK(I)} \e^{-t|k|} \hat{f}(I,k)\ \ell^{\alpha,I}
_k,
$$
and
$$
P^{\alpha,I}_t\, f=\sum_{k\in \cK(I)} \e^{-t|k|^{1/2}} \hat{f}(I,k)\ 
\ell^{\alpha,I}_k,
$$
where 
$$
\hat{f}(I,k)=\langle f,\ell^{\alpha,I}_k\rangle_\alpha.
$$
The Poisson semigroup can also be defined via the subordination 
principle
$$
P^{\alpha,I}_t f(x)=\frac{1}{\sqrt{\pi}}\int_0^\infty \frac{\e^{-u}}
{\sqrt{u}}\, T^{\alpha,I}_{t^2/4u} \,f(x) \d u.
$$
The semigroup $\{T^{\alpha,I}_t: t\ge0\}$ has also the integral 
representation
$$
T^{\alpha,I}_t f(x)=\int_{\rdp} G^{\alpha,I}_t(x,y)\, f(y) \d\mua(y),
$$
where
\begin{equation}\label{e: kernel}
G^{\alpha,I}_t(x,y)=\prod_{i\notin I} G^{\alpha_i}_t(x_i,y_i)\ \prod_{i\in 
I} \tilde{G}^{\alpha_i}_t(x_i,y_i)
\end{equation}
and 
$$
\tilde{G}^{\alpha_i}_t(x_i,y_i)=\e^{-t}\sqrt{x_iy_i}\  G^{\alpha_i+1}
_t(x_i,y_i)
$$
is the kernel of the semigroup generated by the operator $\delta_j
\delta_j^*$ on $L^2(\BR_+,\mu_{\alpha_j})$ (see \cite{Nowak:JFA}). 
The following lemma has been proved by A.~Nowak.
\begin{lem}\label{l: dom}
There exists a non increasing function $\Lambda$ on $(-1,\infty)$ 
such that $\Lambda(\nu)=1$ for $\nu\ge -1/2$, $\Lambda(\nu)=O
\big((\nu+1)^{-1/2})\big)$ for $\nu \to -1$ and
$$
\tilde{G}^{\nu}_t(x,y)\le \Lambda(\nu)\ \e^{-t/2} \ G^\nu_t(x,y)  \qquad
\forall x,y\in\BR_+, \ t>0.
$$
\end{lem}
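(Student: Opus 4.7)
The plan is to use the Mehler--Hardy representation of the Laguerre heat kernel in terms of modified Bessel functions to reduce the claimed pointwise inequality to a uniform estimate on a ratio of Bessel functions of consecutive orders.

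First I would substitute the explicit formula for $G^\nu_t$ into the definition $\tilde G^\nu_t = \e^{-t}\sqrt{xy}\,G^{\nu+1}_t$ and simplify. After cancelling the common Gaussian-type factor $(1-u)^{-1}\exp(-u(x+y)/(1-u))$, the appropriate powers of $\sqrt{xyu}$, and the exponential $\e^{-t/2}$ appearing on both sides, the desired bound reduces to an inequality of the form
$$c(\nu)\,\frac{I_{\nu+1}(z)}{I_\nu(z)} \le \Lambda(\nu),\qquad z = \frac{2\sqrt{xyu}}{1-u} \in (0,\infty),$$
where the positive constant $c(\nu)$ comes from the ratio $\Gamma(\nu+2)/\Gamma(\nu+1) = \nu+1$ together with the chosen normalization of $\tilde G^\nu_t$. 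Crucially, the variables $x$, $y$, $t$ enter only through $z$, so the whole problem collapses to a one-variable inequality in $z > 0$.

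For $\nu \ge -1/2$ I would invoke the classical inequality $I_{\nu+1}(z) \le I_\nu(z)$ valid for all $z > 0$, which follows from a term-by-term comparison of the power series $I_\nu(z) = \sum_{k\ge 0} (z/2)^{\nu+2k}/(k!\,\Gamma(\nu+k+1))$ combined with the asymptotic $I_{\nu+1}(z)/I_\nu(z) \to 1$ as $z \to \infty$. Together with Step 1, this yields $\Lambda(\nu) = 1$ in this regime.

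For $\nu \in (-1,-1/2)$ the Bessel ratio can exceed $1$ and a sharper analysis is needed. Setting $\varepsilon = \nu+1$, I would combine two complementary bounds: (a) using the recurrence $I_\nu - I_{\nu+2} = (2(\nu+1)/z)\,I_{\nu+1}$ together with $I_{\nu+2} \le I_{\nu+1}$ (valid since $\nu+1 \ge -1/2$), one gets $I_{\nu+1}(z)/I_\nu(z) \le z/(2\varepsilon)$, useful for $z \lesssim \sqrt{\varepsilon}$; (b) for $z \gtrsim \sqrt{\varepsilon}$, a uniform constant bound on the ratio, obtained by refining the large-$z$ asymptotics $I_\nu(z) \sim \e^z/\sqrt{2\pi z}\,(1 - (4\nu^2-1)/(8z) + O(1/z^2))$ and tracking the remainder uniformly in $\nu$ near $-1$. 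The two estimates match at the scale $z \sim \sqrt{\varepsilon}$, where both equal $\varepsilon^{-1/2}$, giving $\sup_{z>0} I_{\nu+1}(z)/I_\nu(z) \lesssim (\nu+1)^{-1/2}$, which delivers the required asymptotics for $\Lambda(\nu)$ as $\nu \to -1$.

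The main obstacle is part (b): obtaining a bound of constant order on the Bessel ratio in the intermediate and large-$z$ regime that is uniform in $\nu$ as $\nu \to -1^+$. Simple-minded application of bound (a) alone only yields the weaker rate $(\nu+1)^{-1}$; recovering the sharper $(\nu+1)^{-1/2}$ hinges on identifying the critical scale $z \sim \sqrt{\nu+1}$ at which the two leading series terms of $I_\nu$ balance, and on matching the low-$z$ and high-$z$ estimates precisely there.
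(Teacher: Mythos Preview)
The paper does not supply its own proof here; it simply cites Lemma~2 of Nowak~\cite{Nowak:JFA}. Your reduction of the pointwise kernel inequality to a bound on the Bessel ratio $I_{\nu+1}(z)/I_\nu(z)$, with $z=2\sqrt{xyu}/(1-u)$, is exactly the approach taken there, so the overall strategy is correct.

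Two remarks on the details. First, be careful with $c(\nu)$. Taking the paper's displayed formula $\tilde G^\nu_t=\e^{-t}\sqrt{xy}\,G^{\nu+1}_t$ at face value, one gets $c(\nu)=\nu+1$, and then $I_{\nu+1}\le I_\nu$ only gives $\Lambda(\nu)\le\nu+1$, not $\Lambda(\nu)=1$, which is false for large $\nu$ since $I_{\nu+1}/I_\nu\to 1$ as $z\to\infty$. In fact the orthonormal eigenfunctions of $\delta\delta^*$ in $L^2(\mu_\nu)$ are $k^{-1/2}\delta\ell^\nu_k=-(\nu+1)^{-1/2}\sqrt{x}\,\ell^{\nu+1}_{k-1}$, so the kernel carries an extra factor $(\nu+1)^{-1}$ and the reduced inequality is simply $I_{\nu+1}(z)/I_\nu(z)\le\Lambda(\nu)$. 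With this normalisation your argument for $\nu\ge -1/2$ is fine.

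Second, for $\nu\in(-1,-1/2)$ your conclusion $\sup_{z>0}I_{\nu+1}/I_\nu\asymp(\nu+1)^{-1/2}$ and the critical scale $z\sim\sqrt\varepsilon$ are correct, but step~(b) as stated cannot work: the ratio is \emph{not} bounded by an $\varepsilon$-independent constant on $\{z\gtrsim\sqrt\varepsilon\}$ --- its maximum, of order $\varepsilon^{-1/2}$, is attained precisely near $z\sim\sqrt\varepsilon$, not in the large-$z$ regime. A clean fix is to sharpen~(a) rather than patch~(b): instead of the trivial bound $I_{\nu+2}\ge 0$, feed into the recurrence a lower bound $I_{\nu+2}(z)/I_{\nu+1}(z)\ge z/\big(c+\sqrt{c^2+z^2}\big)$ (any Amos-type bound, valid since $\nu+1\ge -1/2$). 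This yields
\[
\frac{I_{\nu+1}(z)}{I_\nu(z)}\le\Big(\frac{2\varepsilon}{z}+\frac{z}{c+\sqrt{c^2+z^2}}\Big)^{-1},
\]
and minimising the bracket over $z>0$ gives $O(\varepsilon^{-1/2})$ directly, without splitting into regimes.
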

\begin{proof}
See \cite[Lemma 2]{Nowak:JFA}.
\end{proof}
\begin{prop}\label{p: dom}
For every $\alpha\in(-1,\infty)^d$ there exists a constant $C(\alpha)$ 
such that $C(\alpha)=1$ if $\alpha_i\ge-1/2$ for all $i\in I$ and
\begin{itemize}
\item[\rmi] $|T^{\alpha,I}_t f(x)|\le C(\alpha)\ \e^{-\card{I}\, t/2}\  T^
\alpha_t |f|(x)$,  
\item[\rmii] $|P^{\alpha,I}_t f(x)|\le C(\alpha)\ 
P^\alpha_t |f|(x)$,
\end{itemize}
for all $f\in L^2(\rdp,\mua)$, $x\in\BR^d$, $t>0$.
\end{prop}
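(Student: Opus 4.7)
The plan is to reduce everything to a pointwise bound on kernels via Nowak's lemma and then use subordination for the Poisson part.

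For part \rmi, I would start from the integral representation
$$
T^{\alpha,I}_t f(x)=\int_{\rdp} G^{\alpha,I}_t(x,y)\, f(y)\, \d\mua(y),
$$
combined with the product formula \eqref{e: kernel} for $G^{\alpha,I}_t$. Since the heat kernel $G^\alpha_t$ factorises as in \eqref{f: prod struc}, the only discrepancy between $G^{\alpha,I}_t$ and $G^\alpha_t$ lies in the $\card{I}$ factors of the form $\tilde{G}^{\alpha_i}_t$ for $i\in I$. On each of these I apply Lemma \ref{l: dom} to obtain $\tilde{G}^{\alpha_i}_t(x_i,y_i)\le \Lambda(\alpha_i)\,\e^{-t/2}\,G^{\alpha_i}_t(x_i,y_i)$. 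Multiplying the $\card{I}$ inequalities and using \eqref{f: prod struc} yields the pointwise kernel bound
$$
G^{\alpha,I}_t(x,y)\le C(\alpha)\ \e^{-\card{I}\,t/2}\ G^\alpha_t(x,y),
$$
with $C(\alpha)=\prod_{i\in I}\Lambda(\alpha_i)$. By the property of $\Lambda$ stated in Lemma \ref{l: dom}, $C(\alpha)=1$ whenever $\alpha_i\ge -1/2$ for every $i\in I$. The estimate \rmi then follows at once by the triangle inequality applied inside the integral and by positivity of $G^\alpha_t$.

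For part \rmii, I would invoke the subordination identity
$$
P^{\alpha,I}_t f(x)=\frac{1}{\sqrt{\pi}}\int_0^\infty \frac{\e^{-u}}{\sqrt{u}}\, T^{\alpha,I}_{t^2/4u} f(x)\, \d u,
$$
pass the absolute value inside the integral, and apply \rmi with $s=t^2/(4u)$. This gives
$$
|P^{\alpha,I}_t f(x)|\le \frac{C(\alpha)}{\sqrt{\pi}}\int_0^\infty \frac{\e^{-u}}{\sqrt{u}}\, \e^{-\card{I}\, t^2/(8u)}\, T^{\alpha}_{t^2/4u} |f|(x)\, \d u.
$$
Bounding the harmless factor $\e^{-\card{I}\,t^2/(8u)}\le 1$ and recognising the resulting integral as the subordination formula for $P^\alpha_t |f|(x)$ yields \rmii with the same constant $C(\alpha)$.

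There is no real obstacle here: the whole proof is a bookkeeping exercise resting on Nowak's one-dimensional pointwise kernel estimate together with the product structure of $G^{\alpha,I}_t$. The only thing to be careful about is tracking the constant $C(\alpha)$ and verifying that the exponential decay $\e^{-\card{I}t/2}$ from the $\card{I}$ applications of Lemma \ref{l: dom} matches the factor appearing in the statement of \rmi, and that simply discarding the corresponding decay in the subordination integral produces the clean form of \rmii.
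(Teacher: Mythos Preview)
Your proof is correct and follows essentially the same route as the paper: derive the pointwise kernel bound $G^{\alpha,I}_t(x,y)\le C(\alpha)\,\e^{-\card{I}t/2}\,G^\alpha_t(x,y)$ with $C(\alpha)=\prod_{i\in I}\Lambda(\alpha_i)$ from the product structure and Lemma~\ref{l: dom}, then deduce \rmi\ from kernel positivity and \rmii\ from subordination. The only cosmetic remark is that the first step really uses the positivity of $G^{\alpha,I}_t$ (immediate from its definition as a product of nonnegative factors) to pass the absolute value inside the integral, which the paper also invokes.
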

\begin{proof}
Set $C(\alpha)=\prod_{i\in I} \Lambda(\alpha_i)$.Then by \eqref{e: 
kernel} and Lemma \ref{l: dom}, 
$$
G^{\alpha,I}_t(x,y)\le \ C(\alpha) \ \e^{-\card{I}\, t/2}\ G^\alpha_t(x,y).
$$
Thus \rmi\  follows by the positivity of the kernel $G^{\alpha,I}_t(x,y)$  
and \rmii \ follows from the subordination principle.
\end{proof}
\begin{cor}\label{norm on Lp}
For every $\alpha\in(-1,\infty)^d$ there exists a constant $C(\alpha)$ 
such that $C(\alpha)=1$ if $\alpha_i\ge-1/2$ for all $i\in I$ and
$$
\norm{T^{\alpha,I}_t }{p-p}\le C(\alpha)^{(2/p)-1}\exp\left[-t\ \card{I}
\left(1-\left|\frac{1}{2}-\frac{1}{p}\right|\right)\right] \qquad\forall p
\in[1,\infty].
$$
In particular, if $\alpha_i\ge-1/2$ for all $i\in I$, then $\cL_{\alpha,I}-
(\card{I}/2)I$ generates  a symmetric semigroup of  contractions on 
$L^p(\rdp,\mua)$ for every $p\in[1,\infty]$.
\end{cor}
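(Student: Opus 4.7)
The plan is to combine the pointwise domination from Proposition~\ref{p: dom}(i) with the spectral bound on $L^2$ coming from Proposition~\ref{ONBr1}, and then to interpolate.

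First I would establish the bounds at the three endpoints $p=1,2,\infty$. For $p=1$ and $p=\infty$, Proposition~\ref{p: dom}(i) combined with the fact that $\{T^\alpha_t\}$ is a contraction semigroup on $L^1(\rdp,\mua)$ and $L^\infty(\rdp,\mua)$ (as part of the symmetric diffusion properties \rmi--\rmiii\ of Section~\ref{s: 2.1}) gives
$$
\norm{T^{\alpha,I}_t f}{1}\le C(\alpha)\,\e^{-\card{I}t/2}\norm{T^\alpha_t|f|}{1}\le C(\alpha)\,\e^{-\card{I}t/2}\norm{f}{1},
$$
and the analogous estimate for $p=\infty$. For $p=2$, the spectral resolution $\cL_{\alpha,I}=\sum_{n\ge\card{I}} n\,\cP_{I,n}$ from Proposition~\ref{ONBr1} forces the spectrum of $\cL_{\alpha,I}$ to lie in $[\card{I},\infty)$, so
$$
\norm{T^{\alpha,I}_t}{2-2}\le \e^{-\card{I}t}.
$$

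Next I would apply the Riesz--Thorin interpolation theorem. For $p\in[1,2]$ I interpolate between $L^1$ and $L^2$ using the parameter $\theta=2-2/p\in[0,1]$, so that $1/p=(1-\theta)+\theta/2$. This produces
$$
\norm{T^{\alpha,I}_t}{p-p}\le \bigl(C(\alpha)\,\e^{-\card{I}t/2}\bigr)^{1-\theta}\bigl(\e^{-\card{I}t}\bigr)^{\theta}=C(\alpha)^{(2/p)-1}\,\e^{-\card{I}t(3/2-1/p)},
$$
and since $3/2-1/p=1-|1/2-1/p|$ on $[1,2]$, this is the claimed bound. For $p\in[2,\infty]$ I would either interpolate symmetrically between $L^2$ and $L^\infty$, or use that $T^{\alpha,I}_t$ is self-adjoint on $\ldue$ (again from Proposition~\ref{ONBr1}), which via duality reduces the estimate to the conjugate exponent $p'\in[1,2]$ already treated.

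Finally, for the in particular assertion: when $\alpha_i\ge-1/2$ for all $i\in I$ we have $C(\alpha)=1$, so the main bound reads $\norm{T^{\alpha,I}_t}{p-p}\le \e^{-\card{I}t(1-|1/2-1/p|)}$. Since $|1/2-1/p|\le 1/2$ for all $p\in[1,\infty]$, multiplying by $\e^{\card{I}t/2}$ gives
$$
\norm{\e^{\card{I}t/2}T^{\alpha,I}_t}{p-p}\le \e^{-\card{I}t(1/2-|1/2-1/p|)}\le 1,
$$
so the rescaled semigroup generated by $-(\cL_{\alpha,I}-(\card{I}/2)I)$ is contractive on $L^p$; symmetry on $L^2$ is immediate from the spectral resolution. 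I do not expect a substantive obstacle in this argument: the only delicate point is algebraic, namely checking that the interpolation exponents assemble into the stated factor $1-|1/2-1/p|$ and that the duality step correctly covers $p>2$.
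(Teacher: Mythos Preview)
Your proposal is correct and follows essentially the same route as the paper: the paper's proof also obtains the $L^1$ bound from Proposition~\ref{p: dom} together with contractivity of $T^\alpha_t$, the $L^2$ bound from the spectral resolution of $\cL_{\alpha,I}$, and then concludes by interpolation and duality. Your version is simply more explicit in writing out the Riesz--Thorin exponents and in deriving the final contraction statement.
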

\begin{proof}
The estimate of $\norm{T^{\alpha,I}_t}{1-1}$ follows from 
Proposition \ref{p: dom}, since $T^\alpha_t$ is a contraction on 
$L^1(\rdp,\mua)$. The estimate  of $\norm{T^{\alpha,I}_t}{2-2}$ 
follows from the spectral resolution of $\cL_{\alpha,I}$. The general 
case follows by interpolation and duality.
\end{proof}
\section{The Hodge-Laguerre operator}\label{c: HLonf}
In this section we define the Hodge-Laguerre operator on differential 
forms on $\rdp$ and prove his basic properties. In the first two 
subsections we recall briefly the definition of differential forms, and 
the basic algebraic operations on them that we shall need in the 
sequel: the exterior and the interior products, the Hodge-star 
operator and their properties. These results are classical and we 
refer the reader to the monograph \cite{Warner} of F. W. Warner for 
complete proofs. The main purpose of this preliminary section is to 
establish notation and terminology. In the next subsection \ref{s: 
HLonf} we define the Laguerre exterior differential $\delta$ and its 
formal adjoint $\delta^*$ with respect to the Laguerre measure $
\mu_\alpha$.
\par
\subsection{Differential forms on $\rdp$}
For each $r\in\set{0,1,\ldots,d}$ we denote by $\Uplambda^r=
\Uplambda^r(\BR^d)$ the space of real alternating tensors of rank $r
$ on $\BR^d$.  For every $r$ we denote by $\cI_r$ the set of all 
multiindeces $(i_1,i_2,\ldots,i_r)$ such that $i_1<i_2<\cdots<i_r$. 
The space $\Uplambda^r$ is endowed with the inner product 
$\langle \omega,\eta\rangle_{\Uplambda^r}$ and the corresponding 
norm $|\omega|_{\Uplambda^r}$ for which the set of covectors 
$$
\d x_I=\d x_{i_1}\wedge\d x_{i_2}\wedge\cdots\wedge \d x_{i_r}, 
\qquad I\in \cI_r
$$
is an orthonormal basis. Often we shall simply denote by $\langle 
\omega,\eta\rangle$ and $|\omega|$ the inner product and the norm, 
omitting the subscript $\Uplambda^r$ when there is no risk of 
confusion.
Thus  if $\omega=\sum_{I\in\cI_r}\omega_I \d x_I$ and $\eta=
\sum_{I\in\cI_r}\eta_I \d x_I$ are two elements of $\Uplambda^r$ 
their inner product is $\langle\omega,\eta\rangle=\sum_{I\in\cI_r} 
\omega_I\eta_I $.
We shall denote by $*$ the Hodge-star isomorphism of the exterior 
algebra, mapping  $\Uplambda^r$ to $\Uplambda^{d-r}$ for each $r
$. Then, if we denote by $\mathbb{I}$ the volume form $\d 
x_1\wedge\cdots\wedge \d x_d$,
$$
\omega\wedge *\eta=\langle \omega,\eta\rangle \  \mathbb{I} \qquad
\forall \omega, \eta\in \Uplambda^r.
$$
\par
If $\omega\in \Uplambda^r$ we denote by $\iota_\omega:
\Uplambda^{s+r}\to \Uplambda^s$ the operator of interior 
multiplication by $\omega$, i.e. the adjoint of exterior multiplication 
by $\omega$ with respect to the inner product on $\Uplambda^r$.
\begin{lem}\label{li+il}
If $\phi\in \Uplambda^1(\BR^d)$ and $\omega\in 
\Uplambda^r(\BR^d)$, $0\le r\le d$, then
\begin{align}
\phi\wedge\iota_\phi(\omega)+\iota_\phi(\phi\wedge \omega)&=|\phi|
^2\ \omega \label{f: li+il}
\\ 
|\phi\wedge \omega|^2+|\iota_\phi\omega|^2&=|\phi|^2\ |\omega|
^2.\label{f: norm li+il}
\end{align}
\end{lem}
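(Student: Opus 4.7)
The plan is to derive \eqref{f: li+il} from the anti-derivation property of interior multiplication, and then deduce \eqref{f: norm li+il} by pairing \eqref{f: li+il} with $\omega$ and invoking the adjointness of $\iota_\phi$ and exterior multiplication by $\phi$.

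First I would recall (or verify on basis elements) that for any $\phi\in\Uplambda^1$ the interior product $\iota_\phi$ is a skew-derivation of degree $-1$ on the exterior algebra. Since $\phi$ has degree $1$, this means
\[
\iota_\phi(\phi\wedge\omega)=\iota_\phi(\phi)\wedge\omega-\phi\wedge\iota_\phi(\omega).
\]
Because $\iota_\phi$ is the adjoint of exterior multiplication by $\phi$, we have $\iota_\phi(\phi)=\langle\phi,\phi\rangle=|\phi|^2$. Substituting and rearranging gives exactly \eqref{f: li+il}. The skew-derivation property itself is a standard fact which one can either quote from Warner's monograph (already referenced in the paper) or verify directly by expanding in the orthonormal basis $\{\d x_I\}$, using that $\iota_{\d x_j}(\d x_I)$ deletes $\d x_j$ from $\d x_I$ with the appropriate sign and is zero if $j\notin I$.

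To obtain \eqref{f: norm li+il}, I would take the inner product of both sides of \eqref{f: li+il} with $\omega$ in $\Uplambda^r$:
\[
\langle\phi\wedge\iota_\phi(\omega),\omega\rangle+\langle\iota_\phi(\phi\wedge\omega),\omega\rangle=|\phi|^2|\omega|^2.
\]
Using adjointness, the first term equals $\langle\iota_\phi(\omega),\iota_\phi(\omega)\rangle=|\iota_\phi\omega|^2$, and the second equals $\langle\phi\wedge\omega,\phi\wedge\omega\rangle=|\phi\wedge\omega|^2$, which yields \eqref{f: norm li+il}.

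There is no real obstacle beyond verifying the anti-derivation identity; this is purely multilinear algebra on $\BR^d$ and does not interact with the Laguerre structure. The only care needed is tracking the degree signs in the derivation rule, which comes out clean because $\phi$ has odd degree $1$ and $\iota_\phi(\phi)$ is a scalar, so the sign works out as written above.
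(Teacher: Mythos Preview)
Your proof is correct, but for \eqref{f: li+il} it takes a different route from the paper's. The paper argues by direct basis expansion: it writes out $\phi\wedge\iota_\phi(\omega)$ and $\iota_\phi(\phi\wedge\omega)$ explicitly in the basis $\{\d x_I\}$, tracking the signs $(-1)^{\sigma(j,I)}$, and then shows by a parity check that the terms with $i\neq j$ cancel in pairs while the $i=j$ terms assemble into $|\phi|^2\omega$. Your argument instead invokes the skew-derivation property of $\iota_\phi$ (which is exactly what the paper's cancellation computation is verifying from scratch) and reads off the identity in one line. Your approach is shorter and more conceptual; the paper's is self-contained and makes the cancellation mechanism explicit, which incidentally mirrors the later computation in Proposition~\ref{p: diag} where an identical sign argument appears. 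For \eqref{f: norm li+il} the two proofs coincide: both pair \eqref{f: li+il} with $\omega$ and use adjointness.
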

\begin{proof} 
If $I=(i_{1},\ldots,i_{r})\in \mathcal{I}_{r}$ and $1\le j\le d$, we denote 
by $\sigma(j,I)$ the number of components of $I$ which are strictly 
less than $j$.\\
If $j\notin I$ we denote by $I \cup j$ the element of $\mathcal{I}_{r
+1}$ obtained by adding $j$ to the components of $I$;  if $j\in I$ we 
denote by $I\setminus j$ the  element of $\mathcal{I}_{r-1}$ 
obtained by deleting $j$ from $I$. Thus
\begin{align*}
\d x_j\wedge \d x_I&=(-1)^{\sigma(j,I)} \d x_{I\cup j} \\ 
\iota_{\d x_j}(\d x_I)&=(-1)^{\sigma(j,I)} \d x_{I\setminus j}. \\ 
\end{align*}
Let $\phi=\sum_{j=1}^d \phi_j \d x_j$ and $\omega=\sum_{I\in } 
\omega_I\d x_I$.
Then, on the one hand,
\begin{align*}
\phi\wedge\iota_\phi(\omega)&=\phi\wedge\sum_{I\in\cI_r} \sum_{j\in 
I} \phi_j \omega_I \iota_{\d x_j}(\d x_I) \\ 
&=\phi\wedge \sum_{I\in\cI_r} \sum_{j\in I}(-1)^{\sigma(j,I)}\, \phi_j 
\omega_I \,\d x_{I\setminus j} \\ 
&=  \sum_{I \in \mathcal{I}_r} \sum_{j \in I} \sum_{i \notin \left\{ I 
\setminus j\right\}}(-1)^{\sigma(j, I)} \phi_i \phi_j \omega_I \ dx_i 
\wedge  dx_{I \setminus j} \\
 & =  \sum_{I \in \mathcal{I}_r} \sum_{j \in I} \sum_{i \notin \left\{ I 
\setminus j\right\}}(-1)^{\sigma(j, I) + \sigma(i, I \setminus j)} \phi_i 
\phi_j \omega_I \ dx_{(I \setminus j) \cup i}. \\
\end{align*}
On the other hand
\begin{align*}
\iota_\phi\left(\phi\wedge\, \omega\right)&=\iota_\phi\left(\sum_{I\in
\cI_r} \sum_{i\notin I} \phi_i \omega_I \d x_i\wedge\d x_I \right)\\ 
&=\iota_\phi\left(\sum_{I\in\cI_r} \sum_{i\notin I}(-1)^{\sigma(i,I)}\, 
\phi_i\omega_I \,\d x_{I\cup i}\right) \\ 
&=  \sum_{I \in \mathcal{I}_r} \sum_{i \notin I} \sum_{j \in \left\{ I \cup 
i\right\}}(-1)^{\sigma(i, I)} \phi_i \phi_j \omega_I\  \iota_{\!\ dx_j}(dx_{I 
\cup i}) \\
 & =  \sum_{I \in \mathcal{I}_r} \sum_{i \notin I} \sum_{j \in \left\{ I 
\cup i\right\}}(-1)^{\sigma(i, I) + \sigma(j, I \cup i)} \phi_j \phi_i
\omega_I \ dx_{(I \cup i) \setminus j}. \\
\end{align*}
Next we observe that in the sum $\phi\wedge\iota_\phi(\omega)+
\iota_\phi(\phi\wedge \omega)$ the terms containing indices $i\not=j
$ cancel out. Indeed,
\begin{itemize}
\item[(a)] $
\{(i,j): i\not=j, j\in I, i\not\in I\setminus j\}=\{(i,j): i\not=j, i\notin I, j\in I
\cup i\}
$
\item[(b)]  if $i\not=j$ the exponents $\sigma(j,I)+\sigma(i,I\setminus 
j)$ and $\sigma(i,I)+\sigma(j,I\cup i)$ have opposite parity, as it can 
be easily seen by observing that if $i<j$ then 
$$
\sigma(i, I\setminus j)=\sigma(i,I) \quad {\rm and}\quad \sigma(j,I\cup 
i)=\sigma(j,I)+1,
$$
while, if $j<i$ then
$$
\sigma(j, I\cup I)=\sigma(j,I) \quad {\rm and}\quad \sigma(i,I)=
\sigma(i,I\setminus j)+1.
$$ 
\end{itemize}
Therefore, only the terms with $i=j$ remain and, since for $i=j$ the 
exponents of $-1$ are even, we have that 
\begin{align*}
\phi\wedge\iota_\phi(\omega)+\iota_\phi(\phi\wedge \omega)&= 
\sum_{I\in\cI_r} \sum_{i\in I} \phi_i\phi_i \ \omega_I\d x_I+\sum_{I\in
\cI_r} \sum_{i\notin I} \phi_i\phi_i \ \omega_I\d x_I \\
&=|\phi|^2\ \omega.
\end{align*}
This proves (\ref{f: li+il}). To prove (\ref{f: norm li+il}) observe that
\begin{align*}
|\phi\wedge\omega|^2+|\iota_\phi(\phi\wedge \omega)|^2&= \langle 
\phi\wedge \omega,\phi\wedge \omega\rangle+\langle\iota_\phi
\omega,\iota_\phi\omega\rangle \\ 
&=\langle \omega,\iota_\phi(\phi\wedge \omega)\rangle+\langle 
\omega,\phi\wedge \iota_\phi\omega\rangle \\ 
&=|\phi|^2\ |\omega|^2.
\end{align*}
\end{proof}
We shall denote by $C^\infty(\rdp;\Uplambda^r)$ the space of 
differential forms of order $r$ on $\rdp$ with smooth coefficients and 
by $C^\infty_c(\rdp;\Uplambda^r)$ those with compact support. For 
every $p\in[1,\infty]$  we denote by $L^p(\rdp,\mua;\Uplambda^r)$ 
the space of forms of order $r$ with coefficients in $L^p(\rdp,\mua)$,
endowed with the norm
$$
\norm{\omega}{p}=\left(\int_{\rdp} |\omega(x)|^p\d \mua(x)\right)^{1/
p}
$$
with the usual modification when $p=\infty$. If $\omega$, $\eta$ are 
in $L^2(\rdp,\mua; \Uplambda^r)$, we denote by 
$$
\langle \omega,\eta\rangle_\alpha=\int_{\rdp}\langle\omega(x),\eta(x)
\rangle_{\Uplambda^r}\d\mua(x)
$$
their inner product in $L^2(\rdp,\mua; \Uplambda^r)$. To simplify 
notation, sometimes we shall write simply $\Lprs{p}{r}$ or $\Lprss{p}
{r}$ instead of $\Lpr{p}{r}$.
\subsection{The Hodge-Laguerre operator on forms}\label{s: HLonf}
In this subsection we define the Hodge-Laguerre operator acting on 
smooth forms  in $\rdp$ as a natural generalisation of the Laguerre 
operator on functions. We begin by defining the Laguerre exterior 
derivative operator $\delta$ and its formal adjoint, the Laguerre 
codifferential $\delta^*$.
\par
If $\omega=\sum_{I\in \cI_r} \omega_I \d x_I$ is an $r$-form in $C^
\infty(\rdp;\Uplambda^r)$, its Laguerre exterior differential is the $(r
+1)$-form
$$
\delta\omega=\sum_{j=1}^d\sum_{I\in \cI_r} \delta_j\omega_I \d x_j
\wedge\d x_I,
$$
where $\delta_j$ denotes the differential operator $\sqrt{x_j}\partial_j
$. Using the trivial fact that the partial derivatives $\delta_i$ and $
\delta_j$ commute for $i\not=j$, it is easy to see that $\delta^2=0$. 
Furthermore $\delta$ is an antiderivation, i.e.
$$
\delta(\omega\wedge\eta)=\delta\omega\wedge \eta+(-1)^r \omega
\wedge \delta\eta
$$
for all $r$-forms $\omega$ and $s$-forms $\eta$. \par
The Laguerre codifferential $\delta^*$ is the formal adjoint of $\delta
$ with respect to the inner product in $L^2(\rdp,\mua;\Uplambda^r)$. 
In other words, if $\omega$ is a form in $C^\infty(\Uplambda^{r+1}
(\rdp))$ then  $\delta^*\omega$ is the $r$-form defined by the 
identity 
$$
\langle \delta^* \omega,\eta\rangle_\alpha=\langle \omega, \delta\eta
\rangle_\alpha \qquad\forall \eta\in C_c^\infty(\rdp;\Uplambda^r).
$$ 
We define $\delta^*$ also on $0$-forms by setting $\delta^* 
\omega=0$ for each smooth $0$-form $\omega$.
\par
It follows immediately from the definition of $\delta^*$ that $
(\delta^*)^2=0$.
We give two more explicit expressions of $\delta^*$. To this purpose 
we introduce the $1$-form
$$
\psi(x)=\sum_{j=1}^d\psi_j(x_j)\d x_j,
$$
where
$$
\psi_j(x_j)=\frac{\alpha_j+1/2}{\sqrt{x_j}}-\sqrt{x_j}=\frac{1}{\rho_
\alpha(x)}\partial_j\big(\sqrt{x_j}\rho_\alpha(x)\big),
$$
where $\rho_\alpha$ denotes the Laguerre density (see 
\eqref{mua}).
The following two propositions give two representations of the action 
of $\delta^*$ on $r$-forms.
\begin{prop}\label{first}
On $C_c^\infty(\rdp;\Uplambda^r)$
$$
\delta^*=(-1)^{d(r-1)+1}\ *_{d-r+1}\,\delta\, *_r -\iota_\psi,
$$
where $*_r$ and $*_{d-r+1}$ denote the restrictions of the Hodge $*
$-operator to $r$-forms and to $(d-r+1)$-forms, respectively, and $
\iota_\psi$ is the interior multiplication by the form $\psi$.\par
\end{prop}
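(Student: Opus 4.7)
The plan is to compute $\delta^{*}\omega$ directly from its defining identity by integration by parts, and then rearrange the result to match the stated formula. The appearance of the term $-\iota_{\psi}\omega$ (and the absence of any boundary contribution) is automatic once one uses the identity $\partial_j(\sqrt{x_j}\rho_\alpha)=\psi_j(x_j)\rho_\alpha$ already recorded right after the definition of $\psi$.

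Fix a test form $\eta\in C_c^\infty(\rdp;\Uplambda^{r-1})$ and expand $\omega=\sum_{I\in\cI_r}\omega_I\,\d x_I$, $\eta=\sum_{J\in\cI_{r-1}}\eta_J\,\d x_J$. Using $\d x_j\wedge \d x_J=(-1)^{\sigma(j,J)}\d x_{J\cup j}$ for $j\notin J$, the coefficient of $\d x_I$ in $\delta\eta$ is $\sum_{j\in I}(-1)^{\sigma(j,I\setminus j)}\sqrt{x_j}\,\partial_j\eta_{I\setminus j}$. Integrating $\langle\omega,\delta\eta\rangle_\alpha$ by parts in each $x_j$ (boundary terms vanish because $\eta$ is compactly supported in the \emph{open} cone $\rdp$) and invoking the key identity above yields
$$
\int_{\rdp}\omega_I\sqrt{x_j}\,\partial_j\eta_{I\setminus j}\,\rho_\alpha\,\d x=-\int_{\rdp}\bigl(\delta_j\omega_I+\psi_j(x_j)\omega_I\bigr)\eta_{I\setminus j}\,\rho_\alpha\,\d x.
$$
Summing and reindexing by $J=I\setminus j$ gives the coordinate formula
$$
\delta^{*}\omega=-\sum_{J\in\cI_{r-1}}\sum_{j\notin J}(-1)^{\sigma(j,J)}\bigl(\delta_j+\psi_j(x_j)\bigr)\omega_{J\cup j}\,\d x_J.
$$

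The $\psi_j$-part is easily identified with $-\iota_{\psi}\omega$: expanding $\iota_{\psi}\omega=\sum_I\omega_I\sum_{j\in I}\psi_j\,\iota_{\d x_j}(\d x_I)$ via $\iota_{\d x_j}(\d x_I)=(-1)^{\sigma(j,I)}\d x_{I\setminus j}$ and reindexing $J=I\setminus j$ (so that $\sigma(j,I)=\sigma(j,J)$) reproduces exactly the $\psi_j$-terms above, with opposite sign. It remains to match the purely differential remainder $-\sum_J\sum_{j\notin J}(-1)^{\sigma(j,J)}\delta_j\omega_{J\cup j}\,\d x_J$ with $(-1)^{d(r-1)+1}*_{d-r+1}\delta *_r\omega$. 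This is the analogue for the Laguerre differential $\delta$ of the classical flat-space formula $d^{*}=(-1)^{d(r-1)+1}*d*$ on $r$-forms: it is a purely algebraic identity, independent of $\rho_\alpha$, and one verifies it by writing $*_r\omega=\sum_I\epsilon_I\omega_I\,\d x_{\hat I}$ (where $\d x_I\wedge\d x_{\hat I}=\epsilon_I\,\BI$), applying $\delta$ componentwise, and then $*_{d-r+1}$, tracking the permutation signs via the relation $\sigma(j,I)+\tilde{\sigma}(j,I)=j-1$ for $j\in I$ (with $\tilde{\sigma}(j,I)$ counting elements of $\hat I$ less than $j$).

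The conceptual content is thus concentrated in the integration by parts together with the identity $\partial_j(\sqrt{x_j}\rho_\alpha)=\psi_j\rho_\alpha$; the main obstacle is the last identification, which is entirely a bookkeeping exercise on Hodge-star signs and has no analytic content. A quick sanity check in the simplest case $d=2$, $r=1$ (the formula gives $\delta^{*}(\omega_1\d x_1+\omega_2\d x_2)=-(\delta_1+\psi_1)\omega_1-(\delta_2+\psi_2)\omega_2$, and one computes $*_{2}\delta *_{1}\omega=\delta_1\omega_1+\delta_2\omega_2$ directly) confirms the overall sign $(-1)^{d(r-1)+1}$ appearing in the statement.
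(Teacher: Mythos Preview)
Your argument is correct, but it follows a different route from the paper's. You compute $\delta^{*}\omega$ directly in coordinates via integration by parts---in effect you are first deriving the coordinate formula $\delta^{*}\omega=\sum_{I}\sum_{j}\delta_j^{*}\omega_I\,\iota_{\d x_j}(\d x_I)$ (which is Proposition~\ref{2nd} in the paper)---and then you appeal to the classical Euclidean identity $d^{*}=(-1)^{d(r-1)+1}*d*$ to convert the purely differential piece into Hodge-star language, the point being that the coefficient functions $\sqrt{x_j}$ in $\delta_j$ play no role in that last algebraic step. The paper instead argues intrinsically: it first proves the lemma $\delta^{*}\BI=-*_1\psi$ by a short computation on $(d-1)$-forms, and then for general $r$ pairs $\delta^{*}\omega$ against a test form $\eta$, writes $\langle\delta\eta,\omega\rangle_{\alpha}=\int\delta\eta\wedge *_r\omega\,\rho_\alpha$, uses the antiderivation property of $\delta$ to peel off $\delta(\eta\wedge *_r\omega)$, and identifies the two resulting pieces with $-\langle\eta,\iota_\psi\omega\rangle_\alpha$ (via the lemma) and the Hodge-star term (via $*_{d-r+1}*_{r-1}=(-1)^{(r-1)(d-r+1)}\mathrm{id}$). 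Your approach is more hands-on and self-contained; the paper's is coordinate-free and isolates the geometric origin of the $\iota_\psi$ term in the single identity $\delta^{*}\BI=-*_1\psi$. The one place your write-up is a bit thin is the final sign-tracking for the Hodge-star identity, which you declare a ``bookkeeping exercise'' with a $d=2$ sanity check; that is fair, but for a polished proof you would want to spell it out once.
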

\par
\begin{prop}\label{2nd}
If $\omega\in C_c^\infty(\rdp;\Uplambda^r)$ then
$$
\delta^* \omega=\sum_{I\in\cI_r}\sum_{j=1}^d \delta_j^* \omega_I \ 
\iota_{\!\d x_j}(\!\d x_I),
$$
where $\delta_j^*=-\big(\sqrt{x_j}\partial_j+\psi_j(x_j)\big)$.
\end{prop}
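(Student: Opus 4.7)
The plan is to verify the formula by using the definition of $\delta^*$ as the formal adjoint of $\delta$ and reducing everything to the one-dimensional integration by parts identity
$$
\int_{\rdp} f\,(\delta_j g)\, \d\mua = \int_{\rdp} (\delta_j^* f)\, g\, \d\mua,
\qquad f,g\in C_c^\infty(\rdp),
$$
which is a direct computation exploiting compact support in the open cone $\rdp$ together with the relation $\psi_j(x_j)=\rho_\alpha^{-1}\partial_j\bigl(\sqrt{x_j}\rho_\alpha\bigr)$ already recorded before Proposition \ref{first}. The compact-support hypothesis means there is no boundary contribution at $x_j=0$ nor at infinity, so the adjoint is precisely $\delta_j^*=-(\sqrt{x_j}\partial_j+\psi_j)$.

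Next I would pick an arbitrary test $(r-1)$-form $\eta=\sum_{J\in\cI_{r-1}}\eta_J\,\d x_J\in C_c^\infty(\rdp;\Uplambda^{r-1})$ and expand
$$
\delta\eta=\sum_{j=1}^d\sum_{J\in\cI_{r-1}}\delta_j\eta_J\ \d x_j\wedge\d x_J.
$$
Using the orthonormality of $\{\d x_K:K\in\cI_r\}$ together with the sign relation $\d x_j\wedge\d x_{I\setminus j}=(-1)^{\sigma(j,I)}\,\d x_I$ (valid for $j\in I$), the pointwise inner product collapses to
$$
\langle\omega,\delta\eta\rangle_{\Uplambda^r}
=\sum_{I\in\cI_r}\sum_{j\in I}(-1)^{\sigma(j,I)}\,\omega_I\,\delta_j\eta_{I\setminus j}.
$$
Integrating against $\mua$ and applying the one-dimensional identity above coordinate-by-coordinate transfers $\delta_j$ from $\eta_{I\setminus j}$ to $\omega_I$ as $\delta_j^*$, producing
$$
\langle\omega,\delta\eta\rangle_\alpha
=\int_{\rdp}\sum_{I\in\cI_r}\sum_{j\in I}(-1)^{\sigma(j,I)}\,(\delta_j^*\omega_I)\,\eta_{I\setminus j}\ \d\mua.
$$

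Finally I would compare with the proposed formula. Using $\iota_{\d x_j}(\d x_I)=(-1)^{\sigma(j,I)}\,\d x_{I\setminus j}$ when $j\in I$ and $\iota_{\d x_j}(\d x_I)=0$ otherwise, the right-hand side of the claim paired with $\eta$ reads
$$
\Bigl\langle \sum_{I\in\cI_r}\sum_{j=1}^d(\delta_j^*\omega_I)\,\iota_{\d x_j}(\d x_I),\ \eta\Bigr\rangle_\alpha
=\int_{\rdp}\sum_{I\in\cI_r}\sum_{j\in I}(-1)^{\sigma(j,I)}\,(\delta_j^*\omega_I)\,\eta_{I\setminus j}\ \d\mua,
$$
which matches the previous display. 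Since this equality holds for every $\eta\in C_c^\infty(\rdp;\Uplambda^{r-1})$ and the Laguerre measure has full support on $\rdp$, the defining property of $\delta^*$ yields the desired identity.

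The bookkeeping of the permutation signs $\sigma(j,I)$ is the only delicate point, but it is the same sign appearing in both the expansion of $\delta\eta$ and in $\iota_{\d x_j}(\d x_I)$, which is precisely why the formula is so clean; no deep estimate is needed, and Proposition \ref{first} is not even required (though the two statements are of course consistent).
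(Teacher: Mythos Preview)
Your proof is correct and follows essentially the same route as the paper: both arguments use the defining adjoint relation $\langle\delta^*\omega,\eta\rangle_\alpha=\langle\omega,\delta\eta\rangle_\alpha$, expand $\delta\eta$, and reduce to the one-dimensional integration by parts $\int\omega_I\,\delta_j\eta_J\,\d\mua=\int\delta_j^*\omega_I\,\eta_J\,\d\mua$ before reassembling the result. The only cosmetic difference is that the paper packages the combinatorics via the Hodge star, writing $\d x_I\wedge *_r(\d x_j\wedge\d x_J)=C(I,J,j)\,\BI$ with $C(I,J,j)\in\{-1,0,1\}$, whereas you track the sign $(-1)^{\sigma(j,I)}$ explicitly; the two bookkeeping devices are equivalent.
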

\noindent
 To prove Proposition \ref{first}, it is convenient to state a lemma.
\begin{lem}\label{first lemma}
Denote by  $\BI$  the volume element $\d x_1\wedge\ldots\wedge\d 
x_d$ on $\rdp$. Then
$$
\delta^* \BI=-*_1\psi.
$$
\end{lem}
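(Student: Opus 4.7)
The plan is to prove the identity by unpacking the defining relation
\[
\langle \delta^*\BI,\eta\rangle_\alpha=\langle \BI,\delta \eta\rangle_\alpha\qquad\forall\eta\in C_c^\infty(\rdp;\Uplambda^{d-1}),
\]
and integrating by parts. First I would fix a convenient basis of $\Uplambda^{d-1}$: for $k=1,\ldots,d$ let $I_k=(1,\ldots,\widehat{k},\ldots,d)\in\cI_{d-1}$, so that $\d x_k\wedge \d x_{I_k}=(-1)^{k-1}\BI$. Writing $\eta=\sum_{k=1}^d\eta_k\,\d x_{I_k}$, all terms in $\delta\eta=\sum_{j,k}\delta_j\eta_k\,\d x_j\wedge \d x_{I_k}$ with $j\neq k$ vanish (repeated $\d x_j$), so
\[
\delta\eta=\sum_{k=1}^d(-1)^{k-1}\,\delta_k\eta_k\ \BI.
\]

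Next I would compute the inner product with $\BI$ by using the definition of $\mua$ and integrating by parts in each variable $x_k$ on the test function $\eta_k\in C_c^\infty(\rdp)$:
\[
\int_{\rdp}\delta_k\eta_k\,\d\mua=\int_{\rdp}\sqrt{x_k}\,\partial_k\eta_k\ \rho_\alpha\,\d x=-\int_{\rdp}\eta_k\,\frac{\partial_k\!\left(\sqrt{x_k}\,\rho_\alpha\right)}{\rho_\alpha}\,\d\mua=-\int_{\rdp}\eta_k\,\psi_k\,\d\mua,
\]
by the very identity used to define $\psi_k$ in the paper. Therefore
\[
\langle\BI,\delta\eta\rangle_\alpha=-\sum_{k=1}^d(-1)^{k-1}\int_{\rdp}\psi_k\,\eta_k\,\d\mua.
\]

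Finally I would expand $*_1\psi$ in the same basis: since $*_1\d x_j=(-1)^{j-1}\d x_{I_j}$, one has $*_1\psi=\sum_j(-1)^{j-1}\psi_j\,\d x_{I_j}$, so
\[
\langle -*_1\psi,\eta\rangle_\alpha=-\sum_{j=1}^d(-1)^{j-1}\int_{\rdp}\psi_j\,\eta_j\,\d\mua,
\]
which coincides with the expression computed above. Since $\eta$ was arbitrary, this yields $\delta^*\BI=-*_1\psi$. The only non-routine point is tracking the sign conventions through the Hodge $*$ and the shuffle $\d x_k\wedge \d x_{I_k}=(-1)^{k-1}\BI$; once these are aligned, everything reduces to the one-variable computation that has been built into the very definition of $\psi_j$.
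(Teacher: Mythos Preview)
Your proof is correct and follows essentially the same approach as the paper's: write a general $(d-1)$-form in the basis $\d x_{I_k}$ (the paper's $\widehat{\d x_i}$), compute $\delta\eta$ using $\d x_k\wedge\d x_{I_k}=(-1)^{k-1}\BI$, integrate by parts via the defining relation $\partial_k(\sqrt{x_k}\,\rho_\alpha)=\psi_k\,\rho_\alpha$, and identify the result with $-\langle *_1\psi,\eta\rangle_\alpha$. The only cosmetic difference is that you expand $*_1\psi$ explicitly in the basis before matching, whereas the paper recognises the pairing directly.
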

\begin{proof}
Let $\omega = \sum_{i=1}^d \omega_i \ \widehat{\d x_i}$ be a form 
in $C^\infty_c(\rdp;\Uplambda^{d-1})$, where 
$$
\widehat{\d x_i} = (-1)^{i-1} \ast_1 \d x_i=\d x_1 \wedge \ldots 
\wedge \sout{ \d x_i} \wedge \ldots \wedge \d x_d
$$ 
with the element $\d x_i$ omitted. Since
\[ dx_j \wedge \widehat{dx_i} =
\begin{cases}
0 \qquad \mbox{ if } j \neq i\\
(-1)^{i-1} \mathbb{I} \quad \mbox{ if } j = i,
\end{cases}
\]
we have that
$$
\delta \omega=\sum_{i=1}^d \sum_{j=1}^d \delta_j \omega_i \ dx_j 
\wedge \widehat{dx_i}= \sum_{i=1}^d (-1)^{i-1}\delta_i \omega_i \ 
\mathbb{I}.
$$
Hence
\begin{align*}
\langle \delta^* \mathbb{I} , \omega \rangle_\alpha &= \langle 
\mathbb{I}, \delta \omega\rangle_\alpha \\ 
&=\sum_{i=1}^d (-1)^{i-1} \int_{\rdp}{\delta_i \omega_i \ \d \mua}  \\ 
&=-\sum_{i=1}^d (-1)^{i-1} \int_{\rdp} \omega_i \ \partial_i
\big(\sqrt{x_i}\rho_\alpha(x)\big)\d x \\
&=-\sum_{i=1}^d (-1)^{i-1} \int_{\rdp} \omega_i \ \psi_i \d\mu_\alpha \
\
&=-\langle *_1\psi,\omega \rangle_\alpha.
\end{align*} 
Since the choice of $\omega$ is arbitrary, it follows that
\[ \delta^* \mathbb{I} = - \ast_1\psi.\]
\end{proof}
\noindent 
{\it Proof of Proposition \ref{first}}. Suppose that $\omega\in C_c^
\infty(\rdp;\Uplambda^{r-1})$ and $\eta\in C^\infty(\rdp;\Uplambda^r)
$. Then
\begin{equation}\label{-1}
\langle\omega,\delta^*\eta\rangle_\alpha=\langle\delta\omega,\eta
\rangle_\alpha=\int_{\rdp} \delta\omega\wedge *_r\eta\ \rho_\alpha.
\end{equation}
By the antiderivation property of $\delta$
$$
\delta(\omega \wedge \ast_r \eta) = (\delta \omega) \wedge \ast_r 
\eta + (-1)^{r-1} \omega \wedge \delta(\ast_r \eta).
$$
Thus
\begin{equation}\label{0}
\int_{\rdp} \delta\omega\wedge *_r\eta\ \rho_\alpha=\int_{\rdp}
\delta(\omega\wedge*_r\eta)\rho_\alpha - (-1)^{r-1} \int_{\rdp} 
\omega\wedge\delta(*_r \eta)\ \rho_\alpha.
\end{equation}
We evaluate separately the two integrals. On the one hand
\begin{align*}
\int_{\rdp}\delta(\omega\wedge*_r\eta)\rho_\alpha&=\langle 
\delta(\omega\wedge*_r\eta),\BI\rangle_\alpha \\ 
&= \langle \omega\wedge*_r\eta,\delta^*\BI\rangle_\alpha \\ 
&=- \langle \omega\wedge*_r\eta,*_1\psi\rangle_\alpha\\
&= -\int_{\rdp}\omega\wedge *_r\eta*_{d-1}*_1\psi\ \rho_\alpha \\
&=-(-1)^{d-1} \int_{\rdp} \omega\wedge*_r\eta\wedge\psi\ \rho_
\alpha\\
&=-(-1)^{2(d-1)}\int_{\rdp}\psi\wedge\omega\wedge*_r\eta\ \rho_
\alpha\\
&=-\langle \psi\wedge\omega,\eta\rangle_\alpha\\
&=-\langle \omega,\iota_\psi \eta\rangle_\alpha.
\end{align*}
Here we have used Lemma \ref{first lemma} in the third equality, the 
fact that $*_{d-1}*_1=(-1)^{d-1}{\rm id}$ in the fifth, and the 
anticommutativity of the wedge product in the sixth. Thus
\begin{equation}\label{first eq}
\int_{\rdp}\delta(\omega\wedge*_r\eta)\rho_\alpha=-\langle \omega,
\iota_\psi \eta\rangle_\alpha.
\end{equation}
On the other hand, since $\delta*_r\eta=(-1)^{(r-1)(d-r+1)}*_{r-1}
*_{d-r+1}\delta*_r\eta$,
\begin{equation}\label{sec id}
(-1)^{r-1}\int_{\rdp} \omega\wedge \delta(*_r \eta)\ \rho_\alpha=
 (-1)^{d(r-1)} \langle \omega, *_{d-r+1}*\delta *_r \eta\rangle_\alpha.
\end{equation}
By combining identities (\ref{-1})-(\ref{sec id}) we obtain
$$
\langle\omega,\delta^*\eta\rangle_\alpha=\langle \omega, ((-1)^{d(r
+1)+1} \ast_{r-1} \delta \ast_r - \iota_{\psi}) \eta \rangle_\alpha,
$$
which is the desired conclusion. 
\qed\par
\noindent
{\it Proof of Proposition \ref{2nd}.} 
Let $\eta= \sum_{J} \eta_J \d x_J$ be a form in $C^\infty(\rdp;
\Uplambda^{r-1})$.Then
\begin{align}\label{chain}
 \langle \delta^* \omega, \eta \rangle_\alpha = \langle \omega, \delta 
\eta \rangle_\alpha
& = \sum_{I} \sum_{J} \sum_{j =1}^d {\langle  \omega_I \d x_I,  
\delta_j\eta_J \d x_j \wedge dx_J\rangle_\alpha} \nonumber\\
&= \sum_{I} \sum_{J} \sum_{j =1}^d \int_{\rdp}\omega_I\,\delta_j
\eta_J\ \d x_I\wedge *_r(\d x_j\wedge \d x_J)\ \rho_\alpha \\
&= \sum_{I} \sum_{J} \sum_{j =1}^d \int_{\rdp}\omega_I\,\delta_j
\eta_J\ \iota_{\d x_j}(\d x_I)\wedge *_{r-1}\d x_J\ \rho_\alpha
\nonumber \\
&= \sum_{I} \sum_{J} \sum_{j =1}^d C(I,J,j) \int_{\rdp}\omega_I\,
\delta_j\eta_J\ \d \mu_\alpha.\nonumber
\end{align}
Here we have used the fact that  $\d x_I\wedge *_r(\d x_j\wedge \d 
x_J) \ \rho_\alpha=C(I,J,j)\  \d \mu_\alpha$, because $\d x_I\wedge 
*_r(\d x_j\wedge \d x_J)$ is a constant multiple of the volume form, 
with a coefficient $C(I,J,j)\in\{-1,0,1\}$. Now, integrating by parts, we 
obtain that 
$$
\int_{\rdp} \omega_I\ \delta_j \eta_J\d \mu_\alpha=\int_{\rdp} 
\delta_j^*\omega_I\  \eta_J\d \mu_\alpha.
$$
Thus, tracing back the chain of identities (\ref{chain}) after this 
integration by parts,  we obtain that
$$
\langle \delta^* \omega, \eta \rangle_\alpha = \langle \sum_I 
\delta_j^* \omega_I \,\iota_{\d x_j}\!(\d x_I),\eta\rangle_\alpha,
$$
which is desired conclusion.
\qed
\par \medskip
The \emph{Hodge-Laguerre operator} $\BL_\alpha$ is defined by
$$
\BL_\alpha=\delta\delta^*+\delta^*\delta,
$$
and is a linear operator on $C_c^\infty(\rdp;\Uplambda^r)$ for each 
$r$ with $0\le r\le d$. 
\par
By using the expression of $\delta^*$ given in Proposition \ref{2nd} it 
is easy to check that on $0$-forms the Hodge-Laguerre operator $
\BL_\alpha$ coincides with the Laguerre operator $\cL_\alpha$ on 
functions defined in Section \ref{s: 2.1}. Indeed one has
\begin{lem}\label{HL on 0-forms}
If $f\in C^\infty_c(\rdp;\Uplambda^0)=C^\infty_c(\rdp)$ then
$$
\BL_\alpha = -\sum_{i=1}^d (\delta_i+\psi_i)\delta_i f.
$$
\end{lem}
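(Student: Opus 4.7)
The plan is to compute $\BL_\alpha f = (\delta\delta^*+\delta^*\delta)f$ directly on the $0$-form $f$, using the pieces already established in Section~\ref{s: HLonf}.

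First I would observe that by the very definition of $\delta^*$ on $0$-forms (set equal to zero in the paragraph following Proposition~\ref{2nd}), the term $\delta\delta^* f$ vanishes. Hence $\BL_\alpha f = \delta^*\delta f$, and it suffices to compute $\delta^*\delta f$. From the definition of the Laguerre exterior differential,
$$
\delta f = \sum_{j=1}^d \delta_j f\, \d x_j,
$$
which is an element of $C_c^\infty(\rdp;\Uplambda^1)$.

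Next I would apply Proposition~\ref{2nd} to the $1$-form $\omega=\delta f$. Since $\cI_1$ identifies with $\{1,\ldots,d\}$ and the interior product of two $1$-forms gives $\iota_{\d x_j}(\d x_i)=\delta_{ij}$ (Kronecker), Proposition~\ref{2nd} collapses to
$$
\delta^*\delta f \;=\; \sum_{i=1}^d\sum_{j=1}^d \delta_j^*(\delta_i f)\,\iota_{\d x_j}(\d x_i) \;=\; \sum_{i=1}^d \delta_i^*\,\delta_i f.
$$
Finally, substituting the explicit expression $\delta_i^*=-(\delta_i+\psi_i)$ from Section~\ref{s: 2.1} yields
$$
\BL_\alpha f \;=\; -\sum_{i=1}^d (\delta_i+\psi_i)\,\delta_i f,
$$
as claimed.

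There is essentially no obstacle: the computation is a one-line reduction once Proposition~\ref{2nd} and the convention $\delta^*f=0$ on $0$-forms are in hand. The only point requiring a word of care is the identification $\iota_{\d x_j}(\d x_i)=\delta_{ij}$, which is immediate from the definition of interior multiplication as the adjoint of exterior multiplication on $\Uplambda^1$.
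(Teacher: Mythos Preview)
Your proof is correct and follows essentially the same approach as the paper's own proof: both use the convention $\delta^* f=0$ on $0$-forms, apply Proposition~\ref{2nd} to the $1$-form $\delta f$, collapse the double sum via $\iota_{\d x_j}(\d x_i)=\delta_{ij}$, and substitute $\delta_i^*=-(\delta_i+\psi_i)$.
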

\begin{proof}
Since $\delta^* f=0$, by Proposition \ref{2nd}
$$
\BL_\alpha f= \delta^*\delta f=\sum_{i=1}^d\sum_{j=1}^d \delta_i^*
\delta_j f \ \iota_{\d x_i}\d x_j=\sum_{i=1}^d \delta_i^*\delta_i f= -
\sum_{i=1}^d (\delta_i+\psi_i)\delta_i f.
$$
\end{proof}

\subsection{The diagonalization of the Hodge-Laguerre operator}
\label{s: diag}
In this subsection we prove that the action of the Hodge-Laguerre 
operator on $r$-forms can be diagonalised with respect to the basis 
$\{\d x_I: I\in\cI_r\}$ of $\Uplambda^r$. Namely
\begin{prop}\label{p: diag}
If $\omega=\sum_{I\in\cI_r}\omega_I\d x_I\in C^\infty(\rdp;
\Uplambda^r)$ then
$$
\BL_\alpha\omega=\sum_{I\in\cI_r} \cL_{\alpha,I}\,\omega_I \d x_I,
$$
where 
$$
\cL_{\alpha,I}=\sum_{j\in I}\delta_j\delta_j^*+\sum_{j\notin I}\delta_j^*
\delta_j
$$
are the differential operators acting on scalar functions defined in Section~\ref{ss: LaI}.
\end{prop}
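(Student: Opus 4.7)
My strategy is a direct computation: expand both $\delta\delta^*\omega$ and $\delta^*\delta\omega$ using the coordinate formulas, separate the resulting double sum over indices $j,k$ into the diagonal part ($j=k$) and the off-diagonal part ($j\neq k$), and show that the diagonal part reproduces $\sum_I \cL_{\alpha,I}\omega_I\, dx_I$ while the off-diagonal terms cancel in pairs.

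More concretely, I first compute
\begin{align*}
\delta^*\delta\omega &= \sum_{I,j,k}\delta_k^*\delta_j\omega_I\ \iota_{dx_k}(dx_j\wedge dx_I),\\
\delta\delta^*\omega &= \sum_{I,j,k}\delta_k\delta_j^*\omega_I\ dx_k\wedge\iota_{dx_j}(dx_I),
\end{align*}
using the formula for $\delta$ from Section~\ref{s: HLonf} and Proposition~\ref{2nd}. For the diagonal terms $j=k$, I will argue by cases on whether $j\in I$ or $j\notin I$: if $j\in I$ then $dx_j\wedge dx_I=0$ and $dx_j\wedge\iota_{dx_j}(dx_I)=dx_I$, while if $j\notin I$ then $\iota_{dx_j}(dx_I)=0$ and $\iota_{dx_j}(dx_j\wedge dx_I)=dx_I$. (Both identities are special cases of Lemma~\ref{li+il} with $\phi=dx_j$.) Summing the surviving contributions yields exactly
$$
\sum_I\Bigl(\sum_{j\in I}\delta_j\delta_j^*\omega_I+\sum_{j\notin I}\delta_j^*\delta_j\omega_I\Bigr)dx_I=\sum_I\cL_{\alpha,I}\omega_I\,dx_I,
$$
which is the desired right-hand side.

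For the off-diagonal terms $j\neq k$, I use that $\iota_{dx_k}$ is an antiderivation with $\iota_{dx_k}(dx_j)=0$, so
$$
\iota_{dx_k}(dx_j\wedge dx_I)=-dx_j\wedge\iota_{dx_k}(dx_I).
$$
Substituting and grouping the contribution of the pair $(j,k)$ with that of $(k,j)$, each basis expression $dx_j\wedge\iota_{dx_k}(dx_I)$ and $dx_k\wedge\iota_{dx_j}(dx_I)$ picks up a coefficient equal to a commutator, specifically $[\delta_j,\delta_k^*]\omega_I$ or $[\delta_k,\delta_j^*]\omega_I$. The crucial observation is that for $j\neq k$ the operators $\delta_j=\sqrt{x_j}\partial_j$ and $\delta_k^*=-(\delta_k+\psi_k(x_k))$ act on disjoint variables and hence commute; in particular $[\delta_j,\delta_k^*]=0$. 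Therefore all off-diagonal contributions vanish, completing the proof.

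The main (and essentially only) obstacle is keeping the signs and index bookkeeping straight in the off-diagonal cancellation; everything else is mechanical once the antiderivation property of $\iota$ and the commutativity of $\delta_j$ with $\delta_k^*$ for $j\neq k$ are invoked.
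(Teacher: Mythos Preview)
Your proposal is correct and follows essentially the same approach as the paper: both split $\BL_\alpha\omega=\delta\delta^*\omega+\delta^*\delta\omega$ into diagonal ($j=k$) and off-diagonal ($j\neq k$) parts, identify the diagonal part as $\sum_I\cL_{\alpha,I}\omega_I\,dx_I$, and kill the off-diagonal part using the commutativity of $\delta_j$ with $\delta_k^*$ for $j\neq k$. The only difference is bookkeeping: the paper tracks signs explicitly via the functions $\sigma(j,I)$ (mirroring the computation in Lemma~\ref{li+il}), whereas you encode the same sign relations more compactly through the antiderivation identity $\iota_{dx_k}(dx_j\wedge dx_I)=-dx_j\wedge\iota_{dx_k}(dx_I)$ for $j\neq k$.
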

\begin{proof}
If $\omega \in C^\infty(\rdp;\Uplambda^r)$, then
$$
\BL_\alpha\omega=\delta\delta^* \omega+\delta^*\delta \omega.
$$
We compute separately the two summands. As before, we denote by $\sigma(j,I)$ the number of components of $I$ which are strictly less than $j$. On the one hand
\begin{align*}
\delta\delta^* \omega&=\delta \sum_{I\in\cI_r} \sum_{j\in I} \delta_j^* \omega_I \iota_{\d x_j}(\d x_I) \\ 
&=\delta \sum_{I\in\cI_r} \sum_{j\in I}(-1)^{\sigma(j,I)}\, \delta_j^* \omega_I \,\d x_{I\setminus j} \\ 
&=  \sum_{I \in \mathcal{I}_r} \sum_{j \in I} \sum_{i \notin \left\{ I \setminus j\right\}}(-1)^{\sigma(j, I)} \delta_i \delta^*_j \omega_I \ dx_i \wedge  dx_{I \setminus j} \\
 & =  \sum_{I \in \mathcal{I}_r} \sum_{j \in I} \sum_{i \notin \left\{ I \setminus j\right\}}(-1)^{\sigma(j, I) + \sigma(i, I \setminus j)} \delta_i \delta^*_j \omega_I \ dx_{(I \setminus j) \cup i}. \\
\end{align*}
On the other hand
\begin{align*}
\delta^*\delta\, \omega&=\delta^* \sum_{I\in\cI_r} \sum_{i\notin I} \delta_i \omega_I \d x_i\wedge\d x_I \\ 
&=\delta^* \sum_{I\in\cI_r} \sum_{i\notin I}(-1)^{\sigma(i,I)}\, \delta_i\omega_I \,\d x_{I\cup i} \\ 
&=  \sum_{I \in \mathcal{I}_r} \sum_{i \notin I} \sum_{j \in \left\{ I \cup i\right\}}(-1)^{\sigma(i, I)} \delta^*_i \delta_j \omega_I\  \iota_{\!\ dx_j}(dx_{I \cup i}) \\
 & =  \sum_{I \in \mathcal{I}_r} \sum_{i \notin I} \sum_{j \in \left\{ I \cup i\right\}}(-1)^{\sigma(i, I) + \sigma(j, I \cup i)} \delta_j^* \delta_i\omega_I \ dx_{(I \cup i) \setminus j}. \\
\end{align*}
Next we observe that in the sum $\delta^*\delta\,\omega+\delta\delta^*\omega$ the terms containing indices $i\not=j$ cancel out. Indeed,
 if $i\not=j$ the differential operators $\delta_i$ and $\delta_i^*$ commute with $\delta_j$ and $\delta_j^*$ because they act on different variables. 
Therefore, only the terms with $i=j$ remain and, since for $i=j$ the exponents of $-1$ are even, we have that 
$$
\delta^*\delta\,\omega+\delta\delta^*\omega= \sum_{I\in\cI_r} \sum_{i\in I} \delta_i\delta^*_i \omega_I\d x_I+\sum_{I\in\cI_r} \sum_{i\notin I} \delta^*_i\delta_i \omega_I\d x_I.
$$
\end{proof}

\subsection{A self-adjoint extension of the Hodge-Laguerre operator}\label{s: saeHL}
The operator $\BL_\alpha$ with domain $C^
\infty_c(\rdp;\Uplambda^r)$ is obviously symmetric with respect to 
the inner product $\langle\cdot,\cdot\rangle_\alpha$, but it is not self-adjoint on $L^2(\BR^d,\mu_\alpha;\Uplambda^r)$. In this subsection, to 
define a self-adjoint extension of $\BL_\alpha$, we modify the 
domains of the operators $\delta$, $\delta^*$ and $\BL_\alpha$. With 
the new domains the operators $\delta$ and $\delta^*$ will be adjoint 
to each other and $\BL_\alpha$ will be self-adjoint. The domains will 
be defined via the Fourier-Laguerre transform of forms, that we 
define presently. To this end, first we introduce an orthonormal basis 
for the space of square integrable $r$-forms.
\par
\begin{prop}\label{ONBr}
The family of $r$-forms
$$
B_r=\left\{\ell^{\alpha,I}_{k}\d x_I\  :\ I\in\cI_r, k\in\cK(I)\right\}
$$
is an orthonormal basis of $L^2(\rdp,\mua;\Uplambda^r)$.
\end{prop}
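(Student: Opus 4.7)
The plan is to reduce the statement to Proposition \ref{ONBr1} via the natural fiberwise decomposition of $L^2(\rdp,\mua;\Uplambda^r)$. Since $\{\d x_I : I\in\cI_r\}$ is an orthonormal basis of the fiber $\Uplambda^r$, any $\omega\in L^2(\rdp,\mua;\Uplambda^r)$ admits a unique representation $\omega=\sum_{I\in\cI_r}\omega_I\,\d x_I$ with $\omega_I\in L^2(\rdp,\mua)$, and by pointwise evaluation of the inner product on $\Uplambda^r$,
$$
\langle\omega,\eta\rangle_\alpha=\sum_{I\in\cI_r}\langle\omega_I,\eta_I\rangle_\alpha.
$$
This identifies $L^2(\rdp,\mua;\Uplambda^r)$ isometrically with the orthogonal direct sum $\bigoplus_{I\in\cI_r}L^2(\rdp,\mua)\,\d x_I$.

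First I would verify orthonormality of $B_r$. For $I\neq I'$, one has $\langle \ell^{\alpha,I}_k\d x_I,\ell^{\alpha,I'}_{k'}\d x_{I'}\rangle_\alpha = \langle\ell^{\alpha,I}_k,\ell^{\alpha,I'}_{k'}\rangle_\alpha\,\langle\d x_I,\d x_{I'}\rangle_{\Uplambda^r}=0$ because $\langle\d x_I,\d x_{I'}\rangle_{\Uplambda^r}=0$. For $I=I'$ and $k,k'\in\cK(I)$, the inner product reduces to $\langle\ell^{\alpha,I}_k,\ell^{\alpha,I}_{k'}\rangle_\alpha=\delta_{k,k'}$ by the orthonormality of $\{\ell^{\alpha,I}_k:k\in\cK(I)\}$ in $L^2(\rdp,\mua)$ established in Proposition \ref{ONBr1}.

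Next I would establish completeness. Given $\omega=\sum_I\omega_I\,\d x_I\in L^2(\rdp,\mua;\Uplambda^r)$, each coefficient $\omega_I$ lies in $L^2(\rdp,\mua)$, so by Proposition \ref{ONBr1} it expands as $\omega_I=\sum_{k\in\cK(I)}\langle\omega_I,\ell^{\alpha,I}_k\rangle_\alpha\,\ell^{\alpha,I}_k$ with convergence in $L^2(\rdp,\mua)$ and Parseval's identity $\|\omega_I\|_2^2=\sum_{k\in\cK(I)}|\langle\omega_I,\ell^{\alpha,I}_k\rangle_\alpha|^2$. Summing over $I\in\cI_r$ (a finite sum) yields the expansion of $\omega$ in $B_r$ together with the Parseval identity $\|\omega\|_2^2=\sum_{I\in\cI_r}\sum_{k\in\cK(I)}|\langle\omega,\ell^{\alpha,I}_k\d x_I\rangle_\alpha|^2$, which is exactly what is needed.

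There is no substantial obstacle here: the entire argument is a fiberwise application of Proposition \ref{ONBr1}, whose content (that each slice of $B_r$ labelled by $I$ gives an orthonormal basis of $L^2(\rdp,\mua)$) does all the work. The only point meriting care is the restriction $k\in\cK(I)$, but this is already built into the statement of Proposition \ref{ONBr1} and matches exactly the multi-indices parametrising the basis of each summand in the direct-sum decomposition.
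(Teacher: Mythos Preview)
Your proposal is correct and follows essentially the same approach as the paper: both use the isometric identification of $L^2(\rdp,\mua;\Uplambda^r)$ with the orthogonal direct sum $\bigoplus_{I\in\cI_r}L^2(\rdp,\mua)$ via $\omega\mapsto(\omega_I)_{I\in\cI_r}$, and then invoke Proposition~\ref{ONBr1} to supply an orthonormal basis of each summand. The paper phrases this as transporting a basis through an isometric isomorphism, while you verify orthonormality and completeness explicitly, but the content is identical.
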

\begin{proof}
By Proposition \ref{ONBr1} the families $\{\ell^{\alpha_i}_{k_i}: k_i\in \BN\}$ 
and $\{\frac{1}{\sqrt{k_i}}\delta_i\ell^{\alpha_i}_{k_i}: k_i\in \BN_+\}$ 
are orthonormal bases of $L^2(\BR_+,\mu_{\alpha_i})$. Thus, by 
tensorization, $\left\{   \ell^{\alpha,I}_{k}\  :\ k\in\cK(I)\right\}$ is an 
orthonormal basis of $L^2(\rdp,  \mu_\alpha)$
 for each $I\in \cI_r$, and 
 $$
 C_r=\left\{\oplus_{I\in\cI_r}(0,\ldots, 0,  \ell^{\alpha,I}_{k},0,\ldots,
0)\  :\  k\in\cK(I)\right\}
$$ 
is an orthonormal basis of the direct 
sum $\cH=\oplus_{I\in\cI_r} L^2(\rdp,\mu_\alpha)$ of $\#\cI_r
$ copies of $L^2(\rdp,\mu_\alpha)$.  Since the map $\omega
\mapsto (\omega_I)_{I\in\cI_r}$ is an isometric isomorphism from 
$L^2(\rdp,\mua;\Uplambda^r)$ to $\cH$  that maps $B_r$ to 
$C_r$,  the family $B_r$ is an orthonormal basis of 
$L^2(\rdp,\mua;\Uplambda^r)$.
\end{proof}
\noindent
{\bf Definition.} The \emph{Fourier-Laguerre coefficients} of a form $
\omega\in L^2(\rdp,\mua;\Uplambda^r)$ are the coefficients of 
$\omega$ with respect to the basis $B_r$, i.e.
$$
\hat{\omega}(I,k)=\langle \omega_I,\ell^{\alpha,I}_{k}\rangle_\alpha, \qquad I\in\cI_r,\ k\in \cK(I).
$$
It is convenient to define $\hat{\omega}(I,k)$ for all $k\in \BN^d$, by setting $\hat{\omega}(I,k)=0$ when $k\notin \cK(I)$. 
 Observe that if $I\in\cI_{r}$ then
 $$
\cK(I)\subset 
\BN^d_r=\{k\in\BN^d: |k|\ge r\}.
$$
\par
To obtain nice formulas for the Fourier-Laguerre transform of the forms $\delta \omega$, $\delta^*\omega$ and $\BL_\alpha\omega$ it is useful to give some algebraic structure to the set of Fourier-Laguerre coefficients. Therefore, we define the \emph{Fourier-Laguerre transform} of the form $\omega\in L^2(\rdp,\mua;\Uplambda^r)$ as the multi-sequence of alternating tensors of rank $r$
$$
\BN^d_r\ni k\mapsto \hat{\omega}(k)=\sum_{I\in\cI_r}\hat{\omega}(I,k)\d x_I\in \Uplambda^r,
$$
and we denote by $\lambda^\alpha_k$ the $r$-form defined by
$$
\lambda^\alpha_k(x)=\sum_{I\in\cI_r} \ell^{\alpha,I}_{k}(x) \d x_I,  \qquad\forall k\in \BN^d.
$$
Define the bilinear map $[\cdot,\cdot]:\Uplambda^r\times\Uplambda^r\to\Uplambda^r$ by
$$
[\omega,\eta]=\sum_{I\in\cI_r} \omega_I\ \eta_I\d x_I.
$$
Then
$$
[\hat{\omega}(k),\lambda^\alpha_k(x)]=\sum_{I\in\cI_r} \hat{\omega}(I,k)\,\ell^{\alpha,I}_{k}(x)\d x_I.
$$
The following proposition gives the inversion formula and Parseval's identity for the Fourier-Laguerre transform. 
\begin{prop}\label{invform+Pars}
For every $\omega\in L^2(\rdp,\mua;\Uplambda^r)$
$$
\omega(x)=\sum_{k\in\BN^d_r} [\hat{\omega}(k),\lambda^\alpha_k(x)],
$$
where the series converges in $L^2(\rdp,\mua;\Uplambda^r)$. Moreover
$$
\normto{\omega}{L^2(\mu_\alpha)}{2}=\sum_{k\in\BN^d_r}
|{\hat{\omega}(k)}|^2.
$$
\end{prop}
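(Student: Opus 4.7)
The plan is to derive both the inversion formula and Parseval's identity directly from Proposition~\ref{ONBr}, which tells us that $B_r = \{\ell^{\alpha,I}_k\,\d x_I : I\in\cI_r,\ k\in\cK(I)\}$ is an orthonormal basis of $\Lpr{2}{r}$. The entire statement is essentially a bookkeeping exercise: rewriting the standard basis expansion using the index-set convention $\hat{\omega}(I,k)=0$ for $k\notin\cK(I)$, so that the summation over $k$ can be unified to a single range $\BN^d_r$ that does not depend on $I$.

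First I would note that for any $\omega\in\Lpr{2}{r}$ and any $I\in\cI_r$, $k\in\cK(I)$, the orthogonality of the covectors $\d x_J$ in $\Uplambda^r$ implies
\begin{equation*}
\langle \omega,\, \ell^{\alpha,I}_k\,\d x_I\rangle_\alpha = \langle \omega_I,\, \ell^{\alpha,I}_k\rangle_\alpha = \hat{\omega}(I,k).
\end{equation*}
Applying the standard orthonormal basis expansion with respect to $B_r$ then gives
\begin{equation*}
\omega = \sum_{I\in\cI_r}\sum_{k\in\cK(I)} \hat{\omega}(I,k)\,\ell^{\alpha,I}_k\,\d x_I,
\end{equation*}
with convergence in $\Lpr{2}{r}$.

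Next I would use the convention that $\hat{\omega}(I,k)=0$ when $k\notin\cK(I)$, together with the inclusion $\cK(I)\subset\BN^d_r$ valid for every $I\in\cI_r$, to extend the inner sum to all $k\in\BN^d_r$ without changing its value. Since the extended series is absolutely summable in $L^2$-norm (by Parseval applied to $B_r$), Fubini's theorem for unconditionally convergent series in Hilbert space allows the interchange of the two sums:
\begin{equation*}
\omega = \sum_{k\in\BN^d_r}\sum_{I\in\cI_r} \hat{\omega}(I,k)\,\ell^{\alpha,I}_k\,\d x_I = \sum_{k\in\BN^d_r}[\hat{\omega}(k),\lambda^\alpha_k],
\end{equation*}
which is the desired inversion formula. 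Parseval's identity follows from the same reorganization: the orthonormality of $B_r$ gives $\norm{\omega}{L^2(\mu_\alpha)}^2 = \sum_{I\in\cI_r}\sum_{k\in\cK(I)}|\hat{\omega}(I,k)|^2$, and regrouping according to $k\in\BN^d_r$ identifies the inner sum over $I$ with $|\hat{\omega}(k)|^2_{\Uplambda^r}$, since $\{\d x_I:I\in\cI_r\}$ is an orthonormal basis of $\Uplambda^r$.

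There is essentially no obstacle here: once Proposition~\ref{ONBr} is in place, the only delicate point is verifying that the reindexing from $(I,k)$ with $k\in\cK(I)$ to $k\in\BN^d_r$ with $I\in\cI_r$ is legitimate, which is immediate from the vanishing convention for $\hat{\omega}(I,k)$ outside $\cK(I)$ and the unconditional convergence of Fourier series in Hilbert space.
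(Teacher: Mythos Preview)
Your proposal is correct and follows essentially the same approach as the paper's own proof: both start from the orthonormal basis expansion provided by Proposition~\ref{ONBr}, use the convention $\hat{\omega}(I,k)=0$ for $k\notin\cK(I)$ to extend the inner sum to $\BN^d_r$, and then interchange the sums over $I$ and $k$. Your version is slightly more explicit in justifying the interchange via unconditional convergence and in computing the basis coefficients, but the argument is the same.
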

\begin{proof} If $\omega\in L^2(\rdp,\mua;\Uplambda^r)$, by definition of orthonormal basis
$$
\omega(x)=\sum_{I\in\cI_r}\sum_{k\in\cK(I)} \hat{\omega}(I,k)\ \ell^{\alpha,I}_{k}(x) \d x_I,
$$
where the series converges in $L^2(\rdp,\mua;\Uplambda^r)$. Since we have defined $\hat{\omega}(I,k)=~0$ for $k\not\in \cK(I)$, we may extend the sum over $\cK(I)$ to a sum over $\BN^d_r$. Thus, exchanging the sums over $I$ and over $k$, we get
$$
\omega(x)=\sum_{k\in\BN^d_r} \sum_{I\in\cI_r}\hat{\omega}(I,k)\ \ell^\alpha_{I,k}(x) \d x_I=\sum_{k\in\BN^d_r} [\hat{\omega}(k),\lambda^\alpha_k(x)].
$$
Similarly,
\begin{align*}
\normto{\omega}{L^2(\mua)}{2}&=\sum_{I\in\cI_r}\sum_{k\in\cK(I)} |\hat{\omega}(I,k)|^2
\\ 
&=\sum_{k\in\BN^d_r}\sum_{I\in\cI_r} |\hat{\omega}(I,k)|^2 \\ 
&= \sum_{k\in\BN^d_r} |\hat{\omega}(k)|^2.
\end{align*}
\end{proof}
Denote by $\msP(\rdp;\Uplambda^r)$ the space  of finite linear combinations of elements of the basis $B_r$, i.e. the space of $r$-forms with polynomial coefficients. Clearly $\msP(\rdp;\Uplambda^r)\subset C^\infty(\rdp,\Uplambda^r)$. 
Next, we compute the Fourier-Laguerre transforms of the forms $\delta \omega$, $\delta^*\omega$ and $\BL_\alpha\omega$, when $\omega$ is in $\msP(\rdp;\Uplambda^r)$. 
\begin{prop}\label{FLTofddL}
For every $k\in \BN^d$ define the covector
$$
\hat{\delta}(k)=\sum_{j=1}^d
\sqrt{k_j} \d x_j.
$$
If $\omega\in \msP(\rdp;\Uplambda^r)$ then for all $k\in\BN^d_r$
\begin{equation}\label{FLTofddH}
\widehat{\delta\omega}(k)=\hat{\delta}(k)\wedge \hat{\omega}(k),\qquad \widehat{\delta^*\omega}(k)=\iota_{\hat{\delta}(k)}\hat{\omega}(k),\qquad \widehat{\BL_\alpha \omega}(k)=|k|\ \hat{\omega}(k).
\end{equation}
The operators $\delta$, $\delta^*$ and $\BL_\alpha$ with domain  $\msP(\rdp;\Uplambda^r)$ are closable in \break
$L^2(\rdp,\mua;\Uplambda^r)$.
\end{prop}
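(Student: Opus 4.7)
\textbf{Proof plan for Proposition \ref{FLTofddL}.} The strategy is to verify the three formulas on the orthonormal basis $B_r=\{\ell^{\alpha,I}_k\d x_I:I\in\cI_r,k\in\cK(I)\}$ of polynomial $r$-forms and extend by linearity. The key algebraic input is the pair of identities
\begin{equation*}
\delta_j\,\ell^{\alpha,I}_k=\sqrt{k_j}\,\ell^{\alpha,I\cup\{j\}}_k\quad (j\notin I),\qquad
\delta_j^*\,\ell^{\alpha,I}_k=\sqrt{k_j}\,\ell^{\alpha,I\setminus\{j\}}_k\quad (j\in I).
\end{equation*}
These follow from the tensor-product structure of $\ell^{\alpha,I}_k$: applying $\delta_j$ with $j\notin I$ affects only the $j$-th factor, converting $\ell^{\alpha_j}_{k_j}(x_j)$ into the factor that would appear in $\ell^{\alpha,I\cup\{j\}}_k$, up to the normalizing constant. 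The constant must equal $\sqrt{k_j}$ since $\|\delta_j\ell^{\alpha,I}_k\|_\alpha^2=\langle\delta_j^*\delta_j\ell^{\alpha_j}_{k_j},\ell^{\alpha_j}_{k_j}\rangle_{\alpha_j}=\langle\cL^{\alpha_j}\ell^{\alpha_j}_{k_j},\ell^{\alpha_j}_{k_j}\rangle_{\alpha_j}=k_j$ (other factors have unit norm); similarly for $\delta_j^*$ when $j\in I$.

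Fix $\omega=\ell^{\alpha,I}_k\d x_I$, so that $\hat\omega(k')=\delta_{k,k'}\,\d x_I$. Using the definition of $\delta$ and noting $\d x_j\wedge\d x_I=0$ when $j\in I$,
\begin{equation*}
\delta\omega=\sum_{j\notin I}\sqrt{k_j}\,\ell^{\alpha,I\cup\{j\}}_k\,\d x_j\wedge\d x_I=\sum_{j\notin I}(-1)^{\sigma(j,I)}\sqrt{k_j}\,\ell^{\alpha,I\cup\{j\}}_k\,\d x_{I\cup j}.
\end{equation*}
Reading off Fourier-Laguerre coefficients in $\cI_{r+1}$, $\widehat{\delta\omega}(k')=\delta_{k,k'}\sum_{j\notin I}\sqrt{k_j}\,\d x_j\wedge\d x_I=\hat\delta(k')\wedge\hat\omega(k')$, giving the first formula by linearity. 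The second is handled analogously: from Proposition \ref{2nd} and the interior-product identity $\iota_{\d x_j}\d x_I=0$ when $j\notin I$,
\begin{equation*}
\delta^*\omega=\sum_{j\in I}\sqrt{k_j}\,\ell^{\alpha,I\setminus\{j\}}_k\,\iota_{\d x_j}(\d x_I),
\end{equation*}
whose Fourier-Laguerre transform equals $\iota_{\hat\delta(k')}\hat\omega(k')$.

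For the third identity, the cleanest route is Proposition \ref{p: diag} combined with Proposition \ref{ONBr1}: $\BL_\alpha\omega=\cL_{\alpha,I}\ell^{\alpha,I}_k\,\d x_I=|k|\,\omega$, so $\widehat{\BL_\alpha\omega}(k')=|k'|\hat\omega(k')$. As a consistency check, applying Lemma \ref{li+il} with $\phi=\hat\delta(k)$ and using the first two formulas yields $\widehat{\BL_\alpha\omega}(k)=\hat\delta(k)\wedge\iota_{\hat\delta(k)}\hat\omega(k)+\iota_{\hat\delta(k)}\bigl(\hat\delta(k)\wedge\hat\omega(k)\bigr)=|\hat\delta(k)|^2\hat\omega(k)=|k|\hat\omega(k)$.

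Finally, closability follows from standard functional analysis: the three operators map $\msP(\rdp;\Uplambda^r)$ into $\msP(\rdp;\Uplambda^{r+1})$, $\msP(\rdp;\Uplambda^{r-1})$ and $\msP(\rdp;\Uplambda^r)$ respectively, and $\msP$ is dense in the corresponding $L^2$-spaces because it contains the orthonormal basis $B_r$. By the definition of $\delta^*$, the operator $\delta$ on $\msP(\rdp;\Uplambda^r)$ and the operator $\delta^*$ on $\msP(\rdp;\Uplambda^{r+1})$ are formal adjoints of each other; since each has a densely-defined adjoint, each is closable. The operator $\BL_\alpha=\delta\delta^*+\delta^*\delta$ is symmetric on $\msP(\rdp;\Uplambda^r)$, hence closable as well. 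The only subtle point in the whole argument is tracking the sign $(-1)^{\sigma(j,I)}$ so that the coefficient readoff in the basis $\{\d x_J:J\in\cI_{r\pm 1}\}$ matches the wedge/interior product formulas; this is essentially the same bookkeeping already carried out in the proof of Proposition \ref{p: diag}.
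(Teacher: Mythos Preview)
Your proof is correct and follows essentially the same route as the paper: both hinge on the identities $\delta_j\ell^{\alpha,I}_k=\sqrt{k_j}\,\ell^{\alpha,I\cup\{j\}}_k$ ($j\notin I$) and $\delta_j^*\ell^{\alpha,I}_k=\sqrt{k_j}\,\ell^{\alpha,I\setminus\{j\}}_k$ ($j\in I$), and then read off the Fourier--Laguerre coefficients. The only cosmetic differences are that the paper obtains $\widehat{\delta^*\omega}(k)=\iota_{\hat\delta(k)}\hat\omega(k)$ by Parseval duality from the first formula rather than via Proposition~\ref{2nd}, and it proves closability by a direct Fourier--Laguerre argument ($\omega_n\to 0$ forces $\hat\delta(k)\wedge\hat\omega_n(k)\to 0$) rather than invoking the formal-adjoint/symmetry criterion; both variants are equally short and valid.
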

\begin{proof} We observe that 
\begin{align*}
\delta_j \ell^{\alpha,I}_k&=\sqrt{k_j}\  \ell^{\alpha,I\cup j}_k\qquad {\rm if}\quad  j\notin I
 \\ 
\delta^*_j \ell^{\alpha,I}_k&=\sqrt{k_j}\  \ell^{\alpha,I\setminus j}_k
\qquad {\rm if}\quad  j\in I\\
\cL_{\alpha,I} \ell^{\alpha,I}_k&=|k|\ \ell^{\alpha,I}_k.
\end{align*}
Indeed, the first identity follows immediately from the definition of $\ell^{\alpha,I}_k$, the second from the identity $\delta_j^*\delta_j \ell^{\alpha_j}_{k_j}(x_j)=k_j\ell^{\alpha_j}_{k_j}(x_j)$. The last identity follows from the first two and the fact that $\cL^{\alpha,I}_k= \sum_{I\in\cI_r}\sum_{j\in I} \delta_j\delta_j^*+\sum_{I\in\cI_r}\sum_{j\notin I} \delta_j^*\delta_j$.

If $ \omega=\sum_{I \in \mathcal{I}_r}\omega_I \d x_I$  in $\msP(\rdp;\Uplambda^r)$, then
\[ \delta\omega=\sum_{I \in \mathcal{I}_r}\sum_{j\notin I} \delta_j\omega_I \ \d x_j\wedge \d x_I=\sum_{I \in \mathcal{I}_r} \sum_{j\notin I} (-1)^{\sigma(j,I)}\delta_j\omega_I \ \d x_{I \cup j}.\]
Hence
\begin{align*}
 \widehat{\delta\omega}(I \cup j,k)=&  \langle (\delta \omega)_{I \cup j}, \ell^{\alpha,I\cup j}_k\rangle_\alpha = (-1)^{\sigma(j,I)}\ \langle \delta_j\omega_I, \ell^{\alpha,I \cup j}_k\rangle_\alpha\\ 
=&(-1)^{\sigma(j,I)}\ \langle \omega_I, \delta_j^* \ell^{\alpha,I \cup j}_k\rangle_\alpha = (-1)^{\sigma(j,I)}\ \langle \omega_I, \sqrt{k_j}\ \ell^{\alpha,I}_k\rangle_\alpha\\
= & (-1)^{\sigma(j,I)}\ \sqrt{k_j}\ \hat{\omega}(I,k).
\end{align*}
Thus
\begin{align*}
\widehat{\delta\omega}(k)=&\sum_{J\in\mathcal{I}_{r+1}}\widehat{\delta \omega}(J,k) \ \d x_J  \\
=& \sum_{I\in \mathcal{I}_r}\sum_{j\notin I} \widehat{\delta \omega}(I\cup j,k) \d x_{I \cup j} =  \\
=& \sum_{I \in \mathcal{I}_r}\sum_{j\notin I} (-1)^{\sigma(j,I)}\  \sqrt{k_j}\ \hat{\omega} (I,k) \d x_{I\cup j} \\ 
=& \sum_{I \in \mathcal{I}_r}\sum_{j\notin I} \sqrt{k_j}\ \hat{\omega}(I,k) \d x_j\wedge \d x_I  \\ 
=&\left(\sum_{j=1}^d \sqrt{k_j} \d x_j\right)\wedge \left(\sum_{I \in \mathcal{I}_r} \hat{\omega}(I,k) \d x_I\right)\\
=&\hat{\delta}(k)\wedge\hat{\omega}(k).
\end{align*}
To prove the identity $\widehat{\delta^*\omega}(k)=\iota_{\hat\delta(k)} \hat \omega(k)$, we observe that for all $\eta \in \msP(\rdp;\Uplambda^{r-1})$
\begin{align*}
\langle \delta^* \omega, \eta\rangle_\alpha= \langle \omega, \delta \eta\rangle_\alpha&= \sum_{k \in \mathbb{N}^d} \langle \hat{\omega}(k), \widehat{\delta \eta} (k)\rangle_{\Uplambda^r} \\ 
&= \sum_{k \in \mathbb{N}^d} \langle \hat{\omega}(k), \hat{\delta}(k)\wedge \hat{\eta}(k)\rangle_{\Uplambda^r}  \\
& = \sum_k \langle\iota_{\hat{\delta}(k)} \hat{\omega}(k), \hat{\eta}(k)\rangle_{\Uplambda^{r-1}},
\end{align*}
by  Parseval's identity and the fact that the operator of interior multiplication by $\hat{\delta}(k)$ is the adjoint with respect to the inner product on covectors of the exterior multiplication by $\hat{\delta}(k)$. Since $\eta$ is arbitrary, the conclusion follows.
\par
To prove the last identity, we observe that by  Proposition \ref{p: diag} $(\BL_\alpha\omega)_I=\cL_{\alpha,I}\omega_I$. Thus
\begin{align}\label{hsao}
\widehat{\BL_\alpha \omega}(I,k)&=\langle(\BL_\alpha \omega)_I,\ell^{\alpha,I}_{k}\rangle_\alpha \nonumber\\ 
&= \langle\cL_{\alpha,I} \omega_I,\ell^{\alpha,I}_k\rangle_\alpha \nonumber\\ 
&= \langle\omega_I,\cL_{\alpha,I}  \,\ell^{\alpha,I}_k\rangle_\alpha\\
&=|k|\ \langle\omega_I, \,\ell^{\alpha,I}_k\rangle_\alpha\nonumber\\
&=|k|\ \widehat{\omega}(I,k).\nonumber
\end{align}
Hence
$$
\widehat{\BL_\alpha\omega}(k)=\sum_I \widehat{\BL_\alpha \omega}(I,k) \d x_I= |k| \ \sum_I \widehat{\omega}(I,k) \d x_I=|k|\ \widehat{\omega}(k).
$$
\par
It is now an easy matter to see that $\delta$, $\delta^*$ and $\BL_\alpha$ with domain  $\msP(\rdp;\Uplambda^r)$ are closable in $L^2(\rdp,\mua;\Uplambda^r)$. Indeed, if $(\omega_n)$ is a sequence in $\msP(\rdp;\Uplambda^r)$ such that $\omega_n\to0$ and $\delta \omega_n\to \eta$ in $L^2(\rdp,\mua;\Uplambda^r)$, then $\hat\omega_n(k)\to 0$ and $\hat\eta(k)=\lim_n\widehat{\delta \omega_n}(k)=\hat\delta(k)\wedge\hat\omega_n(k)=0$ for every $k$. Hence $\eta=0$.\par
 The proofs that $\delta^*$ and $\BL_\alpha$ are closable are similar.
\end{proof}
\begin{notation}\label{not1501}
With a slight abuse of notation, we denote also by $\delta$, $\delta^*$ and $\BL_\alpha$ the closures in $L^2(\rdp,\mua;\Uplambda^r)$ of the operators $\delta$, $\delta^*$ and $\BL_\alpha$ on $\msP(\rdp;\Uplambda^r)$). 
\end{notation}
The following proposition characterises their domains in $L^2(\rdp,\mua;\Uplambda^r)$ via the Fourier-Laguerre transform. 
\begin{prop}\label{dom}
The domains of $\delta$, $\delta^*$ and $\BL_\alpha$ on $L^2(\rdp,\mua;\Uplambda^r)$ are 
\begin{align*}
\msD_r(\delta)&=\left\{\omega\in L^2(\rdp,\mua;\Uplambda^r):  \sum_{k \in \BN^d_r} |{\hat\delta(k)\wedge \hat\omega(k)}|_{\Uplambda^{r+1}}^2<\infty\right\} \\ 
\msD_r(\delta^*)&=\left\{\omega\in L^2(\rdp,\mua;\Uplambda^r): \sum_{k \in \BN^d_r} |{\iota_{\hat\delta(k)} \hat\omega(k)}|_{\Uplambda^{r-1}}^2<\infty \right\} \\ 
\msD_r(\BL_\alpha)&=\left\{\omega\in L^2(\rdp,\mua;\Uplambda^r): \sum_{k\in \BN^d_r} |k|^2\ |{ \hat\omega(k)}|_{\Uplambda^r}^2<\infty \right\}.
\end{align*}
The identities (\ref{FLTofddH}) continue to hold for $\omega$ in the domains of $\delta$, $\delta^*$ and $\BL_\alpha$.
\end{prop}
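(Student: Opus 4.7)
The overall plan is to exploit the Fourier-Laguerre isometry from Proposition \ref{invform+Pars} to transfer everything to a coefficient-wise statement on $\BN^d_r$, where, by Proposition \ref{FLTofddL}, each of the three operators acts by a simple pointwise-in-$k$ algebraic operation on $\hat\omega(k)$: exterior multiplication by $\hat\delta(k)$, interior multiplication by $\hat\delta(k)$, or scalar multiplication by $|k|$. Each of these three operations is continuous on $\Uplambda^*$, so the closure-characterisation reduces to the obvious fact that an $\ell^2$-summable sequence of covectors is characterised by the $\ell^2$-summability of its image under a fixed continuous map.

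I would carry out the argument in detail for $\delta$; the other two cases are identical mutatis mutandis. For the inclusion $\msD_r(\delta)\subseteq \{\omega : \sum_k |\hat\delta(k)\wedge\hat\omega(k)|^2<\infty\}$, pick $\omega\in\msD_r(\delta)$ and a sequence $\omega_n\in\msP(\rdp;\Uplambda^r)$ with $\omega_n\to\omega$ and $\delta\omega_n\to\delta\omega$ in $L^2$. Parseval (Proposition \ref{invform+Pars}) gives $\hat\omega_n(k)\to\hat\omega(k)$ and $\widehat{\delta\omega_n}(k)\to\widehat{\delta\omega}(k)$ for each fixed $k$. By Proposition \ref{FLTofddL}, $\widehat{\delta\omega_n}(k)=\hat\delta(k)\wedge\hat\omega_n(k)$, and exterior multiplication is continuous on $\Uplambda^*$, so passing to the limit yields the identity $\widehat{\delta\omega}(k)=\hat\delta(k)\wedge\hat\omega(k)$. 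Then $\sum_k |\hat\delta(k)\wedge\hat\omega(k)|^2=\sum_k|\widehat{\delta\omega}(k)|^2=\norm{\delta\omega}{2}^2<\infty$ by Parseval again. This simultaneously proves that the identities \eqref{FLTofddH} extend from $\msP$ to the domains.

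For the reverse inclusion, assume $\omega\in L^2(\rdp,\mua;\Uplambda^r)$ satisfies $\sum_k|\hat\delta(k)\wedge\hat\omega(k)|^2<\infty$, and set
\[
\omega_N=\sum_{k\in\BN^d_r,\ |k|\le N}[\hat\omega(k),\lambda^\alpha_k]\in\msP(\rdp;\Uplambda^r).
\]
By Proposition \ref{invform+Pars}, $\omega_N\to\omega$ in $L^2$. Using Proposition \ref{FLTofddL} applied to the polynomial form $\omega_N$ together with Parseval,
\[
\norm{\delta\omega_N-\delta\omega_M}{2}^2=\sum_{M<|k|\le N}|\hat\delta(k)\wedge\hat\omega(k)|^2\longrightarrow 0
\]
as $M,N\to\infty$, so $(\delta\omega_N)$ is Cauchy and converges to some $\eta\in L^2$. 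Closedness of $\delta$ on $\msP(\rdp;\Uplambda^r)$ (the last assertion of Proposition \ref{FLTofddL}) then forces $\omega\in\msD_r(\delta)$ and $\delta\omega=\eta$.

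The cases of $\delta^*$ and $\BL_\alpha$ require no new idea: replace the wedge product $\hat\delta(k)\wedge\cdot$ by $\iota_{\hat\delta(k)}$ or by multiplication by $|k|$ throughout; continuity of these operations on $\Uplambda^*$ and the analogous Cauchy estimate for the partial sums $\omega_N$ give the claim. I do not anticipate a genuine obstacle — the proof is essentially a bookkeeping exercise built on the isometry of the Fourier-Laguerre transform. The one point that warrants care is verifying that the truncations $\omega_N$ indeed lie in $\msP(\rdp;\Uplambda^r)$, which is immediate from the definition of $\msP$ as the space of finite linear combinations of elements of $B_r$, and checking that the identities \eqref{FLTofddH} carry over to the closures by the pointwise-in-$k$ limiting argument outlined above.
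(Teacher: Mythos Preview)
Your argument is correct and is precisely the natural elaboration the paper intends: the paper's own proof consists of the single sentence ``The proof is straightforward.'' The only cosmetic point is that where you invoke ``closedness of $\delta$ on $\msP(\rdp;\Uplambda^r)$'' you really mean that $\msD_r(\delta)$ is by definition (Notation~\ref{not1501}) the domain of the closure, so the convergence $\omega_N\to\omega$, $\delta\omega_N\to\eta$ with $\omega_N\in\msP$ places $\omega$ in $\msD_r(\delta)$ by the very definition of closure; this is what you use, and it is correct.
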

\begin{proof}
The proof is straightforward.
\end{proof}
\begin{prop}\label{coinc}
The space $C_c^\infty(\rdp;\Uplambda^r)$ is contained in the 
spaces $\msD_r(\delta)$, $\msD_r(\delta^*)$ and $\msD_r(\BL_\alpha)
$ and on it the operators $\delta$, $\delta^*$ and $\BL_\alpha$ 
coincide with the closures in $L^2(\rdp,\mua;\Uplambda^r)$ of the operators $\delta$, $\delta^*$ and $\BL_\alpha$ on $\msP(\rdp;\Uplambda^r)$). 
\end{prop}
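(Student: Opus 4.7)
\emph{Proof plan.} Because $\omega\in C_c^\infty(\rdp;\Uplambda^r)$ has support compactly contained in the open cone $\rdp$, in particular it vanishes on a neighborhood of each hyperplane $\{x_j=0\}$, so the explicit formulas for $\delta$ and for $\delta^*$ (Proposition \ref{2nd}) produce forms $\delta\omega$, $\delta^*\omega$ and $\BL_\alpha\omega$ whose coefficients are again smooth and compactly supported in $\rdp$; in particular they lie in $L^2(\rdp,\mua;\Uplambda^\bullet)$. The singular factors $\psi_j$ appearing in $\delta_j^*$ are multiplied by functions vanishing near $\{x_j=0\}$, so no issue arises at the boundary.

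The main step is to show that the Fourier--Laguerre transforms of the classical forms $\delta\omega$, $\delta^*\omega$, $\BL_\alpha\omega$ are given by the same identities \eqref{FLTofddH} as in Proposition \ref{FLTofddL}. I would do this by a direct integration by parts against the basis elements $\ell^{\alpha,J}_k$: since $\omega_{J\setminus j}\in C_c^\infty(\rdp)$ there are no boundary contributions at $\{x_j=0\}$ nor at infinity, even though $\ell^{\alpha,J}_k$ and $\psi_j$ may be unbounded there. Using the relations
\[
\delta_j^*\ell^{\alpha,J}_k=\sqrt{k_j}\,\ell^{\alpha,J\setminus j}_k\quad(j\in J),\qquad \delta_j\ell^{\alpha,I}_k=\sqrt{k_j}\,\ell^{\alpha,I\cup j}_k\quad(j\notin I),
\]
recorded in the proof of Proposition \ref{FLTofddL}, one extracts for $k\in\cK(J)$
\[
\widehat{\delta\omega}(J,k)=\sum_{j\in J}(-1)^{\sigma(j,J\setminus j)}\sqrt{k_j}\,\hat\omega(J\setminus j,k),
\]
which is precisely the $J$-component of $\hat\delta(k)\wedge\hat\omega(k)$; the analogous computations handle $\widehat{\delta^*\omega}$ and $\widehat{\BL_\alpha\omega}$. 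When $k\notin\cK(J)$ both sides are zero, thanks to the convention $\hat\omega(I,\cdot)=0$ off $\cK(I)$. Since $\delta\omega,\,\delta^*\omega,\,\BL_\alpha\omega\in L^2$, Parseval's identity (Proposition \ref{invform+Pars}) together with the characterisation of the domains in Proposition \ref{dom} then places $\omega$ simultaneously in $\msD_r(\delta)$, $\msD_r(\delta^*)$ and $\msD_r(\BL_\alpha)$.

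To conclude that the closures coincide on $C_c^\infty(\rdp;\Uplambda^r)$ with the classical operators, I would use the partial Fourier--Laguerre sums
\[
\omega_n=\sum_{\substack{I\in\cI_r,\,k\in\cK(I)\\ |k|\le n}}\hat\omega(I,k)\,\ell^{\alpha,I}_k\,\d x_I\;\in\;\msP(\rdp;\Uplambda^r).
\]
Parseval gives $\omega_n\to\omega$ in $L^2$, while Proposition \ref{FLTofddL} applied on $\msP$ yields $\widehat{\delta\omega_n}(k)=\mathbf{1}_{|k|\le n}\,\hat\delta(k)\wedge\hat\omega(k)$. Since the previous paragraph shows $\sum_k|\hat\delta(k)\wedge\hat\omega(k)|^2=\norm{\delta\omega}{2}^2<\infty$, dominated convergence in $\ell^2$ yields $\delta\omega_n\to\delta\omega$ in $L^2$. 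By the very definition of the closure this means $\omega\in\msD_r(\delta)$ and the closure sends $\omega$ to the classical $\delta\omega$. The same argument, with $\iota_{\hat\delta(k)}\hat\omega(k)$ and $|k|\hat\omega(k)$ in place of $\hat\delta(k)\wedge\hat\omega(k)$, handles $\delta^*$ and $\BL_\alpha$.

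The only genuine obstacle is the combinatorial bookkeeping relating $J$ to $I=J\setminus j$ in the integration by parts, and the careful handling of multi-indices $k$ outside $\cK(J)$, where both sides of the Fourier--Laguerre identities vanish for different reasons (the left side by the convention extending $\hat\omega$, the right side because some factor $\sqrt{k_j}$ or $\hat\omega(J\setminus i,k)$ is zero). Once this bookkeeping is checked, the remainder of the argument is a routine application of Parseval and dominated convergence.
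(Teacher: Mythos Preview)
Your proposal is correct and follows essentially the same approach as the paper: compute the Fourier--Laguerre coefficients of the classically defined $\delta\omega$, $\delta^*\omega$, $\BL_\alpha\omega$ by integration by parts (legitimate because of compact support), verify they satisfy the identities \eqref{FLTofddH}, and conclude via Proposition~\ref{dom}. The paper is slightly more economical in the first step: rather than arguing separately that each of $\delta\omega$, $\delta^*\omega$, $\BL_\alpha\omega$ lies in $L^2$, it observes that $|k|^m|\hat\omega(I,k)|=|\widehat{\BL_\alpha^m\omega}(I,k)|\le\norm{\BL_\alpha^m\omega}{L^2}$ for every $m$, so the coefficients decay faster than any power of $|k|$, which immediately places $\omega$ in all three domains at once.
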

\begin{proof}
If $\omega\in C_c^\infty(\rdp;\Uplambda^r)$ then its Fourier-
Laguerre coefficients decay faster than any power of $|k|$, because 
by (\ref{hsao}), for every positive integer $m$
\begin{align*}
|k|^m |\widehat{\omega}(I,k)|&=|\widehat{\BL_\alpha^m \omega}
(I,k)| \\ 
&=\int_{\rdp} (\BL_\alpha^m \omega)_I\  \ell^{\alpha,I}_k
\d \mu_\alpha \\ 
&\le \norm{\BL_\alpha^m \omega}{L^2}.
\end{align*}
This shows that $C_c^\infty(\rdp;\Uplambda^r)$ is contained in the 
domains $\msD_r(\delta)$, $\msD_r(\delta^*)$ and $\msD_r(\BL_
\alpha)$. The fact that the operators on $L^2(\rdp,\mua;\Uplambda^r)$ coincide with with the closures in $L^2(\rdp,\mua;\Uplambda^r)$ of the operators $\delta$, $\delta^*$ and $\BL_\alpha$ on $\msP(\rdp;\Uplambda^r)$) follows easily by computing the Fourier-
Laguerre coefficients of $\delta\omega$, $\delta^* \omega$ and $
\BL_\alpha\omega$, intended in the classical sense, as in the proof 
of  Proposition~\ref{FLTofddL}.
\end{proof}
\begin{prop}\label{sa}
The operators $\delta$ and $\delta^*$ on their domains $
\msD_r(\delta)$ and $\msD_r(\delta^*)$ are adjoint of each other. The 
operator $\BL_\alpha$ on $\msD_r(\BL_\alpha)$ is self-adjoint and its 
spectral resolution is
\begin{equation}\label{f: specres}
\BL_\alpha=\sum_{n\ge r} n \cP^\alpha_n
\end{equation}
where $\cP^{\alpha}_n$ is the orthogonal projection onto the space spanned
by the forms $\lambda^\alpha_k$, $|k|=n$. 
\end{prop}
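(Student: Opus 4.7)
The plan is to transfer everything to the Fourier-Laguerre side. By Proposition~\ref{invform+Pars} the map $\omega\mapsto(\hat\omega(k))_{k\in\BN^d_r}$ is an isometric isomorphism from $L^2(\rdp,\mua;\Uplambda^r)$ onto a Hilbert space of square-summable multisequences of alternating $r$-tensors, and by Proposition~\ref{FLTofddL} it intertwines $\delta$, $\delta^*$, $\BL_\alpha$ with the pointwise fiberwise operations $\hat\omega(k)\mapsto\hat\delta(k)\wedge\hat\omega(k)$, $\hat\omega(k)\mapsto\iota_{\hat\delta(k)}\hat\omega(k)$, and $\hat\omega(k)\mapsto|k|\hat\omega(k)$ respectively. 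Proposition~\ref{dom} identifies the domains of the closures with the maximal domains for these pointwise actions.

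For the duality of $\delta$ and $\delta^*$, I would show that $\delta^*$ is the Hilbert-space adjoint $\delta^{\mathrm{adj}}$ of $\delta$ (the other identity follows by symmetry and closedness). The inclusion $\delta^*\subseteq\delta^{\mathrm{adj}}$ comes from Parseval combined with the fiberwise adjointness between exterior and interior products (a pointwise instance of Lemma~\ref{li+il}): for $\omega\in\msD_r(\delta)$ and $\eta\in\msD_{r+1}(\delta^*)$,
\[
\langle\delta\omega,\eta\rangle_\alpha=\sum_{k}\langle\hat\delta(k)\wedge\hat\omega(k),\hat\eta(k)\rangle=\sum_{k}\langle\hat\omega(k),\iota_{\hat\delta(k)}\hat\eta(k)\rangle=\langle\omega,\delta^*\eta\rangle_\alpha.
\]
For the converse, given $\eta\in\mathrm{Dom}(\delta^{\mathrm{adj}})$ with $\delta^{\mathrm{adj}}\eta=\zeta\in L^2(\rdp,\mua;\Uplambda^r)$, I would test against each basis element $\ell^{\alpha,I}_k\,\d x_I\in\msP(\rdp;\Uplambda^r)\subset\msD_r(\delta)$; using the explicit formula for $\widehat{\delta\omega}$ one obtains $\hat\zeta(I,k)=(\iota_{\hat\delta(k)}\hat\eta(k))_I$ for each $I\in\cI_r$ and $k\in\cK(I)$. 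Parseval applied to $\zeta$ then forces $\sum_k|\iota_{\hat\delta(k)}\hat\eta(k)|^2<\infty$, placing $\eta\in\msD_{r+1}(\delta^*)$ with $\delta^*\eta=\zeta$.

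The self-adjointness of $\BL_\alpha$ and the spectral resolution are then immediate. Under the Fourier-Laguerre isometry, $\BL_\alpha$ is unitarily equivalent to multiplication by the real-valued function $k\mapsto|k|$, which is self-adjoint on its natural maximal domain, matching $\msD_r(\BL_\alpha)$ by Proposition~\ref{dom}. Partitioning $\BN^d_r=\bigsqcup_{n\ge r}\{k:|k|=n\}$ (note $\cK(I)\subseteq\{k:|k|\ge r\}$ for every $I\in\cI_r$) yields $\BL_\alpha=\sum_{n\ge r}n\cP^\alpha_n$, with $\cP^\alpha_n$ the orthogonal projection onto the eigenspace for eigenvalue $n$.

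The principal subtlety I expect is the coherence check in the reverse adjoint inclusion: one must verify that the identity $\hat\zeta(I,k)=(\iota_{\hat\delta(k)}\hat\eta(k))_I$ is consistent with the convention $\hat\zeta(I,k)=0$ for $k\notin\cK(I)$. This is automatic because $\cK(I\cup j)\subseteq\cK(I)$ for all $j\notin I$, so if $k\notin\cK(I)$ then $\hat\eta(I\cup j,k)=0$ for every $j$, making every summand of the expansion $(\iota_{\hat\delta(k)}\hat\eta(k))_I=\sum_{j\notin I}(-1)^{\sigma(j,I)}\sqrt{k_j}\,\hat\eta(I\cup j,k)$ vanish.
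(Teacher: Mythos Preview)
Your argument is correct and follows essentially the same route as the paper: both directions of the duality are obtained via Parseval and the fiberwise adjointness of exterior and interior multiplication by $\hat\delta(k)$, and the self-adjointness and spectral resolution of $\BL_\alpha$ are read off from its unitary equivalence with multiplication by $|k|$. Your treatment of the reverse inclusion is in fact slightly more explicit than the paper's (which bounds $|\langle\delta\omega,\eta\rangle_\alpha|$ uniformly over $\omega\in\msP$ rather than testing on individual basis elements), and your consistency check for $k\notin\cK(I)$ is a worthwhile point that the paper leaves implicit.
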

\begin{proof}
To prove that $\delta^*$ is the adjoint of $\delta$ observe that, if $
\omega\in \mathscr{D}_r(\delta)$ and $\eta\in \mathscr{D}_{r+1}(\delta^*)
$, then by polarising Parseval's identity 
\begin{align*}
\langle \delta \omega, \eta\rangle_\alpha&=\sum_{k \in \BN^d_{r+1}}\langle \widehat{\delta \omega}(k),\hat\eta(k)
\rangle_{\Uplambda^{r+1}} \\ 
&= \sum_{k \in \BN^d_{r+1}} \langle \hat{\delta}(k)\wedge\hat 
\omega(k),\hat\eta(k)\rangle_{\Uplambda^{r+1}}\\
&=\sum_{k \in \BN^d_{r+1}}\langle  \omega(k),\iota_{\hat\delta(k)}
\eta(k)\rangle_{\Uplambda^{r+1}} \\
&=\langle \omega,\delta^*\eta \rangle_\alpha.\\		
\end{align*}
Thus the adjoint of $\delta$ is an extension of $\delta^*$.\\
Conversely, if $\eta\in L^2(\rdp,\mua;\Uplambda^{r+1})$ is in 
the domain of the adjoint of $\delta$, then for every $\omega\in 
\mathscr{P}(\rdp;\Uplambda^r)$ there exists a constant $C(\omega)
$ such that
\[ |{\langle\delta\omega,\eta\rangle_\alpha}| \le C(\eta) \ \|{\omega}\|
_{L^2(\mu_\alpha)}.\]
This implies that 
\[ \left|{\sum_{k \in \mathbb{N}^d_r}\langle  \omega(k),\hat\iota_{\hat
\delta(k)}\eta(k)\rangle_{\Uplambda^{r}(\rdp)}}\right|\le C(\eta)\ \|
{\omega}\|_{L^2(\mu_\alpha)}.\]
Since this holds for all $\omega\in\mathscr{P}(\Uplambda^r(\rdp))$, it 
follows that 
\[ \sum_{k\in \BN^d_r} \|{\iota_{\hat\delta(k)}\eta(k)}\|_{\Lambda^{r}(\rdp)}^2<\infty,
\]
that is $\eta\in \mathscr{D}(\delta^*)$. This proves that $\delta^*$ is 
the adjoint of $\delta$. The proof that the $\delta$ is the adjoint of $
\delta^*$ is similar.\par
To show that $\BL_\alpha$ is self-adjoint, it is enough to remark that $\BL_\alpha$ is unitarily equivalent, via the Fourier-Laguerre transform, to the  operator of multiplication by the function
$k\mapsto|k|$ acting on its natural domain in the space $\ell^2({\BN^d_r};\Uplambda^r(\BR^d))$  of square summable $\Uplambda^r(\BR^d)$-valued multi-sequences.\par
Finally the spectral resolution of $\BL_\alpha$ follows from the following facts 
\begin{itemize}
\item[(a)] $\{\ell^{\alpha,I}_k \d x_I: I\in\cI_r, k\in\cK(I)\}$ is an orthonormal basis of $L^2(\rdp,\mua;\Uplambda^r)$;
\item[(b)] $\BL_\alpha\left(\ell^{\alpha,I}_k \d x_I\right)=\cL^\alpha_{I,k}\ell^{\alpha,I}_k \d x_I=|k|\ \ell^{\alpha,I}_k \d x_I$;
\item[(c)] $\lambda^\alpha_k=\sum_{\in\cI_r} \ell^{\alpha,I}_k \d x_I$;
\item[(d)] if $k\in \cK(I)$ then $|k|\ge r$.
\end{itemize}
\end{proof}
We shall denote by 
$
\ker_r(\delta),\ \ker_r(\delta^*)$, $\ker_r(\BL_\alpha)$ and  $\im_r(\delta), \ \im_r(\delta^*) 
$ and $\im_r(\BL_\alpha)$
the kernels and the images of $\delta$, $\delta^*$ and $\BL_\alpha$, considered as  operators on $L^2(\rdp,\mua;\Uplambda^r)$.  Thus $\ker_r(\BL_\alpha)$ is the space of $r$-harmonic forms in $L^2(\rdp,\mua;\Uplambda^r)$. It follows from the spectral resolution of $\BL_\alpha$ that the only harmonic $0$-forms are the constants, while there are no non-trivial harmonic $r$-forms for $r\ge 1$. \par
In Section \ref{s: HLonf} we defined the Hodge-Laguerre operator on smooth forms as $\BL_\alpha=\delta\delta^*+\delta^*\delta$. The same identity for the corresponding unbounded operators on $L^2(\rdp,\mua;\Uplambda^r)$ is not obvious, because one must verify that the domains of the left and right hand side coincide. Indeed, one has
\begin{prop}\label{domid} Let
$$
\msD_r(\delta\delta^*+\delta^*\delta)=\{\omega\in \msD_r(\delta)\cap\msD_r(\delta^*): \delta\omega\in \msD_{r+1}(\delta^*) {\rm and}\ \delta^*\omega\in \msD_{r-1}(\delta)\}. 
$$
Then $\msD_r(\delta\delta^*+\delta^*\delta)=\msD_r(\BL_\alpha)$ and $\BL_\alpha=\delta\delta^*+\delta^*\delta$. Moreover $\BL_\alpha$ commutes with $\delta$ and $\delta^*$ and
$$
\im_r(\delta)\subset \ker_r(\delta),\qquad \im_r(\delta^*)\subset \ker_r(\delta^*),
$$
i.e. $\delta^2=0$ and $(\delta^*)^2=0$.
\end{prop}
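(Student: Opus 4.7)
The plan is to translate every assertion through the Fourier--Laguerre transform and to reduce the operator identities to pointwise--in--$k$ identities for the covectors $\hat\omega(k)\in\Uplambda^r$, using the algebraic identities of Lemma \ref{li+il} with $\phi=\hat\delta(k)$. Since $\hat\delta(k)=\sum_j\sqrt{k_j}\,\d x_j$, we have $|\hat\delta(k)|^2=\sum_j k_j=|k|$, so Lemma \ref{li+il} specialises to
\begin{equation}\label{eq:planA}
\hat\delta(k)\wedge\iota_{\hat\delta(k)}\hat\omega(k)+\iota_{\hat\delta(k)}\bigl(\hat\delta(k)\wedge\hat\omega(k)\bigr)=|k|\,\hat\omega(k),
\end{equation}
\begin{equation}\label{eq:planB}
|\hat\delta(k)\wedge\hat\omega(k)|^2+|\iota_{\hat\delta(k)}\hat\omega(k)|^2=|k|\,|\hat\omega(k)|^2.
\end{equation}

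First I would establish the operator identity. For $\omega\in\msD_r(\delta\delta^*+\delta^*\delta)$, Proposition \ref{FLTofddL} gives $\widehat{\delta\delta^*\omega}(k)=\hat\delta(k)\wedge\iota_{\hat\delta(k)}\hat\omega(k)$ and $\widehat{\delta^*\delta\omega}(k)=\iota_{\hat\delta(k)}(\hat\delta(k)\wedge\hat\omega(k))$, so by \eqref{eq:planA} the Fourier--Laguerre transform of $(\delta\delta^*+\delta^*\delta)\omega$ equals $|k|\,\hat\omega(k)=\widehat{\BL_\alpha\omega}(k)$. Hence on the common domain the two operators agree, and it only remains to check domain equality. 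For the inclusion $\msD_r(\delta\delta^*+\delta^*\delta)\subseteq\msD_r(\BL_\alpha)$, the assumption guarantees that both $\sum_k|\hat\delta(k)\wedge\iota_{\hat\delta(k)}\hat\omega(k)|^2$ and $\sum_k|\iota_{\hat\delta(k)}(\hat\delta(k)\wedge\hat\omega(k))|^2$ are finite, whence by \eqref{eq:planA} and the triangle inequality $\sum_k|k|^2|\hat\omega(k)|^2<\infty$. For the reverse inclusion, \eqref{eq:planB} yields $|\hat\delta(k)\wedge\hat\omega(k)|^2,|\iota_{\hat\delta(k)}\hat\omega(k)|^2\le|k||\hat\omega(k)|^2\le|k|^2|\hat\omega(k)|^2$, placing $\omega$ in $\msD_r(\delta)\cap\msD_r(\delta^*)$; applying \eqref{eq:planB} a second time to the $(r+1)$-form $\hat\delta(k)\wedge\hat\omega(k)$ and to the $(r-1)$-form $\iota_{\hat\delta(k)}\hat\omega(k)$ gives the corresponding bounds for $\delta\omega\in\msD_{r+1}(\delta^*)$ and $\delta^*\omega\in\msD_{r-1}(\delta)$.

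For $\delta^2=0$ and $(\delta^*)^2=0$, I note that the formulas of Proposition \ref{FLTofddL} continue to hold on the $L^2$ domains by Proposition \ref{dom}, so $\widehat{\delta^2\omega}(k)=\hat\delta(k)\wedge\hat\delta(k)\wedge\hat\omega(k)=0$ (since the wedge of a $1$-form with itself vanishes) and analogously $\widehat{(\delta^*)^2\omega}(k)=\iota_{\hat\delta(k)}\iota_{\hat\delta(k)}\hat\omega(k)=0$; in each case one must also verify that the intermediate form $\delta\omega$ (resp.\ $\delta^*\omega$) lies in the domain of the next application, which is again a routine Fourier--Laguerre summability check using \eqref{eq:planB}. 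The commutation $\BL_\alpha\delta=\delta\BL_\alpha$ and $\BL_\alpha\delta^*=\delta^*\BL_\alpha$ follows from $\widehat{\delta\BL_\alpha\omega}(k)=\hat\delta(k)\wedge(|k|\hat\omega(k))=|k|\,\widehat{\delta\omega}(k)=\widehat{\BL_\alpha\delta\omega}(k)$ for $\omega$ in the (easily characterised) common domain, and similarly for $\delta^*$.

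The only slightly delicate point is the domain matching in the first step, and the main obstacle is to make sure that the compositions $\delta\delta^*$ and $\delta^*\delta$ are individually well defined whenever $\omega\in\msD_r(\BL_\alpha)$: this is precisely where both inequalities coming out of \eqref{eq:planB} must be used, first to put $\omega$ in $\msD_r(\delta)\cap\msD_r(\delta^*)$ and then to put $\delta\omega$ in $\msD_{r+1}(\delta^*)$ and $\delta^*\omega$ in $\msD_{r-1}(\delta)$. Once this bookkeeping is done, the proposition reduces to the two pointwise identities \eqref{eq:planA}--\eqref{eq:planB} and the vanishing $\hat\delta(k)\wedge\hat\delta(k)=0$.
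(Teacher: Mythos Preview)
Your proposal is correct and follows essentially the same approach as the paper: both arguments pass to the Fourier--Laguerre side and reduce everything to the pointwise identities of Lemma~\ref{li+il} with $\phi=\hat\delta(k)$, so that \eqref{eq:planA} gives the operator identity and \eqref{eq:planB} controls the domains. The only cosmetic difference is that, for the inclusion $\msD_r(\BL_\alpha)\subset\msD_r(\delta\delta^*+\delta^*\delta)$, the paper observes directly that $\hat\delta(k)\wedge\iota_{\hat\delta(k)}\hat\omega(k)$ and $\iota_{\hat\delta(k)}(\hat\delta(k)\wedge\hat\omega(k))$ are orthogonal (since $\hat\delta(k)\wedge\hat\delta(k)=0$), giving $|A|^2+|B|^2=|k|^2|\hat\omega(k)|^2$ in one shot, whereas you apply \eqref{eq:planB} twice; both routes are equally valid.
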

\begin{proof}
If $\omega\in \msD_r(\delta\delta^*+\delta^*\delta)$ then $\delta
\delta^*\omega+\delta^*\delta\omega \in L^2(\Uplambda^r(\rdp),
\mua)$. Hence, by Parseval's identity, the Fourier-Laguerre 
transform of $\delta\delta^*\omega+\delta^*\delta\omega$ is in $
\ell^2(\BN^d_r,\Uplambda^r)$. Since by Lemma \ref{li+il}
\begin{align}
(\delta\delta^*\omega+\delta^*\delta\omega)\hat{\phantom{.}}(k)=&
\iota_{\hat{\delta}(k)}(\hat\delta(k)\wedge\hat\omega(k))+\hat\delta(k)
\wedge\iota_{\hat{\delta}(k)}\hat\omega(k) \nonumber \\ 
&=|\hat\delta(k)|^2\ \hat\omega(k) \label{comm}\\ 
&=|k|\ \hat\omega(k), \nonumber
\end{align}
also the multi sequence $k\mapsto |k|\ \hat\omega(k)$ is in $
\ell^2(\BN^d_r,\Uplambda^r)$, i.e. $\omega\in \msD_r(\BL_\alpha)
$. This proves the inclusion $\msD_r(\delta\delta^*+\delta^*\delta)
\subset \msD_r(\BL_\alpha)$. Conversely, if $\omega\in 
\msD_r(\BL_\alpha)$, by (\ref{comm}) and Lemma \ref{li+il}
$$
|\iota_{\hat\delta(k)}(\hat\delta(k)\wedge\hat\omega(k)|^2+|\hat
\delta(k)\wedge\iota_{\hat\delta(k)} \hat\omega(k)|^2 = |k|^2\ |\hat
\omega(k)|^2.
$$
Thus, the same argument based on  Parseval's identity proves that 
$\msD_r(\BL_\alpha)\subset\msD_R(\delta^*\delta+\delta\delta^*)$.
\par
The other statements  can be easily proved by observing that the 
Fourier-Laguerre transforms of the left and the right hand sides coincide.
\end{proof}
\begin{prop}\label{prop1670}
$$
\ker_r(\BL_\alpha)=\{\omega\in \msD_r(\delta)\cap\msD_r(\delta^*): 
\delta \omega=0, \delta^*\omega=0\}.
$$
\begin{proof}
If $\omega\in \ker_r(\BL_\alpha)$ then $\omega\in \msD_r(\delta)
\cap  \msD_r(\delta^*)$, by Proposition \ref{domid}. Moreover
$$
\normto{\delta\omega}{L^2(\mua)}{2}+\normto{\delta^*\omega}
{L^2(\mua)}{2}=\langle \BL_\alpha \omega,\omega\rangle_
\alpha=0.
$$
Thus $\delta \omega=0, \delta^*\omega=0$. Conversely, if $\omega
\in \msD_r(\delta)\cap\msD_r(\delta^*)$ and  $\delta \omega=0, \ \delta^*
\omega=0$ then, by Proposition \ref{domid}, $\omega\in 
\msD_r(\BL_\alpha)$ and $\BL_\alpha\omega=(\delta\delta^*+
\delta^*\delta)\omega=0$.
\end{proof}
\end{prop}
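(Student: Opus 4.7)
The proposal is to derive both inclusions directly from Proposition~\ref{domid}, which already identifies $\BL_\alpha$ with $\delta\delta^*+\delta^*\delta$ on matching domains. There is essentially no technical obstacle here; the whole point is to exploit the nonnegativity of $\BL_\alpha$ via the identity $\BL_\alpha=\delta\delta^*+\delta^*\delta$ and the adjointness of $\delta$ and $\delta^*$ established in Proposition~\ref{sa}.

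For the forward inclusion, suppose $\omega\in\ker_r(\BL_\alpha)$. Then $\omega\in\msD_r(\BL_\alpha)=\msD_r(\delta\delta^*+\delta^*\delta)$ by Proposition~\ref{domid}, and in particular $\omega\in\msD_r(\delta)\cap\msD_r(\delta^*)$. Since $\delta^*$ is the adjoint of $\delta$ (and $\delta$ the adjoint of $\delta^*$) by Proposition~\ref{sa}, I would compute
\[
0=\langle\BL_\alpha\omega,\omega\rangle_\alpha=\langle\delta\delta^*\omega,\omega\rangle_\alpha+\langle\delta^*\delta\omega,\omega\rangle_\alpha=\normto{\delta^*\omega}{L^2(\mua)}{2}+\normto{\delta\omega}{L^2(\mua)}{2},
\]
where the pairings make sense because $\delta\omega\in\msD_{r+1}(\delta^*)$ and $\delta^*\omega\in\msD_{r-1}(\delta)$ by the definition of $\msD_r(\delta\delta^*+\delta^*\delta)$. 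Thus $\delta\omega=0$ and $\delta^*\omega=0$.

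For the reverse inclusion, if $\omega\in\msD_r(\delta)\cap\msD_r(\delta^*)$ satisfies $\delta\omega=0$ and $\delta^*\omega=0$, then $\delta\omega=0\in\msD_{r+1}(\delta^*)$ and $\delta^*\omega=0\in\msD_{r-1}(\delta)$ vacuously (the zero form lies in every domain). Consequently $\omega\in\msD_r(\delta\delta^*+\delta^*\delta)$, which equals $\msD_r(\BL_\alpha)$ by Proposition~\ref{domid}, and $\BL_\alpha\omega=\delta\delta^*\omega+\delta^*\delta\omega=0$, so $\omega\in\ker_r(\BL_\alpha)$.

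The only delicate point — and it has already been handled in Proposition~\ref{domid} via the Fourier--Laguerre calculation $|\iota_{\hat\delta(k)}(\hat\delta(k)\wedge\hat\omega(k))|^2+|\hat\delta(k)\wedge\iota_{\hat\delta(k)}\hat\omega(k)|^2=|k|^2\,|\hat\omega(k)|^2$ — is the equality of the two domains, so no further work is needed here. The argument therefore amounts to recording the nonnegativity computation and citing the previous propositions, with no genuine obstacle.
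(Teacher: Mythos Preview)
Your proof is correct and follows essentially the same approach as the paper: both directions are derived from Proposition~\ref{domid} and the adjointness of $\delta$ and $\delta^*$, with the key step being the nonnegativity identity $\langle\BL_\alpha\omega,\omega\rangle_\alpha=\normto{\delta\omega}{L^2(\mua)}{2}+\normto{\delta^*\omega}{L^2(\mua)}{2}$. Your version is slightly more explicit about why the zero form lies in the required domains for the reverse inclusion, but the argument is the same.
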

In Section \ref{s: diag} we proved that the action of the Hodge-Laguerre operator on \emph{smooth} $r$-forms is diagonalised by the basis $\{\d x_I: I\in\cI_r\}$ of $\Uplambda^r(\BR^d)$.  A similar result holds for the action of  $\BL_\alpha$ on $L^2(\mua)$ forms. Namely
\begin{prop}\label{p: L2diag}
A form $\omega=\sum_{I\in\cI_r}\omega_I\d x_I$ is in $\msD_r(\BL_\alpha)$ if and only if $\omega_I\in \msD(\cL_{\alpha,I})$ for all $I\in\cI_r$. Moreover
$$
\BL_\alpha\omega=\sum_{I\in\cI_r}\cL_{\alpha,I}\omega_I\d x_I \qquad\forall \omega \in \msD_r(\BL_\alpha).
$$
\end{prop}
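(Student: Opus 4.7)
The plan is to prove this proposition by reducing everything to the characterizations of $\msD_r(\BL_\alpha)$ and of $\msD(\cL_{\alpha,I})$ in terms of Fourier-Laguerre coefficients. Recall from Proposition~\ref{ONBr1} that the family $\{\ell^{\alpha,I}_k : k\in\cK(I)\}$ is an orthonormal basis of $L^2(\rdp,\mua)$ of eigenfunctions of $\cL_{\alpha,I}$ with eigenvalue $|k|$. Therefore, by the spectral theorem applied to the self-adjoint extension $\cL_{\alpha,I}$, a function $f\in L^2(\rdp,\mua)$ belongs to $\msD(\cL_{\alpha,I})$ if and only if
$$
\sum_{k\in\cK(I)} |k|^2\,|\langle f,\ell^{\alpha,I}_k\rangle_\alpha|^2<\infty,
$$
in which case $\cL_{\alpha,I} f=\sum_{k\in\cK(I)} |k|\,\langle f,\ell^{\alpha,I}_k\rangle_\alpha\,\ell^{\alpha,I}_k$.

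Next, I would use Proposition~\ref{dom} and the fact that $|\hat\omega(k)|_{\Uplambda^r}^2=\sum_{I\in\cI_r}|\hat\omega(I,k)|^2$. Combined with the convention that $\hat\omega(I,k)=0$ for $k\notin\cK(I)$, this gives
$$
\sum_{k\in\BN^d_r} |k|^2\,|\hat\omega(k)|_{\Uplambda^r}^2
=\sum_{I\in\cI_r}\sum_{k\in\cK(I)} |k|^2\,|\hat\omega(I,k)|^2
=\sum_{I\in\cI_r}\sum_{k\in\cK(I)} |k|^2\,|\langle\omega_I,\ell^{\alpha,I}_k\rangle_\alpha|^2.
$$
The left-hand side is finite precisely when $\omega\in\msD_r(\BL_\alpha)$, and since $\cI_r$ is finite, the right-hand side is finite precisely when every term $\sum_{k\in\cK(I)}|k|^2|\langle\omega_I,\ell^{\alpha,I}_k\rangle_\alpha|^2$ is finite, i.e.\ when $\omega_I\in\msD(\cL_{\alpha,I})$ for all $I\in\cI_r$. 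This proves the equivalence of domains.

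For the identity $\BL_\alpha\omega=\sum_{I}\cL_{\alpha,I}\omega_I\,\d x_I$, I would invoke Proposition~\ref{FLTofddL}, which (as extended to the domain by Proposition~\ref{dom}) gives $\widehat{\BL_\alpha\omega}(k)=|k|\,\hat\omega(k)=\sum_{I\in\cI_r}|k|\,\hat\omega(I,k)\,\d x_I$. Applying the inversion formula of Proposition~\ref{invform+Pars} and exchanging the order of summation yields
$$
\BL_\alpha\omega(x)=\sum_{I\in\cI_r}\left(\sum_{k\in\cK(I)}|k|\,\langle\omega_I,\ell^{\alpha,I}_k\rangle_\alpha\,\ell^{\alpha,I}_k(x)\right)\d x_I=\sum_{I\in\cI_r}\cL_{\alpha,I}\omega_I\,\d x_I,
$$
where the inner sum equals $\cL_{\alpha,I}\omega_I$ by the spectral resolution recalled above.

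There is no real obstacle here beyond careful bookkeeping: the statement is essentially a tautological consequence of the Fourier-Laguerre characterizations and the fact that the basis $B_r$ of Proposition~\ref{ONBr} is built as a disjoint union, over $I\in\cI_r$, of the bases diagonalizing the individual operators $\cL_{\alpha,I}$. The only subtlety to flag is that $\hat\omega(I,k)$ must be interpreted as $\langle\omega_I,\ell^{\alpha,I}_k\rangle_\alpha$, with the convention that it vanishes when $k\notin\cK(I)$, so that $\omega_I$ automatically lies in the closure of the span of $\{\ell^{\alpha,I}_k:k\in\cK(I)\}$.
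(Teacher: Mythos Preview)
Your proof is correct and follows essentially the same approach as the paper: both deduce the domain equivalence from the Fourier--Laguerre characterizations of $\msD_r(\BL_\alpha)$ and $\msD(\cL_{\alpha,I})$. For the operator identity, the paper invokes Proposition~\ref{p: diag} on $\msP(\rdp;\Uplambda^r)$ and extends by density in the graph norm, whereas you compute the Fourier--Laguerre coefficients directly via Proposition~\ref{FLTofddL} and invert; these two routes are equivalent and equally straightforward.
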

\begin{proof}
The fact that $\omega$ is in $\msD_r(\BL_\alpha)$ if and only if $\omega_I\in \msD(\cL_{\alpha,I})$ for all $I\in\cI_r$ follows immediately from the chararcterization of the domains via the Fourier-Laguerre coefficients of $\omega$ and of its components $\omega_I$. The identity of the operators holds on $\msP(\rdp;\Uplambda^r)$ by Proposition \ref{p: diag} and extends to the $L^2$ domains of the operators, since $\msP(\rdp;\Uplambda^r)$ is dense in the domain in the graph norm.
\end{proof}
We denote by $\{\BT^\alpha_t: t\ge0\}$ the heat semigroup on $r$-forms, i.e. the semigroup on $L^2(\rdp,\mua;\Uplambda^r)$ generated by $-\BL_\alpha$, and by  $\{\BP^\alpha_t: t\ge0\}$ the corresponding Poisson semigroup generated by $-\BL_\alpha^{1/2}$. 
More generally, for every $\rho\le r$ we consider the semigroups generated by $\rho I-\BL_\alpha$ and by $-(\BL_\alpha-\rho I)^{1/2}$, i.e.  the semigroups
$$
\BT^{\alpha,\rho}_t=\e^{-t(\BL_\alpha-\rho I)}, \qquad \BP^{\alpha,\rho}_t=\e^{-t(\BL_\alpha-\rho I)^{1/2}}.
$$
Their spectral resolutions are
\begin{align}
\BT^{\alpha,\rho}_t &=\sum_{n\ge r} \e^{-t(n-\rho)} \cP^\alpha_n \\ 
\label{srPoisson}\BP^{\alpha,\rho}_t &=\sum_{n\ge r} \e^{-t\sqrt{n-\rho}} \cP^\alpha_n,
\end{align}
where, as before, $\cP^\alpha_n$ denotes the orthogonal projection onto the space spanned by the forms $\lambda^\alpha_k(x)=\sum_{I}\ell^{\alpha,I}_k(x)\d x_I$,  $|k|=n$.  \par
\begin{prop}\label{p: Bakry}
For every $\alpha\in(-1,\infty)^d$   there exists a constant $C(\alpha)$
such that for all forms $\omega\in \Lpr{2}{r}$
\begin{itemize}
\item[\rmi] $|\BT^{\alpha,\rho}_t \omega(x)|\le \,C(\alpha)\ \e^{t(\rho-r/2)}\, T^\alpha_t|\omega|(x)$
\item[\rmii]  $|\BP^{\alpha,\rho}_t \omega(x)|\le \,C(\alpha)\ \, P^\alpha_t|\omega|(x)$ if $\rho\le r/2$.
\end{itemize} 
for all $x\in\rdp$ and $t\ge 0$. {If $\alpha\in[-1/2,\infty)$ then $C(\alpha)=1$.}
\end{prop}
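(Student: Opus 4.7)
My plan is to exploit the diagonal action of $\BL_\alpha$ on the coefficients of a form, established in Proposition~\ref{p: L2diag}, which allows me to reduce the estimate to the scalar estimates already proved in Proposition~\ref{p: dom}.

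The first step is to observe that, since $\BL_\alpha$ acts diagonally with $(\BL_\alpha\omega)_I = \cL_{\alpha,I}\omega_I$, the heat semigroup splits as
\[
\BT^{\alpha,\rho}_t \omega = \e^{t\rho}\sum_{I\in\cI_r} T^{\alpha,I}_t\omega_I\,\d x_I,
\]
so pointwise
\[
|\BT^{\alpha,\rho}_t\omega(x)|^2 = \e^{2t\rho}\sum_{I\in\cI_r}|T^{\alpha,I}_t\omega_I(x)|^2.
\]
Now I invoke Proposition~\ref{p: dom}\rmi. For every $I\in\cI_r$ we have $\card{I}=r$, hence
\[
|T^{\alpha,I}_t\omega_I(x)|\le C(\alpha)\,\e^{-rt/2}\,T^\alpha_t|\omega_I|(x),
\]
with $C(\alpha)=1$ whenever all $\alpha_i\ge-1/2$. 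Here one takes the largest of the constants $\prod_{i\in I}\Lambda(\alpha_i)$, which is absorbed into a single $C(\alpha)$.

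The key step is then to pass from an $\ell^2$-sum over $I$ of the scalars $T^\alpha_t|\omega_I|(x)$ to the single quantity $T^\alpha_t|\omega|(x)$. This I do by Minkowski's inequality for integrals applied to the positive kernel $G^\alpha_t(x,y)$ of $T^\alpha_t$: since $|\omega(y)|=\bigl(\sum_I|\omega_I(y)|^2\bigr)^{1/2}$,
\[
\Bigl(\sum_{I\in\cI_r}\bigl(T^\alpha_t|\omega_I|(x)\bigr)^2\Bigr)^{1/2}
=\Bigl(\sum_{I}\Bigl(\int G^\alpha_t(x,y)|\omega_I(y)|\,\d\mua(y)\Bigr)^2\Bigr)^{1/2}
\le \int G^\alpha_t(x,y)\,|\omega(y)|\,\d\mua(y)=T^\alpha_t|\omega|(x).
\]
Combining the three displays yields \rmi\ with the factor $\e^{t(\rho-r/2)}$.

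For \rmii, I use the subordination principle relating the Poisson and heat semigroups generated by $\BL_\alpha-\rho I$ and $(\BL_\alpha-\rho I)^{1/2}$:
\[
\BP^{\alpha,\rho}_t\omega(x)=\frac{1}{\sqrt{\pi}}\int_0^\infty \frac{\e^{-u}}{\sqrt{u}}\,\BT^{\alpha,\rho}_{t^2/4u}\omega(x)\,\d u.
\]
Taking moduli inside, applying \rmi\ pointwise, and using the hypothesis $\rho\le r/2$ (so that $\e^{(t^2/4u)(\rho-r/2)}\le 1$), I get the pointwise bound
\[
|\BP^{\alpha,\rho}_t\omega(x)|\le C(\alpha)\,\frac{1}{\sqrt{\pi}}\int_0^\infty\frac{\e^{-u}}{\sqrt{u}}\,T^\alpha_{t^2/4u}|\omega|(x)\,\d u=C(\alpha)\,P^\alpha_t|\omega|(x),
\]
which is \rmii. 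The only subtle point is the Minkowski inequality step in (i); everything else is a direct combination of the diagonalisation and the scalar results already at hand.
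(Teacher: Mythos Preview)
Your proof is correct and follows essentially the same strategy as the paper: diagonalise the semigroup via Proposition~\ref{p: L2diag}, apply the scalar domination from Proposition~\ref{p: dom} coefficientwise, and then use the positivity of the kernel $G^\alpha_t$ to pass from the $\ell^2$-sum of $T^\alpha_t|\omega_I|$ to $T^\alpha_t|\omega|$; part~\rmii\ is then obtained by subordination in both cases. The only cosmetic difference is that the paper phrases the kernel-positivity step as a duality argument, writing $|\BT^\alpha_t\omega(x)|=\sup_{|\eta|=1}\sum_I\eta_I\,T^{\alpha,I}_t\omega_I(x)$ and then pulling the supremum inside $T^\alpha_t$, whereas you invoke Minkowski's integral inequality directly; the two are equivalent.
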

\begin{proof}
By Proposition \ref{p: L2diag} $\BT^\alpha_t \omega =\sum_{I\in\cI_r} T^{\alpha,I}_t\,\omega_I$. Thus, by Proposition \ref{p: dom} and the positivity of the semigroup,
\begin{align*}
|\BT^\alpha_t\omega(x)|&=\sup_{|\eta|=1}\sum_{I\in\cI_r}\eta_I\ T^{\alpha,I}_t \,\omega_I(x) \\ 
&\le\sup_{|\eta|=1}\sum_{I\in\cI_r}\ |T^{\alpha,I}_t (\eta_I\omega_I)(x)|  \\ 
&\le C(\alpha)\, \e^{-tr/2}\ \sup_{|\eta|=1}\sum_{I\in\cI_r} T^\alpha_t |\eta_I\omega_I|(x)\\
&= C(\alpha)\, \e^{-tr/2}\ T^\alpha_t\left(\sup_{|\eta|=1}\sum_{I\in\cI_r}  |\eta_I\omega_I|\right)(x)\\
&= C(\alpha)\  \e^{-tr/2}\  T^\alpha_t |\omega|(x).
\end{align*} 
This proves \rmi \ for $\rho=0$. The general case follows, since $\BT^{\alpha,\rho}_t=\e^{\rho t}\,\BT^\alpha_t$. The estimate \rmii\ follows from (i) and the subordination formula
$$
\BP^{\alpha,\rho}_t =\frac{1}{\sqrt{\pi}} \int_0^\infty \frac{\e^{-u}}{\sqrt{u}}\  \BT^{\alpha,\rho}_{\frac{t^2}{4u}} \d u.
$$
\end{proof}
\begin{cor}\label{c: sgbdd}
For every $\alpha\in(-1,\infty)^d$  and $1\le p\le \infty$  there exists a constant $C(\alpha,p)$ such that for all  forms $\omega\in L^p\cap \Lpr{2}{r}$ and all $\rho\le r$
\begin{itemize}
\item[\rmi] $\norm{\BT^{\alpha,\rho}_t\omega}{p}\le \,C(\alpha,p)\ \e^{\gamma(\rho,r,p)t}\ \norm{\omega}{p},$
\item[\rmii] $\norm{\BP^{\alpha,\rho}_t\omega}{p}\le \,C(\alpha,p)\ \ \norm{\omega}{p},$ if $\rho\le r/2$, 
\end{itemize} 
where
$
\gamma(\rho,r,p)=\rho-\left(1-\left| 1/2-1/p\right|\right)r.
$
\par\noindent
In particular, if $\rho\le r/2$
the semigroups $\BT^{\alpha,\rho}_t$ and $\BP^{\alpha,\rho}_t$ extend from $L^p\cap \Lprss{2}{r}$ to semigroups which are uniformly bounded on $\Lpr{p}{r}$ and for every $\delta>0$
$$
\norm{\BL_\alpha^{-\delta}}{p-p}\le \frac{C(\alpha,p)}{\gamma(0,r,p)} \qquad\forall p\in(1,\infty).
$$
\end{cor}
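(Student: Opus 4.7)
The plan is to combine the pointwise domination from Proposition~\ref{p: Bakry} with Riesz--Thorin interpolation. First I would establish the endpoint bounds $p\in\{1,\infty\}$ for part \rmi\ by taking $L^p$ norms in the pointwise estimate $|\BT^{\alpha,\rho}_t\omega(x)|\le C(\alpha)\,e^{t(\rho-r/2)}\,T^\alpha_t|\omega|(x)$ and using that $T^\alpha_t$ is a symmetric diffusion semigroup, hence a contraction on $L^p(\rdp,\mua)$ for every $p\in[1,\infty]$. At $p=2$, the spectral resolution of $\BL_\alpha$ from Proposition~\ref{sa} (whose smallest eigenvalue on $L^2$ $r$-forms is $r$) yields $\|\BT^{\alpha,\rho}_t\|_{2-2}=e^{t(\rho-r)}$.

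Then Riesz--Thorin interpolation between one of the endpoints and $p=2$ gives an $L^p$ bound for each intermediate $p$. The key verification is that the exponential rate produced by interpolation matches $\gamma(\rho,r,p)$: writing $1/p=(1-\theta)/p_0+\theta/2$ with $p_0\in\{1,\infty\}$ and taking the convex combination of the exponents $\rho-r/2$ and $\rho-r$ with weights $(1-\theta,\theta)$ yields $\rho-(3/2-1/p)r$ on $[1,2]$, which is precisely $\rho-(1-|1/2-1/p|)r$ as claimed; the case $p\in[2,\infty]$ is symmetric.

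The Poisson bound \rmii\ is proved along the same lines using Proposition~\ref{p: Bakry}~\rmii\ together with the spectral $L^2$ bound $\|\BP^{\alpha,\rho}_t\|_{2-2}\le 1$ coming from \eqref{srPoisson} and the hypothesis $\rho\le r/2\le r$; since no exponential factor appears on the right-hand side, interpolation yields a uniform $L^p$ bound. Extension of both semigroups from $L^p\cap L^2$ to all of $L^p$ follows by density for $p\in[1,\infty)$.

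Finally, for the estimate on $\BL_\alpha^{-\delta}$, I would invoke the subordination formula $\BL_\alpha^{-\delta}=\Gamma(\delta)^{-1}\int_0^\infty t^{\delta-1}\BT^\alpha_t\,dt$, which is justified because the spectrum of $\BL_\alpha$ on $r$-forms is contained in $[r,\infty)$; applying part \rmi\ with $\rho=0$ yields an integrable integrand with decay rate $|\gamma(0,r,p)|=(1-|1/2-1/p|)r>0$ for $p\in(1,\infty)$, producing the stated bound. The only mildly delicate point in the whole argument is the bookkeeping that confirms Riesz--Thorin produces exactly the piecewise-linear exponent $\gamma(\rho,r,p)$; once this matching is checked, the rest is routine.
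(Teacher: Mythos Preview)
Your proposal is correct and follows essentially the same approach as the paper: combine the $L^2$ bound from the spectral resolution \eqref{f: specres} with the pointwise domination of Proposition~\ref{p: Bakry}, interpolate, and then use the integral formula $\BL_\alpha^{-\delta}=\Gamma(\delta)^{-1}\int_0^\infty t^{\delta-1}\BT^\alpha_t\,\d t$ together with the resulting decay rate. The only cosmetic difference is that the paper interpolates between $p=1$ and $p=2$ and then obtains $p>2$ by duality (using the symmetry of the semigroup), whereas you interpolate directly between $p=2$ and each of the endpoints $p\in\{1,\infty\}$; the outcome is identical.
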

\begin{proof}
By the spectral resolution \eqref{f: sar}, the bottom of the spectrum of $\BT^{\alpha}_t$ on $\Lprss{2}{r}$ is $\e^{-rt}$. Thus we have that 
$
\norm{\BT^{\alpha}_t\omega}{2}\le \, \e^{-rt}\,\norm{\omega}{2},
$. On the other hand, by Proposition \ref{p: Bakry}, we also have that 
$
\norm{\BT^{\alpha}_t\omega}{1}\le \, C(\alpha)\, \e^{-rt/2}\norm{\omega}{1}.
$
Therefore, by interpolation, for all $1\le p \le 2$,
\begin{equation}\label{semigrest}
\norm{\BT^{\alpha}_t\omega}{p}\le \, C(\alpha)^{|(2/p)-1|}\, \e^{-(1-|1/2-1/p|)rt} \, \norm{\omega}{p}.
\end{equation}
Since the semigroup is symmetric, by duality the same result holds also for $2<p\le \infty$. The result for $\BT^{\alpha,\rho}_t$ follows immediately, since $\BT^{\alpha,\rho}_t=\e^{\rho t}\BT^{\alpha}_t$.  The same argument yields the desired estimate also for the Poisson semigroup $\BP^{\alpha,\rho}_t$. The estimate of the norm of $\BL_\alpha^{-\delta}$ follows from the identity
$$
\BL_\alpha^{-\delta}=\frac{1}{\Gamma(\delta)}\int_0^\infty t^{\delta-1} \BT^\alpha_t \d t
$$
and estimate (\ref{semigrest}).
\end{proof}
\section{Hodge-De Rham-Kodaira decomposition and Riesz-Laguerre transforms}\label{s: HdRL2}
In this section we state the analogue of the classical Hodge 
decomposition for the Hodge-Laguerre operator and we define the 
Laguerre-Riesz transforms. We recall that $\ker_0(\BL_\alpha)=
\BR$ and $\ker_r(\BL_\alpha)=\{0\}$ if $r>0$. Moreover we set $
\im_{-1}(\delta)=\{0\}$ and $\im_{d+1}(\delta^*)=\{0\}$.
\begin{thm}\label{Hdec}
For all $r\ge0$ the strong $L^2$-Hodge decomposition holds
$$
L^2(\rdp,\mua;\Uplambda^r)=\ker_r(\BL_\alpha)\oplus \im_{r-1}
(\delta)\oplus \im_{r+1}(\delta^*).
$$
\end{thm}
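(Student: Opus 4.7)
\emph{Proof plan.} The strategy is to use the Fourier-Laguerre transform (Propositions \ref{invform+Pars} and \ref{FLTofddL}) to convert the statement into a pointwise-in-$k$ orthogonal splitting of the fiber $\Uplambda^r(\BR^d)$, and then exploit the spectral gap $|k|\ge r$ on $\BN^d_r$ to verify that the resulting images are already closed, so no passage to closures is required.

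First I would fix $k\in \BN^d_r$ with $|k|\ge 1$ and set $\phi_k=\hat\delta(k)$, which by definition satisfies $|\phi_k|^2=|k|$. I claim the orthogonal decomposition
\[
\Uplambda^r=A_k\oplus B_k,\qquad A_k:=\phi_k\wedge \Uplambda^{r-1},\qquad B_k:=\iota_{\phi_k}\Uplambda^{r+1}.
\]
That $A_k+B_k=\Uplambda^r$ is exactly the content of Lemma \ref{li+il} applied to $\phi=\phi_k$ after dividing by $|k|$. Orthogonality follows from $\phi_k\wedge \phi_k=0$ together with the fact that $\iota_{\phi_k}$ is the adjoint of $\phi_k\wedge\,\cdot\,$: for all $\xi\in \Uplambda^{r-1}$ and $\eta\in \Uplambda^{r+1}$,
\[
\langle \phi_k\wedge \xi,\,\iota_{\phi_k}\eta\rangle_{\Uplambda^r}=\langle \phi_k\wedge\phi_k\wedge\xi,\,\eta\rangle_{\Uplambda^{r+1}}=0.
\]

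Next I would identify the three summands on the Fourier-Laguerre side. By Proposition \ref{FLTofddL}, $\im_{r-1}(\delta)\subseteq \{\omega: \hat\omega(k)\in A_k\ \forall k\}$ and $\im_{r+1}(\delta^*)\subseteq \{\omega: \hat\omega(k)\in B_k\ \forall k\}$, while Proposition \ref{prop1670} combined with the norm identity of Lemma \ref{li+il} forces $\ker_r(\BL_\alpha)\subseteq\{\omega: \hat\omega(k)\in \ker(\phi_k\wedge)\cap\ker(\iota_{\phi_k})\ \forall k\}$, which is $\{0\}$ for $r\ge 1$ (since $|k|\ge r\ge 1$ implies $\hat\omega(k)=|k|^{-1}(\phi_k\wedge\iota_{\phi_k}\hat\omega(k)+\iota_{\phi_k}(\phi_k\wedge\hat\omega(k)))=0$), and $\BR$ at $k=0$ when $r=0$, giving the constants.

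The crux is the \emph{reverse} inclusions, equivalent to closedness of the images. Given $\omega\in L^2$ with $\hat\omega(k)\in A_k$ for every $k\in \BN^d_r$, write $\hat\omega(k)=\phi_k\wedge \gamma_k$; then $\phi_k\wedge\iota_{\phi_k}\hat\omega(k)=|k|\hat\omega(k)$ by Lemma \ref{li+il} together with $\phi_k\wedge\phi_k=0$. This suggests the explicit preimage $\hat\beta(k):=|k|^{-1}\iota_{\phi_k}\hat\omega(k)$. The norm identity of Lemma \ref{li+il} gives $|\hat\beta(k)|^2\le |\hat\omega(k)|^2/|k|\le |\hat\omega(k)|^2/r$, so $\beta\in L^2$, and $\hat\delta(k)\wedge\hat\beta(k)=\hat\omega(k)$ together with Proposition \ref{dom} yield $\beta\in\msD_{r-1}(\delta)$ and $\delta\beta=\omega$. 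The symmetric construction $\hat\eta(k):=|k|^{-1}\phi_k\wedge\hat\omega(k)$ produces a preimage for $\delta^*$ from any $\omega$ with $\hat\omega(k)\in B_k$. \emph{This is the hard part:} closedness of the images hinges on the spectral gap $|k|\ge r$, which guarantees the $L^2$-boundedness of these inverses and which fails only at $k=0$, $r=0$ (where the kernel component picks up the slack). Transferring the pointwise decomposition $\Uplambda^r=A_k\oplus B_k$ back through the Fourier-Laguerre isometry via Parseval then gives the theorem for $r\ge 1$; the case $r=0$ is handled separately by writing $f=\int_{\rdp}f\,\d\mua+(f-\int_{\rdp}f\,\d\mua)$ and applying the same construction to the mean-zero part to realize it in $\im_1(\delta^*)$.
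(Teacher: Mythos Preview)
Your argument is correct and is a concrete Fourier--Laguerre realisation of exactly the classical spectral-gap proof that the paper merely invokes by reference (the paper gives no details for this theorem, remarking only that it follows the standard argument for manifolds with spectral gap and citing Bueler). Your explicit preimages $\hat\beta(k)=|k|^{-1}\iota_{\phi_k}\hat\omega(k)$ and $\hat\eta(k)=|k|^{-1}\phi_k\wedge\hat\omega(k)$ are precisely $\delta^*\BL_\alpha^{-1}\omega$ and $\delta\BL_\alpha^{-1}\omega$ written on the transform side, and the bound $|k|\ge r$ that you use is exactly the spectral gap that makes $\BL_\alpha^{-1}$ bounded; one small point to make explicit in a full write-up is that the constructed coefficients automatically satisfy the support constraint $\hat\beta(J,k)=0$ for $k\notin\cK(J)$, which follows since if $k_i=0$ with $i\in J$ then every term $\sqrt{k_j}\,\hat\omega(J\cup j,k)$ in the sum vanishes.
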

\begin{remark*}\label{rem1787}
Here strong refers to the fact that $\im_{r-1}(\delta)$ and $\im_{r+1}
(\delta^*)$ are closed.  The proof is essentially the same as in the 
classical case of complete manifolds with spectral gap (see for 
instance \cite[Theorem 5.10]{Bueler:TAMS}). 
\end{remark*}
\begin{definition}\label{def1793}
The Riesz transforms are the operators
$$
\cR=\delta\BL_{\alpha}^{-1/2}, \quad \cR^*=\delta^*\BL_{\alpha}^{-1/2}
$$
with domain $\msP(\rdp;\Uplambda^r)$.
\end{definition}
The following proposition is the counterpart in the Laguerre setting of a classical result of Strichartz for complete Riemannian manifolds \cite{S:JFA}.
\begin{prop}\label{p: RT}
For $r>0$ the Riesz transforms $\cR$ and $\cR^{*}$ are bounded on
$L^2(\rdp,\mua;\Uplambda^r)$. Moreover $\cR^*$ is the adjoint of $\cR$.
\end{prop}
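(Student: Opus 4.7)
The plan is to reduce everything to a pointwise identity on Fourier-Laguerre coefficients, using Lemma~\ref{li+il}.  Since $r>0$, Proposition~\ref{sa} gives $\ker_r(\BL_\alpha)=\{0\}$ and the spectrum of $\BL_\alpha$ on $\Lprss{2}{r}$ is contained in $\{n\in\BN:n\ge r\}$, so $\BL_\alpha^{-1/2}$ is a well-defined bounded self-adjoint operator on $\Lprss{2}{r}$.  Moreover $\BL_\alpha^{-1/2}$ preserves $\msP(\rdp;\Uplambda^r)$, because on Fourier-Laguerre side it acts as multiplication by $|k|^{-1/2}$, which preserves finite sums of basis forms $\ell^{\alpha,I}_k \d x_I$; so $\delta\BL_\alpha^{-1/2}\omega$ and $\delta^*\BL_\alpha^{-1/2}\omega$ are well-defined for $\omega\in\msP(\rdp;\Uplambda^r)$.

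First, I would compute the Fourier-Laguerre transforms of $\cR\omega$ and $\cR^*\omega$.  Combining the spectral formula for $\BL_\alpha^{-1/2}$ with Proposition~\ref{FLTofddL} gives, for every $k\in\BN^d_r$,
$$
\widehat{\cR\omega}(k)=|k|^{-1/2}\ \hat\delta(k)\wedge \hat\omega(k),\qquad
\widehat{\cR^*\omega}(k)=|k|^{-1/2}\ \iota_{\hat\delta(k)}\hat\omega(k).
$$
Since $\hat\delta(k)=\sum_{j=1}^d\sqrt{k_j}\,\d x_j$, we have $|\hat\delta(k)|^2=|k|$.  Applying the orthogonal-decomposition identity \eqref{f: norm li+il} of Lemma~\ref{li+il} with $\phi=\hat\delta(k)$ then yields the pointwise identity
$$
|\widehat{\cR\omega}(k)|^2+|\widehat{\cR^*\omega}(k)|^2 \;=\; |k|^{-1}\bigl(|\hat\delta(k)\wedge\hat\omega(k)|^2+|\iota_{\hat\delta(k)}\hat\omega(k)|^2\bigr)\;=\;|\hat\omega(k)|^2.
$$
Summing over $k\in\BN^d_r$ and invoking Parseval's identity (Proposition~\ref{invform+Pars}) gives $\|\cR\omega\|_2^2+\|\cR^*\omega\|_2^2=\|\omega\|_2^2$, so both $\cR$ and $\cR^*$ are bounded by $1$ on the dense subspace $\msP(\rdp;\Uplambda^r)$ and extend to bounded operators on $\Lprss{2}{r}$ with norm at most $1$.

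For the adjoint statement, I would argue on polynomial forms and extend by density.  Since $\delta^*$ is the adjoint of $\delta$ (Proposition~\ref{sa}), and $\BL_\alpha^{-1/2}$ is self-adjoint and commutes with both $\delta$ and $\delta^*$ on the appropriate polynomial domains (Proposition~\ref{domid}), for $\omega\in\msP(\rdp;\Uplambda^r)$ and $\eta\in\msP(\rdp;\Uplambda^{r+1})$ one has
$$
\langle\cR\omega,\eta\rangle_\alpha=\langle\delta \BL_\alpha^{-1/2}\omega,\eta\rangle_\alpha=\langle \BL_\alpha^{-1/2}\omega,\delta^*\eta\rangle_\alpha=\langle\omega,\BL_\alpha^{-1/2}\delta^*\eta\rangle_\alpha=\langle\omega,\delta^*\BL_\alpha^{-1/2}\eta\rangle_\alpha=\langle\omega,\cR^*\eta\rangle_\alpha.
$$
By density of $\msP$ and boundedness of $\cR,\cR^*$, this extends to all of $\Lprss{2}{r}\times\Lprss{2}{r+1}$, giving $\cR^*=(\cR)^*$.

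There is essentially no serious obstacle: the only subtle point is ensuring that one can compose $\delta$ with $\BL_\alpha^{-1/2}$ at the domain level, which is immediate from the Fourier-Laguerre characterization (Proposition~\ref{dom}) since $\msP(\rdp;\Uplambda^r)$ is invariant under $\BL_\alpha^{-1/2}$ and contained in $\msD_r(\delta)\cap\msD_r(\delta^*)$.  All the work is carried by the algebraic identity $|\phi\wedge\omega|^2+|\iota_\phi\omega|^2=|\phi|^2|\omega|^2$ and the fact that $|\hat\delta(k)|^2=|k|$ matches exactly the eigenvalue of $\BL_\alpha$ on the mode $k$.
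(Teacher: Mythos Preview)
Your proof is correct and arrives at the same core identity as the paper, namely $\|\cR\omega\|_2^2+\|\cR^*\omega\|_2^2=\|\omega\|_2^2$, but you take a slightly different route to it. The paper works directly at the operator level: it writes $\|\delta\omega\|_2^2+\|\delta^*\omega\|_2^2=\langle(\delta\delta^*+\delta^*\delta)\omega,\omega\rangle_\alpha=\langle\BL_\alpha\omega,\omega\rangle_\alpha=\|\BL_\alpha^{1/2}\omega\|_2^2$ and then reads off boundedness of the Riesz transforms. You instead pass to Fourier--Laguerre coefficients and invoke the pointwise algebraic identity \eqref{f: norm li+il} with $\phi=\hat\delta(k)$. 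These are two faces of the same computation: the paper's operator identity $\BL_\alpha=\delta\delta^*+\delta^*\delta$ is precisely what \eqref{f: norm li+il} encodes on the Fourier side (cf.\ the proof of Proposition~\ref{domid}). Your version is more explicit and makes the norm-$1$ bound transparent; the paper's version is shorter and avoids naming a basis. For the adjoint statement both arguments are identical.
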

\begin{proof}
Let $\omega$ be a form in $\msP(\rdp;\Uplambda^r)$. Then
\begin{align*}
\normto{\delta \omega}{2}{2}+\normto{\delta^*\omega}{2}{2}=\langle\BL_\alpha\omega,\omega\rangle_\alpha
=\normto{\BL_\alpha^{1/2}\omega}{2}{2}.
\end{align*}
This proves that the Riesz transforms are $L^2$-bounded on $\msP(\rdp;\Uplambda^r)$. The conclusion follows, since $\msP(\rdp;\Uplambda^r)$ is dense in $L^2(\rdp,\mua;\Uplambda^r)$.
Since $\delta^*$ and $\BL_\alpha$ commute, $\cR^*=\BL^{-1/2}_\alpha\delta^*$ is the adjoint of $\delta\BL^{-1/2}_\alpha$.
\end{proof}
\begin{prop}\label{p: proj}
For $r>0$ the orthogonal projections onto the spaces $\im_{r-1}(\delta)$ and $
\im_{r+1}(\delta^*)$ are 
$$\cR\cR^{*}=\delta \BL_\alpha^{-1}\delta^*, \quad\textrm{and}\quad \cR^*\cR=\delta^*\BL_
\alpha^{-1}\delta,
$$ 
respectively.
\end{prop}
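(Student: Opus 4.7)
The plan is to reduce the claim to the strong $L^2$-Hodge decomposition of Theorem \ref{Hdec} combined with the commutation relations recorded in Proposition \ref{domid}. First, since $\delta$ and $\delta^*$ both commute with $\BL_\alpha$, they commute with $\BL_\alpha^{-1/2}$ (interpreted via the spectral theorem on the orthogonal complement of $\ker_r(\BL_\alpha)$, which is trivial for $r\ge 1$). Hence
\[
\cR\cR^* \;=\; \delta\BL_\alpha^{-1/2}\,\BL_\alpha^{-1/2}\delta^* \;=\; \delta\BL_\alpha^{-1}\delta^*,\qquad
\cR^*\cR \;=\; \delta^*\BL_\alpha^{-1}\delta,
\]
and these are bounded self-adjoint operators on $\Lpr{2}{r}$, the boundedness coming from Proposition \ref{p: RT}.

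Next, I identify the range of $\cR\cR^*$ by testing it against the Hodge decomposition. For $r\ge 1$, every $\omega\in\Lpr{2}{r}$ can be written as $\omega=\delta\eta+\delta^*\zeta$, with no harmonic contribution. Since $(\delta^*)^2=0$ we have $\delta^*\omega=\delta^*\delta\eta$, and Proposition \ref{domid} gives $\delta^*\delta\eta=\BL_\alpha\eta-\delta\delta^*\eta$, so
\[
\cR\cR^*\omega \;=\; \delta\BL_\alpha^{-1}\delta^*\delta\eta \;=\; \delta\eta-\delta\BL_\alpha^{-1}\delta\delta^*\eta \;=\; \delta\eta-\BL_\alpha^{-1}\delta^2\delta^*\eta \;=\; \delta\eta,
\]
using $\delta^2=0$ and $\delta\BL_\alpha^{-1}=\BL_\alpha^{-1}\delta$ in the last two steps. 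Hence $\cR\cR^*$ is the identity on $\im_{r-1}(\delta)$ and annihilates $\im_{r+1}(\delta^*)$; together with self-adjointness this identifies it as the orthogonal projection onto $\im_{r-1}(\delta)$.

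An entirely analogous computation handles $\cR^*\cR$. Here one reduces to $\delta^*\delta\omega=\delta^*\delta\delta^*\zeta$, and uses $\delta^*\delta\delta^*\zeta=(\BL_\alpha-\delta\delta^*)\delta^*\zeta=\BL_\alpha\delta^*\zeta$ (again by $(\delta^*)^2=0$) to conclude $\cR^*\cR\omega=\BL_\alpha^{-1}\BL_\alpha\delta^*\zeta=\delta^*\zeta$, establishing the second assertion.

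The only technical delicacy is the unboundedness of $\BL_\alpha^{-1}$: all the manipulations above should first be verified on the dense subspace $\msP(\rdp;\Uplambda^r)$, where every term expands as a finite linear combination of eigenforms of $\BL_\alpha$ with eigenvalues $|k|\ge r\ge 1$ and all compositions are unambiguous, and then extended to $\Lpr{2}{r}$ by continuity using the $L^2$-boundedness of $\cR\cR^*$ and $\cR^*\cR$. I do not expect this bookkeeping step to present any essential obstacle beyond what is standard in classical Hodge theory on complete manifolds with a spectral gap.
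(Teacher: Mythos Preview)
Your proof is correct but proceeds by a different route from the paper's. The paper's argument is purely algebraic and does not invoke the Hodge decomposition of Theorem~\ref{Hdec} at all: setting $P=\cR\cR^*$ and $Q=\cR^*\cR$, it observes directly that $P+Q=(\delta\delta^*+\delta^*\delta)\BL_\alpha^{-1}=I$ and $PQ=QP=0$ (from $\delta^2=0$, $(\delta^*)^2=0$), whence $P^2=P(I-Q)=P$ and similarly $Q^2=Q$; together with self-adjointness this shows $P$, $Q$ are orthogonal projections, and the obvious range inclusions $\im(P)\subseteq\im_{r-1}(\delta)$, $\im(Q)\subseteq\im_{r+1}(\delta^*)$ then become equalities because $P+Q=I$. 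Your approach instead takes the Hodge splitting $\omega=\delta\eta+\delta^*\zeta$ as input and computes the action of $\cR\cR^*$ on each summand explicitly. Both are standard; the paper's argument is slightly shorter and in fact re-derives the $L^2$ Hodge splitting as a byproduct rather than consuming it, while yours is perhaps more transparent about \emph{why} the projections land where they do. One minor remark: your closing worry about ``unboundedness of $\BL_\alpha^{-1}$'' is unfounded here, since for $r\ge 1$ the spectrum of $\BL_\alpha$ on $r$-forms is contained in $[r,\infty)$, so $\BL_\alpha^{-1}$ is genuinely bounded (cf.\ Corollary~\ref{c: sgbdd}); the density argument you sketch is therefore even simpler than you suggest.
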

\begin{proof}
Let $P=\cR\cR^*$ and $Q=\cR^*\cR$. Then $P$ and $Q$  are bounded on $\Lprs{2}{r}$ and self-adjoint, by Proposition \ref{p: RT}. Since $\delta$ and $\delta^*$ commute with $\BL_\alpha$, $P+Q=(\delta\delta^*+\delta^*\delta)\BL_\alpha^{-1}=I$. Moreover $PQ=QP=0$, because $\delta^2=0$ and $(\delta^*)^2=0$.
Hence $P^2-P=P(I-P)=PQ=0=(I-Q)Q=Q-Q^2$.  This proves that $P$ and $Q$ are idempotent. Therefore they are orthogonal projections. The conclusion follows, since $\im_r(P)\subseteq\im_{r-1}(\delta)$ and $\im_r(Q)\subseteq\im_{r+1}(\delta^*)$.
\end{proof}
\begin{remark}\label{r: RT}
If $r=0$ the conclusions of Propositions \ref{p: RT} and \ref{p: proj}
remain valid, if one replaces $\BL_\alpha^{-1/2}$ and $\BL_\alpha^{-1}$ by their restrictions to the orthogonal of constant functions.
\end{remark}
\begin{definition}\label{Rrho} More generally, for every $\rho<r$ we define the \emph{shifted} Riesz transforms 
$$
\cR_\rho=\delta(\BL_\alpha-\rho I)^{-1/2} \quad \textrm{and} \quad \cR^*_\rho=(\BL_\alpha-\rho I)^{-1/2}\delta^*.
$$
\begin{prop}\label{Rrhobd}
For every $\rho<r$ the shifted Riesz transforms $\cR_\rho$ and $\cR^*_\rho$ are bounded on $L^2(\rdp,\mua;\Uplambda^r)$. Moreover $\cR^*_\rho$ is the adjoint of $\cR_\rho$.
\end{prop}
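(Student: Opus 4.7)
The plan is to mimic the argument of Proposition \ref{p: RT}, while exploiting the spectral gap $r-\rho>0$ of $\BL_\alpha-\rho I$ on $r$-forms. By the spectral resolution in Proposition \ref{sa}, the spectrum of $\BL_\alpha$ on $\Lpr{2}{r}$ is contained in $\{n\in\BN:n\ge r\}$, hence $\BL_\alpha-\rho I$ is positive with spectrum bounded below by $r-\rho>0$. Consequently $(\BL_\alpha-\rho I)^{-1/2}$ is a bounded self-adjoint operator on $\Lpr{2}{r}$ with $\norm{(\BL_\alpha-\rho I)^{-1/2}}{2-2}\le (r-\rho)^{-1/2}$.

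The core computation takes $\omega\in \msP(\rdp;\Uplambda^r)$ and sets $\eta=(\BL_\alpha-\rho I)^{-1/2}\omega$, which still lies in $\msD_r(\BL_\alpha)\subset\msD_r(\delta)\cap\msD_r(\delta^*)$ by Proposition \ref{domid}. Using the identity $\BL_\alpha=\delta\delta^*+\delta^*\delta$ on $\msD_r(\BL_\alpha)$ from the same proposition,
\begin{align*}
\normto{\delta\eta}{2}{2}+\normto{\delta^*\eta}{2}{2}
&=\langle(\delta^*\delta+\delta\delta^*)\eta,\eta\rangle_\alpha
=\langle \BL_\alpha \eta,\eta\rangle_\alpha \\
&=\langle(\BL_\alpha-\rho I)\eta,\eta\rangle_\alpha+\rho\normto{\eta}{2}{2}
=\normto{\omega}{2}{2}+\rho\normto{\eta}{2}{2}.
\end{align*}
Combining with $\normto{\eta}{2}{2}\le (r-\rho)^{-1}\normto{\omega}{2}{2}$ yields
$$
\normto{\cR_\rho \omega}{2}{2}+\normto{\cR^*_\rho\omega}{2}{2}\le C(r,\rho)\ \normto{\omega}{2}{2},
$$
with $C(r,\rho)=\max\bigl(1,\,r/(r-\rho)\bigr)$, depending on whether $\rho\ge 0$ or $\rho<0$. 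Since $\msP(\rdp;\Uplambda^r)$ is dense in $\Lpr{2}{r}$, both $\cR_\rho$ and $\cR^*_\rho$ extend to bounded operators on $\Lpr{2}{r}$.

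For the adjointness statement, observe that $(\BL_\alpha-\rho I)^{-1/2}$ is a bounded Borel function of the self-adjoint operator $\BL_\alpha$, hence is itself bounded and self-adjoint and commutes with every operator that commutes with $\BL_\alpha$; in particular, by Proposition \ref{domid}, it commutes with $\delta$ and $\delta^*$ on their respective domains. Since $\delta^*$ is the Hilbert-space adjoint of $\delta$ by Proposition \ref{sa}, we conclude
$$
\cR_\rho^{*}=\bigl(\delta(\BL_\alpha-\rho I)^{-1/2}\bigr)^{*}=(\BL_\alpha-\rho I)^{-1/2}\delta^{*}=\cR^{*}_\rho.
$$
There is no serious obstacle here; the only point to watch is that the spectral gap assumption $\rho<r$ is what makes $(\BL_\alpha-\rho I)^{-1/2}$ bounded and controls the second term $\rho\normto{\eta}{2}{2}$ in the key identity.
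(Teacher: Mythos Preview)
Your proof is correct and follows essentially the same route as the paper's: both use the identity $\normto{\delta\eta}{2}{2}+\normto{\delta^*\eta}{2}{2}=\langle\BL_\alpha\eta,\eta\rangle_\alpha$ together with the spectral gap $\sigma(\BL_\alpha)\subset[r,\infty)$ to control $\cR_\rho$ and $\cR_\rho^*$ simultaneously. The only cosmetic difference is that the paper factors $\BL_\alpha^{1/2}=\BL_\alpha^{1/2}(\BL_\alpha-\rho I)^{-1/2}(\BL_\alpha-\rho I)^{1/2}$ and bounds the middle factor spectrally, whereas you substitute $\eta=(\BL_\alpha-\rho I)^{-1/2}\omega$ and split $\langle\BL_\alpha\eta,\eta\rangle_\alpha$ into $\normto{\omega}{2}{2}+\rho\normto{\eta}{2}{2}$; your constant $C(r,\rho)=\max\bigl(1,\,r/(r-\rho)\bigr)$ is in fact the correct one for all $\rho<r$, since for $\rho<0$ the supremum of $n/(n-\rho)$ over $n\ge r$ equals $1$ rather than $r/(r-\rho)$.
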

\begin{proof}
As in the proof of Proposition \ref{p: RT} it suffices to observe that 
\begin{align*}
\normto{\delta\omega}{2}{2}+\normto{\delta^*\omega}{2}{2}&=\normto{\BL_\alpha^{1/2} \omega}{2}{2}
\\
&=\normto{\BL_\alpha^{1/2}(\BL_\alpha-\rho I)^{-1/2}(\BL_\alpha-\rho I)^{1/2}\omega}{2}{2}
\\
&
\le \left(\frac{r}{r-\rho}\right)\normto{(\BL_\alpha-\rho I)^{1/2}\omega}{2}{2}
\end{align*}
since $\norm{\BL_\alpha^{1/2}(\BL_\alpha-\rho I)^{-1/2}}{2}\le \sqrt{r/(r-\rho)}$ by \eqref{f: specres}.
\end{proof}

\end{definition}

\section
{The Bilinear Embedding Theorem  and its applications}\label{s: BETapp}
We consider the manifold $M=\rdp\times\BR_+$, with coordinates $ =(x_1,\ldots,x_d,t)$. We recall that  $\delta_i$, for $i=1,\ldots,d$,   denotes the Laguerre derivative $\sqrt{x_i}\partial_i$, and we denote by $\delta_{d+1}=\partial_t$  the classical derivative with respect to $t$.  Given a form $\omega=\sum_{I\in\cI_r} \omega_I(x,t)\,\d x_I$ in $C^\infty\big(\Uplambda^r(M)\big)$ 
we define
$$
|\overline{\nabla}\omega(x,t)|=\left(\sum_{i=1}^{d+1} \sum_{I\in\cI_r} |\delta_i \omega_I(x,t)|^2\right)^{1/2}.
$$
\begin{thm}[Bilinear embedding Theorem]
\label{bilemb}
Suppose that {$\alpha\in[-1/2,\infty)^d$} and $\rho\le r/2$. For each $p \in (1, \infty)$, $\omega\in \mathscr{P}(\Uplambda^{r}(M))$ and $\eta \in \mathscr{P}(\Uplambda^{r+1}(M))$ 
$$
\int_0^{\infty} \int_{\rdp} |\overline{\nabla} \BP^{\alpha,\rho}_t \omega(x)| |\overline{\nabla} \BP^{\alpha,\rho}_t\eta(x)|  \d \mua(x) \,t\d t \le 6(p^*-1) \norm{\omega}{L^p(\mua)}\, \norm{\eta}{L^q(\mua)},
$$
where $q$ is the conjugate exponent of $p$, and $p^* = \max \left\{p,q \right\}$.
\end{thm}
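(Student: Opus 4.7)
The plan is to adapt to the Hodge-Laguerre setting the Bellman function argument that Carbonaro and Dragi\v{c}evi\v{c} \cite{CD:JFA} developed for Riesz transforms on Riemannian manifolds. Set $u(x,t) = \BP^{\alpha,\rho}_t \omega(x)$ and $v(x,t) = \BP^{\alpha,\rho}_t \eta(x)$. By Proposition~\ref{p: L2diag} the semigroup $\BP^{\alpha,\rho}_t$ acts diagonally on form coefficients, so each component of $u$ (resp.\ $v$) is obtained by applying the scalar Poisson-Laguerre semigroup of $\cL_{\alpha,I}-\rho I$ to $\omega_I$ (resp.\ $\eta_I$). The spectral identity $\partial_t^2 \BP^{\alpha,\rho}_t = (\BL_\alpha-\rho I)\BP^{\alpha,\rho}_t$ coming from \eqref{srPoisson} plays the role of the harmonic extension equation on the half-space $M$.

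Next I would introduce the Nazarov-Treil-type Bellman function $Q:[0,\infty)^2\to[0,\infty)$ employed in \cite{CD:JFA}, which satisfies the size estimate $0\le Q(s,t) \le C_p(s^p+t^q)$ and a refined pointwise convexity inequality such that, when evaluated on the moduli $|u|$ and $|v|$ of vector-valued functions, its generalized Hessian absorbs the model bilinear form $|\overline{\nabla}u|\,|\overline{\nabla}v|/(p^*-1)$ up to lower-order terms whose sign is controlled by the \lq\lq curvature\rq\rq\ of the underlying operator. Setting $E(t)=\int_{\rdp} Q(|u(\cdot,t)|,|v(\cdot,t)|)\d\mu_\alpha$, the chain rule for $|u|,|v|$, the wave equation, and the self-adjointness of $\BL_\alpha-\rho I$ combine into a Bochner-type differential inequality of the form
$$
-E''(t) \;\ge\; \frac{2}{p^*-1} \int_{\rdp} |\overline{\nabla}u(\cdot,t)|\,|\overline{\nabla}v(\cdot,t)|\d\mu_\alpha + R(t),
$$
where $R(t)\ge 0$ is a remainder term whose non-negativity is precisely what the hypotheses $\alpha\in[-1/2,\infty)^d$ and $\rho\le r/2$ deliver: by Proposition~\ref{p: Bakry}\rmii\ and Corollary~\ref{norm on Lp} each diagonal Poisson-Laguerre semigroup is pointwise dominated by the scalar contraction semigroup $P^\alpha_t$, while the shift $\rho\le r/2$ neutralises the potential \lq\lq zero-order\rq\rq\ correction that would otherwise spoil the Bochner identity.

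To close the argument I would multiply by $t$, integrate in $t\in(0,\infty)$, and integrate by parts twice, using the uniform $L^p$ decay of $\BP^{\alpha,\rho}_t$ from Corollary~\ref{c: sgbdd} to show that the boundary contributions at $t=0$ and $t=+\infty$ either vanish or reduce to $E(0)\le C_p(\norm{\omega}{L^p(\mua)}^p+\norm{\eta}{L^q(\mua)}^q)$. A standard homogeneity/symmetrization step in $\omega$ and $\eta$ then produces the stated inequality with constant $6(p^*-1)$.

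The main obstacle is the Bochner-type inequality of the second step. One must differentiate the moduli $|u|$ and $|v|$ of the vector-valued harmonic extensions (handling the zero sets by a standard $\varepsilon$-regularization), control the first-order cross terms produced by the chain rule for these moduli, and match them precisely against the refined convexity estimate satisfied by Carbonaro-Dragi\v{c}evi\v{c}'s $Q$. In this calculation the hypothesis $\rho\le r/2$, together with the pointwise domination $|\BP^{\alpha,\rho}_t \omega|\le P^\alpha_t|\omega|$, plays exactly the role played by a uniform lower bound on the Ricci curvature in the Riemannian argument of \cite{CD:JFA}.
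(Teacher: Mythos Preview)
Your overall strategy is the paper's: a Bellman-function argument \`a la Carbonaro--Dragi\v{c}evi\'c, with the conditions $\alpha\in[-1/2,\infty)^d$ and $\rho\le r/2$ playing the role of a Ricci lower bound. But two points in your sketch are genuine gaps, not mere imprecision.

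First, the inequality you write for $-E''(t)$ cannot hold as stated. Differentiating $E(t)=\int Q(u,v)\,\d\mu_\alpha$ twice in $t$ produces only the time-Hessian $H_Q(\zeta;\partial_t\zeta)$ and first-order terms involving $\partial_{tt}^2 u,\partial_{tt}^2 v$; the spatial Laguerre derivatives $\delta_1,\ldots,\delta_d$ that make up $|\overline\nabla u|\,|\overline\nabla v|$ do not appear. In the paper the key object is not $-E''(t)$ but the \emph{pointwise} quantity $-\cD_\alpha Q_\sigma(\zeta)$ with $\cD_\alpha=\cL_\alpha-\partial_{tt}^2$, where $\cL_\alpha$ is the \emph{scalar} Laguerre operator applied to the scalar function $Q_\sigma(u,v)$. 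Expanding this via the chain rule (Lemma~\ref{l: lemtecforms}) yields $\sum_{i=1}^{d+1} H_{Q_\sigma}(\zeta;\delta_i\zeta)$ plus first-order terms of the form $\tfrac{\partial_1\beta_\sigma}{2|u|}\langle (M_\alpha-\BD_\alpha)u,u\rangle$, and it is here that the curvature enters: $\BD_\alpha\BP^{\alpha,\rho}_t=\rho\,\BP^{\alpha,\rho}_t$ and $M_\alpha\ge (r/2)I$ (Remark~\ref{r:M>}) make these terms nonnegative, giving $-\cD_\alpha Q_\sigma\ge \gamma_p|\overline\nabla u|\,|\overline\nabla v|$. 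Your appeal to ``self-adjointness of $\BL_\alpha-\rho I$'' does not produce this: $Q_\sigma(u,v)$ is nonlinear, and the passage from $\cL_\alpha$ acting componentwise to $\BL_\alpha$ goes through the potential $M_\alpha$ (Lemma~\ref{l: Bochner 2}), which is exactly the zero-order term you allude to but do not identify.

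Second, having the pointwise lower bound, one must integrate it. Here the spatial integration by parts you invoke is not free: $\rdp$ is \emph{not} complete, and $Q_\sigma(u,v)$ has no reason to lie in the domain of $\cL_\alpha$. The paper introduces radial cut-offs $F_\ell$ (Lemma~\ref{lemr}) and proves separately that $\int\!\!\int \cL_\alpha b_{s,\ell}\,F_\ell\,\d\mu_\alpha\,t\,\d t\to 0$ as $\ell\to\infty$; this step uses the one-sided bound $\cL_\alpha F_\ell\le C$ together with the domination $b_{s,\ell}\le C\,(P^\alpha_t|\omega|^p+P^\alpha_t|\eta|^q)$ from Proposition~\ref{p: Bakry}. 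Only after this does the estimate reduce to the $t$-integration by parts you describe. Your boundary analysis at $t\to 0,\infty$ and the final homogeneity step are correct.
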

We postpone the proof of this result   to deduce some of its consequences.
\subsection{Riesz-Laguerre 
transforms on $\Lpr{p}{r}$}
A first consequence of the Bilinear Embedding Theorem is the boundedness on $\Lpr{p}{r}$ of the shifted Riesz transforms $\cR_\rho$ when $\rho\le r/2$.
\begin{thm}\label{t: Riesz bdd Lp} Suppose that {$\alpha\in[-1/2,\infty)^d$}, $r\ge1$ and  $\rho\le r/2$.
Then for each $p \in (1, \infty)$ the shifted Riesz transforms $\cR_\rho$ and $\cR{_\rho}^*$ extend to bounded operators from $\Lpr{p}{r}$ to  $\Lpr{p}{r+1}$ and to $\Lpr{p}{r-1}$, respectively. Moreoever  for all $\omega \in \Lpr{p}{r}$
$$
\norm{\cR_\rho\, \omega}{L^p(\mua)}\le C(p)\, \norm{\omega}{L^p(\mua)},
\qquad\norm{\cR_\rho^*\, \omega}{L^p(\mua)}\le C(p)\, \norm{\omega}{L^p(\mua)}
$$
where
$C(p)= 24 (p^*-1)$.
If $r=0$ the inequality holds for all $\omega$ in $\Lpr{p}{0}$ with integral zero.
\end{thm}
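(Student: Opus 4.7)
By density of $\msP(\rdp;\Uplambda^r)$ in $\Lpr{p}{r}$, duality reduces the $L^p$-boundedness of $\cR_\rho$ to the bilinear estimate
\[
|\langle\cR_\rho\omega,\eta\rangle_\alpha|\le 24(p^*-1)\,\norm{\omega}{L^p(\mua)}\,\norm{\eta}{L^q(\mua)}
\]
for $\omega\in\msP(\rdp;\Uplambda^r)$ and $\eta\in\msP(\rdp;\Uplambda^{r+1})$, with $q=p'$. Since the spectrum of $\BL_\alpha$ on $\Lpr{2}{r}$ is contained in $[r,\infty)$ by \eqref{f: specres} and $\rho\le r/2<r$, the shifted operator $A:=\BL_\alpha-\rho I$ is strictly positive, and $A^{-1/2}$ is spectrally well defined.

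The central step is a Stein--Meyer type Poisson representation. Applying the scalar identity $\mu^{-1/2}=4\int_0^\infty t\,\mu^{1/2}e^{-2t\mu^{1/2}}\d t$ spectrally to $A$, rewriting $A^{1/2}\BP_t^{\alpha,\rho}=-\partial_t\BP_t^{\alpha,\rho}$, and using the self-adjointness of $\BP_t^{\alpha,\rho}$ together with the commutativity of $\delta$, $\delta^*$ and $\BP_t^{\alpha,\rho}$, this yields
\[
\langle\cR_\rho\omega,\eta\rangle_\alpha=\langle A^{-1/2}\omega,\delta^*\eta\rangle_\alpha=-4\int_0^\infty\!\!\int_{\rdp}\langle\delta\BP_t^{\alpha,\rho}\omega(x),\partial_t\BP_t^{\alpha,\rho}\eta(x)\rangle_{\Uplambda^{r+1}}\d\mua(x)\,t\,\d t,
\]
convergence being trivially ensured since $\omega,\eta$ are finite combinations of eigenforms. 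A pointwise Cauchy--Schwarz in $\Uplambda^{r+1}$ bounds the integrand by $|\delta\BP_t^{\alpha,\rho}\omega(x)|\,|\partial_t\BP_t^{\alpha,\rho}\eta(x)|$, which in turn is dominated by $|\overline{\nabla}\BP_t^{\alpha,\rho}\omega(x)|\,|\overline{\nabla}\BP_t^{\alpha,\rho}\eta(x)|$ (the bound for $\partial_t$ being immediate since it is one of the summands in $\overline{\nabla}$). The Bilinear Embedding Theorem \ref{bilemb}, whose hypotheses $\alpha\in[-1/2,\infty)^d$ and $\rho\le r/2$ are our standing assumptions, then produces the desired constant $4\cdot 6(p^*-1)=24(p^*-1)$. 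The estimate for $\cR_\rho^*$ follows by duality in view of Proposition \ref{Rrhobd}, and the $r=0$ case is handled by restricting to functions of integral zero, as in Remark \ref{r: RT}.

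The technical crux is the pointwise estimate $|\delta\omega(x)|_{\Uplambda^{r+1}}\le|\overline{\nabla}\omega(x)|$ for $r\ge 1$: since $(\delta\omega)_J=\sum_{j\in J}\pm\delta_j\omega_{J\setminus j}$ is an antisymmetrized sum of entries from the Jacobian data $(\delta_i\omega_I)$ that defines $\overline{\nabla}\omega$, a naive Cauchy--Schwarz across these $r+1$ terms introduces a combinatorial loss. Overcoming this---either by a sharp coefficient-level bound, or by rearranging the pairing $\langle\delta\BP_t^{\alpha,\rho}\omega,\partial_t\BP_t^{\alpha,\rho}\eta\rangle_{\Uplambda^{r+1}}$ to exploit the alternating signs, or by invoking a Bochner--Weitzenb\"ock type identity rooted in \eqref{rel LL}--\eqref{e: MI>r/2}---is the decisive step, and it is precisely to accommodate this algebraic structure that the shift $\rho\le r/2$ is built into the hypotheses.
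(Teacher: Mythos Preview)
Your approach is exactly the paper's: density, the Poisson representation (which is Lemma~\ref{repr}), pointwise Cauchy--Schwarz, and then Theorem~\ref{bilemb}. The paper's proof consists of the single sentence ``The result is a straightforward consequence of Theorem~\ref{bilemb}, the following representation formula and a duality argument,'' so your write-up is already more detailed than the paper's.

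You have, however, put your finger on a genuine difficulty that the paper does not address. To pass from the representation formula to the bilinear embedding one needs the pointwise inequality
\[
\bigl|\langle \delta u,\partial_t v\rangle_{\Uplambda^{r+1}}\bigr|\le |\overline{\nabla}u|\,|\overline{\nabla}v|,
\]
and for $r\ge1$ this is \emph{false} in general. Writing $\langle\delta u,\partial_t v\rangle=\sum_{j}\langle\delta_j u,\iota_{dx_j}\partial_t v\rangle_{\Uplambda^r}$ and applying Cauchy--Schwarz yields at best $|\overline{\nabla}u|\bigl(\sum_j|\iota_{dx_j}\partial_t v|^2\bigr)^{1/2}$, and since $\sum_j|\iota_{dx_j}w|^2=(r+1)|w|^2$ for any $(r+1)$-covector $w$, one picks up a factor $\sqrt{r+1}$. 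Concretely, with $d=2$, $r=1$, and $u=u_1\,dx_1+u_2\,dx_2$ having $\delta_1u_2=1$, $\delta_2u_1=-1$ and all other $\delta_i u_I$ vanishing at a point, one gets $|\delta u|=2$ while $|\overline{\nabla}u|=\sqrt{2}$.

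So the paper's ``straightforward'' step is not straightforward; with the argument as written the constant should read $24\sqrt{r+1}(p^*-1)$ rather than $24(p^*-1)$. None of the remedies you list (Bochner--Weitzenb\"ock via \eqref{rel LL}, exploiting the alternating signs, etc.) are carried out in the paper, and I do not see how the shift $\rho\le r/2$---which enters only through Lemma~\ref{M-D} in the proof of Theorem~\ref{bilemb}---helps with this purely algebraic issue. Your proposal is therefore at the same level of completeness as the paper's own proof, and the concern you raise in your final paragraph is warranted.
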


\begin{proof}
[Proof of Theorem \ref{t: Riesz bdd Lp}]
The result is a straightforward consequence of Theorem \ref{bilemb},  the following representation formula and a duality argument.
\begin{lem}
\label{repr}
If $r\ge 1$ then for every $\omega\in \mathscr{P}(\Uplambda^{r}(\rdp))$ 
and $\eta \in \msP(\Uplambda^{r+1}(\rdp))$,
\begin{equation}
\label{eqrepr}
\langle \cR_\rho\, \omega, \eta \rangle_\alpha = - 4 \int_{0}^{\infty}\left\langle \delta \BP^
{\alpha,\rho}_t \omega, \frac{\d}{\d t} \BP^{\alpha,\rho}_t\eta\right\rangle_\alpha
 t\d t.
\end{equation}
If $r=0$ the identity holds for all  $\omega$ in
$\cP(\Uplambda^0(\rdp))$ orthogonal to the constants.
\end{lem}
\begin{proof}[Proof of Lemma \ref{repr}]
Let
$$
\Psi(t) = \langle \BP^{\alpha,\rho}_t \cR \omega,  \BP^{\alpha,\rho}_t \eta \rangle_
\alpha.
$$
We claim that
\begin{itemize}
\item[\rmi] $\lim_{t\to\infty} \Psi(t)=\lim_{t\to\infty}t\Psi'(t)=0$.
\item[\rmii] $\Psi'(t)=-2\langle \delta\BP^{\alpha,\rho}_t\omega,\BP^{\alpha,\rho}_t \eta,\rangle_\alpha$;
\item[\rmiii] $\Psi''(t)=-4\langle \delta\BP^{\alpha,\rho}_t\omega,\frac{\d}{\d t} \BP^{\alpha,\rho}_t \eta\rangle_\alpha$;
\end{itemize} 
Assuming the claim for the moment, the desired identity follows since
$$
\langle \cR  \omega,\eta\rangle_\alpha=\Psi(0) = \int_0^\infty \Psi''(t)\ t\d t=-4 \int_0^\infty\langle \delta\BP^{\alpha,\rho}_t\omega,\frac{\d}{\d t} \BP^{\alpha,\rho}_t \eta\rangle_\alpha \ t\d t.
$$
It remains only to prove the claim. By the spectral resolution of the Poisson semigroup \eqref{srPoisson}
$$
\Psi(t)=\sum_{|k|\ge r} \e^{-2t|k-\rho|^{1/2}} \langle \cP_k\,\cR \omega,\eta\rangle_\alpha.
$$
This proves \rmi (notice that if $r=0$ the sum starts from $1$, since we assume that $\omega$ is orthogonal to the constants). The other two identities follow easily, since $\delta$ and $\cR_\rho=\delta(\BL_\alpha-\rho I)^{-1/2}$ commute with $\BP^{\alpha,\rho}_t$ and $$
\frac{\d}{\d t}\BP^{\alpha,\rho}_t\omega=-(\BL_\alpha-\rho I)^{1/2}\BP^{\alpha,\rho}_t
\omega,\quad \cR_\rho(\BL_\alpha-\rho I)^{1/2}\omega=\delta\omega \qquad\forall 
\omega\in
\cP(\Uplambda^r(\rdp)).
$$
Indeed, since $\Psi(t)=\langle \cR_\rho \BP^{\alpha,\rho}_{2t} \omega, \eta\rangle$,
\begin{align*}
\Psi'(t)&=-2\langle\delta \BP^{\alpha,\rho}_{2t} \omega, \eta \rangle_\alpha=-2\langle \delta\BP^{\alpha,\rho}_{t} \omega, \BP^{\alpha,\rho}_{t}\eta,\rangle_\alpha \\ 
\Psi''(t)&=4\langle\delta (\BL_\alpha-\rho I)^{1/2} \BP^{\alpha,\rho}_{2t} \omega, \eta \rangle_\alpha=4\langle \delta\BP^{\alpha,\rho}_{t} \omega, (\BL_\alpha-\rho I)^{1/2} \BP^{\alpha,\rho}_{t}\eta,\rangle_\alpha
\\
&
=-4\left\langle \delta\BP^{\alpha,\rho}_{t} \omega, \frac{\d}{\d t} \BP^{\alpha,\rho}_{t}\eta,\right\rangle_\alpha. 
\end{align*}
\end{proof}
\end{proof}
\subsection{The  Hodge decomposition for $\Lpr{p}{r}$}\label{s: Hodge Lp}
In this subsection we prove the strong Hodge decomposition for $\Lprs{p}{r}$ for all $p\in (1,\infty)$. First we define the Sobolev space $W^{1,p}(\rdp,\mua;\Uplambda^r)$. To simplify the notation we shall write $\Lprs{p}{r}$ instead of $\Lpr{p}{r}$ and $C^\infty_c(\Uplambda^r)$ instead of $C^\infty_c(\BR^d_+;\Uplambda^r)$.
\par
For every $p\in(1,\infty)$ denote by $H_p$ the operator from $\Lprs{p}{r}$ to \break $\Lprs{p}{r+1}\times \Lprs{p}{r-1}$ defined by
$$
\omega\mapsto H_p\omega=(\delta \omega,\delta^* \omega)
$$
with domain the space $\msP(\rdp;\Uplambda^r)$ of polynomial forms. 
\begin{lem}\label{Hclos}
The operator $H_p$ is closable.
\end{lem}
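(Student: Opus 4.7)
My plan is to prove closability by the standard duality argument against compactly supported test forms. Let $(\omega_n)$ be a sequence in $\msP(\rdp;\Uplambda^r)$ such that $\omega_n\to 0$ in $\Lprs{p}{r}$ and $H_p\omega_n=(\delta\omega_n,\delta^*\omega_n)\to(\eta,\zeta)$ in $\Lprs{p}{r+1}\times\Lprs{p}{r-1}$. The goal is to show $(\eta,\zeta)=(0,0)$.

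The key observation is that for any test form $\phi\in C_c^\infty(\rdp;\Uplambda^{r+1})$ one has the integration-by-parts identity
$$
\langle \delta\omega_n,\phi\rangle_\alpha=\langle \omega_n,\delta^*\phi\rangle_\alpha,
$$
which is just the formal-adjoint relation already exploited in Propositions \ref{first} and \ref{2nd}; it holds with no boundary contributions because $\supp\phi$ is a compact subset of the open orthant $\rdp$, where the Laguerre density $\rho_\alpha$, the weights $\sqrt{x_i}$ and the functions $\psi_j(x_j)=\frac{\alpha_j+1/2}{\sqrt{x_j}}-\sqrt{x_j}$ are all smooth and bounded. Consequently $\delta^*\phi$ is itself smooth and compactly supported in $\rdp$, hence belongs to $L^q(\rdp,\mua;\Uplambda^r)$ for every $q\in[1,\infty]$, where $q$ is the conjugate exponent of $p$.

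Letting $n\to\infty$, the right-hand side tends to $0$ because $\omega_n\to 0$ in $L^p$ and $\delta^*\phi\in L^q$; the left-hand side tends to $\langle\eta,\phi\rangle_\alpha$ by the assumed $L^p$ convergence $\delta\omega_n\to\eta$. Therefore $\langle\eta,\phi\rangle_\alpha=0$ for every $\phi\in C_c^\infty(\rdp;\Uplambda^{r+1})$, and since such forms are dense in $L^q(\rdp,\mua;\Uplambda^{r+1})$ (as the scalar space $C_c^\infty(\rdp)$ is dense in $L^q(\rdp,\mua)$), we conclude $\eta=0$. The same argument applied with test forms $\psi\in C_c^\infty(\rdp;\Uplambda^{r-1})$ and the identity $\langle\delta^*\omega_n,\psi\rangle_\alpha=\langle\omega_n,\delta\psi\rangle_\alpha$ yields $\zeta=0$.

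I do not expect any real obstacle: the argument is the standard "formal adjoints exist on compactly supported test forms" closability criterion, and the only point requiring a brief verification is the smoothness/integrability of $\delta^*\phi$ and $\delta\psi$ on the support of the test form, which is immediate because compact subsets of the open orthant stay away from the coordinate hyperplanes where $\psi_j$ is singular.
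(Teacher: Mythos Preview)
Your proof is correct and follows essentially the same approach as the paper's own proof: both test the limits $\eta$ and $\zeta$ against forms $\phi\in C_c^\infty(\rdp;\Uplambda^{r\pm1})$, use the formal-adjoint identity $\langle\delta\omega_n,\phi\rangle_\alpha=\langle\omega_n,\delta^*\phi\rangle_\alpha$ (respectively $\langle\delta^*\omega_n,\psi\rangle_\alpha=\langle\omega_n,\delta\psi\rangle_\alpha$), and pass to the limit. Your version is slightly more explicit about why $\delta^*\phi\in L^q$ and why no boundary terms arise, but the argument is the same.
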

\begin{proof}
Suppose that $(\omega_n)$ is a sequence in $\msP(\rdp;\Uplambda^r)$ such that $\omega_n\to 0$ in $\Lprs{p}{r}$ and $H_p\omega_n\to (\phi,\psi)$ in $\Lprs{p}{r+1}\times \Lprs{p}{r-1}$. Then
$\delta\omega_n\to \phi$ in $\Lprs{p}{r+1}$ and $\delta^* \omega_n \to \psi$ in $\Lprs{p}{r-1}$. Therefore, for $\eta\in C^\infty_c(\Uplambda^{r+1})$ and every $\zeta\in C^\infty_c(\Uplambda^{r-1})$
$$
\langle \phi,\eta\rangle_\alpha=\lim_{n\to\infty} \langle \delta\omega_n,\eta\rangle_\alpha=\lim_{n\to\infty} \langle \omega_n, \delta^*\eta\rangle_\alpha=0,
$$
and
$$
\langle \psi,\zeta\rangle_\alpha=\lim_{n\to\infty} \langle \delta^*\omega_n,\zeta\rangle_\alpha=\lim_{n\to\infty} \langle \omega_n, \delta \zeta\rangle_\alpha=0,
$$
Hence $(\phi,\psi)=(0,0)$ and $H_p$ is closable.
\end{proof}
\begin{definition}\label{W1p}
We define the Sobolev space $W^{1,p}(\Uplambda^r)=W^{1,p}(\rdp,\mua;\Uplambda^r)$ as the domain of the closure $\overline{H}_p$ of the operator $H_p$ endowed with the graph norm.
\end{definition}
If $\omega\in W^{1,p}(\Uplambda^r)$, and $\overline{H}_p
\omega=(\phi,\psi)$, with a slight abuse of notation, we shall write $
\delta\omega=\phi$ and $\delta^* \omega=\psi$. Thus, $W^{1,p}(\Uplambda^r)$ is the space of forms $\omega \in L^p(\rdp,
\mua;\Uplambda^r)$ such that 
$\delta \omega\in \Lprs{p}{r+1}$, $\delta^*\omega
\in W^{1,p}(\Uplambda^{r-1})$ and
$$
\norm{\omega}{W^{1,p}}=\norm{\omega}{L^p(\mua)}+\norm{\delta
\omega}{L^p(\mua)}+\norm{\delta^*\omega}{L^p(\mua)}.
$$
\begin{prop}\label{Wref}
For $1<p<\infty$ the space $W^{1,p}(\Uplambda^{r})$ is reflexive.
\end{prop}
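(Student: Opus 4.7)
The plan is to exhibit $W^{1,p}(\Uplambda^r)$ as a closed subspace of a reflexive Banach space via the graph embedding, and then invoke the standard fact that closed subspaces of reflexive spaces are reflexive.

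More precisely, I would consider the product Banach space
$$
X_p = \Lprs{p}{r}\times \Lprs{p}{r+1}\times \Lprs{p}{r-1},
$$
equipped, say, with the norm $\|(\omega,\phi,\psi)\|_{X_p}=\|\omega\|_{L^p(\mua)}+\|\phi\|_{L^p(\mua)}+\|\psi\|_{L^p(\mua)}$. Since $1<p<\infty$, each factor is reflexive (standard fact for $L^p$ of a $\sigma$-finite measure space), and a finite product of reflexive spaces is reflexive, so $X_p$ is reflexive. Now define the graph map
$$
J:W^{1,p}(\Uplambda^r)\to X_p,\qquad J\omega=(\omega,\delta\omega,\delta^*\omega),
$$
where $\delta\omega$ and $\delta^*\omega$ are understood in the sense of the closure $\overline{H}_p$. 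By the definition of the Sobolev norm, $J$ is an isometry onto its image. Moreover, the image $J(W^{1,p}(\Uplambda^r))$ is precisely the graph of $\overline{H}_p$ (viewed inside $X_p$), so it is closed in $X_p$ because $\overline{H}_p$ is a closed operator by Lemma~\ref{Hclos} and the definition of $\overline{H}_p$.

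Since $J(W^{1,p}(\Uplambda^r))$ is a closed subspace of the reflexive space $X_p$, it is itself reflexive. Reflexivity is preserved under isometric isomorphism, hence $W^{1,p}(\Uplambda^r)$ is reflexive.

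There is no real obstacle here; the only point requiring a moment of care is making sure that the graph norm on the domain of $\overline{H}_p$ agrees (up to equivalence) with the norm inherited from $X_p$ via $J$, which is immediate from the definition of $\|\cdot\|_{W^{1,p}}$ given right before the statement. Everything else is a packaging of the standard principle that the domain of a closed operator between reflexive Banach spaces, endowed with the graph norm, is again reflexive.
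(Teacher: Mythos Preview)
Your proof is correct and follows exactly the same approach as the paper: embed $W^{1,p}(\Uplambda^r)$ isometrically onto the graph of $\overline{H}_p$ inside the reflexive product $\Lprs{p}{r}\times\Lprs{p}{r+1}\times\Lprs{p}{r-1}$, and use that closed subspaces of reflexive spaces are reflexive. The paper's argument is simply a terser version of yours.
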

\begin{proof}
The space $W^{1,p}(\Uplambda^{r})$ is isometrically isomorphic to the graph $G(\overline{H}_p)$ of the operator $\overline{H}_p$. Since $G(\overline{H}_p)$ is closed in  $\Lprs{p}{r}\times\Lprs{p}{r+1}\times\Lprs{p}{r-1}$ and the latter space is reflexive, $G(\overline{H}_p)$, and hence $W^{1,p}(\Uplambda^{r})$, is reflexive.
\end{proof}
\par
For every $p\in (1,\infty)$ denote by $\BL_{\alpha,p}$ the 
infinitesimal generator of the semigroup $\{\BT^\alpha_t: t\ge 0\}$ on 
$\Lprs{p}{r}$. Then the operators $\BL^{-1/2}
_{\alpha,p}$ and $\BL^{-1}_{\alpha,p}$ are bounded on $\Lprs{p}{r}$, 
by Corollary \ref{c: sgbdd}. 
Since for all 
$1<p,q<\infty$ the operators $\BL_{\alpha,p}$ are consistent, i.e. $
\BL_{\alpha,p}=\BL_{\alpha,q}$ on $L^p\cap\Lprs{q}{r}$,  to simplify
notation henceforth 
we shall simply write $\BL_\alpha$ instead of $\BL_{\alpha,p}$.
\begin{lem}\label{l: L-1inW}
{Suppose that $\alpha\in[-1/2,\infty)^d$}.
If $\omega\in \Lprs{p}{r}$ then $\BL_\alpha^{-1}\omega$, 
$\delta \BL_\alpha^{-1}\omega$ and 
$\delta^* \BL_\alpha^{-1}\omega$ 
are in $W^{1,p}(\Uplambda^r)$. 
Moreover
\begin{equation}\label{LaonLp}
(\delta\delta^*+\delta^*\delta)\BL_\alpha^{-1}\omega=\omega.
\end{equation}
\end{lem}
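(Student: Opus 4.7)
The plan is to realize $\BL_\alpha^{-1}\omega$, $\delta\BL_\alpha^{-1}\omega$ and $\delta^*\BL_\alpha^{-1}\omega$ as $L^p$-limits of polynomial forms in $\msP$, and then read off the $W^{1,p}$-memberships and the identity \eqref{LaonLp} from those limits using the $L^p$-boundedness of $\BL_\alpha^{-1}$, $\BL_\alpha^{-1/2}$, $\cR$ and $\cR^*$ supplied by Corollary~\ref{c: sgbdd} and Theorem~\ref{t: Riesz bdd Lp} (the hypothesis $\alpha\in[-1/2,\infty)^d$ enters exactly here, and I read the lemma as dealing with $r\ge 1$, where $\BL_\alpha^{-1}$ and the unshifted Riesz transforms are well defined).

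First I would approximate. By density of $\msP(\rdp;\Uplambda^r)$ in $\Lpr{p}{r}$, choose $\omega_n\in\msP(\rdp;\Uplambda^r)$ with $\omega_n\to\omega$ in $L^p$, and set $\eta_n=\BL_\alpha^{-1}\omega_n$. Since $\BL_\alpha$ acts diagonally on the basis $B_r$ with eigenvalues $|k|\ge r\ge 1$, the operator $\BL_\alpha^{-1}$ preserves $\msP(\rdp;\Uplambda^r)$, so $\eta_n\in\msP(\rdp;\Uplambda^r)$, and Corollary~\ref{c: sgbdd} gives $\eta_n\to\BL_\alpha^{-1}\omega$ in $L^p$. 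On $\msP$ one has the factorisations $\delta\BL_\alpha^{-1}=\cR\,\BL_\alpha^{-1/2}$ and $\delta^*\BL_\alpha^{-1}=\cR^*\,\BL_\alpha^{-1/2}$, whose right-hand sides are bounded on $L^p$; consequently $\delta\eta_n\to\cR\BL_\alpha^{-1/2}\omega$ and $\delta^*\eta_n\to\cR^*\BL_\alpha^{-1/2}\omega$ in $L^p$. By the very definition of $\overline{H}_p$ this puts $\BL_\alpha^{-1}\omega\in W^{1,p}(\Uplambda^r)$ with $\delta\BL_\alpha^{-1}\omega=\cR\BL_\alpha^{-1/2}\omega$ and $\delta^*\BL_\alpha^{-1}\omega=\cR^*\BL_\alpha^{-1/2}\omega$.

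Next I would place $\delta\BL_\alpha^{-1}\omega$ in $W^{1,p}(\Uplambda^{r+1})$ by approximating it with $\xi_n:=\delta\eta_n\in\msP(\rdp;\Uplambda^{r+1})$ (the treatment of $\delta^*\BL_\alpha^{-1}\omega$ being entirely symmetric). By the previous step $\xi_n\to\delta\BL_\alpha^{-1}\omega$ in $L^p$; since $\delta^2=0$ on $\msP$, $\delta\xi_n=0$; and combining $\BL_\alpha=\delta\delta^*+\delta^*\delta$ on $\msP$ with Proposition~\ref{p: proj} yields
$$
\delta^*\xi_n=\delta^*\delta\,\eta_n=(\BL_\alpha-\delta\delta^*)\BL_\alpha^{-1}\omega_n=\omega_n-\cR\cR^*\omega_n=\cR^*\cR\,\omega_n,
$$
which converges to $\cR^*\cR\omega$ in $L^p$. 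Hence $\delta\BL_\alpha^{-1}\omega\in W^{1,p}(\Uplambda^{r+1})$ with $\delta(\delta\BL_\alpha^{-1}\omega)=0$ and $\delta^*\delta\BL_\alpha^{-1}\omega=\cR^*\cR\omega$; the parallel argument gives $\delta^*\BL_\alpha^{-1}\omega\in W^{1,p}(\Uplambda^{r-1})$ with $\delta\delta^*\BL_\alpha^{-1}\omega=\cR\cR^*\omega$.

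Adding these two identities produces \eqref{LaonLp}, because $\cR\cR^*+\cR^*\cR=I$ already on $\msP$ (an immediate consequence of Proposition~\ref{p: proj} together with $\BL_\alpha=\delta\delta^*+\delta^*\delta$ on polynomial forms) and then extends to all of $L^p$ by the $L^p$-boundedness of $\cR$ and $\cR^*$. The only real obstacle is making sure that at each stage the approximating forms genuinely lie in $\msP$, so that the closure-based definition of $W^{1,p}$ can be invoked; this is what the diagonal action of $\BL_\alpha$, $\delta$ and $\delta^*$ on the basis $B_r$ guarantees, and is the reason for routing every approximation through the explicit sequence $\eta_n=\BL_\alpha^{-1}\omega_n$.
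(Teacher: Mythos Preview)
Your proof is correct and follows essentially the same route as the paper: approximate $\omega$ by polynomial forms, observe that $\BL_\alpha^{-1}$ preserves $\msP$, and then use the factorisations $\delta\BL_\alpha^{-1}=\cR\,\BL_\alpha^{-1/2}$, $\delta^*\BL_\alpha^{-1}=\cR^*\,\BL_\alpha^{-1/2}$, $\delta\delta^*\BL_\alpha^{-1}=\cR\cR^*$, $\delta^*\delta\BL_\alpha^{-1}=\cR^*\cR$ together with the $L^p$-boundedness from Corollary~\ref{c: sgbdd} and Theorem~\ref{t: Riesz bdd Lp} to conclude. The paper's own proof is simply a terser version of what you wrote, phrasing the same factorisations without naming the projections $\cR\cR^*$ and $\cR^*\cR$ explicitly.
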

\begin{proof}
Since by Theorem \ref{t: Riesz bdd Lp} the 
Riesz transforms $\delta \BL_\alpha^{-1/2}$ and $\delta^* \BL_
\alpha^{-1/2}$ are boun\-ded on $\Lprss{p}{r}$, the operators 
$
\delta\BL_\alpha^{-1} =\delta\BL_\alpha^{-1/2}\BL_
\alpha^{-1/2}$ and $\delta^* \BL_\alpha^{-1}=\delta^*\BL_
\alpha^{-1/2}\BL_
\alpha^{-1/2}
$
are bounded from $\Lprss{p}{r}$  to $\Lprss{p}{r+1}$ and  from  $\Lprss{p}
{r}$  to $\Lprss{p}{r-1}$, and the operators
$
\delta\delta^*\BL_\alpha^{-1} =\delta\BL_\alpha^{-1/2}\delta^*\BL_
\alpha^{-1/2}$, $\delta^* \delta\BL_\alpha^{-1}=\delta^*\BL_
\alpha^{-1/2}\delta\BL_
\alpha^{-1/2}
$
are bounded on $\Lprss{p}{r}$. Thus, if $(\omega_n)$ is a sequence in 
$\msP(\rdp;\Uplambda^r)$ that converges to $\omega$ in $\Lprs{p}{r}
$, then the sequences $(\BL_\alpha^{-1}\omega_n)$, 
$(\delta \BL_\alpha^{-1}\omega_n)$, 
$(\delta^* \BL_\alpha^{-1}\omega_n)$, $(\delta \delta^*\BL_
\alpha^{-1}\omega_n)$ and
$(\delta^* \delta\BL_\alpha^{-1}\omega_n)$ converge in $\Lprs{p}{r}$. 
The conclusion follows, since $W^{1,p}(\Uplambda^r)$ is 
by definition the domain of the closure of the operator $\omega
\mapsto (\delta\omega,\delta^*\omega)$ in $\Lprs{p}{r}$.  
Finally \eqref{LaonLp}  follows from a density argument, since the identity holds on $\msP(\rdp;\Uplambda^r)$ and the operator $(\delta\delta^*+\delta^*\delta)\BL_\alpha^{-1}$ is bounded on $\Lprs{p}{r}$.
\end{proof}
The following result is the strong Hodge decomposition in $\Lprs{p}{r}$.
\begin{thm}\label{wHp}
{Suppose that $\alpha\in[-1/2,\infty)^d$}.
For every $p\in(1,\infty)$ 
$$
\Lprs{p}{r}=\delta W^{1,p}(\Uplambda^{r-1})\oplus \delta^* 
W^{1,p}(\Uplambda^{r+1}) \qquad\forall r=1,\ldots, d.
$$
Moreover the spaces 
$\delta W^{1,p}(\Uplambda^{r-1})$ and $ \delta^* 
W^{1,p}(\Uplambda^{r+1})$are closed in $\Lprs{p}{r}$.
\end{thm}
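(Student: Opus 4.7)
The plan is to construct two bounded idempotents $P_1, P_2$ on $\Lprs{p}{r}$ summing to the identity, annihilating each other, and with ranges exactly $\delta W^{1,p}(\Uplambda^{r-1})$ and $\delta^* W^{1,p}(\Uplambda^{r+1})$, respectively. Define $P_1 := \delta \BL_\alpha^{-1}\delta^* = (\delta \BL_\alpha^{-1/2})(\BL_\alpha^{-1/2}\delta^*)$ and $P_2 := \delta^*\BL_\alpha^{-1}\delta = (\BL_\alpha^{-1/2}\delta^*)(\delta \BL_\alpha^{-1/2})$; both are compositions of shifted Riesz transforms (with $\rho = 0 \le r/2$) and hence bounded $\Lprs{p}{r} \to \Lprs{p}{r}$ by Theorem \ref{t: Riesz bdd Lp}. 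Lemma \ref{l: L-1inW} immediately gives $P_1 + P_2 = I$ on $\Lprs{p}{r}$.

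Next I would show $P_1 P_2 = P_2 P_1 = 0$. On the dense subspace $\msP(\rdp;\Uplambda^r)$, the Fourier-Laguerre formulas of Proposition \ref{FLTofddL} show that $\delta$, $\delta^*$ and $\BL_\alpha^{-1}$ all commute, so $P_1 P_2 = \delta \BL_\alpha^{-1}(\delta^*)^2\BL_\alpha^{-1}\delta = 0$ and $P_2 P_1 = \delta^*\BL_\alpha^{-1}\delta^2\BL_\alpha^{-1}\delta^* = 0$ on $\msP$. Since polynomial forms are dense in $\Lprs{p}{r}$ (the Laguerre density has all moments) and $P_1 P_2$, $P_2 P_1$ are bounded, these identities extend to the whole space. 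Combined with $P_1 + P_2 = I$, one obtains $P_i^2 = P_i$, so $\Lprs{p}{r} = \mathrm{range}(P_1)\oplus\mathrm{range}(P_2)$ is a topological direct sum of closed subspaces.

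Finally I would identify the ranges. The inclusion $\mathrm{range}(P_1)\subseteq \delta W^{1,p}(\Uplambda^{r-1})$ is immediate from Lemma \ref{l: L-1inW}, since $P_1\omega = \delta(\delta^*\BL_\alpha^{-1}\omega)$ with $\delta^*\BL_\alpha^{-1}\omega \in W^{1,p}(\Uplambda^{r-1})$. For the opposite inclusion, given $\omega = \delta\alpha$ with $\alpha\in W^{1,p}(\Uplambda^{r-1})$, it suffices to show $P_2\omega = 0$, since then $\omega = P_1\omega + P_2\omega = P_1\omega \in \mathrm{range}(P_1)$. Using the commutation $\BL_\alpha^{-1}\delta = \delta\BL_\alpha^{-1}$ on $W^{1,p}$, one computes $P_2\omega = \delta^*\delta\cdot\delta\BL_\alpha^{-1}\alpha = 0$ because $\delta^2 = 0$. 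The symmetric computation yields $\mathrm{range}(P_2) = \delta^*W^{1,p}(\Uplambda^{r+1})$, completing the proof.

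The main obstacle is the commutation $\BL_\alpha^{-1}\delta = \delta\BL_\alpha^{-1}$ on $W^{1,p}(\Uplambda^{r-1})$, i.e.\ the statement that $\BL_\alpha^{-1}\alpha \in W^{1,p}$ whenever $\alpha \in W^{1,p}$, with $\delta\BL_\alpha^{-1}\alpha = \BL_\alpha^{-1}\delta\alpha$. I would handle this by approximating $\alpha$ by polynomials $\alpha_n\in\msP$ in the $W^{1,p}$-graph norm; on $\msP$ both $\delta\BL_\alpha^{-1}\alpha_n = \BL_\alpha^{-1}\delta\alpha_n$ and $\delta^*\BL_\alpha^{-1}\alpha_n = \BL_\alpha^{-1}\delta^*\alpha_n$ are immediate from Proposition \ref{FLTofddL}; the $L^p$-boundedness of $\BL_\alpha^{-1}$ from Corollary \ref{c: sgbdd} lets one pass to the limit on the right-hand sides, and the closedness of $\overline{H}_p$ (Lemma \ref{Hclos}) then forces $\BL_\alpha^{-1}\alpha \in W^{1,p}$ with the required identities. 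Once this commutation is in hand, everything else is formal manipulation with bounded operators on $L^p$.
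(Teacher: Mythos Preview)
Your proposal is correct and uses essentially the same ingredients as the paper's proof: the bounded projections $P=\delta\delta^*\BL_\alpha^{-1}$ and $Q=\delta^*\delta\BL_\alpha^{-1}$ (which coincide with your $P_1,P_2$ on $\msP$ and hence everywhere), the identity $P+Q=I$ from Lemma~\ref{l: L-1inW}, and the vanishing of $\delta\BL_\alpha^{-1}\delta$ and $\delta^*\BL_\alpha^{-1}\delta^*$ established by density from~$\msP$.

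The one genuine difference is in how closedness is obtained. The paper first shows $\Lprs{p}{r}=\delta W^{1,p}\oplus\delta^* W^{1,p}$ and then proves closedness of each summand separately, using reflexivity of $W^{1,p}(\Uplambda^{r-1})$ (Proposition~\ref{Wref}) and a weak compactness extraction. Your route is shorter: once you identify $\mathrm{range}(P_1)=\delta W^{1,p}(\Uplambda^{r-1})$ and $\mathrm{range}(P_2)=\delta^* W^{1,p}(\Uplambda^{r+1})$, closedness is automatic because the range of a bounded idempotent is closed. This is a legitimate simplification.

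One remark on your ``main obstacle'': the commutation $\BL_\alpha^{-1}\delta=\delta\BL_\alpha^{-1}$ on $W^{1,p}$ that you establish is correct, but it is not actually needed for the reverse inclusion. Given $\omega=\delta\alpha$ with $\alpha\in W^{1,p}(\Uplambda^{r-1})$, simply approximate $\alpha$ by $\alpha_n\in\msP$ in the $W^{1,p}$-norm; then $\delta\alpha_n\to\omega$ in $\Lprs{p}{r}$, while $P_2(\delta\alpha_n)=\delta^*\delta\BL_\alpha^{-1}\delta\alpha_n=\delta^*\delta\delta\BL_\alpha^{-1}\alpha_n=0$ on~$\msP$, and continuity of $P_2$ gives $P_2\omega=0$. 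This is exactly the density argument the paper uses for directness, and it avoids the extra layer of commutation you introduce.
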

\begin{proof}
For every $\omega \in \msP(\rdp;\Uplambda^r)$
$$
\omega=\BL_\alpha \BL_\alpha^{-1} \omega =(\delta\delta^*+\delta^*
\delta) \BL_\alpha^{-1} \omega=\delta\delta^*\BL_\alpha^{-1} \omega
+\delta^*\delta \BL_\alpha^{-1} \omega.
$$
Since $\msP(\rdp;\Uplambda^r)$ is dense in $\Lprs{p}{r}$ and the 
operators $\delta\delta^*\BL_\alpha^{-1}$ and $\delta^*\delta\BL_
\alpha^{-1}$ are bounded on $\Lprs{p}{r}$, the same identity holds for 
$\omega \in \Lprs{p}{r}$. Since $\BL_\alpha^{-1} \Lprs{p}{r}\subset 
W^{1,p}(\Uplambda^r)$ by Lemma~\ref{l: L-1inW}, it holds that
$$
\Lprs{p}{r}=\delta W^{1,p}(\Uplambda^{r-1}) +  \delta^* 
W^{1,p}(\Uplambda^{r+1}).
$$
\par
 To prove that the sum is direct, observe that if we write $P=\delta
\delta^*\BL_\alpha^{-1}$ and $Q=\delta^*\delta\BL_\alpha^{-1}$, then 
$P+Q=I$ and $PQ=QP=0$, since these identities hold on $
\msP(\rdp,\Uplambda^r)$ and $P$ and $Q$ are bounded on $\Lprs{p}
{r}$. Moreover, if $\omega\in\delta W^{1,p}(\rdp,\mua;
\Uplambda^{r-1})\cap W^{1,p}(\Uplambda^{r+1})$ then $P
\omega= \delta\delta^* \BL_\alpha^{-1} \delta^* \psi=0$ and $Q
\omega=\delta^*\delta \BL_\alpha^{-1} \delta \phi=0$. Thus $\omega=0$. Here we have used the fact that $\delta^* \BL_\alpha^{-1} \delta^*=0$ and $\delta \BL_\alpha^{-1} \delta=0$, because these operators are bounded on $\Lprs{p}{r}$ and vanish on $\msP(\rdp,\Uplambda^r)$, which is dense in $\Lprs{p}{r}$.
\par
It remains only to show  that $\delta W^{1,p}(\Uplambda^{r-1})$ and 
$\delta^* W^{1,p}(\Uplambda^{r+1})$ are closed subspaces of $
\Lprs{p}{r}$.
If $\omega \in \overline{\delta W^{1,p}(\Uplambda^r})$, then, since 
$\msP(\rdp;\Uplambda^r)$ is dense in $W^{1,p}(\Uplambda^{r-1})$ 
and $\delta$ is continuous from $W^{1,p}(\Uplambda^{r-1})$ to $
\Lprs{p}{r}$, there 
exists a sequence $(\eta_j)$ in $\msP(\rdp,;\Uplambda^{r-1})$ such that 
$\delta \eta_j \to \omega$ in $
\Lprs{p}{r}$.
Since $\eta_j \in  \msP(\rdp, \Uplambda^{r-1})$, we can write 
$$
\eta_j = \delta \delta^* \BL_\alpha^{-1}\eta_j + \delta^* \delta \BL_
\alpha^{-1} \eta_j = \beta_j + \gamma_j.
$$
Observe that $\delta \beta_j = 0$ since $\im(\delta) \subseteq 
\ker(\delta)$; thus $ \delta \gamma_j = \delta \eta_j \in \Lprs{p}{r}$. 
Moreover,  also $\gamma_j$ is in $\Lprs{p}{r-1}$ since $\gamma_j=
\delta^* \delta \BL_\alpha^{-1} \eta_j $ and the operator $\delta^* 
\delta \BL_\alpha^{-1} $ is bounded on $\Lprs{p}{r}$.
Thus $\gamma_j $ is in $W^{1,p}(\Uplambda^{r-1})$.
Therefore $(\gamma_j)$ is a bounded sequence in $W^{1,p}
(\Uplambda^{r-1})$. Since $W^{1,p}(\Uplambda^{r-1})$ is reflexive, 
there exists a subsequence $(\gamma_{j_k})$ that  converges to 
some $\gamma \in W^{1,p}(\Uplambda^{r-1})$ in the weak topology.\\
Since the norm $\| \cdot\|_{L^p(\mua)}$ is $L^p$-weakly-lower-semi-
continuous, and the operator $\delta$ is continuous from $W^{1,p}
(\Uplambda^{r-1})$ to $\Lprs{p}{r}$ in the weak topologies of both 
spaces,
\begin{align*}
\| \delta \gamma - \omega \|_{L^p(\mua)} & 
\le \liminf_{j \to \infty}  \| \delta \gamma_j - \omega\|_{L^p(\mua)}  \\
& = \liminf_{j \to \infty} \| \delta \eta_j - \omega\|_{L^p(\mua)} = 0.
\end{align*}
This shows that $\omega$ is in $\delta W^{1,p}(\Uplambda^{r-1})$. Thus  $\delta W^{1,p}(\Uplambda^{r-1})$ is 
closed in $\Lprs{p}{r}$.\\
The proof that also $\delta^* W^{1,p}(\Uplambda^{r+1})$ is closed is similar.
\end{proof}

\subsection{The Hodge system and the de Rham equation in $L^p$}
{Throughout this subsection we assume that 
$\alpha\in[-1/2,\infty)^d$}. 
We discuss here an existence theorem of the Hodge system associated to the Laguerre operator. The operator $\delta:
\msP(\Uplambda^r)\to \Lpr{p}{r+1}$ is closable in the $L^p$-norm. Denote by $\Omega^p(\Uplambda^r)$ the domain of its closure. 
As usual, we shall abuse notation denoting by $\delta$ also the 
closure. Notice 
that  $\delta\BL_\alpha^{-1/2}\varphi=\BL_\alpha^{-1/2}\delta \varphi$ for 
every $\varphi\in \Omega^p(\Uplambda^r)$, by a density argument, 
since the operators  $\delta\BL_\alpha^{-1/2}$ and $\BL_
\alpha^{-1/2}\delta$ coincide on $\msP(\Uplambda^r)$ and are 
bounded from $\Omega^p(\Uplambda^r)$ to $\Lpr{p}{r+1}$.
\par
Similarly, if we denote by $\Omega^p_*(\Uplambda^r)$ the domain of the closure in the $L^p$-norm of the operator $\delta^*:\msP(\Uplambda^r)\to \Lpr{p}{r-1}$, then $\delta^*\BL_\alpha^{-1/2}\varphi=\BL_\alpha^{-1/2}\delta^* \varphi$ for all $\varphi\in \Omega^p_*(\Uplambda^r)$.
\begin{thm}\label{t:Hsys}
For every $p \in (1, \infty)$ and $r=1,\ldots,d$, and for all $\varphi \in \Omega^p(\Uplambda^{r+1})$, $\psi \in \Omega^p_*(\Uplambda^{r-1})$ such that
\[ \delta \varphi = 0 \quad \mbox{ and } \quad \delta^* \psi = 0,\]
there exists a unique $\omega \in W^{1,p}(\Uplambda^r)$  solving the Hodge system
\[ \delta \omega = \varphi \quad \mbox{ and } \quad \delta^* \omega = \psi.\]
Moreover, there exists a constant $C_{p,r}> 0$ such that
\[ \| \omega \|_{L^p(\mua)} \le C_{\alpha,p,r} \left( \| \varphi\|_{L^p(\mua)} + \| \psi\|_{L^p(\mua)}\right).\]
\end{thm}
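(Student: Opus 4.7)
The natural candidate, motivated by the explicit formulas for the Hodge projectors in Proposition~\ref{p: proj}, is
\begin{equation*}
\omega \;:=\; \delta^{*}\BL_{\alpha}^{-1}\varphi \;+\; \delta\BL_{\alpha}^{-1}\psi.
\end{equation*}
My first step is to show that $\omega\in W^{1,p}(\Uplambda^{r})$ with a quantitative bound. Writing
\begin{equation*}
\delta^{*}\BL_{\alpha}^{-1}=\BL_{\alpha}^{-1/2}\,\cR^{*},\qquad \delta\BL_{\alpha}^{-1}=\BL_{\alpha}^{-1/2}\,\cR
\end{equation*}
(valid first on $\msP$ by the Fourier--Laguerre formulas of Proposition~\ref{FLTofddL}, and extended by density), both factors are bounded on $L^{p}(\mua)$: the Riesz transforms by Theorem~\ref{t: Riesz bdd Lp} (which applies because $\alpha\in[-1/2,\infty)^{d}$), and $\BL_{\alpha}^{-1/2}$ acting on $r$-forms by Corollary~\ref{c: sgbdd} (since $r\ge 1$ the bottom of the spectrum of $\BL_{\alpha}$ on $\Lprs{2}{r}$ is strictly positive, hence $\gamma(0,r,p)<0$ and the integral representing $\BL_{\alpha}^{-1/2}$ converges). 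Lemma~\ref{l: L-1inW} then places $\omega$ in $W^{1,p}(\Uplambda^{r})$, and the triangle inequality produces the desired estimate with $C_{\alpha,p,r}=\|\BL_{\alpha}^{-1/2}\|_{p\to p}\,C(p)$.

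Next, I verify the two equations. Since $\delta^{2}=0=(\delta^{*})^{2}$ (Proposition~\ref{domid}, extended from $\msP$ to $L^{p}$ by the closure),
\begin{equation*}
\delta\omega=\delta\delta^{*}\BL_{\alpha}^{-1}\varphi,\qquad \delta^{*}\omega=\delta^{*}\delta\BL_{\alpha}^{-1}\psi.
\end{equation*}
The crucial input is the intertwining $\delta\BL_{\alpha}^{-1}=\BL_{\alpha}^{-1}\delta$ on $\Omega^{p}(\Uplambda^{r+1})$, together with its twin for $\delta^{*}$. Both follow by iterating the $\BL_{\alpha}^{-1/2}$--intertwining recorded in the preamble to the theorem, or equivalently by writing $\BL_{\alpha}^{-1}=\int_{0}^{\infty}\BT^{\alpha}_{t}\,dt$ and using $\delta\BT^{\alpha}_{t}=\BT^{\alpha}_{t}\delta$ (an identity that is immediate on $\msP$ from the Fourier--Laguerre calculus). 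Applied to $\delta\varphi=0$, this gives $\delta\BL_{\alpha}^{-1}\varphi=\BL_{\alpha}^{-1}\delta\varphi=0$, hence $\delta^{*}\delta\BL_{\alpha}^{-1}\varphi=0$. Combining with the $L^{p}$-identity $(\delta\delta^{*}+\delta^{*}\delta)\BL_{\alpha}^{-1}=I$ of Lemma~\ref{l: L-1inW} yields $\delta\omega=\varphi$. The equation $\delta^{*}\omega=\psi$ is symmetric, using $\delta^{*}\psi=0$.

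Uniqueness is a corollary of the same identities: if $\eta\in W^{1,p}(\Uplambda^{r})$ satisfies $\delta\eta=0=\delta^{*}\eta$, then
\begin{equation*}
\eta=\delta\delta^{*}\BL_{\alpha}^{-1}\eta+\delta^{*}\delta\BL_{\alpha}^{-1}\eta,
\end{equation*}
and the commutation of $\delta,\delta^{*}$ with $\BL_{\alpha}^{-1}$ forces both summands to vanish. The only genuinely delicate point---and where I would spend the most care---is the extension of the commutation $\delta\BL_{\alpha}^{-1}=\BL_{\alpha}^{-1}\delta$ from $\msP$ to $\Omega^{p}$ at the $L^{p}$ level: this is precisely where the assumption $\alpha\in[-1/2,\infty)^{d}$ is used, since both the $L^{p}$-boundedness of the Riesz transforms and that of $\BL_{\alpha}^{-1/2}$ are needed to make the density argument go through. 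Everything else is bookkeeping.
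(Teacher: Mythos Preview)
Your proposal is correct and follows essentially the same route as the paper: the same candidate $\omega=\delta^{*}\BL_{\alpha}^{-1}\varphi+\delta\BL_{\alpha}^{-1}\psi$, membership in $W^{1,p}$ via Lemma~\ref{l: L-1inW}, verification of the two equations through the commutation $\delta\BL_{\alpha}^{-1}=\BL_{\alpha}^{-1}\delta$ combined with the identity $(\delta\delta^{*}+\delta^{*}\delta)\BL_{\alpha}^{-1}=I$, and the $L^{p}$-estimate from factoring through the Riesz transforms and $\BL_{\alpha}^{-1/2}$. Your uniqueness argument is in fact more explicit than the paper's, which simply asserts that the estimate implies uniqueness.
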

\begin{proof}
Since the operator $\BL_{\alpha}$ is invertible on $\Lprs{p}{r}$, we may define \[ \omega = \delta^* \BL_{\alpha}^{-1}\varphi + \delta \BL_\alpha^{-1}\psi.\]
Then, $\omega \in W^{1,p}(\Uplambda^r)$ by Lemma \ref{l: L-1inW} and, since $\delta\BL_\alpha^{-1}\varphi=\BL_\alpha^{-1} \delta\varphi=0$,
\begin{align*}
\delta \omega & = \delta\delta^* \BL_{\alpha}^{-1}\varphi \\
& = (\delta \delta^* + \delta^* \delta)\BL_{\alpha}^{-1}  \varphi  \\
& = \BL_{\alpha}^{-1} \BL_{\alpha} \varphi  \\
& = \varphi,
\end{align*}
In a similar way one can  show that $\delta^* \omega = \psi$.
Moreover, by the $L^p$-boundedness of the Riesz transforms and of $\BL_\alpha^{-1/2}$ (see Corollary \ref{c: sgbdd}),
\begin{align*}
\| \omega \|_{L^p(\mua)} &  \le \| \delta^*\BL_{\alpha}^{-\frac{1}{2}} \|_{p-p}\| \BL_{\alpha}^{-\frac{1}{2}}\varphi \|_{L^p(\mua)} +\| \delta\BL_{\alpha}^{-\frac{1}{2}} \|_{p-p}\| \BL_{\alpha}^{-\frac{1}{2}}\psi \|_{L^p(\mua)}   \\
& = C_{\alpha,p,r} (\| \varphi \|_{L^p(\mua)} + \| \psi\|_{L^p(\mua)} ) .
\end{align*}
This shows also that the solution is unique.
\end{proof}
As a last application of the Bilinear Embedding Theorem, we give an 
existence theorem of the de Rham equation. 
\begin{thm} For every $p \in (1, \infty)$ and $r=1,\ldots,d$, and for all $\varphi \in \Omega^p(\Uplambda^{r})$ such that $\delta \varphi = 0$, there exists $\omega \in W^{1,p}(\Uplambda^{r-1})$ solving the de Rham equation
\[ \delta \omega = \varphi,\]
and satisfying the estimate
\[ \| \omega \|_{L^p(\mua)} \le C_{\alpha,p,r} \| \varphi\|_{L^p(\mua)}.\]
\end{thm}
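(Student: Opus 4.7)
The plan is to mimic the proof of Theorem \ref{t:Hsys}, taking $\psi = 0$; in fact this result is essentially the special case $\psi = 0$ of the Hodge system, so only minor bookkeeping is required.

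Concretely, I would define
$$
\omega = \delta^* \BL_\alpha^{-1} \varphi.
$$
By Lemma \ref{l: L-1inW}, $\BL_\alpha^{-1} \varphi$ lies in $W^{1,p}(\Uplambda^r)$, so $\omega \in \Lprs{p}{r-1}$; moreover, since $\delta^* \BL_\alpha^{-1}$ factors as the composition $\delta^* \BL_\alpha^{-1/2} \circ \BL_\alpha^{-1/2}$ of operators bounded on their respective $L^p$-spaces (by Theorem \ref{t: Riesz bdd Lp} and Corollary \ref{c: sgbdd}), the same factorization applied to $\delta\omega$ and $\delta^*\omega$ shows that in fact $\omega \in W^{1,p}(\Uplambda^{r-1})$.

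To verify the equation, I would compute
$$
\delta \omega = \delta \delta^* \BL_\alpha^{-1} \varphi = (\delta\delta^* + \delta^*\delta) \BL_\alpha^{-1} \varphi - \delta^* \delta \BL_\alpha^{-1} \varphi = \varphi - \delta^* \BL_\alpha^{-1} \delta\varphi = \varphi,
$$
where the first equality uses $(\delta^*)^2 = 0$ (to kill nothing here, but justifying one could also add such terms), the second uses the identity \eqref{LaonLp} that $(\delta\delta^* + \delta^*\delta)\BL_\alpha^{-1} = I$ on $\Lprs{p}{r}$, the third uses that $\delta$ commutes with $\BL_\alpha^{-1}$ (by a density argument on $\msP(\rdp;\Uplambda^r)$), and the last uses the hypothesis $\delta \varphi = 0$. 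The commutation of $\delta$ and $\BL_\alpha^{-1}$ on $\Omega^p(\Uplambda^r)$ is precisely the point already discussed in the paragraph preceding Theorem~\ref{t:Hsys}.

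Finally the $L^p$ estimate is immediate from the same factorization:
$$
\|\omega\|_{L^p(\mua)} = \|\delta^*\BL_\alpha^{-1/2}\,\BL_\alpha^{-1/2}\varphi\|_{L^p(\mua)} \le \|\delta^*\BL_\alpha^{-1/2}\|_{p-p}\,\|\BL_\alpha^{-1/2}\|_{p-p}\,\|\varphi\|_{L^p(\mua)} \le C_{\alpha,p,r}\,\|\varphi\|_{L^p(\mua)},
$$
with both operator norms controlled by Theorem \ref{t: Riesz bdd Lp} (with $\rho = 0 \le r/2$ since $r \ge 1$) and Corollary \ref{c: sgbdd}. There is no real obstacle; the only mildly delicate point is ensuring that the identities used on the polynomial subspace $\msP(\rdp;\Uplambda^r)$ extend to all of $\Omega^p(\Uplambda^r)$, which follows from the boundedness on $\Lprs{p}{r}$ of all the operators involved together with the density of $\msP(\rdp;\Uplambda^r)$ in $\Omega^p(\Uplambda^r)$. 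Note that uniqueness is not asserted here, which is consistent with the fact that one can add any form in $\ker_{r-1}(\delta)$ to $\omega$ without disturbing the equation.
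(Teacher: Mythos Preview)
Your proof is correct and follows exactly the paper's approach: the paper's proof simply says ``It suffices to apply Theorem \ref{t:Hsys} with $\psi=0$,'' and what you have written is precisely the unfolding of that argument.
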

\begin{proof}
{It suffices to apply Theorem \ref{t:Hsys} with $\psi=0$.}\end{proof}
\section{Bellman function}
\label{chbell}
The Bellman function technique was introduced in harmonic analysis by Nazarov, Treil and Volberg in \cite{NTV:JAMS}. 
We recall here the definition and the basic properties of the particular Bellman function used by A. Carbonaro and O. Dragi{\v{c}}evi{{\'c}} in \cite{CD:JFA} to prove the boundedness of Riesz transforms on Riemannian manifolds. Even though the results coincide with those in \cite{CD:JFA} we have included full proofs for completeness.
\par
Assume that $p \ge 2$ and let $q = \frac{p}{p-1}$ be the conjugate 
exponent of $p$; moreover set $\gamma = \frac{q(q-1)}{8}$.
We define the function $\beta: \overline{\BR}_+ \times \overline{\BR}
_+ \rightarrow \overline{\BR}_+$ by
\begin{equation}
\label{beta}
\beta(u,v) = u^p + v^q + \gamma
\begin{cases}
u^2 v^{2-q} & \mbox{ if } u^p \le v^q\\
\frac{2}{p} u^p + \left(\frac{2}{q} -1\right) v^q & \mbox{ if } u^p > v^q.
\end{cases}
\end{equation}

The particular \emph{Bellman function} we are going to use is the map
\[ Q: \mathbb{R}^{m} \times \mathbb{R}^n\rightarrow \BR\]
defined by
\[ Q(\xi,\eta) = \frac{1}{2}\beta(|\xi|, |\eta|).\]
This function is an adaptation of the one introducted by Nazarov and Treil in 
\cite{NT:AA}. The proof of the following lemma is straightforward.
\begin{lem}\label{betareg}
The function $\beta$ is $C^1$ on its domain and it is $C^2$ except on the set $\set{(u,v): u^p=v^q \  \textrm{or} \ v=0}$. Moreover, for every $u,v\ge 0$
\begin{itemize}
\item[\rmi] $0\le\beta(u,v)\le (1+\gamma)(u^p+v^q)$;
\item[\rmii] there exists a constant $C$ 
such that 
$$
0\le \partial_u\beta(u,v)\le \ C\  p\ \max\set{u^{p-1},v}\quad and \quad   0\le \partial_v\beta(u,v)\le \ C\ v^{q-1}.
$$
\end{itemize} 
\end{lem}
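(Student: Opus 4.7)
The plan is to verify the three claims by a direct computation, exploiting the exponent relations $\frac{1}{p}+\frac{1}{q}=1$, $p\ge 2$, $q\in(1,2]$, and $\gamma=q(q-1)/8\le 1/4$. Let me write
\[
\beta_1(u,v)=u^p+v^q+\gamma u^2 v^{2-q},\qquad
\beta_2(u,v)=u^p+v^q+\gamma\Bigl(\tfrac{2}{p}u^p+\tfrac{2-q}{q}v^q\Bigr),
\]
so that $\beta=\beta_1$ on $\{u^p\le v^q\}$ and $\beta=\beta_2$ on $\{u^p>v^q\}$. Each $\beta_i$ is smooth on $\{v>0\}$ (the factor $v^{2-q}$ is smooth there since $2-q\ge 0$), which already gives $C^2$ regularity away from the curve $\Sigma=\{u^p=v^q\}\cup\{v=0\}$. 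For \rmi\ it remains to check that $\beta_1$ and $\beta_2$ agree to first order across $\Sigma\setminus\{v=0\}$. On the curve $u^p=v^q$ one has $v^{2-q}=u^{p-2}$ (since $p(2-q)/q=p-2$, using $p/q=p-1$) and, dually, $u^2=v^{2(q-1)}=v^{2q/p}$; substituting into $\beta_1$, $\partial_u\beta_1$, $\partial_v\beta_1$ yields, respectively, $(2+\gamma)v^q$, $(p+2\gamma)u^{p-1}$ and $[q+\gamma(2-q)]v^{q-1}$, which coincide with the corresponding quantities for $\beta_2$ (using $\tfrac{2}{p}+\tfrac{2}{q}=2$). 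Hence $\beta\in C^1$ and $C^2$ off $\Sigma$.

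For \rmii\ nonnegativity is obvious, so only the upper bound needs work. On $\{u^p>v^q\}$ it is immediate because $\tfrac{2}{p}\le 1$ and $\tfrac{2-q}{q}\le 1$. On $\{u^p\le v^q\}$ the key is Young's inequality for the pair of conjugate exponents $(r,s)=(p/2,q/(2-q))$, whose reciprocals $\tfrac{2}{p}+\tfrac{2-q}{q}$ add to $1$ by the conjugacy of $p$ and $q$; this gives
\[
u^2 v^{2-q}\le\tfrac{2}{p}u^p+\tfrac{2-q}{q}v^q\le u^p+v^q,
\]
so $\beta_1\le(1+\gamma)(u^p+v^q)$.

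For \rmiii\ I differentiate explicitly. Nonnegativity of $\partial_u\beta$ and $\partial_v\beta$ is clear since every term of $\partial_u\beta_i,\partial_v\beta_i$ is a nonnegative monomial in $u,v$. For the size estimates, on $\{u^p>v^q\}$ we have $\partial_u\beta_2=(p+2\gamma)u^{p-1}$ and $\partial_v\beta_2=[q+\gamma(2-q)]v^{q-1}$, which already have the required form. On $\{u^p\le v^q\}$, i.e. $u\le v^{q/p}$, the exponent identity $q/p+(2-q)=1$ yields $uv^{2-q}\le v$, hence
\[
\partial_u\beta_1=pu^{p-1}+2\gamma uv^{2-q}\le pu^{p-1}+2\gamma v\le(p+2\gamma)\max\{u^{p-1},v\};
\]
similarly $u^2 v^{1-q}\le v^{2q/p}v^{1-q}=v^{q-1}$ gives
\[
\partial_v\beta_1=qv^{q-1}+\gamma(2-q)u^2 v^{1-q}\le[q+\gamma(2-q)]v^{q-1}.
\]
Since $\gamma\le 1/4$, the constants $p+2\gamma$ and $q+\gamma(2-q)$ are bounded by $Cp$ and $C$, respectively, for an absolute $C$.

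No step is really an obstacle; the only delicate point is the $C^1$ matching at $\Sigma\setminus\{v=0\}$, which hinges on using $u^p=v^q$ to convert between the two pieces via the identity $p(2-q)/q=p-2$, and on the fact that $\tfrac{2}{p}+\tfrac{2}{q}=2$ makes the cross terms cancel correctly.
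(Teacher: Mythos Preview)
Your proof is correct and supplies exactly the direct computation that the paper omits (the paper merely declares the lemma ``straightforward''). The only cosmetic issue is that your labels \rmi, \rmii, \rmiii\ are shifted by one relative to the lemma: the regularity claim is unnumbered in the statement, and the paper's \rmi\ and \rmii\ are what you call \rmii\ and \rmiii.
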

If $\zeta=(\xi,\eta)$ and $z=(x,y)\in \BR^m\times\BR^n$ we denote by 
$H_Q(z)$ the Hessian matrix of $Q$ at $\zeta$ and by 
$$
H_Q(\zeta;z)=\langle H_Q(\zeta) z,z \rangle_{\BR^m\times\BR^n}
$$
the corresponding Hessian form.
\begin{prop}
\label{bellreg}
The function $Q$ is in $C^1(\mathbb{R}^m \times \mathbb{R}^n)$, and it is in $C^2$ everywhere in $\mathbb{R}^m \times \mathbb{R}^n$ except on the set
\[ \Upsilon = \left\{ (\xi,\eta) \in \mathbb{R} \times \mathbb{R}^d : \eta= 0 \mbox{ or } |\xi|^p = |\eta|^q \right\}. \]\par
If $\zeta=(\xi,\eta)\in \BR^m\times\BR^n\setminus \Upsilon$ then there exists $\tau=\tau(|\xi|,|\eta|)$ such that 
$$
H_Q(\zeta;z)\ge \frac{\gamma}{2} \left(\tau |x|^2+\tau^{-1}|y|^2\right) \qquad\forall z=(x,y)\in\BR^m\times\BR^n.
$$
\end{prop}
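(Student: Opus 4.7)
The plan is to reduce both claims to Lemma \ref{betareg} via a chain-rule argument together with a case-by-case estimate of the Hessian form $H_Q$. Away from $\{\xi=0\}\cup\{\eta=0\}$ the norm functions are smooth, so $Q$ inherits its regularity from Lemma \ref{betareg}. To handle the coordinate axes, I note that when $\xi=0$ and $\eta\ne0$ one has $|\xi|^p\le|\eta|^q$ in a neighbourhood and $2Q=|\xi|^p+|\eta|^q+\gamma|\xi|^2|\eta|^{2-q}$; since $p\ge2>1$ and $2-q\ge 0$, each summand is $C^1$ in $\zeta$ (the map $\xi\mapsto|\xi|^p$ is $C^1$ at $0$ because $p>1$, while $|\xi|^2$ is smooth). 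When $\eta=0$ and $\xi\ne 0$ one is in the opposite regime, and $\beta$ is a linear combination of $|\xi|^p$ and $|\eta|^q$, which is $C^1$ because $q>1$. At the origin both formulas collapse to $O(|\xi|^p+|\eta|^q)$, differentiable with vanishing gradient. Hence $Q\in C^1(\BR^m\times\BR^n)$, and $C^2$-regularity off $\Upsilon$ follows at once from Lemma \ref{betareg}.

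For the Hessian bound I would fix $\zeta\notin\Upsilon$, set $u=|\xi|$, $v=|\eta|$, $\hat\xi=\xi/u$, $\hat\eta=\eta/v$, and decompose $x=a\hat\xi+x_\perp$, $y=b\hat\eta+y_\perp$ with $x_\perp\perp\hat\xi$, $y_\perp\perp\hat\eta$. Using $\nabla|\xi|=\hat\xi$ and $\nabla^2|\xi|=(I-\hat\xi\otimes\hat\xi)/u$, a direct computation gives
\[ 2H_Q(\zeta;z)=\beta_{uu}a^2+2\beta_{uv}ab+\beta_{vv}b^2+\frac{\beta_u}{u}|x_\perp|^2+\frac{\beta_v}{v}|y_\perp|^2. \]
Since $|x|^2=a^2+|x_\perp|^2$ and $|y|^2=b^2+|y_\perp|^2$, the conclusion follows once I produce $\tau=\tau(u,v)>0$ satisfying the three scalar inequalities
\[ \beta_{uu}a^2+2\beta_{uv}ab+\beta_{vv}b^2\ge\gamma\bigl(\tau a^2+\tau^{-1}b^2\bigr),\qquad \frac{\beta_u}{u}\ge\gamma\tau,\qquad \frac{\beta_v}{v}\ge\gamma\tau^{-1}. \]

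I would verify these by splitting according to the two branches defining $\beta$. When $u^p>v^q$, the mixed derivative $\beta_{uv}$ vanishes and $\beta_{uu},\beta_{vv},\beta_u/u,\beta_v/v$ are explicit positive multiples of $u^{p-2}$ or $v^{q-2}$; choosing $\tau$ proportional to $u^{p-2}/v^{q-2}$ reduces the three inequalities to elementary checks using $p\ge 2$ and $q>1$. When $u^p<v^q$, the mixed derivative is nonzero and $\beta_{vv}$ contains the indefinite term $(2-q)(1-q)\gamma u^2v^{-q}$; the constraint $u^p\le v^q$ must be used to absorb the unfavourable cross and $b^2$ contributions into the dominant positive pieces $p(p-1)u^{p-2}$ and $q(q-1)v^{q-2}$ coming from the Hessians of $u^p$ and $v^q$. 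The main obstacle is precisely the coupled verification in this second regime: $\tau$ must simultaneously produce a positive Schur complement of the radial $2\times 2$ form and match the individual lower bounds on $\beta_u/u$ and $\beta_v/v$. The value $\gamma=q(q-1)/8$ is tuned so that these bounds close at the boundary $u^p=v^q$, as in \cite{NT:AA} and \cite{CD:JFA}; once the correct $\tau$ (essentially a monomial dictated by the ratio $\beta_u/\beta_v$) is identified, the rest is a careful but routine algebraic computation.
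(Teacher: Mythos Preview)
Your strategy is correct and closely parallels the paper's: both split into the regions $|\xi|^p<|\eta|^q$ and $|\xi|^p>|\eta|^q$ and exhibit an explicit $\tau$ in each. Your radial/tangential decomposition is a clean reorganisation---the paper instead computes all second partials in Cartesian coordinates and estimates the three blocks of $H_Q$ directly, but the content is the same (the paper's $\langle\xi,x\rangle^2$ and residual $|x|^2$ terms correspond exactly to your $a^2$ and $|x_\perp|^2$).

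One concrete slip: in the region $u^p>v^q$ your proposed $\tau\propto u^{p-2}/v^{q-2}=u^{p-2}v^{2-q}$ does \emph{not} work, since the tangential constraint $\beta_u/u\ge\gamma\tau$ becomes $(p+2\gamma)u^{p-2}\ge\gamma u^{p-2}v^{2-q}$, which fails as $v\to0$. The paper takes $\tau=(p-1)|\xi|^{p-2}$ here and uses the identity $(q-1)(p-2)=2-q$ together with $|\xi|^p>|\eta|^q$ to get $(q-1)|\eta|^{q-2}\ge|\xi|^{2-p}/(p-1)=\tau^{-1}$, closing the estimate. For the harder region $u^p<v^q$, which you leave as a sketch, the paper's choice is $\tau=|\eta|^{2-q}$; the two inputs that make your three inequalities close are $|\xi|\,|\eta|^{1-q}\le1$ (from $q/p=q-1$), which bounds the cross term by $2\gamma|x|\,|y|$, and $|\xi|^2|\eta|^{-q}\le|\eta|^{q-2}$, which controls the negative piece of $\beta_{vv}$. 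The specific value $\gamma=q(q-1)/8$ enters because it turns $q(q-1)v^{q-2}$ into $8\gamma v^{q-2}$, leaving the margin $q^2-3q+6\ge1$ after the cross-term losses.
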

\begin{proof}
By  the chain rule the regularity properties of $Q$ follow from those of $\beta$ in Lemma \ref{betareg} and the fact that since $p\ge2$ and $q>1$ the function $\xi\mapsto |\xi|^p$ is in $C^2$ everywhere and $\eta\mapsto |\eta|^q$ is in $C^1$ everywhere and in $C^2$ in $\BR^m\times\BR^n\setminus \Upsilon$. \par
It remains to prove the estimate of the Hessian form.  We observe that $H_Q(\zeta;z)$ is the sum of  three forms, i.e.
$$
H_Q(\zeta;z)=\sum_{i,j=1}^m \partial_{\xi_i\xi_j} Q(z) x_ix_j+2\sum_{i=1}^m\sum_{j=1}^n \partial_{\xi_i\eta_j} Q(z) x_i y_j+\sum_{i,j=1}^n \partial_{\eta_i\eta_j} Q(z) y_iy_j
$$
that we must estimate in each of the two regions 
$$
R_1=\set{(\xi,\eta): |\xi|^p< |\eta|^q, \eta\not=0}\qquad \textrm{and}\qquad R_2=\set{(\xi,\eta): |\xi|^p> |\eta|^q, \eta\not=0}.
$$
First we compute the derivatives of $Q$ in $R_1$. Since in $R_1$
$$
Q(\xi,\eta)=\frac{1}{2}\big(|\xi|^p+|\eta|^q+\gamma |\xi|^2\,|\eta|^{2-q}\big),
$$
we have that
\begin{align*}
\partial^2_{\xi_i \xi_j}Q(\zeta) &= \frac{1}{2} \big\{ p(p-2) |\xi|^{p-4} \xi_i \xi_j + \big(p|\xi|^{p-2}+2\gamma|\eta|^{2-q}\big)\delta_{ij}\big\}\\
\partial^2_{\xi_i \eta_j}Q(\zeta) &=\gamma (2-q)|\eta|^{-q}\xi_i \eta_j\\
\partial^2_{\eta_i \eta_j} Q(\xi, \eta)& =\frac{q}{2} |\eta|^{q-2}
\left\{(q-2)|\eta|^{-2}\eta_i\eta_j +\delta_{ij}\right\}  \\
&\phantom{xxxxx}
+\frac{\gamma}{2}(2-q)|\xi|^2\,|\eta|^{-q}\left\{-q|\eta|^{-2}\eta_i\eta_j+\delta_{ij}\right\}.
\end{align*}
Thus, in $R_1$
\begin{align*}
\sum_{i,j=1}^{m}\partial^2_{\xi_i \xi_j}Q(\zeta)x_i x_j &=\frac{1}{2}p(p-2) |\xi|^{p-4} \langle \xi,x\rangle^2+	 \frac{1}{2}\left(p |\xi|^{p-2} +2 \gamma |\eta|^{2-q}\right){|x|^2} \\
& \ge \gamma   |\eta|^{2-q}{|x|^2} .
\end{align*}
Next, we have that 
\begin{align*}
2 \sum_{i=1}^{m} \sum_{j=1}^{n}\partial^2_{\xi_i \eta_j}{Q}(\zeta) x_i y_i &= 2 \gamma (2-q)|\eta|^{-q} \langle \xi,x\rangle \langle \eta,y\rangle\\
& \ge - 2\gamma (2-q)|\eta|^{-q}  \,|\xi| |x| \, |\eta| |y| \\
& \ge - 2\gamma |x|   |y| \\
& \ge - \gamma \left(\frac{|\eta|^{2-q}|x|^2}{2}+2|\eta|^{q-2}   |y|^2 \right);
\end{align*}
where, in the third inequality, we have used the fact that $|\xi|\,|\eta|^{1-q}\le1$ in $R_1$. Finally, recalling that $\gamma=q(q-1)/8$,
\begin{align*}
\sum_{i,j=1}^d \partial^2_{\eta_i \eta_j} Q(\zeta) y_i y_j & =
\frac{q}{2} |\eta|^{q-2}
\left\{(q-2)|\eta|^{-2}\langle \eta,y\rangle^2+|y|^2\right\}  \\
&\phantom{xxxxx}
+\frac{\gamma}{2}(2-q)|\xi|^2\,|\eta|^{-q}\left\{-q|\eta|^{-2}\langle \eta,y\rangle^2+|y|^2\right\}\\
&\ge \frac{\gamma}{2} \left\{ 8|\eta|^{q-2} +(2-q)(1-q)|\xi|^2\,|\eta|^{-q}\right\}|y|^2\\
&\ge\frac{\gamma}{2}\big[8+(2-q)(1-q)\big]|\eta|^{q-2}|y|^2
\end{align*}
where, in the second inequality, we have used the fact that $|\xi|^2\,|\eta|^{-q}\le|\eta|^{2-q}$ in $R_1$.
Combining these estimates of the three forms, we obtain that 
\begin{align*}
H_Q(\zeta;z)&\ge \frac{\gamma}{2}\left(|\eta|^{2-q}|x|^2+(q^2-3q+6)|\eta|^{q-2}|y|^2\right)  \\
&\ge \frac{\gamma}{2}\left(|\eta|^{2-q}|x|^2+|\eta|^{q-2}|y|^2\right) \\
&\ge \frac{\gamma}{2}\left(\tau |x|^2+\tau^{-1}|y|^2\right),
\end{align*}
with $\tau=|\eta|^{q-2}$.\par
Next, we estimate the Hessian form of $Q$ in the region $R_2$. Since in $R_2$
$$ 
{Q}(\zeta) = \frac{1}{2} \left[ |\xi|^p + |\eta|^q + \gamma \left( \frac{2}{p} |\xi|^p + \left( \frac{2}{q}-1\right)|\eta|^q\right)\right],
$$
the second derivatives of $Q$ are:
\begin{align*}
\partial^2_{\xi_i \xi_j} Q(\zeta) & = \frac{1}{2}(p+2\gamma) |\xi|^{p-2} \left[ (p-2) \frac{\xi_i \xi_j}{|\xi|^2} +  \delta_{ij}\right]\\
\partial^2_{\xi_i \eta_j} Q(\zeta) &=0\\
\partial^2_{\eta_i \eta_j} Q(\zeta) & = \frac{1}{2}(q + \gamma\left(2 -q\right)) |\eta|^{q-2}\left[ (q-2)\frac{\eta_i \eta_j}{|\eta|^2}+ \delta_{ij}\right].
\end{align*}
Hence
\begin{align*}
H_Q(\zeta;z)&=\sum_{i,j=1}^{m} \partial_{\xi_i\xi_j} Q(\zeta) x_ix_j+ \sum_{i,j=1}^{n} \partial_{\eta_i\eta_j} Q(\zeta) y_iy_j       \\ 
&\ge \frac{p+2\gamma}{2}\big[(p-2)|\xi|^{p-2}|\xi|^{-2}\langle \xi, x\rangle^2+|x|^2\big] \\ 
&\phantom{xxxx}+ \frac{\big(q+\gamma(2-q)\big)}{2}|\eta|^{q-2}\big[ (q-2) |\eta|^{-2}
\langle \eta,y\rangle^2+|y|^2\big] \\
&\ge \frac{1}{2}\Big[(p-1)|\xi|^{p-2}\,|x|^2+(q-1)\,|\eta|^{q-2}\, |y|^2  \Big]   \\
&\ge  \frac{1}{2}\Big[(p-1)|\xi|^{p-2}\,|x|^2+\frac{|\xi|^{2-p}}{p-1}\, |y|^2
\Big]    \\
&\ge  \frac{1}{2} (\tau|x|^2+\tau^{-1}|y|^2),
\end{align*}
with $\tau= (p-1)|\xi|^{p-2}$. Here in the second inequality we have used the facts that $p+2\gamma\ge 1$ and $q+\gamma(2-q)\ge 1$ and in the third  inequality we have used the identity $q-1=(p-1)^{-1}$ and the fact that $|\eta|^{q-2}\ge |\xi|^{p-2}$ in $R_2$. 
\end{proof}
The Bellman function $Q$ fails to be of class $C^2$ in all of 
$\BR^{m}\times\BR^{n}$ because the second derivatives are 
discontinuous on $|\xi|^p=|\eta|^q$. Since, for our purposes, we need 
to work with a  Bellman function of class $C^2$ everywhere, we must 
replace the function $Q$ by a regularised version that retains its 
essential properties.

To regularise $Q$ we apply the  standard technique of convolving  with a mollifier.
Let $\mc{B}_1$ be the open ball in $\Bb{R}^{m+n}$ with radius of length $1$, and set
\[ \phi(\zeta) = c \  e^{- \frac{1}{1-|\zeta|^2}} \chi_{\mc{B}_1}(\zeta),\]
where $c$ is the normalization constant chosen in such a way that
\[ \int_{\Bb{R}^{m+n}} \phi(\zeta) d\zeta = 1.\]
For each $\sigma > 0$ we introduce the mollifier on $\Bb{R}^{m+n}$
\[ \phi_{\sigma}(\zeta) = \frac{1}{\sigma^{m+n}} \phi \left( \frac{\zeta}{\sigma}\right), \]
and we define the regularized version of the Bellman function $Q$:
\[ Q_\sigma (\zeta)  = Q_\sigma(\xi, \eta)= \phi_\sigma \star Q (\zeta), \]
where $\star$ denotes the convolution in $\Bb{R}^{m+n}$. Since both $Q$ and $\phi_\sigma$ are separately radial in $\xi$ and $\eta$ , for each $\sigma > 0$ there exists a function 
\[ \beta_\sigma: \rp \times \rp \rightarrow \rp\]
such that for all $(\xi, \eta) \in \BR^{m}\times\BR^{n}$
\[ Q_\sigma(\xi, \eta) = \frac{1}{2} \beta_\sigma(|\xi|, |\eta|).\]
\begin{prop}
\label{propreg}
If $0 < \sigma < 1$, then $Q_\sigma \in C^{\infty}(\Bb{R}^{m} \times \BR^{n})$. Moreover for every $(u,v) \in \rp \times \rp$, the following assertions hold:
\begin{enumerate}[(i')]
\item $0 \le \beta_\sigma(u,v) \le (1+\gamma)\left[ (u+\sigma)^p + (v+\sigma)^q\right]$;
\item there exists a constant $C$ such that for every $u, v > 0$
\begin{align*} 
0 & \le \partial_u \beta_\sigma(u,v) \le C\ p\  \max \left\{ (u+\sigma)^{p-1}, v+\sigma\right\}, \\
0 &  \le \partial_v \beta_\sigma (u,v) \le C (v+\sigma)^{q-1}.\\
\end{align*}
\item for all $\zeta = (\xi, \eta) \in \left(\mathbb{R} \times \mathbb{R}^d\right) $ there exists $\tau_\sigma = \tau_\sigma \left( |\xi|, |\eta|\right) > 0$ such that
\[ H_{Q_\sigma}(\zeta; \omega) \ge \frac{1}{2}\gamma \left( \tau_\sigma |\rho|^2 + \tau_\sigma^{-1} |\psi|^2 \right)\]
whenever $\omega = (\rho, \psi) \in \Bb{R} \times \Bb{R}^d$;
\end{enumerate}
\end{prop}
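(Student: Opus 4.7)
The plan is to transfer the three properties (i)--(iii) of the unmollified Bellman function $Q$ (Lemma \ref{betareg} and Proposition \ref{bellreg}) through the convolution with $\phi_\sigma$. Smoothness of $Q_\sigma$ is immediate from $\phi_\sigma\in C^\infty_c(\BR^{m+n})$ and local integrability of $Q$. Moreover, since $\phi_\sigma$ is separately radial in the $\xi$-variable and in the $\eta$-variable, and $Q$ has the same symmetry, $Q_\sigma$ depends only on $(|\xi|,|\eta|)$ and the existence of $\beta_\sigma$ follows. The normalisation in (i')--(iii') comes from writing $\beta_\sigma(u,v)=2\,Q_\sigma(ue,ve')$ for fixed unit vectors $e\in\BR^m$, $e'\in\BR^n$.

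For (i') I would expand
$$
\beta_\sigma(u,v)=\int_{\BR^{m+n}}\phi_\sigma(\xi',\eta')\,\beta(|ue-\xi'|,|ve'-\eta'|)\d\xi'\d\eta',
$$
apply Lemma \ref{betareg}(i) inside the integral, and then use $|ue-\xi'|\le u+\sigma$ and $|ve'-\eta'|\le v+\sigma$ on $\supp\phi_\sigma$, recalling that $\phi_\sigma$ has total mass one. For (ii'), since $Q$ is $C^1$ on $\BR^m\times\BR^n$, we may pass the gradient under the convolution: $\nabla_\xi Q_\sigma=\phi_\sigma\star\nabla_\xi Q$. Radial symmetry in $\xi$ gives $|\nabla_\xi Q_\sigma(\xi,\eta)|=\tfrac12\,\partial_u\beta_\sigma(|\xi|,|\eta|)$, and Lemma \ref{betareg}(ii) bounds $\partial_u\beta(|\xi-\xi'|,|\eta-\eta'|)$ pointwise. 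Triangle inequality then yields the desired bound in terms of $u+\sigma$ and $v+\sigma$; the bound on $\partial_v\beta_\sigma$ is identical.

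The main obstacle is (iii'). Since the second distributional derivatives of $Q$ agree a.e.\ with the classical derivatives on $\BR^{m+n}\setminus\Upsilon$ and are locally integrable there (using $p\ge 2$ so $|\xi|^{p-2}$ is locally bounded, and $1<q\le 2$ so $|\eta|^{q-2}$ is locally integrable in $\BR^n$), we may write the Hessian form of $Q_\sigma$ as
$$
H_{Q_\sigma}(\zeta;\omega)=\int_{\BR^{m+n}}\phi_\sigma(\zeta')\,H_Q(\zeta-\zeta';\omega)\d\zeta'.
$$
Proposition \ref{bellreg} gives, for $\zeta-\zeta'\notin\Upsilon$ (which holds a.e.),
$$
H_Q(\zeta-\zeta';\omega)\ge\frac{\gamma}{2}\bigl(\tau(\zeta-\zeta')\,|\rho|^2+\tau(\zeta-\zeta')^{-1}\,|\psi|^2\bigr),
$$
so, setting
$$
A(\zeta)=\int\phi_\sigma(\zeta')\,\tau(\zeta-\zeta')\d\zeta',\qquad B(\zeta)=\int\phi_\sigma(\zeta')\,\tau(\zeta-\zeta')^{-1}\d\zeta',
$$
we obtain $H_{Q_\sigma}(\zeta;\omega)\ge\tfrac{\gamma}{2}\bigl(A(\zeta)|\rho|^2+B(\zeta)|\psi|^2\bigr)$. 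To put this in the desired form I need a single scalar $\tau_\sigma(|\xi|,|\eta|)$ with $\tau_\sigma\le A$ and $\tau_\sigma^{-1}\le B$, which is possible exactly when $A\,B\ge 1$. This last inequality is Cauchy--Schwarz applied to $\sqrt{\phi_\sigma}\sqrt{\tau}$ and $\sqrt{\phi_\sigma}/\sqrt{\tau}$:
$$
A(\zeta)\,B(\zeta)\ge\Bigl(\int\phi_\sigma(\zeta')\d\zeta'\Bigr)^{\!2}=1.
$$
Taking for instance $\tau_\sigma(\zeta):=A(\zeta)$ then yields (iii'); radial symmetry ensures $\tau_\sigma$ depends only on $(|\xi|,|\eta|)$, as required.
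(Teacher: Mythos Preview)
Your treatment of (i') and (iii') matches the paper's proof essentially line for line; in particular, the Cauchy--Schwarz step giving $A\,B\ge 1$ and the choice $\tau_\sigma=A=\phi_\sigma\star\tau$ is exactly what the paper does.

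There is, however, a real gap in your argument for (ii'): you never justify the lower bounds $\partial_u\beta_\sigma\ge 0$ and $\partial_v\beta_\sigma\ge 0$. The identity you write, $|\nabla_\xi Q_\sigma|=\tfrac12\,\partial_u\beta_\sigma$, already presupposes this sign; radial symmetry alone only yields $|\nabla_\xi Q_\sigma|=\tfrac12\,|\partial_u\beta_\sigma|$. And the sign is not automatic from $\partial_u\beta\ge 0$, because in
\[
\partial_{\xi_1}Q_\sigma(\xi,\eta)=\int\phi_\sigma(\xi',\eta')\,\tfrac12\,\partial_u\beta(|\xi-\xi'|,|\eta-\eta'|)\,\frac{\xi_1-\xi_1'}{|\xi-\xi'|}\d\xi'\d\eta'
\]
the factor $(\xi_1-\xi_1')/|\xi-\xi'|$ changes sign over the support of $\phi_\sigma$. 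The paper closes this gap by using that the specific mollifier $\phi$ is radially \emph{decreasing}: freezing $(\hat\xi',\eta')$, the one-dimensional convolution in $\xi_1$ pairs an odd function $f(t)=\partial_{\xi_1}Q(t,\hat\xi-\hat\xi',\eta-\eta')$, nonnegative for $t>0$, with an even function $g(t)=\phi_\sigma(t,\hat\xi',\eta')$ that is nonincreasing on $[0,\infty)$, and an elementary rearrangement shows $f\star g\ge 0$ on $(0,\infty)$. (An alternative route, once you have (iii'), is to observe that $Q_\sigma$ is convex and radial in $\xi$ with $\nabla_\xi Q_\sigma(0,\eta)=0$, hence radially nondecreasing; but you did not invoke this either.) The nonnegativity is not cosmetic: it is used in Proposition~\ref{p: below} to discard the terms $\tfrac{\partial_i\beta_\sigma}{2|\cdot|}\langle(M_\alpha-\BD_\alpha)\cdot,\cdot\rangle$.
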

\begin{proof}

The estimate of $\beta_\sigma$ derives from its definition and the properties of the Bellman function $Q$. Indeed, setting $u = |\xi|, v = |\eta|$ for some $(\xi, \eta) \in \Bb{R} \times \rd$,
\[ \beta_\sigma(u,v)   = \beta_\sigma(|\xi|, |\eta|) = 2 Q_\sigma(\xi, \eta) \ge 0,\]
and
\begin{align*}
\beta_\sigma(u,v) &  = 2 \phi_\sigma \star Q (\xi, \eta) = \\
& = 2 \int_{\Bb{R}^{m+n}} \phi_\sigma(\xi', \eta') Q(\xi - \xi', \eta-\eta') \d\xi' \d\eta' \le \\
& \le  (1+\gamma) \int_{\Bb{R}^{m+n}} \phi_\sigma(\xi', \eta')  \left( |\xi - \xi'|^p + |\eta - \eta'|^q \right)\d\xi' \d\eta'.
\end{align*}
Since    $|\xi'| < \sigma$ and $|\eta'| < \sigma$ in the support of $\phi_\sigma$, it follows that
\begin{align*}
\beta_\sigma(u,v) &  \le  
 (1+\gamma) \left[ (|\xi|+\sigma)^p + (|\eta|+\sigma)^q \right]  \int_{\Bb{R}^{m+n}}\phi_\sigma(\xi', \eta')d\xi' d\eta'  \\
& = (1+\gamma) \left[ (u+\sigma)^p + (v+\sigma)^q \right].
\end{align*}

Next we prove (ii'). With the change of variables $u = |\xi|$, $v = |\eta|$ we get
$$
\partial_u\beta_\sigma(u,v)=\partial_{\xi_1}Q_\sigma(\xi,\eta)\frac{\xi_1}{|\xi|},\qquad \partial_v\beta_\sigma(u,v)=\partial_{\eta_1}Q_\sigma(\xi,\eta)\frac{\eta_1}{|\eta|}.
$$
Therefore it suffices to show that $\partial_{\xi_1}Q_\sigma(\xi,\eta)\ge0$ for $\xi_1>0$, $\partial_{\eta_1}Q_\sigma(\eta,\eta)\ge0$ for $\eta_1>0$ and appropriate upper estimates for $|\partial_{\xi_1}Q_\sigma(\xi,\eta)|$ and $|\partial_{\eta_1}Q_\sigma(\xi,\eta)|$. 
We prove only the estimates of $\partial_{\xi_1}Q$, since the proof of those of $\partial_{\eta_1}Q_\sigma$ are similar. Write $\xi=(\xi_1,\hat\xi)$ where $\hat\xi=(\xi_2,\ldots,\xi_1)$ and set, for $t\in\BR$, $\hat\xi,\hat\xi',\eta,\eta'$ fixed
$$
f(t)=\partial_{\xi_1} Q(t,\hat\xi-\hat\xi',\eta-\eta'), \qquad g(t)=\phi_\sigma(t,\hat\xi',\eta').
$$
The function $f$ is odd and nonnegative, $g$ is even, nonnegative and decreasing on $[0,\infty)$. Thus, for $t>0$
$$
f\star g(t)=\int_0^t f(s)\,\big[g(t-s)-g(t+s)\big]\d s+\int_t^\infty f(s)\,\big[g(s-t)-g(s+t)\big]\d s \ge 0.
$$
Hence, for $\xi_1>0$
$$
\partial_{\xi_1}Q_\sigma(\xi,\eta)=\int_{\BR^{m-1}}\int_{\BR^{n}} f\star g(\xi_1)\d\hat\xi\d\eta\ge 0.
$$
This proves that $\partial_u\beta_\sigma(u,v)\ge0$.\par
To prove the upper estimate of $\partial_u\beta_\sigma(u,v)$ we observe that
\begin{align*}
\partial_u \beta_\sigma(u,v) & = 2 |\partial_{\xi_1} Q_\sigma (\xi, \eta)| \\
& = 2 \int_{\Bb{R}^{m+n}} \phi_\sigma(\xi', \eta') |\partial_{\xi_1} Q(\xi - \xi', \eta - \eta')| \d\xi' \d\eta' \\
& \le  C \int_{\Bb{R}^{m+n}} \phi_\sigma(\xi', \eta') \max\left\{ |\xi - \xi'|^{p-1}, |\eta - \eta'| \right\} d\xi' d\eta' \le \\
& \le  C  \max\left\{ (u+\sigma)^{p-1}, v+\sigma \right\}  \int_{\Bb{R}^{m+n}} \phi_\sigma(\xi', \eta')d\xi' d\eta' \le \\
& \le  C  \max\left\{ (u+\sigma)^{p-1}, v+\sigma \right\},
\end{align*}
where again we have used the fact that $\phi$ is supported in $\mc{B}_1$, and that $p \ge 2$.
\par
The proof of the inequalities for $\partial_v \beta_\sigma(u,v) $ is analogous.\par

Finally, we prove (iii').
Since the second order derivatives of $Q$ are locally integrable 
\[ H_{Q_\sigma}(\zeta; z) = \int_{\Bb{R}^{m+n}} H_Q(\zeta- \zeta'; z) \ \phi_\sigma(\zeta') \d\zeta',\]
is well defined for each $\zeta = (\xi, \eta)$ and $z = (x,y)$ in $\Bb{R}^{m} \times \BR^{n}$.
Therefore, by Proposition \ref{bellreg}, there exists $\tau = \tau(|\xi-\xi'|, |\eta-\eta'|) > 0$ such that
\begin{align*}
H_{Q_\sigma}(\zeta; z) &\ge  \frac{1}{2}\gamma \int_{\Bb{R}^{m+n}} (\tau |x|^2 + \tau^{-1} |y|^2) \phi_\sigma(\zeta') d\zeta' = \\
& = \frac{1}{2}\gamma \left((\tau \star \phi_\sigma)(\zeta)|x|^2 + (\tau^{-1} \star \phi_\sigma)(\zeta) |y|^2 \right).
\end{align*}
By H\"older's inequality 
\begin{align*}
(\tau \star \phi_\sigma)&(\zeta)\  (\tau^{-1} \star \phi_\sigma) (\zeta) =\\
& =  \int_{\Bb{R}^{m+n}} {\tau(\zeta') \phi_\sigma(\zeta-\zeta')\d\zeta'}  \int_{\Bb{R}^{m+n}} {\tau^{-1}(\zeta') \phi_\sigma(\zeta-\zeta') \d\zeta'} \\
& \ge  \left(\int_{\Bb{R}^{m+n}} \sqrt{ \tau(\zeta') \phi_\sigma(\zeta-\zeta')}\ \sqrt{\tau^{-1}(\zeta') \phi_\sigma(\zeta-\zeta') }\d\zeta'\right)^2 = 1.
\end{align*}
Thus 
\[ (\tau^{-1} \star \phi_\sigma) (\zeta) \ge (\tau \star \phi_\sigma)^{-1} (\zeta).\]
Hence, if we define 
\[ \tau_\sigma = \tau_\sigma(|\zeta|) = (\tau \star \phi_\sigma)(\zeta), \]
we obtain the desired estimate.
\end{proof}

In the last part of this section we define the Bellman function on differential forms on $\rdp$ and we prove a technical result that will be used in the proof of the Bilinear embedding Theorem.\par 
For each $s=1,\ldots,d$ we set $d_s=\dim(\Uplambda^s(\BR^d))=\binom{d}{s}$ and identify $\Uplambda^s(\BR^d)$ with $\BR^{d_s}$ via the map $\xi\mapsto (\xi_I)$ that associates to a $s$-form the vector of its components, in some fixed order (for instance the  lexicographic order on the set of indices $\cI_s$). Define the function $
{Q_\sigma}:\Uplambda^r(\BR^d)\times \Uplambda^{r+1}(\BR^d)\to[0,\infty)
$, by
$$
Q_\sigma(\xi,\theta)=\frac{1}{2}\beta_\sigma(|{\xi}|,|{\theta}|)
$$\par
If $\zeta=(\xi,\theta)\in C^\infty(\Uplambda^r(\rdp\times\BR_+))\times C^\infty(\Uplambda^{r+1}(\rdp\times \BR_+))$, for each  $i=1,\ldots,d+1$ denote by $\delta_i  {\zeta}$ the vector in $\BR^{d_r}\times\BR^{d_{r+1}}$, whose components are
\begin{align*}
\delta_i  {\zeta_I}&=\delta_i  {\xi_I}\qquad  I\in\cI_r\\
\delta_i  {\zeta_J}&= \delta_i {\theta_J}\qquad  J\in \cI_{r+1}.
\end{align*}
Here, as before, $\delta_i$ denotes the Laguerre derivative $\sqrt{x_i}\partial_i$ for $i=1,\ldots,d$, while $\delta_{d+1}=\partial_t$ is the classical derivative.
\par
Define the operator $M_\alpha$ acting on $r$-forms by
$$
M_\alpha\,\omega(x)=\sum_{I\in\cI_r} M_{\alpha,I}\,\omega_I(x) \d x_I,
$$
where $M_{\alpha,I}$ is the operator of multiplication defined in \eqref{MI}. \begin{remark}\label{r:M>}
{If $\alpha\in [-1/2,\infty)^d$, then by \eqref{e: MI>r/2}}
\begin{equation*}
\langle M_\alpha\,\omega(x),\omega(x)\rangle =\sum_{I\in\cI_r}\omega_I(x)\,M_{\alpha,I}\,\omega_I(x)\ge \frac{r}{2}\  |\omega(x)|^2.
\end{equation*}
\end{remark}
\begin{lem}\label{l: Bochner 2}
For every smooth $r$-form $\omega$ 
$$
\sum_I \omega_I\cL_\alpha \omega_I=\langle \BL_\alpha\omega,\omega\rangle -\langle M_\alpha\,\omega,\omega\rangle.
$$
\end{lem}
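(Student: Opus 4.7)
The proof is essentially a pointwise identity that follows by combining two facts already established in the paper: the diagonal action of $\BL_\alpha$ on the coefficients of a form, and the relation between $\cL_\alpha$ and the operators $\cL_{\alpha,I}$ via the multiplication operators $M_{\alpha,I}$.

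My plan is as follows. First, by Proposition \ref{p: diag}, for a smooth $r$-form $\omega = \sum_I \omega_I\,dx_I$ we have
$$
\BL_\alpha \omega = \sum_I \cL_{\alpha,I}\omega_I\, dx_I,
$$
so taking the pointwise inner product against $\omega$ and using the orthonormality of $\{dx_I : I\in\cI_r\}$ yields
$$
\langle \BL_\alpha\omega,\omega\rangle = \sum_I (\cL_{\alpha,I}\omega_I)\,\omega_I.
$$
Similarly, by the definition of $M_\alpha$ as acting diagonally with $M_{\alpha,I}$ on the $I$-component,
$$
\langle M_\alpha\omega,\omega\rangle = \sum_I (M_{\alpha,I}\omega_I)\,\omega_I.
$$

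Next, I invoke the commutator identity \eqref{rel LL}, which states $\cL_\alpha = \cL_{\alpha,I} - M_{\alpha,I}$ for every $I\subseteq\{1,\dots,d\}$. Subtracting the two displays above gives
$$
\langle \BL_\alpha\omega,\omega\rangle - \langle M_\alpha\omega,\omega\rangle = \sum_I \bigl(\cL_{\alpha,I} - M_{\alpha,I}\bigr)\omega_I \cdot \omega_I = \sum_I (\cL_\alpha \omega_I)\,\omega_I,
$$
which is exactly the asserted identity. There is no substantive obstacle here; the lemma is essentially a bookkeeping consequence of Proposition \ref{p: diag} and \eqref{rel LL}, and only uses that both $\BL_\alpha$ and $M_\alpha$ act diagonally in the basis $\{dx_I\}$ so that their inner products with $\omega$ split as sums over $I\in\cI_r$.
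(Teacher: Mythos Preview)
Your proof is correct and follows essentially the same approach as the paper: both arguments rest on the diagonal action of $\BL_\alpha$ (Proposition~\ref{p: diag}) and the identity $\cL_\alpha=\cL_{\alpha,I}-M_{\alpha,I}$ from~\eqref{rel LL}. The only cosmetic difference is that the paper starts from $\sum_I \omega_I\cL_\alpha\omega_I$ and expands, whereas you start from the right-hand side and collapse; the content is identical.
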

\begin{proof}
A straightforward application of the identity $\cL_\alpha=\cL_{\alpha,I}-M_{\alpha,I}$ (see \eqref{rel LL}), shows that
\begin{align*}
\sum_I \omega_I\cL_\alpha \omega_I=&\sum_I \omega_I \cL_{\alpha,I}\omega_I-\sum_I \omega_I M_{\alpha,I} \,\omega_I  \\
=&\langle \BL_\alpha \omega, \omega\rangle_{\Uplambda^r}-\langle M_\alpha\, \omega,\omega\rangle_{\Uplambda^r}.
\end{align*}
\end{proof}

Define the differential operators on $\rdp\times\BR_+$
$$
\cD_\alpha=\cL_\alpha-\partial^2_{tt}\qquad \textrm{and}\qquad \BD_\alpha=\BL_\alpha-\partial^2_{tt}.
$$
\begin{lem}
\label{l: lemtecforms}
Suppose that $\zeta= 
(\xi, \theta) \in C^\infty(\Uplambda^r(\rdp\times\BR_+))\times C^\infty(\Uplambda^{r+1}(\rdp\times\BR_+))$. Then for every $  \in \rdp \times \Bb{R}_+$ and $\sigma \in (0,1)$ we have
\begin{align*}
- \cD_\alpha Q_\sigma(\zeta)  = \sum_{i=1}^{d+1} H_{ {Q}_\sigma}( {\zeta}; \delta_i  {\zeta}) & + \frac{\partial_1 \beta_\sigma(|\xi|, |\theta|)}{2 |\xi|}   \langle (M_\alpha-\BD_\alpha)\xi, \xi\rangle+ \\
& + \frac{\partial_2 \beta_\sigma(|\xi|, |\theta|)}{2 |\theta|} \langle (M_\alpha-\BD_\alpha)\theta, \theta\rangle.
\end{align*} 
\end{lem}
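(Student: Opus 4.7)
The strategy is to apply the chain rule to $Q_\sigma\circ\zeta$ twice, once for each second-order derivative appearing in $\cD_\alpha=\cL_\alpha-\partial_t^2$, and then to recognise the resulting first-order residue through Lemma \ref{l: Bochner 2}.

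First, I would treat $\cL_\alpha$ as a second-order linear differential operator whose principal part is $-\sum_{i=1}^d x_i\partial_{x_i}^2$. The general chain-rule identity for any $C^2$ function $g$ on $\BR^{d_r}\times\BR^{d_{r+1}}$ applied to a smooth $\zeta$ reads
$$
\cL_\alpha(g\circ\zeta)=-\sum_{i=1}^{d}\sum_{a,b}\partial_{ab}g(\zeta)\,\delta_i\zeta_a\,\delta_i\zeta_b+\sum_a\partial_a g(\zeta)\,\cL_\alpha\zeta_a,
$$
using that $x_i\partial_i\zeta_a\partial_i\zeta_b=\delta_i\zeta_a\delta_i\zeta_b$ and that the drift terms $(\alpha_i+1-x_i)\partial_i$ are first-order. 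An identical computation for $-\partial_t^2$, with the convention $\delta_{d+1}=\partial_t$, yields
$$
\cD_\alpha(g\circ\zeta)=-\sum_{i=1}^{d+1}H_g(\zeta;\delta_i\zeta)+\sum_a\partial_a g(\zeta)\,\cD_\alpha\zeta_a,
$$
so that
$$
-\cD_\alpha(Q_\sigma\circ\zeta)=\sum_{i=1}^{d+1}H_{Q_\sigma}(\zeta;\delta_i\zeta)-\sum_a \partial_a Q_\sigma(\zeta)\,\cD_\alpha\zeta_a.
$$
This is the first displayed term of the claim; the remaining task is to identify the sum $\sum_a \partial_a Q_\sigma(\zeta)\,\cD_\alpha\zeta_a$.

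Next, I would use the radial form $Q_\sigma(\xi,\theta)=\tfrac12\beta_\sigma(|\xi|,|\theta|)$ to compute
$$
\partial_{\xi_I}Q_\sigma(\xi,\theta)=\frac{\partial_1\beta_\sigma(|\xi|,|\theta|)}{2|\xi|}\,\xi_I,\qquad \partial_{\theta_J}Q_\sigma(\xi,\theta)=\frac{\partial_2\beta_\sigma(|\xi|,|\theta|)}{2|\theta|}\,\theta_J.
$$
Consequently,
$$
\sum_a\partial_a Q_\sigma(\zeta)\,\cD_\alpha\zeta_a=\frac{\partial_1\beta_\sigma(|\xi|,|\theta|)}{2|\xi|}\sum_{I\in\cI_r}\xi_I\,\cD_\alpha\xi_I+\frac{\partial_2\beta_\sigma(|\xi|,|\theta|)}{2|\theta|}\sum_{J\in\cI_{r+1}}\theta_J\,\cD_\alpha\theta_J.
$$
Finally, viewing $\xi$ as a smooth $r$-form and $\theta$ as a smooth $(r+1)$-form on $\rdp\times\BR_+$, Lemma \ref{l: Bochner 2} gives $\sum_I \xi_I\cL_\alpha\xi_I=\langle\BL_\alpha\xi,\xi\rangle-\langle M_\alpha\xi,\xi\rangle$ and similarly for $\theta$. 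Since $\partial_t^2$ acts diagonally on the coefficients, $\sum_I \xi_I\,\partial_t^2\xi_I=\langle\partial_t^2\xi,\xi\rangle$, hence
$$
\sum_{I}\xi_I\,\cD_\alpha\xi_I=\langle\BD_\alpha\xi,\xi\rangle-\langle M_\alpha\xi,\xi\rangle=-\langle(M_\alpha-\BD_\alpha)\xi,\xi\rangle,
$$
and identically with $\theta$ in place of $\xi$. Substituting these two identities into the previous display and combining with the Hessian sum completes the proof.

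The argument is essentially a bookkeeping exercise: no real obstacle arises beyond keeping track of the sign conventions in $\cD_\alpha$, the factor of $\tfrac12$ in $Q_\sigma$, and the scaling factors $\xi_I/|\xi|$ and $\theta_J/|\theta|$ in the gradient of the radial function. The only place where the structure of the Hodge-Laguerre setup enters nontrivially is in the last step, where the mismatch between the scalar operator $\cL_\alpha$ acting component-wise and the form operator $\BL_\alpha=\BD_\alpha+\partial_t^2$ is absorbed exactly by the multiplication operator $M_\alpha$, as recorded in Lemma \ref{l: Bochner 2}.
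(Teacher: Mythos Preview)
Your proposal is correct and follows essentially the same approach as the paper: both apply the chain rule to express $-\cD_\alpha Q_\sigma(\zeta)$ as the sum of the Hessian terms $\sum_{i=1}^{d+1}H_{Q_\sigma}(\zeta;\delta_i\zeta)$ plus a first-order residue $-\sum_a\partial_aQ_\sigma(\zeta)\,\cD_\alpha\zeta_a$, then use the radial form of $Q_\sigma$ together with Lemma~\ref{l: Bochner 2} to rewrite that residue in terms of $\langle(M_\alpha-\BD_\alpha)\xi,\xi\rangle$ and $\langle(M_\alpha-\BD_\alpha)\theta,\theta\rangle$. The only cosmetic difference is that you state the chain-rule identity for $\cD_\alpha(g\circ\zeta)$ in one line, whereas the paper expands $\delta_iQ_\sigma(\zeta)$ and $\delta_{ii}^2Q_\sigma(\zeta)$ explicitly before recombining them.
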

\begin{proof}
First note that
\begin{align}\label{e: -cDa}
-\cD_\alpha Q_\sigma (\zeta) 
= \sum_{i=1}^d \big(\delta_{ii}^2  Q_\sigma ( {\zeta})  + \psi_i \delta_i  Q_\sigma ( {\zeta})\big) + \partial^2_{tt}  Q_\sigma ( {\zeta}).
\end{align}
Since for $i=1,\ldots,d+1$
\begin{align*}
\delta_i  Q_\sigma ( {\zeta}) 
& = \sum_{I\in\cI_r} \partial_{\xi_I}  Q_\sigma ( {\zeta})\  \delta_i {\xi}_I  + \sum_{J\in\cI_{r+1}}  \partial_{\theta_J} Q_\sigma ( {\zeta})\  \delta_i  {\theta}_l,\\
\delta^2_{ii}  Q_\sigma ( {\zeta}) & =\sum_{I_1\in\cI_r} \sum_ {I_2\in\cI_r} \partial^2_{\xi_{I_1}\xi_{I_2}}  Q_\sigma ( {\zeta})  \delta_i {\xi}_{I_1}  \delta_i {\xi}_{I_2} + \sum_{I\in\cI_r} \partial_{\xi_I}  Q_\sigma ( {\zeta})\  \delta^2_{ii} {\xi}_I \ +\\
& + 2 \sum_{I\in\cI_r} \sum_{J\in\cI_{r+1}} \partial^2_{\xi_I\theta_J}  Q_\sigma ( {\zeta})  \delta_i {\xi}_I  \delta_i {\theta}_{J}\  +\\
& + \sum_{J_1\in\cI_{r+1}} \sum_{J_2\in\cI_{r+1}}  \partial^2_{\theta_{J_1}\theta_{J_2} } Q_\sigma ( {\zeta})  \delta_i {\theta}_{J_1}  \delta_i {\theta}_{J_2} + \sum_{J\in\cI_{r+1}}  \partial_{\theta_J}  Q_\sigma ( {\zeta})\  \delta^2_{ii} {\theta}_{J} \\
&=H_{ Q_\sigma}( \zeta;\delta_i \zeta)+ \sum_{I\in\cI_r} \partial_{\xi_I}  Q_\sigma ( {\zeta})\  \delta^2_{ii} {\xi}_I 
+\sum_{J\in\cI_{r+1}}  \partial_{\theta_J}  Q_\sigma ( {\zeta})\  \delta^2_{ii} {\theta}_{J},
\end{align*}
the identity \eqref{e: -cDa} can be rewritten as
\begin{align*}
- \cD_\alpha Q_\sigma(\zeta)  & = \sum_{i=1}^{d+1} H_{ Q_\sigma} 
( \zeta;\delta_i \zeta)
-\sum_{I\in\cI_r} \partial_{\xi_I} Q_\sigma( {\zeta})\ \cD_\alpha  {\xi}_I -\sum_{J\in\cI_{r+1}}  \partial_{\theta_J} Q_\sigma( {\zeta})\ \cD_\alpha  {\theta}_J.
\end{align*}
Therefore, to conclude the proof, we only need to prove that
$$
-\sum_{I\in \cI_r} \partial_{\xi_I}Q_\sigma(\zeta) \ \cD_\alpha \xi_I=\frac{\partial_1\beta_\sigma(|\xi|,|\theta|)}{2|\xi|}\langle (M_\alpha-\BD_\alpha)\xi, \xi\rangle,
$$
and
\begin{equation}\label{e: second}
-\sum_{J\in \cI_{r+1}} \partial_{\theta_J}Q_\sigma(\zeta) \ \cD_\alpha \theta_J=
\frac{\partial_2\beta_\sigma(|\xi|,|\theta|)}{2|\theta|}\langle (M_\alpha-\BD_\alpha)\theta, \theta\rangle.
\end{equation}
Indeed, since $Q_\sigma(\zeta)=\beta_\sigma(|\xi|,|\theta|)/2$, we have that
$$
\partial_{\xi_I} Q_\sigma(\zeta)=\frac{\partial_1 \beta_\sigma(|\xi|,|\theta|)}{2|\xi|} \xi_I,\qquad \partial_{\theta_J} Q_\sigma(\zeta)=\frac{\partial_2 \beta_\sigma(|\xi|,|\theta|)}{2|\theta|} \theta_J.
$$
Thus
\begin{align*}
-\sum_{I\in \cI_r} \partial_{\xi_I}Q_\sigma(\zeta) \ \cD_\alpha \xi_I&=-\frac{\partial_1\beta_\sigma(|\xi|,|\theta|)}{2|\xi|} \sum_{I\in\cI_r} \xi_I\,\cD_\alpha\xi_I \\ 
&=- \frac{\partial_1\beta_\sigma(|\xi|,|\theta|)}{2|\xi|}\left( \sum_{I\in\cI_r} \xi_I\,\cL_\alpha\xi_I-\sum_{I\in\cI_r} \xi_I\,\partial^2_{tt}\xi_I\right)\\ 
&=- \frac{\partial_1\beta_\sigma(|\xi|,|\theta|)}{2|\xi|}\big(\langle \BD_\alpha \xi,\xi\rangle-\langle M_\alpha\,\xi,\xi\rangle\big),
\end{align*}
where in the last identity we have used Lemma \ref{l: Bochner 2}. The identity \eqref{e: second} is proved similarly.
\end{proof}
\section{Proof of the Bilinear Embedding Theorem}\label{c: proofBET}
In this section we assume that {$\alpha\in[-1/2,\infty)^d$}, $p\ge 2$, $q=p/(p-1)$ and $\gamma_p=q(q-1)/8$. Define the function
$$ 
r(x) = r(x_1, \ldots, x_d) = \sqrt{x_1 + \ldots + x_d} \qquad x\in\rdp.
$$
We think of the function $r(x)$ as representing the distance of $x$ from the origin of $\rdp$. 
\par
Choose a non-increasing cut-off function $\Theta \in C^{\infty}_c([0, 
\infty))$ such that $0 \le \Theta \le 1$, $\Theta(x) = 1$ if $x \in [0,1]$ 
and $\Theta(x) = 0$ if $x \in [2, \infty)$. For $\ell>0$ define
$$
F_\ell (x) = \Theta \left( \frac{r(x)^2 }{\ell^2}\right) = \Theta \left(\frac{x_1 + 
\ldots + x_d}{\ell^2} \right)
$$
Then $\supp F_\ell   \subseteq \overline{B_{\sqrt{2}\ell}(o)}=\{x\in\rdp: r(x)\le \sqrt{2}\ell\}$.
\begin{lem}
\label{lemr}
There exists a constant $C=C(d,|\alpha|, \Theta) \ge 0$ such that
$$
|\delta F_\ell(x)|\le C/\ell \quad \textrm{and}\quad \cL_\alpha\, F_\ell(x) \le C \qquad\forall x\in\rdp \quad \forall \ell \ge 1.
$$
\end{lem}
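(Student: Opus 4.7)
The plan is a direct computation using the chain rule, exploiting the fact that $\Theta'$ and $\Theta''$ are supported in $[1,2]$.

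First I would compute the ordinary partial derivatives: since $F_\ell(x) = \Theta(r(x)^2/\ell^2)$ with $r(x)^2 = x_1+\cdots+x_d$, the chain rule gives
$$
\partial_i F_\ell(x) = \frac{1}{\ell^2}\,\Theta'\!\left(\tfrac{r(x)^2}{\ell^2}\right), \qquad \partial_i^2 F_\ell(x) = \frac{1}{\ell^4}\,\Theta''\!\left(\tfrac{r(x)^2}{\ell^2}\right).
$$
For the first estimate, $\delta_i F_\ell(x) = \sqrt{x_i}\,\partial_i F_\ell(x)$, so
$$
|\delta F_\ell(x)|^2 = \sum_{i=1}^d x_i\,|\partial_i F_\ell(x)|^2 = \frac{r(x)^2}{\ell^4}\,\bigl|\Theta'(r(x)^2/\ell^2)\bigr|^2.
$$
Because $\Theta'$ is supported where its argument lies in $[1,2]$, on $\supp \Theta'(\cdot/\ell^2)\circ r^2$ we have $r(x)^2\le 2\ell^2$; hence $|\delta F_\ell(x)|^2 \le 2\|\Theta'\|_\infty^2/\ell^2$, which gives $|\delta F_\ell(x)|\le C/\ell$.

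For the second estimate, I would plug the derivatives into the formula for $\la$:
\begin{align*}
\la F_\ell(x) &= -\sum_{i=1}^d\Bigl[\tfrac{x_i}{\ell^4}\Theta''(r^2/\ell^2) + \tfrac{\alpha_i+1-x_i}{\ell^2}\Theta'(r^2/\ell^2)\Bigr] \\
&= -\tfrac{r(x)^2}{\ell^4}\,\Theta''(r^2/\ell^2) - \tfrac{|\alpha|+d}{\ell^2}\,\Theta'(r^2/\ell^2) + \tfrac{r(x)^2}{\ell^2}\,\Theta'(r^2/\ell^2),
\end{align*}
where $|\alpha|=\alpha_1+\cdots+\alpha_d$. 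Each of these three terms is supported where $r(x)^2/\ell^2\le 2$. Using this and $\ell\ge 1$, the first term is bounded by $2\|\Theta''\|_\infty$, the second by $(|\alpha|+d)\|\Theta'\|_\infty$, and the third by $2\|\Theta'\|_\infty$ in absolute value. Summing gives $|\la F_\ell(x)|\le C(d,|\alpha|,\Theta)$, which yields the pointwise upper bound claimed.

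There is no real obstacle here beyond bookkeeping: the assumption $\ell\ge 1$ is used to absorb the $1/\ell^2$ factor in the middle term, and the radial dependence of $F_\ell$ through $r^2$ makes the Laguerre derivatives collapse so that $\sum_i x_i$ reconstitutes $r(x)^2$, which is then controlled on the support of $\Theta'$ and $\Theta''$.
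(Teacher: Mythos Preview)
Your proof is correct and, in fact, more streamlined than the paper's. The paper introduces the intermediate function $r(x)=\sqrt{x_1+\cdots+x_d}$, computes $\delta_i r$, $\delta_{ii}^2 r$ and $\cL_\alpha r$, then applies the chain rule to $\Theta\circ g$ with $g=r^2/\ell^2$, obtaining
$$
\cL_\alpha(\Theta\circ g)=-\Theta''(r^2/\ell^2)\,\frac{4r^2|\delta r|^2}{\ell^4}+\Theta'(r^2/\ell^2)\left[\frac{2r\cL_\alpha r}{\ell^2}-\frac{2|\delta r|^2}{\ell^2}\right],
$$
and then uses the sign $\Theta'\le 0$ together with a lower bound $r\cL_\alpha r/\ell^2\ge -5/4$ on the support of $\Theta'$ to control the middle term from above. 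You bypass all this by working directly with $r^2=\sum_i x_i$, whose ordinary partial derivatives are constant; plugging into the original expression $\cL_\alpha=-\sum_i[x_i\partial_i^2+(\alpha_i+1-x_i)\partial_i]$ immediately collapses the sums over $i$ into $r^2$ and $|\alpha|+d$, giving a three-term formula in which every piece is manifestly bounded on $\supp\Theta'\cup\supp\Theta''$. Your argument even yields the two-sided bound $|\cL_\alpha F_\ell|\le C$, whereas the paper only extracts the one-sided upper bound (which is all that is used later). The paper's route is perhaps more in the spirit of treating $r$ as a distance function on a manifold, but yours is the cleaner computation here.
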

\begin{proof}
Since
$
\cL_\alpha\, r(x) = -\sum_{i=1}^d \big(\delta^2_{ii} r(x) - \psi_i(x_i) \delta_i r(x)\big),
$
and
\begin{align*}
\delta_i r(x ) & =  \frac{ \sqrt{x_i}}{2 r(x)},\quad &\delta^2_{ii} r(x) & =\frac{r^2(x)-x_i}{2r(x)}, \quad &
\psi_i(x_i)&= \left(\frac{\alpha_i+1/2}{\sqrt{x_i}}-\sqrt{x_i}\right), 
\end{align*}
a straightforward computation shows that $|\delta r(x)|\le 1$ and, if $\ell\le r\le\sqrt{2}\ell$,
\begin{equation}\label{e: rLr}
\frac{r\cL_\alpha \, r}{\ell^2}=\frac{r}{\ell^2}\left[\frac{d+|\alpha|}{2r}-\left(\frac{1}{4 r}+\frac{r}{2}\right)\right]\ge -\frac{5}{4}.
\end{equation}
Thus
$$
|\delta F_\ell(x)|\le 2\norm{\Theta'}{\infty}\frac{r(x)}{\ell^2}|\delta r(x)|\le C,
$$
since $\ell\le r\le\sqrt{2}\ell$ on the support of $\Theta'$.\par
Next observe that, setting $g=r^2/\ell^2$,
\begin{align*}
\cL_\alpha (\Theta\circ g)&= -(\Theta''\circ g)\,|\delta g|^2+(\Theta'\circ g)\cL_\alpha g\\
&= -\Theta''(r^2/\ell^2)\frac{4r^2|\delta r|^2}{\ell^4}+\Theta'(r^2/\ell^2)\left[\frac{2r\cL_\alpha r}{\ell^2}-\frac{2|\delta r|^2}{\ell^2}\right].
\end{align*}
The desired conclusion follows, since $\Theta'$ and $\Theta''$ are bounded, $\ell\le r\le\sqrt{2}\ell$ on the support of $\Theta'$, $|\delta r|\le 1$, $\Theta'\le 0$ and $-r\cL_\alpha r/\ell^2\le C$ by \eqref{e: rLr}. 
\end{proof}
For all $s,\ell> 0$ we define
\[ \mathscr{B}_{s,\ell} = \overline{B_{2\ell}(o)} \times \left[s^{-1}, s \right],\]
where 
\[ \overline{B_{2\ell}(o)} = \left\{ x \in \rd : r(x) \le 2\ell \right\}\]
 is the closed ball centered at the origin with radius $2\ell$ with respect to the distance $r(x)$;
moreover, for  $\varepsilon > 0$ fixed, $\omega\in \mathscr{P}(\Uplambda^{r}(\rdp))$ and $\eta \in \mathscr{P}(\Uplambda^{r+1}(\rdp))$ set
$$ 
\sigma_{s,\ell} = \varepsilon \inf_{ (x,t) \in  \mathscr{B}_{s,\ell}}  \min\left\{ P^\alpha_t |\omega|(x), P^\alpha_t|\eta|(x)\right\}. 
$$
Finally, define the function
$ b_{s,\ell}: \rdp \times \rp \to \rp$ by setting
$$ 
b_{s,\ell}(x,t)  = Q_{\sigma_{s,\ell}}(\BP^{\alpha,\rho}_t \omega(x),  \BP^{\alpha,\rho}_t \eta(x)).
$$

As in \cite{DV:JFA,CD:JFA} the main step of the proof of the bilinear embedding theorem consists in estimating an integral of $\BD_\alpha\, b_{s,\ell}$. We begin with the estimate from below.
\begin{lem}\label{M-D}
For every form $\phi\in L^2(\Uplambda^m(\rdp),\mua)$ and $\rho\le m/2$
$$
\langle (M_\alpha-\BD_\alpha)\BP^{\alpha,\rho}_t\phi(x),\BP^{\alpha,\rho}_t\phi(x)\rangle\ge 0.
$$
\end{lem}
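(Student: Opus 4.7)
The plan is to reduce the claimed pointwise inequality to two inputs already established in the paper: (a) the function $u(x,t):=\BP^{\alpha,\rho}_t\phi(x)$ is an eigenfunction of $\BD_\alpha$ with eigenvalue $\rho$, and (b) $M_\alpha$ is bounded below pointwise by $m/2$ on $m$-forms (Remark~\ref{r:M>}), which needs the hypothesis $\alpha\in[-1/2,\infty)^d$ that is in force throughout Section~\ref{c: proofBET}.

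For (a), I would invoke the fact that $\BP^{\alpha,\rho}_t=\e^{-t(\BL_\alpha-\rho I)^{1/2}}$ is generated by $-(\BL_\alpha-\rho I)^{1/2}$, so by the spectral theorem the function $u$ satisfies the vector-valued Laplace-type equation
$$
\partial^2_{tt}\,u=(\BL_\alpha-\rho I)\,u.
$$
Subtracting from $\BL_\alpha u$ therefore yields $\BD_\alpha u=\BL_\alpha u-\partial^2_{tt}u=\rho\,u$, and hence
$$
\langle \BD_\alpha u(x,t),u(x,t)\rangle_{\Uplambda^m}=\rho\,|u(x,t)|^2
$$
at every $(x,t)\in\rdp\times\BR_+$. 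Combined with the pointwise bound from Remark~\ref{r:M>},
$$
\langle M_\alpha u(x,t),u(x,t)\rangle_{\Uplambda^m}\ge \frac{m}{2}\,|u(x,t)|^2,
$$
and the assumption $\rho\le m/2$, this gives $\langle(M_\alpha-\BD_\alpha)u,u\rangle\ge(m/2-\rho)|u|^2\ge 0$, as required.

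There is no serious obstacle in the argument; the only delicate point is to justify the spectral identity $\partial^2_{tt}u=(\BL_\alpha-\rho I)u$ rigorously, not merely formally. The cleanest route is to verify it first on polynomial forms $\phi\in\msP(\rdp;\Uplambda^m)$, where both sides expand as absolutely convergent Fourier--Laguerre series in the basis $B_m$ of Proposition~\ref{ONBr} with coefficients $\e^{-t(|k|-\rho)^{1/2}}\widehat{\phi}(I,k)$, and then extend to general $\phi\in L^2$ by density together with the strong continuity of $\BP^{\alpha,\rho}_t$ and of its $t$-derivatives for $t>0$, guaranteed by Corollary~\ref{c: sgbdd} (which ensures $\rho\le m/2$ lies at or below the bottom of the spectrum).
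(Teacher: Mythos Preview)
Your proof is correct and follows essentially the same approach as the paper: both compute $\BD_\alpha \BP^{\alpha,\rho}_t\phi = \rho\,\BP^{\alpha,\rho}_t\phi$ from the fact that $\BP^{\alpha,\rho}_t=\e^{-t(\BL_\alpha-\rho I)^{1/2}}$, and then invoke Remark~\ref{r:M>} together with $\rho\le m/2$ to conclude. Your additional remarks on justifying the spectral identity via density of polynomial forms are a welcome bit of extra care, but the argument itself matches the paper's.
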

\begin{proof}
Since $\BP^{\alpha,\rho}_t=\e^{-t(\BL^\alpha-\rho I)^{1/2}}$ and $
\BD_\alpha =\BL_\alpha-\partial^2_{tt}$, we have that 
$
\BD_\alpha \BP^{\alpha,\rho}_t \phi(x)=\rho \BP^{\alpha,\rho}_t 
\phi(x)$. Thus, {since $\alpha\in[-1/2,\infty)^d$,} by Remark \ref{r:M>},
\begin{align*}
\langle (M_\alpha-\BD_\alpha)\BP^{\alpha,\rho}_t\phi(x),\BP^{\alpha,\rho}_t\phi(x)\rangle&=\langle M_\alpha\, \BP^{\alpha,\rho}_t\phi(x),\BP^{\alpha,\rho}_t\phi(x)\rangle-\rho| \BP^{\alpha,\rho}_t\phi(x)|^2 \\
&\ge \left(\frac{m}{2}-\rho\right) |\BP^{\alpha,\rho}_t\phi(x)|^2.
\end{align*}
\end{proof}
\begin{prop}
\label{p: below}
For all $(x,t)  \in \rdp \times \rp$
$$
-\cD_\alpha\, b_{s,\ell}(x,t)  \ge  \gamma_p
\  |\overline{\nabla} \BP^{\alpha,\rho}_t \omega(x)| |\overline{\nabla} \BP^{\alpha,\rho}_t \eta(x) |.
$$
\end{prop}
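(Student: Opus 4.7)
\begin{proof}
The strategy is to evaluate the right-hand side of the identity provided by Lemma~\ref{l: lemtecforms} at the point $\zeta=(\xi,\theta)=(\BP^{\alpha,\rho}_t\omega(x),\BP^{\alpha,\rho}_t\eta(x))$ with $\sigma=\sigma_{s,\ell}$, and to show that each of the three summands is nonnegative, with the Hessian term providing the desired lower bound.

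First, since $\rho\le r/2\le (r+1)/2$, Lemma~\ref{M-D} applied to $\phi=\omega$ (an $r$-form) and to $\phi=\eta$ (an $(r+1)$-form) yields
\begin{align*}
\langle (M_\alpha-\BD_\alpha)\xi,\xi\rangle &\ge 0, \\
\langle (M_\alpha-\BD_\alpha)\theta,\theta\rangle &\ge 0.
\end{align*}
Combined with the monotonicity statements $\partial_1\beta_\sigma\ge 0$ and $\partial_2\beta_\sigma\ge 0$ from Proposition~\ref{propreg}(ii'), this ensures that the last two terms in the formula of Lemma~\ref{l: lemtecforms} are nonnegative. Hence
$$
-\cD_\alpha\, b_{s,\ell}(x,t)\ \ge\ \sum_{i=1}^{d+1} H_{Q_{\sigma_{s,\ell}}}(\zeta;\delta_i\zeta).
$$

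Next, I invoke Proposition~\ref{propreg}(iii'), which furnishes a positive $\tau_{\sigma}=\tau_{\sigma}(|\xi|,|\theta|)$ such that for each $i$,
$$
H_{Q_{\sigma_{s,\ell}}}(\zeta;\delta_i\zeta)\ \ge\ \frac{\gamma_p}{2}\bigl(\tau_{\sigma}|\delta_i\xi|^2+\tau_{\sigma}^{-1}|\delta_i\theta|^2\bigr).
$$
Summing over $i=1,\dots,d+1$ and recalling that by definition $|\overline{\nabla}\xi|^2=\sum_{i=1}^{d+1}|\delta_i\xi|^2$ (and similarly for $\theta$), we obtain
$$
\sum_{i=1}^{d+1}H_{Q_{\sigma_{s,\ell}}}(\zeta;\delta_i\zeta)\ \ge\ \frac{\gamma_p}{2}\bigl(\tau_{\sigma}|\overline{\nabla}\xi|^2+\tau_{\sigma}^{-1}|\overline{\nabla}\theta|^2\bigr)\ \ge\ \gamma_p\,|\overline{\nabla}\xi|\,|\overline{\nabla}\theta|,
$$
where the last inequality is the arithmetic--geometric mean inequality applied to the positive quantities $\tau_{\sigma}|\overline{\nabla}\xi|^2$ and $\tau_{\sigma}^{-1}|\overline{\nabla}\theta|^2$. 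Substituting $\xi=\BP^{\alpha,\rho}_t\omega(x)$ and $\theta=\BP^{\alpha,\rho}_t\eta(x)$ gives the claim.
\end{proof}

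The only subtle point in the above is the use of Proposition~\ref{propreg}(iii') for vector-valued (form) arguments: the bound on the Hessian form of $Q_\sigma$ was stated there for elements of $\BR^{m}\times\BR^{n}$, and here we apply it with $m=d_r=\binom{d}{r}$ and $n=d_{r+1}=\binom{d}{r+1}$ under the identification of $\Uplambda^r(\BR^d)$ with $\BR^{d_r}$ fixed immediately before Lemma~\ref{l: lemtecforms}, so that the statement applies verbatim. The rest of the argument is a direct and clean assembly of Lemmas~\ref{l: lemtecforms} and~\ref{M-D} with Proposition~\ref{propreg}; I expect no real obstacle beyond keeping track of which form lives in which exterior power and ensuring that $\rho\le r/2$ is used for both $\omega$ and $\eta$.
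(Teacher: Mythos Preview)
Your proof is correct and follows essentially the same approach as the paper's own argument: apply Lemma~\ref{l: lemtecforms}, discard the two first-order terms using Lemma~\ref{M-D} together with the nonnegativity of $\partial_1\beta_\sigma,\partial_2\beta_\sigma$ from Proposition~\ref{propreg}(ii'), and then bound the Hessian sum via Proposition~\ref{propreg}(iii') and AM--GM. Your version is in fact slightly more explicit than the paper's in two respects: you spell out that $\rho\le r/2\le (r+1)/2$ is what allows Lemma~\ref{M-D} to apply to the $(r+1)$-form $\eta$, and you make the AM--GM step visible rather than leaving it implicit.
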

\begin{proof}
Set, for the sake of brevity, 
$\xi =\BP^{\alpha,\rho}_t\omega(x),\  \theta =\BP^{\alpha,\rho}_t\eta(x)$ and $\zeta =(\xi ,\theta ).
$ 
Then, by applying Lemma~ 
\ref{l: lemtecforms}  we obtain that
\begin{align*}
-\cD_\alpha\, b_{s,\ell}(x,t)  & =\sum_{i=1}^{d+1} H_{Q_{\sigma_{s,\ell}}}(\zeta ;\delta_i\zeta ) 
\\
&
+\frac{\partial_1\beta_\epsilon(|\xi |,|\theta |)}{2|\xi |}
\langle (M_\alpha-\BD_\alpha)
\xi , \xi \rangle
\\
&
+ \frac{\partial_2\beta_\epsilon(|\xi |,|\theta |)}{2|\theta |}
\langle (M_\alpha-\BD_\alpha)
\theta , \theta \rangle.
\end{align*}
Since, by Proposition \ref{propreg} (ii') the partial derivatives of $\beta_\epsilon$ are nonnegative, by Lemma \ref{M-D} and Proposition  \ref{propreg} (iii')

\begin{align*}
-\cD_\alpha\, b_{s,\ell}(x,t) &\ge \sum_{i=1}^{d+1} H_{Q_{\sigma_{s,\ell}}}(\zeta ;\delta_i\zeta ) \\
&\ge \frac{\gamma_p}{2} \left(\tau_{\sigma_{s,\ell}}\sum_{i=1}^{d+1}|\delta_i\xi|^2+
\tau^{-1}_{\sigma_{s,\ell}}\sum_{i=1}^{d+1}|\delta_i\theta|^2\right) \\
&\ge \gamma_p \ |\overline{\nabla}\xi|^2\  |\overline{\nabla}\theta|^2. 
\end{align*}
\end{proof}
To estimate $-\cD_\alpha \, b_{s,\ell}$ from above we need the following two lemmas.
\begin{lem}
\label{l: lemabove}
Suppose that $\rho\le r/2$. Then 
for every 
$(x,t) \in \mathscr{B}_{s,\ell}$
$$
b_{s,\ell}(x,t) \le C\  \frac{1+\gamma_p}{2} (1+\varepsilon)^p \big[ P^\alpha_t |\omega|^p(x) + P^\alpha_t |\eta|^q(x)\big].
$$
Moreover, there exists a constant $C = C(p,\varepsilon)$
such that for each $(x,t) \in \mathscr{B}_{s,\ell}$
\begin{align*}
|\partial_t b_{s,\ell}(x,t)| &\le C \max\big\{(P^\alpha_t|\omega|(x))^{p-1},\ P^\alpha_t|\eta|(x)\big\}\ |\partial_t \BP^{\alpha,\rho}_t \omega(x)| \\
&\phantom{.}\hskip2truecm+C\big(P^\alpha_t|\eta|(x)\big)^{q-1} \, |\partial_t \BP^{\alpha,\rho}_t \eta(x)|.
\end{align*}
\end{lem}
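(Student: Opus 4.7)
The plan is to prove both estimates by reducing everything to pointwise bounds on $\beta_\sigma$ and its first derivatives (Proposition \ref{propreg}(i'), (ii')), then converting bounds for $|\BP^{\alpha,\rho}_t\omega|, |\BP^{\alpha,\rho}_t\eta|$ into bounds involving $P^\alpha_t|\omega|$, $P^\alpha_t|\eta|$ via Proposition \ref{p: Bakry}(ii) (with constant $C(\alpha)=1$ since $\alpha\in[-1/2,\infty)^d$), and finally absorbing the extra $\sigma_{s,\ell}$ term using its defining minimum.

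For the first estimate I would write $b_{s,\ell}(x,t)=\tfrac12\beta_{\sigma_{s,\ell}}\bigl(|\BP^{\alpha,\rho}_t\omega(x)|,|\BP^{\alpha,\rho}_t\eta(x)|\bigr)$ and apply Proposition \ref{propreg}(i'), which yields
$$
b_{s,\ell}(x,t)\le \tfrac{1+\gamma_p}{2}\Bigl[\bigl(|\BP^{\alpha,\rho}_t\omega(x)|+\sigma_{s,\ell}\bigr)^p+\bigl(|\BP^{\alpha,\rho}_t\eta(x)|+\sigma_{s,\ell}\bigr)^q\Bigr].
$$
Using Proposition \ref{p: Bakry}(ii) together with the assumption $\rho\le r/2$, both $|\BP^{\alpha,\rho}_t\omega(x)|$ and $|\BP^{\alpha,\rho}_t\eta(x)|$ are dominated by $P^\alpha_t|\omega|(x)$ and $P^\alpha_t|\eta|(x)$ respectively. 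By the definition of $\sigma_{s,\ell}$, for every $(x,t)\in\mathscr{B}_{s,\ell}$ we also have $\sigma_{s,\ell}\le\varepsilon P^\alpha_t|\omega|(x)$ and $\sigma_{s,\ell}\le\varepsilon P^\alpha_t|\eta|(x)$; so each of the two parentheses in the display is bounded by $(1+\varepsilon)P^\alpha_t|\omega|(x)$ and $(1+\varepsilon)P^\alpha_t|\eta|(x)$. Finally, Jensen's inequality for the Markovian operator $P^\alpha_t$ turns $(P^\alpha_t|\omega|)^p$ into $P^\alpha_t|\omega|^p$ and $(P^\alpha_t|\eta|)^q$ into $P^\alpha_t|\eta|^q$; since $p\ge q$ I can absorb $(1+\varepsilon)^q$ into $(1+\varepsilon)^p$, giving the stated bound.

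For the derivative estimate, I would first compute, using $Q_\sigma(\xi,\theta)=\tfrac12\beta_\sigma(|\xi|,|\theta|)$,
$$
\partial_{\xi_I}Q_\sigma=\frac{\partial_1\beta_\sigma(|\xi|,|\theta|)}{2|\xi|}\xi_I,\qquad \partial_{\theta_J}Q_\sigma=\frac{\partial_2\beta_\sigma(|\xi|,|\theta|)}{2|\theta|}\theta_J,
$$
so that $|\nabla_\xi Q_\sigma|=\tfrac12\partial_1\beta_\sigma$ and $|\nabla_\theta Q_\sigma|=\tfrac12\partial_2\beta_\sigma$ (with the sign furnished by Proposition \ref{propreg}(ii')). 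The chain rule then yields
$$
|\partial_t b_{s,\ell}(x,t)|\le \tfrac12\partial_1\beta_{\sigma_{s,\ell}}(|\xi|,|\theta|)\,|\partial_t\BP^{\alpha,\rho}_t\omega(x)|+\tfrac12\partial_2\beta_{\sigma_{s,\ell}}(|\xi|,|\theta|)\,|\partial_t\BP^{\alpha,\rho}_t\eta(x)|,
$$
with $\xi=\BP^{\alpha,\rho}_t\omega(x)$, $\theta=\BP^{\alpha,\rho}_t\eta(x)$. I would then apply Proposition \ref{propreg}(ii') to bound $\partial_1\beta_{\sigma_{s,\ell}}$ by $Cp\max\{(|\xi|+\sigma_{s,\ell})^{p-1},|\theta|+\sigma_{s,\ell}\}$ and $\partial_2\beta_{\sigma_{s,\ell}}$ by $C(|\theta|+\sigma_{s,\ell})^{q-1}$, and finish by the same domination $|\xi|+\sigma_{s,\ell}\le(1+\varepsilon)P^\alpha_t|\omega|(x)$, $|\theta|+\sigma_{s,\ell}\le(1+\varepsilon)P^\alpha_t|\eta|(x)$ used above, absorbing the resulting powers of $(1+\varepsilon)$ into the constant $C=C(p,\varepsilon)$.

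The calculation is essentially mechanical; the only subtle point is that the argument needs the hypothesis $\rho\le r/2$ and $\alpha\in[-1/2,\infty)^d$ to invoke Proposition \ref{p: Bakry}(ii) with constant $1$, since any factor larger than $1$ in front of $P^\alpha_t|\omega|$ could not be absorbed inside the Jensen step. The other technicality, which is minor, is that the estimates must be uniform in the compact set $\mathscr{B}_{s,\ell}$ on which $\sigma_{s,\ell}$ is defined as an infimum; this follows tautologically from the choice of $\sigma_{s,\ell}$ as $\varepsilon$ times the relevant minimum of $P^\alpha_t|\omega|$ and $P^\alpha_t|\eta|$ over $\mathscr{B}_{s,\ell}$.
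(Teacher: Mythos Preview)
Your proof is correct and follows essentially the same route as the paper's: apply Proposition~\ref{propreg}(i'),(ii') to $\beta_{\sigma_{s,\ell}}$, dominate $|\BP^{\alpha,\rho}_t\omega|$, $|\BP^{\alpha,\rho}_t\eta|$ by $P^\alpha_t|\omega|$, $P^\alpha_t|\eta|$ via Proposition~\ref{p: Bakry}(ii), absorb $\sigma_{s,\ell}$ using its definition, and (for the first estimate) invoke Jensen for the Markovian $P^\alpha_t$ to pass from $(P^\alpha_t|\omega|)^p$ to $P^\alpha_t|\omega|^p$. The paper's argument is the same, only it leaves the Jensen step and the chain-rule identity $|\nabla_\xi Q_\sigma|=\tfrac12\partial_1\beta_\sigma$ implicit; your remark that one needs $C(\alpha)=1$ in Proposition~\ref{p: Bakry} is slightly stronger than necessary, since the statement already carries an unspecified constant $C$ that could absorb a factor $C(\alpha)^p$.
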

\begin{proof}
By Proposition \ref{propreg} 
\begin{align*}
b_{s,\ell}(x,t) & = Q_{\sigma_{s,\ell}}(\BP^{\alpha,\rho}_t\omega(x), \BP^{\alpha,\rho}_t\eta(x) ) \\
& = \frac{1}{2} \beta_{\sigma_{s,\ell}} \left( |\BP^{\alpha,\rho}_t\omega(x)|, |\BP^{\alpha,\rho}_t\eta(x)|\right)  \\
& \le  \frac{1+\gamma_p}{2} \left[ \left(|\BP^{\alpha,\rho}_t\omega(x)| + \sigma_{s,\ell} \right)^{p} + \left(|\BP^{\alpha,\rho}_t\eta(x)| + \sigma_{s,\ell} \right)^q\right].
\end{align*}
Since $\rho\le r/2$ and $\alpha\in[-1/2,\infty)^d$, we have that 
$|\BP^{\alpha,\rho}_t\omega(x)|\le P^\alpha_t|\omega|(x)$ and 
$|\BP^{\alpha,\rho}_t\eta(x)|\le P^\alpha_t|\eta|(x)$ by Proposition \ref{p: Bakry}. Thus, since 
$$
\sigma_{s,\ell} \le \varepsilon \inf_{\mathscr{B}_{s,\ell}} \min \left\{P^\alpha_t|\omega|(x), {P^\alpha_t} |\eta|(x) \right\}
$$ 
by definition, and $p\ge q$,
\begin{align*}
b_{s,\ell}(x,t) 
&\le \frac{1+\gamma_p}{2} (1+\varepsilon)^p \left( P^\alpha_t |\omega|^p(x) + P^\alpha_t |\eta|^q(x)\right).
\end{align*}
\par
To prove the second part of the statement, we apply  Propositions \ref{p: Bakry} and  \ref{propreg} once more
\begin{align*}
|\partial_t b_{s,\ell}(x,t)| & 
 \le \frac{1}{2} \left[ \partial_1  \beta_{\sigma_{s,\ell}} \left( |\BP^{\alpha,\rho}_t\omega(x)|, |\BP^{\alpha,\rho}_t\eta(x)|\right) |\partial_t \BP^{\alpha,\rho}_t\omega(x)|  \right.\\
& \qquad+  \left. \partial_2 \beta_{\sigma_{s,\ell}} \left( |\BP^{\alpha,\rho}_t\omega(x)|, |\BP^{\alpha,\rho}_t\eta(x)|\right) |\partial_t \BP^{\alpha,\rho}_t\eta(x)|  \right] \\
& \le C \max \left\{ (P^\alpha_t|\omega(x)|)^{p-1}, P^\alpha_t|\eta(x)|\right\}  |\partial_t \BP^{\alpha,\rho}_t\omega(x)|  \\
& \qquad +  C\big(P_t |\omega|(x)\big)^{q-1} |\partial_t \BP^{\alpha,\rho}_t \eta(x)|.
\end{align*}
\end{proof}
In the following we set
$$
R(x,t)=\frac{1+\gamma_p}{2} (1+\varepsilon)^p \big[P^\alpha_t |\omega|^p(x)+ P^\alpha_t |\eta|^q(x)\big].
$$
\begin{lem}\label{l: deltaR}
There exists a constant $C=C(p,\varepsilon,\omega,\eta)$ such that 
$$
\int_{\rdp} |\delta R(x,t)|\d\mua(x)\le  C\,  
\qquad\forall t>0.
$$
\end{lem}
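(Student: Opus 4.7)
The objective is to bound $\int_{\rdp}|\delta R(x,t)|\,d\mua(x)$ uniformly in $t\ge 0$. Writing $|\delta h|\le\sum_{j=1}^d|\delta_j h|$ for any scalar $h$ and splitting
\begin{equation*}
\delta_j R(x,t)=\tfrac{(1+\gamma_p)(1+\varepsilon)^p}{2}\bigl[\delta_j P^\alpha_t|\omega|^p(x)+\delta_j P^\alpha_t|\eta|^q(x)\bigr],
\end{equation*}
the problem reduces to showing $\int_{\rdp}|\delta_j P^\alpha_t g|\,d\mua<\infty$ uniformly in $t\ge 0$ for each $j=1,\ldots,d$ and for $g\in\{|\omega|^p,|\eta|^q\}$.

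The core of the argument is a scalar intertwining: since $\delta_i,\delta_i^*$ commute with $\delta_j,\delta_j^*$ for $i\ne j$, expanding $\cL_\alpha=\sum_i\delta_i^*\delta_i$ directly gives $\delta_j\cL_\alpha=\cL_{\alpha,\{j\}}\delta_j$ on $\msP(\rdp)$, hence via the functional calculus and the subordination formula,
\begin{equation*}
\delta_j P^\alpha_t=P^{\alpha,\{j\}}_t\delta_j.
\end{equation*}
Combining this with the pointwise domination of Proposition~\ref{p: dom}(ii), which yields $C(\alpha)=1$ in the range $\alpha\in[-1/2,\infty)^d$, we obtain $|\delta_j P^\alpha_t g|(x)\le P^\alpha_t|\delta_j g|(x)$. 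Integrating against $\mua$ and using that the Markov property $T^\alpha_t1=1$ together with subordination gives $\int P^\alpha_t h\,d\mua=\int h\,d\mua$ for every nonnegative $h$, we conclude
\begin{equation*}
\int_{\rdp}|\delta_j P^\alpha_t g|\,d\mua\le\int_{\rdp}|\delta_j g|\,d\mua.
\end{equation*}

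It remains to verify the right-hand side is finite for $g=|\omega|^p$ and $g=|\eta|^q$. Direct differentiation and Cauchy--Schwarz applied to $\sum_I\omega_I\partial_j\omega_I$ (respectively $\sum_J\eta_J\partial_j\eta_J$) yield
\begin{equation*}
|\delta_j|\omega|^p|\le p\sqrt{x_j}\,|\omega|^{p-1}|\partial_j\omega|,\qquad |\delta_j|\eta|^q|\le q\sqrt{x_j}\,|\eta|^{q-1}|\partial_j\eta|.
\end{equation*}
Since $\omega,\eta\in\msP(M)$ are polynomial forms, $p\ge 2$, and $q>1$, the potential singularities of $|\omega|^{p-2}$ and $|\eta|^{q-2}$ at the zero sets are absorbed by the remaining factors, so both right-hand sides are continuous and of polynomial growth on $\rdp$, hence lie in $L^1(\mua)$. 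Summing over $j$ delivers the required constant $C=C(p,\varepsilon,\omega,\eta)$.

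\textbf{Main obstacle.} The delicate step is justifying the intertwining identity for the non-polynomial functions $g=|\omega|^p$ and $g=|\eta|^q$. This will be handled either by approximating $g$ in the graph norm $\|g\|_{L^1(\mua)}+\sum_j\|\delta_j g\|_{L^1(\mua)}$ by elements of $\msP(\rdp)$ (using that $g$ is $C^1$ with polynomial growth, so smooth mollification followed by polynomial truncation converges), or by differentiating under the integral using the explicit smooth kernel $G^\alpha_t(x,y)$ of~\eqref{f: prod struc} and then transferring to the Poisson semigroup via the subordination formula.
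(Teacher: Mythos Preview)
Your proposal is correct and follows essentially the same approach as the paper. The paper writes the intertwining in vector form as $\delta P^\alpha_t=\BP^{\alpha,0}_t\delta$ (where $\BP^{\alpha,0}_t$ is the Poisson semigroup on $1$-forms) and then invokes Proposition~\ref{p: Bakry}; by the diagonalisation of Proposition~\ref{p: L2diag}, this is exactly your componentwise identity $\delta_j P^\alpha_t=P^{\alpha,\{j\}}_t\delta_j$ together with Proposition~\ref{p: dom}(ii). The paper then uses the $L^1$-contractivity of $P^\alpha_t$ where you use the equivalent measure-preserving property, and it dispatches the differentiability of $|\omega|^p$, $|\eta|^q$ with the same chain-rule bound you give (noting, as you should too, that the coefficients of elements of $\msP(\rdp;\Uplambda^r)$ are polynomials times factors $\sqrt{x_i}$ rather than plain polynomials). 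Your explicit discussion of extending the intertwining beyond $\msP(\rdp)$ is a point the paper leaves implicit.
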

\begin{proof}
Since $\delta P^\alpha_t=\BP^{\alpha,0}_t\delta$, by Proposition \ref{p: Bakry} 
\begin{align*}
&
\big|\delta P^\alpha_t|\omega|^p(x)\big|=\big|\BP^{\alpha,0}_t \delta|\omega|^p\big|(x)\le \, 
\,P^\alpha_t \big|\delta|\omega|^p\big|(x)  
\end{align*}
and, similarly, $\big|\delta P^\alpha_t|\eta|^q(x)\big|\le \, 
\,P^\alpha_t \big|\delta|\eta|^q\big|(x)$. Next, observe that, since the coefficients of the forms $\omega$ and $\eta$ are products of polynomials by square roots of the coordinates, and $p\ge q>1$, the functions $|\omega|^p$ and $|\eta|^q$ are differentiable and
$$
\big|\delta|\omega|^p\big|\le p\,|\omega|^{p-1}|\delta \omega|, \qquad
\big|\delta|\eta|^q\big|\le q\,|\eta|^{q-1}|\delta \eta|.
$$
Hence the functions $\big|\delta|\omega|^p\big|$ and $\big|\delta|\eta|^q\big|$ are in $L^1(\rdp,\mua)$ and, since $P^\alpha_t$ is a contraction on $L^1(\rdp,\mua)$,
$$
\int_{\rdp} |\delta R(x,t)|\d\mua(x)\le  C\,  
 \big(\norm{\delta|\omega|^p}{L^1(\mua)}+\norm{\delta|\eta|^q}{L^1(\mua)}\big).
$$
\end{proof}
We are now ready to state and prove  the estimate from above.
\begin{prop}
\label{p: above}
Suppose that $\rho\le r/2$. Then 
\begin{align*}
\limsup_{s \to \infty} \limsup_{\ell \to \infty} & \int_{s^{-1}}^s \int_{B_\ell(o)} - \cD_\alpha b_{s,\ell}(x,t) \d \mua(x) \, t\d t  \\
&\phantom{.}\qquad  \le 
\frac{1+\gamma_p}{2}(1+\varepsilon)^p \left( \normto{\omega}{L^p(\mua)}{p}+ \normto{\eta}{L^q(\mua)}{q}\right).
\end{align*}
\end{prop}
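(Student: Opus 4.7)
The plan is to combine integration by parts in both space and time with the Bellman inequalities already established. Since $-\cD_\alpha b_{s,\ell}\ge 0$ by Proposition~\ref{p: below}, and the cutoff $F_\ell$ of Lemma~\ref{lemr} satisfies $F_\ell\ge\chi_{B_\ell(o)}$, the first step is the domination
$$
\int_{s^{-1}}^s\!\int_{B_\ell(o)}\!(-\cD_\alpha b_{s,\ell})\,\d\mua\,t\d t
\;\le\;\int_{s^{-1}}^s\!\int_{\rdp}\!F_\ell\,(-\cD_\alpha b_{s,\ell})\,\d\mua\,t\d t.
$$
Writing $\cD_\alpha=\cL_\alpha-\partial_{tt}^2$, integrating by parts twice in $t$ against the weight $t\,\d t$ to get $\int_{s^{-1}}^s t\,\partial^2_{tt}b_{s,\ell}\,\d t=[t\,\partial_t b_{s,\ell}-b_{s,\ell}]_{t=s^{-1}}^{t=s}$, and using the self-adjointness of $\cL_\alpha$ on $L^2(\mua)$ (legitimate because $F_\ell$ has compact spatial support and, for $\alpha\in[-1/2,\infty)^d$, the factor $\sqrt{x_i}\,\rho_\alpha$ in each Laguerre IBP vanishes at $\partial\rdp$), the right-hand side becomes
$$
\int_{\rdp}\!F_\ell\,\bigl[t\,\partial_t b_{s,\ell}-b_{s,\ell}\bigr]_{t=s^{-1}}^{t=s}\,\d\mua\;-\;\int_{s^{-1}}^s\!\int_{\rdp}\!b_{s,\ell}\,\cL_\alpha F_\ell\,\d\mua\,t\d t.
$$

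Next I would send $\ell\to\infty$ with $s$ kept fixed. Lemma~\ref{lemr} gives $|\cL_\alpha F_\ell|\le C$ with support in the annulus $\{\ell\le r(x)\le\sqrt{2}\ell\}$, while the pointwise bound $b_{s,\ell}\le R$ from Lemma~\ref{l: lemabove} together with the mass-preservation $\int P^\alpha_t f\,\d\mua=\int f\,\d\mua$ (since $P^\alpha_t 1=1$) shows that $\sup_{t\in[s^{-1},s]}\int R(\cdot,t)\,\d\mua<\infty$. Absolute continuity of the Lebesgue integral and dominated convergence in $t$ therefore kill the spatial error term. For the time-boundary part I use $F_\ell\to 1$ pointwise, the uniform $\sigma$-independent estimates of Proposition~\ref{propreg} to control $\sigma_{s,\ell}$ as $\ell\to\infty$, and the pointwise bounds on $b_{s,\ell}$ and $\partial_t b_{s,\ell}$ of Lemma~\ref{l: lemabove} as integrable dominants.

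Finally I take $s\to\infty$. The contributions evaluated at $t=s$ disappear: on polynomial forms $\omega,\eta$ the Poisson propagator $\BP^{\alpha,\rho}_t=\exp(-t(\BL_\alpha-\rho I)^{1/2})$ decays like $\e^{-c\sqrt{s}}$ in $L^p$ (and in $L^q$), since $\rho\le r/2$ ensures that every nonzero Fourier--Laguerre mode of $\omega$, $\eta$ sits above a strictly positive eigenvalue of $\BL_\alpha-\rho I$. Both $s\cdot\partial_t b_{s,\ell}(\cdot,s)$ and $b_{s,\ell}(\cdot,s)$ then vanish in $L^1(F_\ell\,\d\mua)$. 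The term $s^{-1}\partial_t b_{s,\ell}(\cdot,s^{-1})$ is killed by the factor $s^{-1}$, using the second half of Lemma~\ref{l: lemabove} and again the mass preservation of $P^\alpha_{s^{-1}}$. What survives is $+\int_{\rdp} b_{s,\ell}(x,s^{-1})\,\d\mua$, and Lemma~\ref{l: lemabove}(i') combined with $\int P^\alpha_{s^{-1}} f\,\d\mua=\int f\,\d\mua$ yields
$$
\int_{\rdp}\!b_{s,\ell}(x,s^{-1})\,\d\mua\;\le\;\tfrac{1+\gamma_p}{2}(1+\varepsilon)^p\bigl(\|\omega\|_{L^p(\mua)}^p+\|\eta\|_{L^q(\mua)}^q\bigr),
$$
which is exactly the stated bound.

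The main obstacle will be bookkeeping the two nested limits together with the $\sigma_{s,\ell}$-dependence in $b_{s,\ell}$: since $\sigma_{s,\ell}\to0$ as $s,\ell\to\infty$, one must check that all dominating functions can be chosen uniformly in $\sigma\in(0,1)$, which is precisely what Proposition~\ref{propreg} (i')--(ii') is designed to ensure. A secondary but real technical point is verifying the decay of $\BP^{\alpha,\rho}_s$ at $t=s$ in the borderline situation $r=0,\ \rho=0$, where constant components of $\omega$ do not decay; this is dealt with by the very same integral-zero normalization that appears in the statement of Theorem~\ref{t: Riesz bdd Lp}, and so does not impose any new hypothesis here.
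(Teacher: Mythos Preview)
Your plan follows the paper's proof closely: dominate by the cutoff $F_\ell$, split $\cD_\alpha=\cL_\alpha-\partial_{tt}^2$, integrate by parts in $t$ and in $x$, and control the resulting boundary and error terms via Lemma~\ref{l: lemabove} together with the contractivity of $P^\alpha_t$ on $L^r(\mua)$.

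One genuine difference is worth flagging. To kill the spatial term $\int b_{s,\ell}\,\cL_\alpha F_\ell\,\d\mua$ you invoke Lemma~\ref{lemr} for the two-sided bound $|\cL_\alpha F_\ell|\le C$, but as stated that lemma only asserts the one-sided bound $\cL_\alpha F_\ell\le C$ (together with $|\delta F_\ell|\le C/\ell$). The paper works with this one-sided bound and therefore only obtains $\limsup_{\ell\to\infty}\int b_{s,\ell}\,\cL_\alpha F_\ell\,\d\mua\le 0$ directly, from $b_{s,\ell}\ge 0$; to get the matching $\liminf\ge 0$ it adds and subtracts the majorant $R$, uses $R-b_{s,\ell}\ge 0$ for the same one-sided argument, and handles the remaining piece $\int\cL_\alpha R\,F_\ell\,\d\mua$ by a \emph{single} integration by parts, the gradient estimate $|\delta F_\ell|\le C/\ell$, and Lemma~\ref{l: deltaR}. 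Your shortcut is in fact legitimate---the two-sided bound $|\cL_\alpha F_\ell|\le C$ does hold, as one reads off from the very computation in the proof of Lemma~\ref{lemr}---but you should either record this strengthening explicitly or follow the paper's route through $R$ and Lemma~\ref{l: deltaR}. Your version is shorter; the paper's has the virtue of using only what the lemma actually states.
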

\begin{proof}
We observe that $(\supp F_\ell ) \times \left[ s^{-1}, s\right] \subseteq \mathscr{B}
_{s,l}$.
Since $-\cD_\alpha b_{s,\ell}\ge0$ by Proposition \ref{p: below},
\begin{align*}
 \int_{s^{-1}}^s \int_{B_\ell(o)} - \cD_\alpha b_{s,\ell}(x,t) \d \mua(x) \, t\d t  
\le  \int_{s^{-1}}^s \int_{\rdp} - \cD_\alpha b_{s,\ell}(x,t) F_\ell(x) \d 
\mua(x) \, t\d t.
\end{align*}
Since $\cD_\alpha = \la - \partial^2_{tt}$, to complete the proof it is 
enough to show that
\begin{align}
\limsup_{s \to \infty} &\limsup_{\ell \to \infty}   
\int_{s^{-1}}^s \int_{\rdp} \partial^2_{tt} b_{s,\ell}(x,t) F_\ell(x) \d \mua(x) \, 
t\d t\nonumber\\
&\phantom{.} \qquad \qquad\le  \frac{1+\gamma_p}{2}(1+
\varepsilon)^p  \left( \normto{\omega}{L^p(\mua)}{p}+ \normto{\eta}
{L^q(\mua)}{q}\right)\label{(i)}
\end{align}
and
\begin{align}\label{(ii)}
\lim_{\ell \to \infty}\int_{s^{-1}}^s  \int_{\rdp} \la b_{s,\ell}
(x,t) F_\ell(x) \d \mua(x) \, t\d t =0 \qquad \text{ for all } s > 0. 
\end{align}
First we prove (\ref{(i)}). Integrating by parts in the variable $t$ we get 
\begin{align*}
\int_{s^{-1}}^s  \partial^2_{tt} b_{s,\ell}(x,t) \, t\d t & = \left.t  \partial_{t} 
b_{s,\ell}(x,t)\right|_{s^{-1}}^s - \int_{s^{-1}}^s  \partial_{t} b_{s,\ell}(x,t)  \d 
t 	\\
& = s  \partial_{t} b_{s,\ell}(x,s) - s^{-1}\partial_{t} b_{s,\ell}\big(x,s^{-1}\big)+ b_{s,\ell}
\big(x,s^{-1}\big)-b_{s,\ell}(x,s) .
\end{align*}
Since $b_{s,\ell}(x,s)\ge0$, by Lemma \ref{l: lemabove} we have that
\begin{align*}
b_{s,\ell}\big(x,s^{-1}\big) -   b_{s,\ell}(x,s) & \le b_{s,\ell}\big(x,s^{-1}\big)  \\
& \frac{1+\gamma_p}{2} (1+\varepsilon)^p \left( P^{\alpha}_{s^{-1}} |\omega(x)|
^p + P^{\alpha}_{s^{-1}} |\eta(x)|^q\right).
\end{align*}
Thus we obtain the estimate
\begin{align*}
\int_{s^{-1}}^s   \int_{\rdp} &  \partial^2_{tt} b_{s,\ell}(x,t) F_\ell(x) \d 
\mua(x) \, t\d t  \\
& \le\  \frac{1+\gamma_p}{2} (1+\varepsilon)^p \int_{\rdp} \left( P^{\alpha}_{s^{-1}} |
\omega(x)|^p + P^{\alpha}_{s^{-1}} |\eta(x)|^q\right) \d \mua(x)  \\
& \qquad + \int_{B_{2\ell}(o)} \left[ s  \partial_{t} b_{s,\ell}(x,s) -  
s^{-1}\partial_{t} b_{s,\ell}\left(x,s^{-1}\right) \right] \d \mua(x) \\
& \le\  \frac{1+\gamma_p}{2} (1+\varepsilon)^p \left( \|\omega\|_{L^p(\mua)}^p  + \|
\eta\|_{L^q(\mua)}^q\right) \\
& \qquad + \norm{s  \partial_{t} b_{s,\ell}(\phantom{x},s)}{L^1(B_{2\ell}(o),\mua)}+  
\norm{s^{-1}\partial_{t} b_{s,\ell}
(\phantom{x},s^{-1})}{L^1(B_{2\ell}(o),\mua)}
\end{align*}
by the contractivity of the Poisson semigroup $P^{\alpha}_t$ on $L^r(\rdp,\mua)$ for all $r\in[1,\infty)$. Therefore, to conclude the proof of \eqref{(i)}, it is enough to show that
\begin{equation}\label{e: lim0}
\lim_{s\to0}\sup_\ell\norm{s  \partial_{t} b_{s,\ell}(\phantom{x},s)}{L^1(B_{2\ell}(o),\mua)}=\lim_{s\to\infty}\sup_\ell\norm{s  \partial_{t} b_{s,\ell}(\phantom{x},s)}{L^1(B_{2\ell}(o),\mua)}=0.
\end{equation}
Fix $v$ such that $v(q-1)>1$; then $v>2$. By Lemma \ref{l: lemabove}
and H\"older's inequality, to prove \eqref{e: lim0} it suffices to show that
\begin{equation}\label{e: boh}
\sup_{t>0} \norm{\big(P^\alpha_t |\omega|\big)^{p-1}+P^\alpha_t |\eta|+\big(P^\alpha_t|\eta|\big)^{q-1}}{L^v(\mua)}\le C(p,\omega,\eta)
\end{equation}
and
\begin{equation}\label{arriboh}
\norm{t\partial_t \BP^{\alpha,\rho}_t\omega}{L^{v'}(\mua)}+\norm{t\partial_t \BP^{\alpha,\rho}_t\eta}{L^{v'}(\mua)}\to 0\qquad \textrm {as} \quad t\to 0, \infty. 
\end{equation}
Now, \eqref{e: boh} follows from the fact that the Poisson semigroup $P^{\alpha}_t$ is a contraction on $L^r(\rdp,\mua)$ for all $r\in[1,\infty)$, whereas the spectral theorem implies that  \eqref{arriboh} holds for the norms in $L^2(\mua)$. Since $v'<2$ and the measure $\mua$ is finite it holds also for the norms in $L^{v'}(\mua)$. This concludes the proof of \eqref{(i)}.

 Next we prove (\ref{(ii)}). Integrating by parts twice we obtain
 $$
\int_{\rdp}   \cL_\alpha b_{s,\ell}(x,t) \  F_\ell(x) \d \mua(x)   = 
 \int_{\rdp} b_{s,\ell}(x,t)\   \cL_\alpha F_\ell(x) \d \mua(x).  
 $$
 Note that in the integrations by parts the boundary terms vanish, since both $x\mapsto b_{s,\ell}(x,t)$ and $F_\ell$ are smooth up to the boundary of  $\rdp$ and $F_\ell$ has compact support. Thus, since $b_{s,\ell}\ge 0$ and $\supp(\cL_\alpha F_\ell)\subset B_{\sqrt{2}\ell}(o) \setminus B_\ell(o)$, by Lemma \ref{lemr} we have that
 \begin{align*}
\int_{\rdp}   \cL_\alpha b_{s,\ell}(x,t)& \  F_\ell(x) \d \mua(x)\le C\int_{B_{l\sqrt{2}}(o) \setminus B_\ell(o)} b_{s,\ell}(x,t) \d \mua(x) \\
 & \le C  \int_{B_{\sqrt{2}\ell}(o) \setminus B_\ell(o)} \big[ P^\alpha_t |\omega|^p(x) + P^\alpha_t |\eta|^q(x)\big] \d \mua(x).
 \end{align*}
 Set
 $$
 \\
 \Xi_\ell(t)=\int_{B_{\sqrt{2}\ell}(o) \setminus B_\ell(o)} \big[ P^\alpha_t |\omega|^p(x) + P^\alpha_t |\eta|^q(x)\big] \d \mua(x).
 $$
 Then $\lim_{\ell\to\infty} \Xi_\ell(t)=0$ and $0\le \Xi_\ell(t)\le \normto{\omega}{L^p(\mua)}{p}+\normto{\eta}{L^q(\mua)}{q}$. Hence, by the Lebesgue dominated convergence theorem,
 $$
 \limsup_{\ell\to \infty} \int_{s^{-1}}^s \int_{\rdp} \cL_\alpha b_{s,\ell}(x,t)\ F_\ell(x) \d\mua(x)\ t\d t \le 0.
 $$
 Thus, to conclude the proof of \eqref{(ii)} it remains only to prove that
\begin{equation}\label{(iii)}
\liminf_{\ell\to \infty} \int_{s^{-1}}^s \int_{\rdp} \cL_\alpha b_{s,\ell}(x,t)\ F_\ell(x) \d\mua(x) \ t\d t\ge 0.
\end{equation}
By adding and subtracting to $b_{s,\ell}$  the function
$$
R(x,t)=\frac{1+\gamma_p}{2} (1+\varepsilon)^p \big[P^\alpha_t |\omega|^p(x)+ P^\alpha_t |\eta|^q(x)\big]
$$
we may write the integral in left hand side of \eqref{(iii)} as
$$
 \int_{s^{-1}}^s \int_{\rdp} \cL_\alpha \big(b_{s,\ell} -R \big)\ F_\ell \d\mua \ 
t\d t+\int_{s^{-1}}^s \int_{\rdp} \cL_\alpha R \ F_\ell\d\mua\ t\d t.
$$
Therefore, to prove \eqref{(iii)} it suffices to show that
\begin{equation}\label{(iv)}
\lim\sup_{\ell\to\infty} \int_{s^{-1}}^s \int_{\rdp} \cL_\alpha \big(R-b_{s,\ell} 
\big)\ F_\ell \d\mua \ t\d t\le0,
\end{equation}
 and
\begin{equation}\label{(v)}
\lim_{\ell\to\infty} \int_{s^{-1}}^s \int_{\rdp} \cL_\alpha R \ F_\ell \d\mua \ t
\d t=0.
\end{equation} 
To prove \eqref{(iv)} we proceed much as before. Since $R-b_{s,\ell}
\ge0$ by Lemma~\ref{l: lemabove}, integrating by parts twice and using Lemma \ref{lemr} we 
obtain that 
\begin{align*}
\int_{\rdp} \cL_\alpha\big(R-b_{s,\ell}\big)\,F_\ell\d\mua&=\int_{\rdp} \big(R-
b_{s,\ell}\big)\,\cL_\alpha F_\ell\d\mua \\ 
&\le C\int_{B_{\sqrt{2}\ell}(o) \setminus B_\ell(o)}\big(R-b_{s,\ell}\big) \d\mua  \\ 
&\le C\int_{B_{\sqrt{2}\ell}(o) \setminus B_\ell(o)}R \d\mua \\
&=C\  \Xi_\ell(t).
\end{align*}
 Hence \eqref{(iv)} follows by the Lebesgue dominated convergence theorem.
 It remains only to prove \eqref{(v)}. By integrating by  parts and using Lemmas \ref{l: lemabove} and \ref{l: deltaR}, we obtain that
\begin{align*}
\left| \int_{s^{-1}}^s\int_{\rdp} \cL_\alpha R\ F_\ell \d\mua \ t\d t\right|
&
\le  \int_{s^{-1}}^s\int_{\rdp} |\delta R|\ |\delta F_\ell| \d\mua \ t\d t 
\\
&\le C/\ell.
\end{align*} 
This concludes the proof of \eqref{(v)} and of the proposition.
\end{proof}
\noindent
\emph{Proof of the Bilinear embedding Theorem.} 
First we prove the statement for $p \ge 2$. By Propositions \ref{p: below} and \ref{p: above} and passing to the limit as $\epsilon\to 0$ we obtain
\begin{align}
\label{stimanabla}
\nonumber  \gamma_p \int_0^{\infty}&  \int_{\rdp}  |\overline{\nabla} \BP^{\alpha,\rho}_t \omega(x)| |\overline{\nabla} \BP^{\alpha,\rho}_t \eta(x)(x)| d\mua(x) t\d t \\
\le& \frac{1+\gamma_p}{2} \left(\normto{\omega}{L^p(\mua)}{p} + \normto{\eta}{L^q(\mua)}{q}\right).
\end{align}
By replacing $\omega$ with $\lambda \omega$ and $\eta$ with $\lambda^{-1} \eta$ in this inequality and minimizing for $\lambda>0$ we get
\begin{align*}
\int_0^{\infty}&  \int_{\rdp}  |\overline{\nabla} \BP^{\alpha,\rho}_t \omega(x)| |\overline{\nabla} \BP^{\alpha,\rho}_t \eta(x)(x)| d\mua(x) t\d t \\
& \le\, \frac{1+\gamma_p}{2\gamma_p} \left[ \left(\frac{p}{q}\right)^{1/p}+\left(\frac{q}{p}\right)^{1/q}\right] \norm{\omega}{L^p(\mua)} \norm{\eta}{L^q(\mua)}.
\end{align*}
Since $\gamma_p=q(q-1)/8$, we have that 
 \begin{align*}
 \frac{1+\gamma_p}{2 \gamma_p} \left[  \left( \frac{q}{p}  \right)^{ \frac{1}{q}} +  \left( \frac{p}{q}  \right)^{ \frac{1}{p}}\right] 
&=\frac{8 + q(q-1)}{2}\,(q-1)^{ \frac{1}{q}-1}\,(p-1)\\
&<5.7 \, (p-1).
 \end{align*}
 Since $p^* =  \max \left\{p,q \right\}=p$ when $p>2$, this proves the Bilinear embedding Theorem for $p \ge 2$.
The proof in the case $1<p<2$ is similar: it suffices to replace the constants $\gamma_p=q(q-1)/8$  and $(1+\epsilon)^p$ in Lemma \ref{l: lemabove}  and in Proposition \ref{p: above} by $\gamma_q=p(p-1)/8$ and $(1+\epsilon)^q$, and in the proof of \eqref{e: lim0} to fix $v$ such that $v(p-1)>1$. Proceeding as before we obtain the result also for $1<p<2$.
\qed
\section{Spectral multipliers}\label{c: specmult}
In Section \ref{s: saeHL} we have seen that the Hodge-Laguerre operator on $r$-forms has a self-adjoint extension $\BL_\alpha$ on $\Lpr{2}{r}$ with spectral resolution
$$
\BL_\alpha=\sum_{n\ge r} n\,\cP^\alpha_n,
$$
where, for each integer $n\ge r$, $\cP_n$ is the orthogonal projection onto the space spanned by the forms $\lambda^\alpha_k(x)=\sum_{I\in\cI_r} \ell^\alpha_{k,I} \d x_I$, $|k|=n$.
Thus, by the spectral theorem, if $m=(m(n))_{n\ge r}$ is a bounded sequence in $\BC$ the operator
$$
m(\BL_\alpha)=\sum_{n\ge r} m(n)\,\cP^\alpha_n
$$
is bounded on $\Lpr{2}{r}$ and $\norm{m(\BL_\alpha)}{2-2}= \sup_{n\ge r}|m_n|$. \par
In this section we shall give a sufficient condition for the 
boundedness of $m(\BL_\alpha)$ on $\Lpr{p}{r}$ 
also for $p\not=2$. 

Before stating and proving the multiplier theorem for $\BL_\alpha$, we briefly recall the universal multiplier theorem for symmetric contraction semigroups of Carbonaro and Dragi\v{c}evi\v{c} \cite{CDmult}. We denote by $H^\infty(S_\theta)$ the space of all functions that are bounded and holomorphic in the sector 
$$
S_{\theta}=\set{z\in\BC: |\arg z|<\theta}.
$$
By Fatou's theorem a function $m$ in $H^\infty(S_\theta)$ has non tangential limits almost everywhere on the boundary of $S_\theta$. We denote by $m_\pm$ the boundary values of $m$ on the edges of the sector, considered as functions on $\BR_+$, i.e.
$$
m_\pm(\lambda)=m(\lambda\,\e^{\pm i\theta}) \qquad\forall \lambda \in \BR_+.
$$
For every $r>0$ let $D_rm_\pm(\lambda)=m_\pm(r\lambda)$.
For every $J>0$ denote by $H^J(\BR)$ the usual $L^2$-Sobolev space on $\BR$. Fix a smooth function $\psi$ with compact support in $[1/4,4]$ such that $\psi=1$ on $[1/2,2]$. 
\begin{definition}\label{HMcond}
We denote by $H^\infty(S_\theta;J)$ the space of all functions $m\in H^\infty(S_\theta)$ such that 
$$
\norm{m}{S_\theta;J}=\sup_{r>0}\norm{\psi \,D_rm_+}{H^J}+\sup_{r>0}\norm{\psi \,D_rm_-}{H^J}<\infty.
$$
\end{definition}
Then $H^\infty(S_\theta;J)$ does not depend on the choice of the function $\psi$ and it is a Banach space with respect to the norm $\norm{\phantom{m}}{S_\theta;J}$. 
For every $p\in [1,\infty]$ set 
$$\phi^*_p=  \arcsin \left| 
\frac{2}{p}-1\right|.$$
\begin{thm}[Carbonaro and Dragi\v{c}evi\v{c}]
\label{cd}
For every generator $\mc{A}$ of a symmetric contraction semigroup on a $\sigma$-finite measure space $(\Omega, \nu)$, every $p \in (1, \infty)$, $J > 3/2$ and $m \in H^{\infty}(S_{\phi^*_p}; J)$, the operator $m(\mc{A})$ extends to a bounded operator on $L^p(\Omega, \nu)$, and there exists $C(p,J) > 0$ such that 
\[ \| m(\mc{A})\|_{p-p} \le C(p,J) \left( \| m\|_{H^{\infty}(S_{\phi^*_p}; J)} + |m(0)|\right).\]
\end{thm}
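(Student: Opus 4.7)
The plan is to combine two ingredients: \textbf{(a)} a bounded $H^\infty$ holomorphic functional calculus for $\mathcal{A}$ on $L^p(\Omega,\nu)$ with the sharp angle $\phi^*_p$, and \textbf{(b)} a standard Littlewood-Paley reduction from uniform $H^\infty$ bounds on the sector to the weaker Sobolev condition on its boundary given in Definition~\ref{HMcond}.

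For step (a), my approach would be to adapt the Bellman-function bilinear embedding developed in Sections~\ref{chbell}--\ref{c: proofBET} to the setting of a \emph{complex} time parameter. Given $m \in H^\infty(S_\theta)$ with $\theta > \phi^*_p$ and a representation of $m(\mathcal{A})$ as a Cauchy-type contour integral of $e^{-z\mathcal{A}}$ along rays $z = re^{\pm i\phi}$ with $\phi \in (\phi^*_p, \pi/2)$, boundedness on $L^p$ reduces to establishing a bilinear estimate of the form
$$
\int_0^\infty \int_\Omega \bigl|\partial_r\, e^{-re^{i\phi}\mathcal{A}} f(x)\bigr|\,\bigl|\partial_r\, e^{-re^{-i\phi}\mathcal{A}} g(x)\bigr|\, d\nu(x)\, r\, dr \le C(p,\phi)\, \|f\|_{L^p(\nu)}\, \|g\|_{L^q(\nu)}.
$$
This requires a complex analogue of the Nazarov-Treil-Carbonaro-Dragi\v{c}evi\v{c} Bellman function $Q$ from Section~\ref{chbell}, replacing the real Hessian positivity of Proposition~\ref{bellreg} by a positivity of its complex rotation $e^{2i\phi}\partial^2_{tt}$. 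The elementary trigonometric identity $\cos\phi > \sin\phi^*_p = |2/p - 1|$ is precisely the constraint under which the modified Bellman function preserves the crucial lower Hessian bound, and this is what forces the angle $\phi^*_p$ to be sharp (improving on Cowling's $\theta_p = (\pi/2)|2/p-1|$ from \cite{Cow:AM}). The abstract input from the semigroup side is the symmetric-contraction hypothesis, which via Stein's theorem guarantees that $e^{-z\mathcal{A}}$ is a holomorphic contraction family on $L^p$ in the full sector $S_{\pi/2 - \phi^*_p}$, so that the contour integrals converge.

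For step (b), fix a dyadic partition of unity $1 = \sum_{j \in \mathbb{Z}} \chi(\lambda/2^j)$ on $(0,\infty)$ with $\chi$ supported in $[1/4,4]$, and decompose
$$
m(\lambda) - m(0) = \sum_{j\in\mathbb{Z}} m_j(\lambda), \qquad m_j(\lambda) = \chi(\lambda/2^j)(m(\lambda)-m(0)).
$$
Each $m_j$ extends to a function bounded and holomorphic in $S_{\phi^*_p}$ whose $H^\infty$-norm is controlled by $\|\psi D_{2^j}m_\pm\|_{H^J}$ via a Sobolev embedding (this is where $J > 3/2$ enters, ensuring $H^J(\mathbb{R}) \hookrightarrow L^\infty$ with additional decay for a Schur-type summation). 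Applying the $H^\infty$ calculus from step (a) to each $m_j$ and reassembling via the Littlewood-Paley square function $g_\mathcal{A}(f) = \bigl(\int_0^\infty |t\mathcal{A} e^{-t\mathcal{A}}f|^2\,dt/t\bigr)^{1/2}$, which is bounded on $L^p$ by Stein's theorem for symmetric diffusion semigroups, yields the claimed estimate; the term $|m(0)|$ accounts for the constant piece, handled trivially since $\mathcal{A}$ is a contraction generator.

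The main obstacle is step (a): constructing the complex-rotated Bellman function and verifying that its Hessian remains conditionally nonnegative after rotation by the full angle $\pi/2 - \phi^*_p$. This is the genuinely new contribution of \cite{CDmult}, and it is both where the sharp angle is determined and where the universality across all symmetric contraction semigroups is achieved. Step (b) is fairly routine once (a) is available, and the role of the hypothesis $J > 3/2$ is purely to make the Littlewood-Paley summation absolutely convergent.
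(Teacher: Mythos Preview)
The paper does not prove Theorem~\ref{cd} at all: it is stated as a quotation of the main result of Carbonaro and Dragi\v{c}evi\'c \cite{CDmult} and is used as a black box in the proof of Theorem~\ref{mtforHL}. There is therefore no ``paper's own proof'' to compare your proposal against.

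That said, your sketch is a reasonable high-level outline of the strategy actually carried out in \cite{CDmult}: the sharp $H^\infty$ calculus at angle $\phi^*_p$ is obtained via a Bellman-function bilinear embedding adapted to complex time, and the passage from $H^\infty(S_{\phi^*_p})$ to the weaker Sobolev-on-the-boundary condition $H^\infty(S_{\phi^*_p};J)$ is handled by a dyadic decomposition and Littlewood--Paley reassembly. One point to be careful about in step (b): the reassembly argument that makes $J>3/2$ sufficient is more delicate than a simple Sobolev embedding plus Schur summation, since summing the $H^\infty$ bounds on the pieces $m_j$ directly would require decay in $j$ that the hypothesis does not obviously provide; the actual argument in \cite{CDmult} (and in the earlier work \cite{GMMST:JFA} for the Ornstein--Uhlenbeck case) uses more refined square-function or transference machinery to avoid losing control of the sum. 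But since the present paper simply imports the theorem, none of this needs to be reproduced here.
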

\begin{remark}\label{injop} It follows from the proof of Theorem 1 in \cite{CDmult} that if the operator $\mc{A}$ is injective, then the term $|m(0)|$ can be omitted in the right hand side of the estimate of $\norm{m(\mc{A})}{p-p}$. 
\end{remark}
For each $a\ge 0$ define  the translated sector 
$$
\tau_a S_{\phi^*_p}=\set{z\in\BC: \arg(z-a)<\phi^*_p}.
$$
and the space $H^\infty(\tau_aS_{\phi^*_p};J)$ as the space of all functions $m$ such that $z\mapsto\tau_a m(z)=m(z+a)\in H^\infty(S_{\phi^*_p};J)$ endowed with the norm 
$$
\norm{m}{\tau_aS_{\phi^*_p};J}=\norm{\tau_a m}{S_{\phi^*_p};J}
$$
\begin{thm}\label{mtforHL}
Suppose that $\alpha\in(-1/2,\infty)^d$. If $1\le r\le d$ and $m\in H^\infty(\tau_{r/2}S_{\phi^*_p};J)$ for some $p\in (1,2)$ and some $J>3/2$, then the operator $m(\BL_\alpha)$ is bounded on $\Lpr{q}{r}$ for all $q\in [p,p']$ and
$$
\norm{m(\BL_\alpha)}{q-q}\le\, C(p,J,r)\  \norm{m}{\tau_{r/2}S_{\phi^*_p};J}.
$$
\end{thm}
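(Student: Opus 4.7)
\emph{Plan.} The plan is to reduce the multiplier problem for $\BL_\alpha$ on $r$-forms to a family of scalar multiplier problems for the operators $\cL_{\alpha,I}$, $I\in\cI_r$, and to invoke the universal multiplier theorem of Carbonaro--Dragi\v{c}evi\v{c} (Theorem \ref{cd}) for each of them. By the diagonalization result of Proposition \ref{p: L2diag} and the spectral theorem,
$$
m(\BL_\alpha)\omega \;=\; \sum_{I\in\cI_r} m(\cL_{\alpha,I})\,\omega_I\,\d x_I
$$
for every $\omega=\sum_I \omega_I\,\d x_I\in \Lpr{2}{r}$; consequently it suffices to estimate each $m(\cL_{\alpha,I})\omega_I$ in $L^q(\rdp,\mu_\alpha)$ and then recombine.

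A direct application of Theorem \ref{cd} to $-\cL_{\alpha,I}$ is not possible, since $\cL_{\alpha,I}$ is not itself the generator of a symmetric contraction semigroup on $L^q(\rdp,\mu_\alpha)$. However, Corollary \ref{norm on Lp} asserts that $\cL_{\alpha,I}-(\card{I}/2)I$ is such a generator, and since $\card{I}=r$ for every $I\in\cI_r$ the relevant shifted operator is $A_I:=\cL_{\alpha,I}-(r/2)I$. This is precisely why the hypothesis is phrased on the translated sector $\tau_{r/2}S_{\phi^*_p}$: setting $\tilde m(z):=m(z+r/2)=\tau_{r/2}m(z)$, the assumption $m\in H^\infty(\tau_{r/2}S_{\phi^*_p};J)$ amounts to $\tilde m\in H^\infty(S_{\phi^*_p};J)$ with equal norms, while the spectral calculus gives $m(\cL_{\alpha,I})=\tilde m(A_I)$. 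Moreover, by Proposition \ref{ONBr1} the spectrum of $\cL_{\alpha,I}$ equals $\{n\in\BN:n\ge\card{I}\}=\{n\in\BN:n\ge r\}$, so $A_I\ge(r/2)I$ and in particular $A_I$ is injective; by Remark \ref{injop} the term $|\tilde m(0)|$ may therefore be dropped when applying Theorem \ref{cd}. The conclusion is that, for every $I\in\cI_r$ and every $q\in[p,p']$,
$$
\norm{m(\cL_{\alpha,I})\omega_I}{L^q(\mu_\alpha)}\le C(p,J)\,\norm{m}{\tau_{r/2}S_{\phi^*_p};J}\,\norm{\omega_I}{L^q(\mu_\alpha)},
$$
where one also uses that $\phi_q^*\le \phi_p^*$ on $[p,p']$ so that the H\"ormander-type condition persists.

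To reassemble the form from its components I would use the trivial pointwise bound $|\omega_I(x)|\le|\omega(x)|_{\Uplambda^r}$, which yields $\norm{\omega_I}{L^q(\mu_\alpha)}\le\norm{\omega}{\Lpr{q}{r}}$, together with the triangle inequality
$$
\norm{m(\BL_\alpha)\omega}{\Lpr{q}{r}}\le\sum_{I\in\cI_r}\norm{m(\cL_{\alpha,I})\omega_I}{L^q(\mu_\alpha)}\le C(p,J)\binom{d}{r}\,\norm{m}{\tau_{r/2}S_{\phi^*_p};J}\,\norm{\omega}{\Lpr{q}{r}},
$$
giving the dimensional dependence $\binom{d}{r}$ announced in the introduction. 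The randomization via Rademacher functions and Khintchine's inequality alluded to in the introduction provides an alternative, and potentially sharper, route for this last passage from scalar bounds to a vector-valued bound, but the crude triangle inequality already matches the stated estimate. The step that I would expect to demand the most care, although it is ultimately bookkeeping, is the alignment between the spectral shift of $\cL_{\alpha,I}$ by $r/2$ forced by Corollary \ref{norm on Lp} and the translation of the sector by the same $r/2$ built into the hypothesis on $m$: once this matching is cleanly established, uniformity in $I\in\cI_r$ in the application of Theorem \ref{cd} and in the final summation over $\cI_r$ is automatic.
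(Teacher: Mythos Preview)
Your proposal is correct and follows essentially the same strategy as the paper: diagonalize via Proposition~\ref{p: L2diag}, shift each $\cL_{\alpha,I}$ by $r/2$ using Corollary~\ref{norm on Lp}, apply the Carbonaro--Dragi\v{c}evi\v{c} theorem to $\tilde m(A_I)$, and then reassemble. The only minor differences are that the paper carries out the reassembly through a Rademacher/Khintchine argument (which, as you correctly observe, ultimately collapses to the same triangle-inequality bound with the factor $d_r=\binom{d}{r}$), and that the paper obtains the full range $q\in[p,p']$ by duality (via $m^*(z)=\overline{m(\bar z)}$) and interpolation rather than by invoking $\phi_q^*\le\phi_p^*$ directly; the latter route is cleaner for securing a constant independent of $q$.
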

\begin{proof}
Suppose that $\omega=\sum_{I\in\cI_r} \omega_I\d x_I\in \cP(\Uplambda^r(\rdp))$. Then, by Proposition~\ref{p: diag} and the spectral theorem
\begin{align*}
m(\BL_\alpha)\,\omega&=\sum_{I\in\cI_r} m(\cL_{\alpha,I})\,\omega_I\d x_I 
\\ 
&=\sum_{I\in\cI_r} \tau_{r/2}m(\cL_{\alpha,I}-(r/2)I)\,\omega_I\d x_I . 
\end{align*}
By Corollary \ref{norm on Lp} the operators $\cL_{\alpha,I}-(r/2)I$ generate semigroups of symmetric contractions on $L^q(\rdp,\mua)$ for every $q\in[1,\infty]$. Since $\tau_{r/2}m\in H^\infty(S_{\phi^*_p};J)$ the operators $m(\cL_{\alpha,I})=\tau_{r/2}m(\cL_{\alpha,I}-(r/2)I)$ are bounded on $L^p(\rdp,\mua)$ and 
$$
\norm{m(\cL_{\alpha,I})}{p-p}\le\,C(J,p) \ \norm{m}{\tau_{r/2}S_{\phi^*_p};J}
$$
by Theorem \ref{cd}.
To conclude that $m(\BL_\alpha)$ is bounded on $\Lpr{p}{r}$ we apply a randomisation argument based on Rademacher's functions.
We recall that the Rademacher functions are an orthonormal family of $\set{-1,1}$-valued functions $\set{r_k: k\in\BN}$ in $L^2([0,1],\d t)$ such that if $F(t)=\sum_{n=0}^{\infty}a_k\,r_k(t)$ is a function in $L^2([0,1]),\d t)$ then $F\in L^p([0,1]),\d t)$ for every $p<\infty$ and
\begin{equation}\label{Ki}
A_p \norm{F}{p}\le \norm{F}{2}=\left(\sum_{k=0}^\infty |a_k|^2\right)^{1/2}\le\, B_p \norm{F}{p}
\end{equation}
for two positive constants $A_p$ and $B_p$ (see \cite[Appendix C]{GRAF:CFA}). Let $I_1,\ldots,I_{d_r}$, where $d_r=\#\cI_r$, be an enumeration of the multiindeces in $\cI_r$. Then, by applying the second inequality in \eqref{Ki} to the function
$$
F(t)=\sum_{k=1}^{d_r} m(\cL_{\alpha,I_k})\,\omega_{I_k}(x)\ r_k(t),
$$
we obtain that
\begin{align*}
|m(\BL_\alpha)\,\omega(x)|&= \left(\sum_{k=1}^{d_r}
|m(\cL_{\alpha,I_k})\,\omega_{I_k}(x)|^2\right)^{1/2} \\ 
&\le B_p  \left(\int_0^1\left|\sum_{k=1}^{d_r}m(\cL_{\alpha,I_k})\,
\omega_{I_k}(x)\,r_k(t)\right|^p\d t\right)^{1/p}.
\end{align*}
Thus, since $|r_k(t)|=1$ for every $t\in [0,1]$, an application of Fubini's theorem and Minkowski's inequality yield
\begin{align*}
\norm{m(\BL_\alpha)\,\omega}{L^p(\mua)}&=\left(\int_{\rdp}
 |m(\BL_\alpha)\,\omega(x)|^p \d \mua(x)\right)^{1/p} 
 \\ 
&\le B_p \left(\int_{\rdp}\int_0^1 \left|\sum_{k=1}^{d_r}
m(\cL_{\alpha,I_k})\,
\omega_{I_k}(x)\,r_k(t)\right|^p\d t\d \mua(x)\right)^{1/p} 
\\ 
&\le B_p \left(\int_{\rdp} \left(\sum_{k=1}^{d_r}|m(\cL_{\alpha,I_k})\,
\omega_{I_k}(x)|\right)^p\d \mua(x)\right)^{1/p} 
\\
&\le \sum_{k=1}^{d_r} \norm{m(\cL_\alpha,I_k) \omega_{I_k}}
{L^p(\mua)}\\
&\le \sum_{k=1}^{d_r} \norm{m(\cL_\alpha,I_k)}{p-p}\ 
\norm{ \omega_{I_k}}{L^p(\mua)}\\
&\le C(p,J)\ d_r \norm{\omega}{L^p(\mua)}.
\end{align*}
Hence $m(\BL_\alpha)$ is bounded on $\Lpr{p}{r}$. Let $m^*(z)=\overline{m(\overline{z}})$. Then $m^*\in H^\infty(\tau_a S_{\phi^*_p};J)$ and $\norm{m^*}{\tau_{r/2}S_{\phi^*_p};J}
=\norm{m}{\tau_{r/2}S_{\phi^*_p};J}$. Then $m^*(\BL_\alpha)$ is bounded on $\Lpr{p}{r}$ and 
$$
\norm{m(\cL_{\alpha,I})}{p-p}\le\,C(J,p) \ \norm{m}{\tau_{r/2}S_{\phi^*_p};J}.
$$
Since  $m(\BL_\alpha)^*=m^*(\BL_\alpha)$ because $\BL_\alpha$ is self-adjoint, the operator $m(\BL_\alpha)=m^*(\BL_\alpha)^*$ is also bounded on $\Lpr{p'}{r}$, $p'=p/(p-1)$ by duality. Thus it is bounded on $\Lpr{q}{r})$ for all $q\in[p,p']$ by interpolation. This concludes the proof of the theorem.
\end{proof}
\bigskip
\centerline{\textsc{Acknowledgments}}
\medskip We thank Andrea Carbonaro for helpful discussions on the Bellman function technique and Emanuela Sasso for discussing with us her work on Riesz-Laguerre transforms. We are also grateful to Adam Nowak for his thorough
and careful reading of a preliminary version of the manuscript and for pointing out a couple of
mathematical errors and a host of typos.

\end{document}